\documentclass[11pt,a4paper]{amsproc}
\usepackage{tgtermes}
\usepackage{amsthm}
\usepackage{amsmath}
\usepackage{amssymb}
\usepackage{amscd}
\usepackage[latin2]{inputenc}
\usepackage{t1enc}
\usepackage[mathscr]{eucal}
\usepackage{indentfirst}
\usepackage{graphicx}
\usepackage{graphics}
\usepackage{pict2e}
\usepackage{epic}
\numberwithin{equation}{section}
\usepackage[margin=1.1in]{geometry}
\usepackage{epstopdf} 
\usepackage{tikz-cd}
\usepackage{mathtools}
\usepackage{cases}
\usepackage{hyperref}
\usepackage{mathrsfs}
\usepackage{verbatim,amsmath,amsthm,amsfonts,amssymb,latexsym,graphicx,mathtools,extpfeil,color,mathabx}
\usepackage{epstopdf,pinlabel}
\epstopdfsetup{suffix=}
\usepackage{amsfonts}
\usepackage[all]{xy}
\usepackage{graphicx}
\usepackage{caption}
\usepackage{subcaption}
\usepackage{tikz} 
\usetikzlibrary{calc,knots}
\DeclareMathAlphabet{\mathpzc}{OT1}{pzc}{m}{it}

\theoremstyle{plain}
\newtheorem{Th}{Theorem}[section]
\newtheorem{Lemma}[Th]{Lemma}
\newtheorem{Cor}[Th]{Corollary}
\newtheorem{Prop}[Th]{Proposition}
\newtheorem{Problem}[Th]{Problem}

 \theoremstyle{definition}
\newtheorem{Def}[Th]{Definition}

\newtheorem{Rem}[Th]{Remark}
\newtheorem{?}[Th]{Question}

\newcommand{\RN}{\mathbb{R}}
\newcommand{\CN}{\mathbb{C}}

\newcommand{\ZN}{\mathbb{Z}}

\newcommand{\la}{\langle}
\newcommand{\ra}{\rangle}

\newcommand\norm[1]{\left\lVert#1\right\rVert}

\setcounter{tocdepth}{3}
\makeatletter
\def\l@subsection{\@tocline{2}{0pt}{2.5pc}{5pc}{}}
\makeatother

\title[The three-dimensional Seiberg-Witten equations for $3/2-$spinors]{The three-dimensional Seiberg-Witten equations for $3/2-$spinors: a compactness theorem}

\author{Ahmad Reza Haj Saeedi Sadegh}
\address{Northeastern University}
\email{a.hajsaeedisadegh@northeastern.edu}

\author{Minh Lam Nguyen}
\address{Washington University in St. Louis}
\email{minhn@wustl.edu}

\subjclass[2020]{Primary 53Cxx, 57Rxx, 58Jxx, 57Kxx } 

\begin{document}

	\maketitle
	
	\begin{abstract}
        The Rarita-Schwinger-Seiberg-Witten (RS-SW) equations are defined similarly to the classical Seiberg-Witten equations, where a geometric non-Dirac-type operator replaces the Dirac operator called the Rarita-Schwinger operator. In dimension four, the RS-SW equation was first considered by the second named author \cite{nguyen2023pin}. The variational approach will also give us a three-dimensional version of the equations. The RS-SW equations share some features with the multiple-spinor Seiberg-Witten equations, where the moduli space of solutions could be non-compact. In this note, we prove a compactness theorem regarding the moduli space of solutions of the RS-SW equations defined on 3-manifolds.

		\noindent\textbf{Keywords:} Rarita-Schwinger operator, Seiberg-Witten equations, gauge theory

	\end{abstract}

	\tableofcontents

 \newpage

\section{Introduction}

Let $Y$ be a closed Riemannian 3-manifold with fixed spin structure $\mathfrak{s}_{1/2}$ with $W_{\mathfrak{s}_{1/2}}$ denoting the spinor bundle. We denote by $D$ the Dirac operator on $W_{\mathfrak{s}_{1/2}}$ and denote by $D^T$ the twisted Dirac operator on the twisted vector bundle $W_{\mathfrak{s}_{1/2}}\otimes TY$, where $TY$ is equipped with the Levi-Civita connection. The bundle of 3/2-spinors, $W_{\mathfrak{s}_{3/2}}$, is given as the kernel of the Clifford multiplication
morphism $\gamma:W_{\mathfrak{s}_{1/2}}\otimes TY\to W_{\mathfrak{s}_{1/2}}$. We have an orthogonal decomposition
of vector bundles $W_{\mathfrak{s}_{1/2}}\otimes TY=W_{\mathfrak{s}_{1/2}}\oplus W_{\mathfrak{s}_{3/2}}$, with respect to which the twisted Dirac operator has the following matrix form:
\[D^T=\left(
    \begin{array}{c|c}
       D & P^*\\
      \hline
      P & Q
    \end{array}
    \right)\]
where $P:\Gamma(W_{\mathfrak{s}_{1/2}})\to\Gamma(W_{\mathfrak{s}_{1/2}}\otimes TY)$ is the \emph{Penrose operator} and $Q:\Gamma(W_{\mathfrak{s}_{3/2}})\to \Gamma(W_{\mathfrak{s}_{3/2}})$ 
is the \textit{Rarita-Schwinger operator}. Similar to Dirac operators, the operator $Q$ is a first-order elliptic differential operator that is essentially self-adjoint. The bundle of $3/2$-spinors is not a Clifford module so the Rarita-Schwinger operator is not a Dirac-type operator.

For the gauge-theoretic purposes, fix a $U(1)$-line bundle $\mathscr{L}$ over $Y$ and denote by $\mathcal{A}(\mathscr{L})$ its affine space of all $U(1)$-connections.
For any $A\in\mathcal{A}(\mathscr{L})$, one then obtains the twisted Rarita-Scwhinger operator
\[Q_{A}:\Gamma(W_{\mathfrak{s}_{3/2}}\otimes\mathscr{L})\to \Gamma(W_{\mathfrak{s}_{3/2}}\otimes\mathscr{L}).\]
The Rarita-Schwinger-Seiberg-Witten equation (RS-SW) is the following system
 for the pairs $(A,\phi)\in\mathcal{A}(\mathscr{L})\times \Gamma(W_{\mathfrak{s}_{3/2}}\otimes\mathscr{L})$:
 \begin{equation}\label{eq:SWRS}
    \begin{cases}
      Q_A\phi=0\\
      F_A=\mu(\phi)
    \end{cases} \end{equation}
where $F_A$ is curvature of the connection $A$, and $\mu$ is the quadratic function that gives the traceless part of $\phi\phi^*$. Fix $A_0$ to be the referenced $U(1)$-connection in $\mathcal{A}(\mathscr{L})$. The solutions of this equation correspond to critical points of the modified Chern-Simons-Dirac functional 
\begin{equation}\label{eq:csrsfunctional}
    \mathcal{L}^{RS}(A,\phi)=-\int_Y(A-A_0)\wedge(F_{A}+F_{A_0})+\frac{1}{2}\int_Y\langle Q_A\phi,\phi\rangle
\end{equation}
where the Dirac operator is replaced by the Rarita-Schwinger operator.

A solution $(A,\phi)$ of \eqref{eq:SWRS} is called \textit{reducible} if $\phi=0$. By blowing up along the locus of the reducible solutions \cite{kronheimer_mrowka_2007}, we then obtain the blown-up equations 
\begin{equation}\label{eq:SWRSblowup}
    \begin{cases}
    \norm{\psi}_{L^4}=1\\
      Q_A\psi=0\\
      \epsilon^2F_A=\mu(\psi)
    \end{cases}\end{equation}
for triples $(A,\psi,\epsilon)\in \mathcal{A}(\mathscr{L})\times\Gamma(W_{\mathfrak{s}_{3/2}}\otimes \mathscr{L})\times (0,\infty)$. In this paper, we will study the moduli space of solutions of \eqref{eq:SWRSblowup} up to gauge transformations. 

There have been several generalizations to the classical Seiberg-Witten equations, one of which is the \textit{multiple-spinor Seiberg-Witten equations}. The four-dimensional version of the equations was first considered in \cite{MR1392667} and later on was systematically studied by Taubes in \cite{Taubes:2016voz}. Haydys and Walpuski \cite{haydys2015compactness} and several other people also study the three-dimensional variant of the multiple-spinor Seiberg-Witten equations. Broadly speaking, each variant of the multiple-spinor Seiberg-Witten equations associates with a choice of a compact Lie group $G$ equipped with a quaternionic representation $G\to Sp(S)$, where $Sp(S)$ is the group of quaternion-linear isometries on a quaternionic vector space $S$. For example, when $G=U(1)$, then one obtains the classical Seiberg-Witten equations. Note that Dirac (or Dirac-type) operators are among the key ingredients in defining these equations.

One could consider \eqref{eq:SWRS} as a kind of generalization to the classical Seiberg-Witten equations where the Dirac operator is replaced with the Rarita-Schwinger operator. The Rarita-Schwinger operator was first defined by Rarita and Schwinger in 1941 to study wave functions in supergravity and superstring theory \cite{PhysRev.60.61}. It is studied extensively in theoretic Physics. In contrast to its Dirac cousin, the Rarita-Schwinger operator has not been studied as much in Mathematics. However, there are some recent interests in the applications of the Rarita-Schwinger operator in geometry and topology by several people, see \cite{MR1129331}, \cite{homma2019kernel} and \cite{MR4252883}.

\subsection{Main results}
Similar to some other variants of the generalized Seiberg-Witten equations, the RS-SW equations could have non-compact moduli space of solutions. In dimension four, the non-compactness of the moduli space of the RS-SW equations is exhibited by some topological condition on the manifold via $Pin(2)-$equivariant degree theory \cite{nguyen2023pin}. In dimension three, to the best of our knowledge, there is no such analogous statement. As a result, one must take on a more analytical approach to understand the behavior of solutions of the three-dimensional RS-SW equations. 

Since the quadratic map $\mu$ in the curvature equation of the system fails to be proper, we do not have an a priori universal bound on the $3/2-$spinors. However, if a hypothetical universal bound does exist for the $3/2-$spinor solutions, then roughly speaking, via integration-by-parts argument, we should expect that the moduli space should be (sequentially) compact in some appropriate sense. Note that the first equation in the system is homogeneous. This suggests that we should re-scale the equations so that, at the very least, we have a uniform finite norm of the $3/2-$spinor solutions at the expense of introducing an extra unknown to the system. Hence, the hope is that some sort of integration-by-parts argument will also apply to us.

With that being said, regarding the moduli space of the RS-SW equations, the main featured theorem of the paper is the following.

\begin{Th}[cf. Theorem \ref{first compactness theorem}, Theorem \ref{second compactness theorem}]\label{first main theorem} Fix a Riemannian metric $g$ on a smooth, oriented, closed three-dimensional manifold $Y$.
\begin{enumerate}
    \item Let $\{(A_n, \psi_n, \epsilon_n)\} \subset \mathcal{A}(\mathscr{L}) \times \Gamma(W_{\mathfrak{s}_{3/2}} \otimes \mathscr{L}) \times (0, \infty)$ be a sequence of solutions of the blown-up RS-SW equations (cf. \eqref{eq:first compactness.2}). There exists a constant $c$ depending only on the Riemannian metric $g$ on $Y$, $A_0$ such that if $\limsup \epsilon_n > c$, then after passing through a subsequence and up to gauge transformations $(A_n, \psi_n, \epsilon_n)$ converges to a solution $(A,\psi, \epsilon)$ in the $C^\infty$ topology.
    \item Let $\{(A_n, \psi_n, \epsilon_n)\} \subset \mathcal{A}(\mathscr{L}) \times \Gamma(W_{\mathfrak{s}_{3/2}}\otimes \mathscr{L}) \times (0,\infty)$ be a sequence of solutions of \eqref{eq:first compactness.2} where $\{F_{A_n}\}$ is uniformly bounded in $L^6-$norm. If $\limsup \epsilon_n = 0$, then
    \begin{enumerate}
        \item There is a closed nowhere-dense subset $Z \subset Y$, a connection $A \in \mathcal{A}(\mathscr{L}|_{Y\setminus Z})$ and a $3/2-$spinor $\psi \in \Gamma(Y\setminus Z, W_{\mathfrak{s}_{3/2}}\otimes \mathscr{L})$ such that $(A,\psi, 0)$ solves \eqref{eq:first compactness.2}. Furthermore, after to passing through a subsequence, $|\psi_n|$ converges to $|\psi|$ in $C^{0,\alpha}-$topology. Specifically, $Z = |\psi|^{-1}(0)$.
        \item On $Y\setminus Z$, up to gauge transformations and after passing through a subsequence, $A_n$ converges weakly to $A$ in $L^{2}_{1,loc}$ and $\psi_n$ converges weakly to $\psi$ in $L^2_{2,loc}$.
    \end{enumerate}
\end{enumerate}
\end{Th}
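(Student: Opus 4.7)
The plan is to treat the two regimes separately. Throughout, the backbone of both arguments is a Weitzenb\"ock-type identity for the twisted Rarita-Schwinger operator, schematically
\[
Q_A^2 = \nabla_A^*\nabla_A + \mathcal{R}_g + c_1\, F_A\cdot,
\]
where $\mathcal{R}_g$ is a fiberwise symmetric endomorphism depending on the Riemannian curvature of $Y$ and $c_1\,F_A\cdot$ denotes a twisted Clifford-type action of the curvature on $3/2$-spinors.

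For part (1), pairing the identity with $\psi_n$, integrating, and using $Q_{A_n}\psi_n = 0$, the normalization $\|\psi_n\|_{L^4}=1$, the curvature equation $\epsilon_n^2 F_{A_n} = \mu(\psi_n)$, and a pointwise identity $\langle\mu(\psi)\cdot\psi,\psi\rangle = \kappa |\psi|^4$ for an explicit $\kappa$, one obtains an energy identity of the form
\[
\int_Y |\nabla_{A_n}\psi_n|^2 + \frac{c_1\kappa}{\epsilon_n^{2}}\int_Y |\psi_n|^4 + \int_Y \langle \mathcal{R}_g\psi_n,\psi_n\rangle + (\text{terms in }F_{A_0}) = 0.
\]
Combined with $\|\psi_n\|_{L^4}=1$ and H\"older's inequality, this yields a uniform $L^2_1$ bound on $\psi_n$ precisely when $\epsilon_n$ exceeds a critical threshold $c = c(g,A_0)$ balancing the coupling $c_1\kappa/\epsilon_n^2$ against the metric curvature and $F_{A_0}$ contributions. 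The curvature equation also gives $\|F_{A_n}\|_{L^2}\lesssim \epsilon_n^{-2}$; placing $A_n$ in Coulomb gauge relative to $A_0$ promotes this to a uniform $L^2_1$ bound on $A_n-A_0$, after which a standard elliptic bootstrap on the coupled system $(Q_{A_n}\psi_n=0,\,d^*(A_n-A_0)=0,\,\epsilon_n^2F_{A_n}=\mu(\psi_n))$ delivers $C^\infty$ subsequential convergence, modulo gauge, to a solution $(A,\psi,\epsilon)$ with $\epsilon\geq c$.

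For part (2), the curvature equation degenerates as $\epsilon_n\to 0$, so I would shift attention to the gauge-invariant scalar $|\psi_n|^2$. Combining the Weitzenb\"ock identity with Kato's inequality produces a pointwise subsolution inequality schematically of the form
\[
\tfrac12\Delta|\psi_n|^2 + |\nabla_{A_n}\psi_n|^2 \leq C_g|\psi_n|^2 + C|F_{A_n}|\,|\psi_n|^2,
\]
and a Moser iteration, exploiting the hypothesized uniform $L^6$ bound on $F_{A_n}$ together with the Sobolev embedding $L^2_1 \hookrightarrow L^6$ on the three-manifold $Y$, yields a uniform $L^\infty$ bound on $|\psi_n|$. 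Classical De Giorgi--Nash--Moser regularity then furnishes a uniform $C^{0,\alpha}$ bound, and by Arzel\`{a}--Ascoli a subsequence of $|\psi_n|$ converges in $C^{0,\alpha}$ to a nonnegative limit $|\psi|$. The zero set $Z := |\psi|^{-1}(0)$ is closed, and is nowhere-dense since $\int_Y|\psi|^4 = 1$ while $Q_A\psi=0$ enjoys unique continuation. On any compact $K\subset Y\setminus Z$, $C^{0,\alpha}$-convergence furnishes a uniform positive lower bound on $|\psi_n|$; combined with the $L^\infty$ bound on $|\psi_n|$ and the $L^6$ bound on $F_{A_n}$, one extracts a local Coulomb gauge on $K$ relative to $A_0$, producing uniform $L^2_{1,\mathrm{loc}}$ bounds on $A_n$ and $L^2_{2,\mathrm{loc}}$ bounds on $\psi_n$, and weak compactness of Sobolev spaces delivers the desired weak limits solving the equation on $Y\setminus Z$.

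The main technical hurdle, I expect, is the $L^\infty$ control of $|\psi_n|$ in part (2). Because $Q_A$ is not of Dirac type, its Weitzenb\"ock formula lacks the clean positivity structure enjoyed in classical Seiberg-Witten theory, and converting the integral Weitzenb\"ock identity into a bona fide pointwise subsolution inequality amenable to Moser iteration requires careful tracking of lower-order curvature and connection terms; verifying that the $L^6$ hypothesis on $F_{A_n}$ is strong enough to close the iteration (rather than only an $L^p$ bound with $p\leq 3/2$, which would be marginal in dimension three) is where the genuine analytic work lies. Once that estimate is secured, the singular-set extraction and the off-$Z$ Coulomb-gauge argument follow the template of the Haydys--Walpuski compactness theorem for multiple-spinor Seiberg-Witten equations, reducing the remainder to a standard gauge-theoretic patching.
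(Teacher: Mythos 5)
Your schematic Weitzenb\"ock identity $Q_A^2 = \nabla_A^*\nabla_A + \mathcal{R}_g + c_1\,F_A\cdot$ omits the Penrose term, and this omission undermines both halves of the argument. The correct formula (Lemma \ref{weitzenbock formula}) reads
\[
Q_A^2\psi + \tfrac{4}{3}P_AP_A^*\psi = \nabla_A^*\nabla_A\psi + \tfrac{s}{4}\psi + \pi(F_A\psi) - \pi(1\otimes Ric)\psi,
\]
and the extra operator $\tfrac{4}{3}P_AP_A^*$ is precisely what the paper identifies as the defining technical feature of the Rarita-Schwinger setting as compared with the classical Seiberg-Witten equations. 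In Part (1), with your truncated formula the energy identity closes for \emph{every} $\epsilon$, because the curvature term $-\epsilon^{-2}\int|\mu(\psi)|^2$ appears with the helpful sign; there is no mechanism in your sketch that actually produces a threshold $c$. The real source of the threshold is the competition
\[
\int_Y|\nabla_A\psi|^2 \;\leq\; \tfrac{4}{3}\int_Y|P_A^*\psi|^2 \;-\;\int_Y\epsilon^{-2}|\mu(\psi)|^2 \;+\;O(1),
\]
where $\int|P_A^*\psi|^2$ must be bounded via Uhlenbeck gauge-fixing by $c_3\epsilon^{-4}\int|\mu(\psi)|^2 + c_4$ (Lemma \ref{first compactness lemma 1.5}); the good $\epsilon^{-2}$-term dominates the bad $\epsilon^{-4}$-term only when $\epsilon$ exceeds an explicit constant. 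Without the Penrose term your argument cannot locate the constant $c$ at all.

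In Part (2), you propose a Moser iteration/De Giorgi--Nash--Moser route for the $L^\infty$ and $C^{0,\alpha}$ bounds, but the subsolution inequality you write is again missing the Penrose contribution, and the missing term $-\tfrac{4}{3}\langle P_AP_A^*\psi,\psi\rangle$ is not pointwise signed and is not a lower-order potential that one can simply absorb into a scalar divergence-form equation. This is exactly where Moser iteration fails to close here, and it is why the paper (following Haydys--Walpuski and Taubes, not a DGNM scheme) builds a gauge-theoretic \emph{frequency function} $N_x(r)$, into whose definition $-\tfrac{4}{3}\langle P_AP_A^*\psi,\psi\rangle$ is explicitly inserted. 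The argument then proceeds through almost-monotonicity of $N$, control of the critical radius $\rho(x)$ by $|\psi|(x)$ (Propositions \ref{frequency function proposition 3.15}--\ref{frequency function proposition 3.17}), and the second-derivative estimate of Proposition \ref{curvature controls proposition 2.7}, culminating in the H\"older bound via a Morrey-type estimate on $B(x,\rho(x)/2)$. This chain is absent from your proposal, which treats the problem as a local regularity statement for a scalar PDE rather than as a gauge-theoretic unique-continuation-type problem. The $L^6$ hypothesis on $F_{A_n}$ is then used not to close a Moser iteration but to get the $L^6$-bounds on $P_AP_A^*\psi$ (Lemmas \ref{frequency function lemma 3.19}--\ref{frequency function lemma 3.21}) that feed into the critical-radius argument.
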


As we vary the Riemannian metric on the three-manifold, a version of Theorem \ref{first main theorem} is still true. Specifically, we have

\begin{Th}[cf. Theorem \ref{varying compactness theorem}]\label{second main theorem} 
    Denote $\mathfrak{M}$ by the space of all Riemannian metrics on $Y$. Let $\{g_n\}$ be a sequence of metrics on $Y$ converging to $g \in \mathfrak{M}$. Let $\{(A_n, \psi_n, \epsilon_n)\}$ be a sequence of solutions of the $g_n-$\eqref{eq:first compactness.2} equations (i.e, the blown-up $g_n-$RS-SW equations) such that the $L^6-$norms of $F_{A_n}$ are all uniformly bounded.
    \begin{enumerate}
        \item If $\limsup \epsilon_n > c >0$, then after passing through a subsequence and up to gauge transformations $\{(g_n, (A_n, \psi_n, \epsilon_n))\}$ converges to $(g, (A, \psi, \epsilon))$ in the $C^\infty$ topology. 
        \item If $\limsup \epsilon_n = 0$, then there exists a closed nowhere-dense subset $Z \subset Y$, a connection $A$ on $Y\setminus Z$, a $g-3/2-$spinor $\psi$ on $Y\setminus Z$ such that
            \begin{enumerate}
                \item $Q_A \psi = 0$ and $\mu(\psi) = 0$
                \item $\displaystyle \int_{Y\setminus Z} |\psi|^4 = 1$ and $\displaystyle \int_{Y\setminus Z} |\nabla_A \psi|^2 < \infty$
                \item $|\psi|$ extends to a $C^{0,\alpha}-$H\"older continuous function on $Y$ where $Z= |\psi|^{-1}(0)$;
            \end{enumerate}
            furthermore, $A_n$ converges weakly to $A$ in $L^2_{1,loc}$ and $\psi_n$ converges weakly to $\psi$ in $L^2_{2,loc}$ on $Y\setminus Z$.
    \end{enumerate}
\end{Th}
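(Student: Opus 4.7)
The plan is to reduce Theorem \ref{second main theorem} to the two fixed-metric compactness statements (Theorem \ref{first compactness theorem} and Theorem \ref{second compactness theorem}, assembled in Theorem \ref{first main theorem}) by transferring all $g_n$-dependent data onto the background geometry of the fixed metric $g$, and then verifying that every estimate appearing in the fixed-metric proofs has constants depending continuously on the metric. To effect the transfer, for each $n$ I would introduce the unique $g$-symmetric positive-definite endomorphism $h_n \in \Gamma(\operatorname{End}(TY))$ satisfying $g_n(u,v) = g(h_n u, h_n v)$; since $g_n \to g$ in $C^\infty$, we have $h_n \to \operatorname{Id}$ smoothly. The endomorphism $h_n$ sends $g_n$-orthonormal frames to $g$-orthonormal frames, hence induces an isomorphism of principal $SO(3)$-bundles that lifts uniquely (for the fixed topological spin structure) to the spin frame bundles. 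Passing to associated bundles yields canonical vector bundle isomorphisms $\Phi_n : W^{g_n}_{\mathfrak{s}_{3/2}} \otimes \mathscr{L} \to W^g_{\mathfrak{s}_{3/2}} \otimes \mathscr{L}$ with $\Phi_n \to \operatorname{Id}$ in $C^\infty$.

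Setting $\tilde \psi_n := \Phi_n \psi_n$ and $\tilde Q^{g_n}_{A_n} := \Phi_n \circ Q^{g_n}_{A_n} \circ \Phi_n^{-1}$, the $g_n$-blown-up RS-SW system \eqref{eq:SWRSblowup} transforms into
\begin{equation*}
\begin{cases}
\|\tilde \psi_n\|_{L^4_g} = 1 + o(1),\\
\tilde Q^{g_n}_{A_n} \tilde \psi_n = 0,\\
\epsilon_n^2 F_{A_n} = \mu^g(\tilde \psi_n) + R_n,
\end{cases}
\end{equation*}
where $\tilde Q^{g_n}_{A_n}$ converges to the $g$-Rarita-Schwinger operator in smooth coefficient norm and $\|R_n\|_{C^k_g} \to 0$ for every $k$. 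The analytic ingredients behind Theorems \ref{first compactness theorem} and \ref{second compactness theorem} — the Weitzenb\"ock/Bochner formula for the twisted Rarita-Schwinger operator, Sobolev and Gaffney-type inequalities, elliptic bootstrap for the Coulomb slice of $\mathcal{A}(\mathscr{L})$, and the $L^6$-bound on $F_{A_n}$ — all carry constants depending continuously on the $C^k$-norm of the metric. Because $g_n \to g$ smoothly, these constants can be chosen uniform in $n$, so the fixed-metric arguments apply to $(A_n, \tilde \psi_n, \epsilon_n)$ with the perturbed operator $\tilde Q^{g_n}_{A_n}$ in place of $Q^g_{A_n}$ and the lower-order term $R_n$ absorbed into errors. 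In case (1), when $\limsup \epsilon_n > c$, this yields a $C^\infty$ subsequential limit $(A,\psi,\epsilon)$ on $(Y,g)$; in case (2), when $\limsup \epsilon_n = 0$, the blow-up analysis produces the limiting triple $(A,\psi,0)$ on $Y\setminus Z$ together with a closed nowhere-dense $Z = |\psi|^{-1}(0)$ across which $|\psi|$ extends H\"older continuously.

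The main obstacle is uniform control of the blow-up analysis in case (2) under varying metrics. The identification of $Z$ as nowhere dense, the $C^{0,\alpha}$ convergence $|\tilde \psi_n| \to |\psi|$, and the integrability $\int_{Y\setminus Z} |\nabla_A \psi|^2 < \infty$ rely on a subharmonic differential inequality for $|\tilde \psi_n|^2$ extracted from Weitzenb\"ock, coupled with Morrey- and monotonicity-type estimates whose implicit constants depend on $C^2$-bounds of the metric and on uniform $L^6$-control of $F_{A_n}$. Verifying that these constants remain uniformly bounded as $g_n \to g$ in $C^\infty$ — and, crucially, that the lower-order perturbation $R_n$ and the discrepancy $\tilde Q^{g_n}_{A_n} - Q^g_{A_n}$ do not distort the limiting nodal structure of $\psi$ or shift the location of $Z$ in an uncontrolled way — is the technically most delicate point of the argument.
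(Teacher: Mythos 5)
The paper does not actually prove Theorem \ref{varying compactness theorem}; immediately after Theorem \ref{second compactness theorem} the authors only remark that ``The proof above can be adapted to this more general setting,'' state the theorem, and move on. Your proposal is therefore a genuine fleshing-out of what the paper leaves implicit, and the approach you take is the natural one for the reason you yourself identify: since the bundles $W^{g_n}_{\mathfrak{s}_{3/2}}$ (being kernels of the $g_n$-Clifford multiplication) are literally different sub-bundles of $T^*Y\otimes W_{\mathfrak{s}_{1/2}}\otimes\mathscr{L}$ for different $n$, one needs the metric-induced isomorphisms $\Phi_n$ just to compare the $\psi_n$ with one another, let alone pass to a limit. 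Transferring $Q^{g_n}_{A_n}$, the projection $\pi$, and the Penrose operator $P_{A_n}$ across $\Phi_n$ and observing that the resulting error terms vanish in $C^\infty$ as $g_n\to g$ is exactly what ``adapt the proof'' must mean, and then the fixed-metric argument is run with constants uniform over a $C^\infty$-neighborhood of $g$. Your observation that the $L^4$-normalization becomes $\|\tilde\psi_n\|_{L^4_g}=1+o(1)$ after transfer is an important bookkeeping point, and your flagging of the frequency-function stability (Propositions \ref{frequency function proposition 3.14}--\ref{frequency function proposition 3.17} and the results of Sections 4--5) as the delicate part of the argument is the right diagnosis: those are precisely the estimates whose implicit constants depend on $C^2$-bounds of $g$, $\operatorname{Ric}_g$, and the injectivity radius, all of which are controlled under smooth convergence of the metric.

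One gap you should address to tighten the argument: in Case (2), the weak $L^2_{2,\mathrm{loc}}$ convergence of $\tilde\psi_n$ alone does not guarantee $\int_{Y\setminus Z}|\psi|^4 = 1$; you need to combine the Rellich embedding $L^2_2\hookrightarrow L^4_{\mathrm{loc}}$ on compacta of $Y\setminus Z$ with the $C^{0,\alpha}$-convergence of the densities $|\tilde\psi_n|\to|\psi|$ on all of $Y$ to control the $L^4$-mass leaking into a shrinking neighborhood of $Z$. This is implicit in the paper's Theorem \ref{second compactness theorem} as well (via the $C^{0,\alpha}$ control and the fact that $Z$ is nowhere dense with $|\psi|\to 0$ there), but your proposal should make the argument explicit since the normalization is part of the conclusion (2b) of the theorem. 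Other than that bookkeeping point, your proposal captures the same reduction-to-fixed-metric strategy the paper indicates.
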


\begin{Rem}
    One should compare our main theorem (cf. Theorem \ref{first main theorem}) with the compactness results of Haydys-Walpuski in \cite{haydys2015compactness}, and of Taubes in \cite{Taubes:2016voz}. However, there are two notable differences in our statement. Firstly, we have to blow up our equations by rescaling the $3/2-$spinor unknown by the reciprocal of its global $L^4-$norm. Secondly, we have to impose a universal bound on the curvatures of the connections. These two differences will addressed in a later subsection. It would be interesting to know if one has an analogous result without assuming the universal bound on the curvatures.
\end{Rem}

Follow \cite{haydys2015compactness}, we call the potential limiting solutions that appear in the moduli space of the RS-SW equations $3/2-$\textit{Fueter sections} (cf. Definition \ref{3/2 fueter section}). It turns out that the existence of $3/2-$Fueter sections is an obstruction for the moduli space to be compact.

\begin{Cor}[cf. Corollary \ref{obstruction for compactness}]\label{Corollary obstruction}
    Let $g \in \mathfrak{M}$. If there is no $g-3/2-$Fueter section, then the charged-$k$ moduli space (cf. Definition \ref{charged moduli space}) is always compact on some neighborhood of $g$ in $\mathfrak{M}$.
\end{Cor}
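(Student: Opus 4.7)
The plan is to argue by contradiction, leveraging Theorem \ref{second main theorem} to produce a $g$-$3/2$-Fueter section whenever compactness fails near $g$. Suppose the charged-$k$ moduli space is \emph{not} compact on any neighborhood of $g$ in $\mathfrak{M}$. Then I can extract a sequence of metrics $g_n \to g$ together with representatives $(A_n, \psi_n, \epsilon_n)$ of gauge classes in the charged-$k$ moduli space for $g_n$ such that no subsequence converges, modulo gauge, to a gauge class of a solution for $g$.

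The first step is to secure the hypotheses of Theorem \ref{second main theorem}. Fixing the topological charge $k$ pins down the first Chern class of $\mathscr{L}$ (and hence, together with the reference connection $A_0$, controls the harmonic part of $F_{A_n}$); combined with the $L^4$-normalization $\|\psi_n\|_{L^4} = 1$ and the curvature equation $\epsilon_n^2 F_{A_n} = \mu(\psi_n)$ built into \eqref{eq:first compactness.2}, this should yield a uniform $L^6$-bound on $\{F_{A_n}\}$ on the charged-$k$ stratum. (If the definition of the charged-$k$ moduli space already incorporates such a bound, this step is vacuous.)

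With this in place I apply Theorem \ref{second main theorem} to $\{(g_n, (A_n, \psi_n, \epsilon_n))\}$ and split on $\limsup \epsilon_n$. If $\limsup \epsilon_n > c$, part (1) gives $C^\infty$-convergence, after a subsequence and gauge transformations, to a genuine solution $(g,(A,\psi,\epsilon))$ of the $g$-RS-SW equations, directly contradicting the non-compactness assumption. Otherwise $\limsup \epsilon_n = 0$, and part (2) produces a closed nowhere-dense $Z \subset Y$, a connection $A$ on $Y \setminus Z$, and a $g$-$3/2$-spinor $\psi$ on $Y \setminus Z$ satisfying $Q_A\psi = 0$, $\mu(\psi)=0$, $\int_{Y\setminus Z}|\psi|^4 = 1$, $\int_{Y\setminus Z}|\nabla_A\psi|^2 < \infty$, and with $|\psi|$ extending to a $C^{0,\alpha}$-H\"older continuous function on $Y$ whose zero set is $Z$.

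The final step is to match this limiting data against Definition \ref{3/2 fueter section}: since all of the listed properties (the Fueter equation $Q_A\psi=0$, vanishing moment map, finite energy, unit $L^4$-norm, and regularity of $|\psi|$) are precisely the clauses of being a $g$-$3/2$-Fueter section, the limit \emph{is} a $g$-$3/2$-Fueter section, contradicting the hypothesis that none exists. The main obstacle in this argument is the first step: verifying that the charged-$k$ condition, as defined, is strong enough to furnish the uniform $L^6$-bound on $\{F_{A_n}\}$ required to invoke Theorem \ref{second main theorem}; once that is granted, the remaining bookkeeping is a direct translation between the conclusion of the theorem and the definition of a Fueter section.
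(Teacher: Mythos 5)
Your overall strategy---argue by contradiction, degenerate along a sequence of metrics, and apply Theorem \ref{second main theorem} to split on $\limsup \epsilon_n$---is in the same spirit as the paper's proof, and your observation that the uniform $L^6$-bound on $\{F_{A_n}\}$ is automatic from Definition \ref{charged moduli space} (it is built into what ``charge $k$'' means) is correct. The trouble is the extraction step. From the negation ``for every neighborhood $U$ of $g$ there is $g' \in U$ with $\mathcal{M}_k(g')$ non-compact,'' you assert that you can pick representatives $(A_n,\psi_n) \in \mathcal{M}_k(g_n)$ so that the diagonal sequence ``has no subsequence converging to a gauge class of a solution for $g$.'' But non-compactness of each $\mathcal{M}_k(g_n)$ only supplies a bad sequence \emph{within} $\mathcal{M}_k(g_n)$; choosing one point per $n$ gives no control on the diagonal, and there is no stated criterion that makes the chosen sequence divergent. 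Moreover, if Theorem \ref{second main theorem}(1) later yields convergence, that contradicts your \emph{choice} of sequence, not the non-compactness hypothesis directly---so the argument, as written, is circular unless the extraction is made concrete.

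The paper repairs exactly this gap by introducing the quantity $\nu := \sup\{\|\psi\|_{L^4} : [A,\psi] \in \mathcal{M}_k(g)\}$, which is finite precisely because there is no $g$-$3/2$-Fueter section (apply Theorem \ref{varying compactness theorem} with the constant metric sequence). It then supposes for contradiction that for each neighborhood there is $g'$ with $\sup_{\mathcal{M}_k(g')}\|\psi\|_{L^4} \geq \nu+1$, and picks $(A_n,\psi_n)\in\mathcal{M}_k(g_n)$ satisfying $\|\psi_n\|_{L^4}\geq \nu+1$. This is an actual construction: it forces $\epsilon_n \leq 1/(\nu+1)$, and scenario (1) of Theorem \ref{varying compactness theorem} now produces a limit $(A,\psi)\in\mathcal{M}_k(g)$ with $\|\psi\|_{L^4}\geq \nu+1>\nu$, contradicting the definition of $\nu$, while scenario (2) produces a $g$-$3/2$-Fueter section, contradicting the hypothesis. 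The upshot is a \emph{uniform} $L^4$-bound $<\nu+1$ on some neighborhood of $g$, which the paper then feeds into Theorem \ref{first compactness theorem} to conclude that $\mathcal{M}_k(g')$ is $C^\infty$-compact for every $g'$ in that neighborhood. Your sketch stops at ``contradiction,'' and so omits both the quantitative choice of representatives and this final step deducing compactness from the bound.
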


\begin{Rem}
    In the multiple-monopole Seiberg-Witten equations setting, the limiting objects of the associated moduli space correspond to solutions of a certain non-linear Dirac operator defined on a hyper-K\"ahler fiber bundle over the three-manifold \cite{MR2980921}. We suspect that an analogous correspondence also holds for our $3/2-$Fueter sections. In particular, we believe that the $3/2-$Fueter sections should appear as solutions to a certain non-linear Rarita-Schwinger operator defined on some hyper-K\"ahler fiber bundle over $Y$. This circle of ideas will be addressed elsewhere in our future work.
\end{Rem}

\begin{Rem}
    The existence of Fueter sections (or $\ZN/2-$harmonic spinors) have been shown by Doan-Walpuski in \cite{doan2021existence}. To the best of our knowledge, there is no such analogous statement for the $3/2-$Fueter sections.
\end{Rem}

Lastly, just as the study of Rarita-Schwinger fields is intimately tied with the deformation problem of Einstein metrics, the non-existence of $g-3/2-$Fueter sections gives us some interesting information about the Einstein geometry of the underlying three-manifold. Recall that to define the RS-SW equations, besides picking a Riemannian metric $g$, we also have to pick a $\text{spin}^c$ structure.

\begin{Cor}[cf. Corollary \ref{geometry of y}]
    Let $\mathfrak{s}_{1/2}$ be a $\text{spin}^c$ structure on $Y$ such that $c_1(\mathfrak{s}_{1/2})$ is torsion in $H^2(Y;\ZN)$. Let $g$ be a Riemannian metric on $Y$. If there is no $g-3/2-$Fueter section associated to the RS-SW equations corresponding to $g$ and $\mathfrak{s}_{1/2}$, then
    \begin{enumerate}
        \item The charged moduli space $\mathcal{M}_k(g)$ is compact for some charge $k$.
        \item Either $g$ is not an Einstein metric with non-negative scalar curvature or $(A,\psi)$ is never a solution where $A$ is a flat $U(1)-$connection on $det\,(\mathfrak{s}_{1/2})$.
    \end{enumerate}
\end{Cor}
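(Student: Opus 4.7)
The plan is to treat the two conclusions separately. Conclusion $(1)$ is an immediate application of Corollary \ref{obstruction for compactness}: the torsion condition on $c_1(\mathfrak{s}_{1/2})$ guarantees that $\det(\mathfrak{s}_{1/2})$ carries a flat reference connection, so the charged moduli space $\mathcal{M}_k(g)$ is defined, and the absence of a $g$-$3/2$-Fueter section at $g$ forces its compactness on some neighborhood of $g \in \mathfrak{M}$, in particular at $g$ itself, for the appropriate choice of charge $k$.

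For conclusion $(2)$, I would argue by contradiction. Assume that $g$ is Einstein with $\mathrm{Scal}(g) \geq 0$ and that there exists a solution $(A,\psi)$ of the RS-SW equations \eqref{eq:SWRS} with $A$ a flat $U(1)$-connection on $\det(\mathfrak{s}_{1/2})$ and $\psi \not\equiv 0$. Flatness gives $F_A = 0$, and the curvature equation then forces $\mu(\psi) = 0$. The engine of the argument is a Weitzenb\"ock-type identity for the twisted Rarita-Schwinger operator of the form
\[
Q_A^{*} Q_A \;=\; \nabla_A^* \nabla_A + \mathcal{R}_A,
\]
where $\mathcal{R}_A$ is a pointwise self-adjoint endomorphism of $W_{\mathfrak{s}_{3/2}} \otimes \mathscr{L}$ built from the Riemann curvature of $g$ and the curvature of $A$. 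In dimension three the Weyl tensor vanishes, and flatness of $A$ eliminates the twisting contribution, so on an Einstein $3$-manifold $\mathcal{R}_A$ collapses to a positive scalar multiple of $\mathrm{Scal}(g) \cdot \mathrm{id}$, as can be extracted from the computations in \cite{homma2019kernel}.

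Pairing $Q_A \psi = 0$ against $\psi$, integrating by parts, and applying the Weitzenb\"ock identity yields an equality of the form
\[
0 \;=\; \int_Y |\nabla_A \psi|^2 \, dV \;+\; c \int_Y \mathrm{Scal}(g)\,|\psi|^2 \, dV
\]
for some constant $c > 0$. Since $\mathrm{Scal}(g) \geq 0$ both integrands are non-negative, forcing $\nabla_A \psi \equiv 0$. A nonzero parallel section $\psi$ with $Q_A\psi = 0$ and $\mu(\psi) = 0$ is, after rescaling so that $\|\psi\|_{L^4}=1$, precisely a $g$-$3/2$-Fueter section (with empty vanishing locus $Z = \emptyset$), contradicting the standing hypothesis. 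The case $\psi \equiv 0$ is the reducible pair $(A,0)$, which is excluded from the notion of solution here.

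The main technical obstacle I anticipate is pinning down the precise shape of $\mathcal{R}_A$. Because $Q$ is not of Dirac type but is defined by projecting the twisted Dirac operator $D^T$ on $W_{\mathfrak{s}_{1/2}} \otimes TY$ onto the kernel of Clifford multiplication, the Weitzenb\"ock formula for $Q_A$ has to be assembled from the standard Lichnerowicz formula for $D_A^T$ together with the Penrose operator contributions $P$, $P^*$ appearing in the block decomposition of $D^T$. Keeping the signs and the interaction with the projection under control---in particular verifying that the Einstein assumption collapses $\mathcal{R}_A$ to a non-negative multiple of $\mathrm{Scal}(g)$ rather than an indefinite combination of Ricci and scalar curvature---is the delicate step. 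Once this identity is correctly assembled, the vanishing argument is a standard Lichnerowicz-type application.
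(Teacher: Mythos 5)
Your Part (1) argument agrees with the paper, which invokes Corollary~\ref{obstruction for compactness} for that claim and spends its effort only on Part (2).

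For Part (2) there is a genuine gap, and it lies exactly in the place you flagged as ``the delicate step.'' The Weitzenb\"ock identity for the Rarita-Schwinger operator is \emph{not} $Q_A^*Q_A = \nabla_A^*\nabla_A + \mathcal{R}_A$ with $\mathcal{R}_A$ a nonnegative multiple of $\mathrm{Scal}(g)$. The formula proved in Lemma~\ref{RS clifford lemma 2.12} (specialized to an Einstein metric in Corollary~\ref{RS clifford lemma 2.13}) reads
\[
Q_A^2 + \tfrac{4}{3} P_A P_A^* \;=\; \nabla_A^*\nabla_A \;-\; \tfrac{s}{12} \;+\; \pi(\gamma(F_A)),
\]
so two things go wrong with the direct Lichnerowicz pairing: the curvature endomorphism on the full $3/2$-spinor bundle carries the \emph{negative} scalar-curvature contribution $-s/12$ (this is the characteristic difference between $Q$ and a Dirac operator, coming from the Ricci term $-\pi(1\otimes Ric)$ with $Ric = s/3$), and there is an extra second-order term $\tfrac43 P_A P_A^*$ which you discard. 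Pairing $Q_A\psi = 0$ against $\psi$ with $F_A = 0$ gives
\[
0 \;=\; \int_Y |\nabla_A\psi|^2 \;-\; \tfrac{s}{12}|\psi|^2 \;-\; \tfrac43 |P_A^*\psi|^2,
\]
which is an identity with all three terms a priori of the same sign and yields no vanishing conclusion. The paper's Proposition~\ref{RSSW proposition 3.1} circumvents this by \emph{not} applying the Weitzenb\"ock formula on the full bundle: it uses Corollary~\ref{RS clifford lemma 2.13}(1) to show that for flat $A$, $Q_A$ preserves the orthogonal decomposition $\ker P_A^* \oplus \mathrm{im}\, P_A$, and that on $\mathrm{im}\, P_A$ one has $Q_A^2 = \tfrac19(\nabla_A^*\nabla_A + \tfrac{s}{4})$ via the ordinary Lichnerowicz formula for $D_A^2$. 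It is \emph{only} on the $\mathrm{im}\, P_A$ summand that the scalar-curvature term is favorable, so the vanishing argument applies only to $\psi_2 := \mathrm{proj}_{\mathrm{im}\, P_A}\psi$, concluding $\psi_2 = 0$, i.e.\ $P_A^*\psi = 0$. This is also weaker than what you claim: the paper never shows $\nabla_A\psi = 0$, only that $\psi$ is divergence-free (a Rarita-Schwinger field); combined with $\mu(\psi) = 0$ (forced by $F_A = 0$) and Proposition~\ref{RSSW proposition 2.16}, this already produces the $3/2$-Fueter section with $Z = \emptyset$. To repair your proof you would need to incorporate the $\ker P_A^* \oplus \mathrm{im}\, P_A$ splitting and localize the Weitzenb\"ock estimate to $\mathrm{im}\, P_A$; the global Lichnerowicz pairing on $W_{\mathfrak{s}_{3/2}}$ cannot work because of the sign.
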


\subsection{Outline of the argument}

As a convention, all geodesic balls we consider throughout this paper have radius $r \in (0,\ r_0]$, where $r_0$ is the injective radius of $Y$ that is taken to be much lesser than 1.
Sobolev completions of the space of connections and spinors play an essential role in the analysis of various mathematical gauge-theoretic elliptic PDEs, so they will be of importance in our setup. Firstly, we describe how these spaces are defined in the setting of the RS-SW equations. We use the notation $L^p_k$ to describe the space of $L^p-$integrable objects with their derivatives up to order $k$.

Fix a referenced $U(1)-$connection $A_0$ on a complex line bundle $\mathscr{L}$ over $Y$. Every other connection $A$ can be written as $A= A_0 + a$, where $a \in \Gamma(Y, T^*Y \otimes i\RN) := i\Omega^1(Y)$. Denote $\mathcal{A}(\mathscr{L})$ by the space of all $U(1)-$connections on $\mathscr{L}$. Then $\mathcal{A}(\mathscr{L})$ can be viewed as an affine space over $i\Omega^1(Y)$. The $L^p_k-$Sobolev completion of $\mathcal{A}(\mathscr{L})$ is taken to be the Sobolev completion of $i\Omega^1(Y)$ with respect to the fixed referenced connection $A_0$. In particular, $\norm{A}^p_{L^p_k}$ is taken to by $ \sum_{j=0}^{k}\norm{\nabla^{\otimes j}_{A_0}a}^p_{L^p}$. The topology on $\mathcal{A}(\mathscr{L})$ induced by the $L^p_k-$norm does not change as one changes the referenced connection.

The $3/2-$spinors are sections of the bundle $T^*Y \otimes W_{\mathfrak{s}_{1/2}}\otimes \mathscr{L}$. A covariant derivative for the $3/2-$spinors is constructed from the Levi-Civita connection and a covariant derivative $\nabla_A$, where $A \in \mathcal{A}(\mathscr{L})$. We consider the $L^p_k-$Sobolev completion of the space of $3/2-$spinors by using $\nabla_{A_0}$. That is, $\norm{\psi}^{p}_{L^p_k} := \sum_{j=1}^{k} \norm{\nabla^{\otimes j}_{A_0}\psi}^{p}_{L^p}$. Once again, changing the fixed referenced connection does change the topology induced by the Sobolev norm on $\Gamma(T^*Y \otimes W_{\mathfrak{s}_{1/2}}\otimes \mathscr{L})$.

With the notations explained, we give an outline for the proof of Theorem \ref{first main theorem}. 

Following \cite[Chapter 6]{kronheimer_mrowka_2007}, and similar to \cite{haydys2015compactness}, we study a blow-up version of the RS-SW equations. The first part of the theorem (cf. Theorem \ref{first compactness theorem}) follows from a combination of the Weitzenb\"ock formula of the Rarita-Schwinger operator (cf. Lemma \ref{RS clifford lemma 2.12} or Lemma \ref{weitzenbock formula}) and Green's integration by parts formula (cf. Lemma \ref{first compactness lemma 1.6}). This is a standard technique that is also used to prove similar compactness results for the multiple-spinor Seiberg-Witten equations \cite{haydys2015compactness} and the equations of flat $PSL(2,\CN)-$connections \cite{taubes2012psl}. However, there is one notable technical difference that we now describe. Unlike the Dirac operator, the square of the Rarita-Schwinger operator contains an additional second-order term contributed by the \textit{Penrose operator} $P_A$ (a more detailed definition is given in Subsection 2.1). As a result, in our version of Green's integration by parts formula, there is an appearance of $\norm{P^*_A \psi}_{L^2}$ which we have to control. Essentially, we can control the divergence of $\psi$ by the curvature $F_A$ (cf. Lemma \ref{first compactness lemma 1.5}) via Uhlenbeck's gauge fixing lemma (cf. Lemma \ref{first compactness lemma 1.4}) at the expense of $\psi$ having unit $L^4-$norm. In short, the main reason to use the $L^4-$norm has to do with the requirement for higher regularity conditions due to the appearance of the Penrose operator in the Weitzenb\"ock formula. Thus in our set-up, we blow up the equations differently compared to other variants of generalized Seiberg-Witten equations \cite{haydys2015compactness} by re-scaling the $3/2-$spinor by the reciprocal of its global $L^4-$norm. Since the $L^4-$norm is continuous with respect to the topology of the configuration space, this is enough for the blow-up construction.  

The second part of the theorem (cf. Theorem \ref{second compactness theorem}) is more involved. We prove that, for a solution $(A,\psi,\epsilon)$ of the blown-up RS-SW equations \eqref{eq:first compactness.2}, we can uniformly control the $L^2_{2,A}-$norm of $\psi$ on a geodesic ball $B(x,r)$ on $Y$ as long as $r$ is less than the \textit{critical radius} (cf. Definition \ref{first compactness defintion 2.8}) $\rho(x)$ at $x$. To control $\rho$, we use a \textit{frequency function} $N(r)$, which measures how much $\psi$ vanishes near $x$. Roughly speaking, if the vanishing order of $\psi$ near $x$ is not too large, then we always have a lower bound for the critical radius. Hence by Uhlenbeck's local slice theorem (cf. Lemma \ref{local slice theorem}), connections can be put in appropriate gauge-slice so that we can establish convergence away from the vanishing set of the $3/2-$spinors.

The frequency function is an important tool for studying the nodal set of eigenfunctions and the growth rate of solutions of elliptic PDEs. It was first developed by Almgren \cite{MR0574247} and Agmon \cite{MR0252808}. The gauge-theoretic version of the frequency function was first considered by Taubes \cite{taubes2012psl} and then Haydys-Walpuski \cite{haydys2015compactness}. In spirit, we follow closely the approach of \cite{taubes2012psl}, \cite{taubes2014zero}, \cite{Taubes:2016voz}, \cite{haydys2015compactness} in our setup with a technical difference. For us to obtain a uniform control of the $L^2_{2,A}-$norm of the rescaled $3/2-$spinor solution $\psi$ on $B(x,r)$ and a control the critical radius $\rho(x)$, we must establish some local estimates in terms of $\epsilon$ for $\psi$. Just as in the case of the first part of the main theorem, to achieve this, we have to rely on the control of divergence of $\psi$ by the curvature $F_A$. Therefore, it seems natural for us to impose a universal for bound for the curvatures $F_A$ and study the behavior of such a sequence of solutions.

\subsection{Organization of the paper}

In this section, we briefly survey the current literature about the Rarita-Schwinger operator. A motivation to why we study the RS-SW equations as a generalization of the classical Seiberg-Witten equations is also given. Although the main analytical problem of the RS-SW equations is similar to other variants of generalized Seiberg-Witten equations, some of the stark differences are addressed. The main theorem (cf. Theorem \ref{first main theorem}) regarding the compactness phenomenon of the moduli space of the RS-SW equations is stated. Lastly, we give a brief outline of the strategy employed in proving the main theorem and discuss the motivation of the approach we take.

In Section 2, we present some preliminary material about the construction of Rarita-Schwinger operators. A general definition of the Rarita-Schwinger operator on a Clifford module is given in Subsection 2.1 (cf. Definition \ref{the equations defintion 2.1}). In Subsection 2.2, we construct the Rarita-Schwinger operator associated with a $\text{spin}^c$ structure on a three or four-dimensional manifold. Various Weitzenb\"ock-type formulas associated with the Rarita-Schwinger operators will be derived in Section 2 (cf. Lemma \ref{RS clifford lemma 2.8}, Lemma \ref{RS clifford lemma 2.12}). These formulas play an important role in many computations and estimates which are employed throughout the paper. Although the results of the paper mainly concern the three-dimensional setting, we think that it is also appropriate to include the four-dimensional construction so that we have the terminologies to discuss our future works in the discussion section. Having established the background, in Subsection 2.3, we define the three-dimensional RS-SW equations (cf. \eqref{eq:RSSW.1}, \eqref{eq:first compactness.1}).

The proof of the main theorem (cf. Theorem \ref{first main theorem}) is divided into two parts. In Section 3, we give proof of the first part of our main theorem (cf. Theorem \ref{first compactness theorem}). The rest of the paper, namely Section 4--6, will be devoted to the proof of the second part of the main theorem (cf. Theorem \ref{second compactness theorem}). The technical tool in the analysis of PDEs we use to prove the second part of the main theorem is the \textit{frequency function} method. To apply this method effectively to our gauge-theoretic equations, we must exhibit some sort of control on the $3/2-$spinors by the curvatures. The main content of Section 4 is to establish this control. In section 5, we define our gauge-theoretic frequency function (cf. Definition \ref{frequency function definition 3.1}) associated with the RS-SW equations (cf. \eqref{eq:RSSW.1}, \eqref{eq:first compactness.1}). Various necessary analyses of our frequency function will be established to set up for Section 6 (cf. Proposition \ref{frequency function proposition 3.14}, Corollary \ref{frequency function corollary 3.11}, Corollary \ref{frequency function corollary 3.13}, Proposition \ref{frequency function proposition 3.16}). In Section 6, we give a proof of the second part of our main theorem (cf. Theorem \ref{second compactness theorem}).

Section 7 is reserved for the discussion of some of the conjectural pictures regarding the RS-SW equations. Several of the future works and problems will also be stated in this section.

\section{The equations}

\subsection{Rarita-Schwinger operator on a Clifford module}

Suppose $M$ is a smooth, oriented, closed Riemannian manifold of dimension $n$ and $S$ is a Clifford module over $M$. Let $\nabla$ be a compatible connection on  $S\to M$, i.e., it is compatible with a fixed hermitian metric on $S$ and the Levi-Civita connection $\nabla^{LC}$ of $M$ associated with some fixed Riemannian metric $g$ on $M$ (e.g, see \cite{MR0960889} or \cite{MR1215720}). Together with the Clifford multiplication $\gamma$, we have $D = \gamma \circ \nabla: \Gamma(S) \to \Gamma(S)$, which is called the \textit{Dirac operator} of $S$. When $n$ is even, there is a chirality of $S$ induced by $\gamma$ extended to top complexified volume form $\omega_{\CN}$ of $M$. Then one has the following orthogonal decomposition $S = S^+ \oplus S^-$, where $S^{\pm}$ corresponds to the $\pm 1-$eigenspace of $\gamma(\omega_{\CN})$. With respect to the above decomposition, we have 
\[D = \begin{pmatrix} 0 & D^- \\ D^{+} & 0 \end{pmatrix},\text{i.e,} \quad \quad \quad D^{\pm}: \Gamma(S^{\pm}) \to \Gamma(S^{\mp}).\]

Consider $S\otimes TM$. One can show that $S \otimes TM$ is also a Clifford module over $M$. The Clifford multiplication on such bundle is defined by tensoring $\gamma$ with the identity on $TM$. We are also going to use $\gamma$ to refer to this new Clifford multiplication. A compatible connection on $S \otimes TM$ is given by $\nabla := \nabla \otimes 1 + 1 \otimes \nabla^{LC}$. Then one obtains another Dirac operator $D^T: \Gamma(S\otimes TM) \to \Gamma(S \otimes TM)$. Again, when $n$ is even, $D^T$ exchanges the chirality of $S \otimes TM$,
\[D = \begin{pmatrix} 0 & D^{T-} \\ D^{T+} & 0 \end{pmatrix},\text{i.e,} \quad \quad \quad D^{T\pm }: \Gamma(S^{\pm}\otimes TM) \to \Gamma(S^{\mp}\otimes TM).\]

We want to think of $\gamma$ as follows. It is a linear map $\gamma : TM \otimes S \to S$. Denote $ker\, \gamma:= S_{3/2}$, this is a sub-bundle of $TM\otimes S$. One can show that we have the following orthogonal decomposition $TM \otimes S \cong S_{3/2} \oplus \iota (S)$, where $\iota : S \to S \otimes T^*M \cong S \otimes TM$ is an embedding given by
\[\iota(\phi)(v) = -\frac{1}{n} \gamma(v \otimes \phi) = -\frac{1}{n} \gamma(v) \phi, \quad \quad \phi \in \Gamma(S), v \in \Gamma(TM).\]
Let $\pi$ be the orthogonal projection from $TM \otimes S$ onto $S_{3/2}$. With this orthogonal projection, we have the \textit{Penrose operator} $P = \pi \circ \nabla$, and another first order operator $Q = \pi \circ D^T|_{S_{3/2}}$. In fact, with respect to the orthogonal decomposition $TM \otimes S \cong S_{3/2} \oplus \iota(S)$, one can write $D^T$ in matrix from as following
\begin{equation*}
   \displaystyle D^{T} = \begin{pmatrix} \frac{2-n}{n} \iota D \iota^{-1} & 2\iota P^* \\ \frac{2}{n}P\iota^{-1} & Q \end{pmatrix}.
\end{equation*}

\begin{Def}\label{the equations defintion 2.1}
$Q : \Gamma(S_{3/2}) \to \Gamma(S_{3/2})$ is called the \textit{Rarita-Schwinger operator} of a Clifford module $S$ over $M$.
\end{Def}

\begin{Def}
$\psi \in \Gamma(S_{3/2})$ is called a \textit{Rarita-Schwinger field} if and only if $Q\psi = 0$ and $P^* \psi = 0$. Since $P^* = \nabla^*$, $\psi$ is a Rarita-Schwinger field if and only if $\psi \in ker\, Q$ and divergence-free. Equivalently, one can view Rarita-Schwinger field $\psi$ as a harmonic $S-$valued $1-$form such that $\gamma(\psi) = 0$.
\end{Def}

\begin{Rem}
    One can check that an explicit formula of the projection $\pi$ is given by $\pi = 1 - \iota \circ \gamma$.
\end{Rem}

\begin{Rem}
    Just like $D$, $D^T$, it can be shown that $Q$ is a first-order elliptic operator that is formally self-adjoint, see \cite{MR4395331}. 
\end{Rem}

\begin{Rem}
    When $n$ is even, $Q$ also exchanges the chirality of $S_{3/2}$. We put in the superscript decorations $\pm$ appropriately in the context where we restrict our attention to positive (negative) Rarita-Schwinger fields.
\end{Rem}

\begin{Rem}
    Since $Q$ is a first-order elliptic operator, upon appropriate Sobolev completion of $\Gamma(S_{3/2})$, $Q$ extends to a bounded Fredholm operator.
\end{Rem}

\begin{Rem} 
    Via the canonical isomorphism induced by $g$, we have $\psi \in \Gamma(TM\otimes S)\cong \Gamma(T^*M\otimes S)$. Thus, we can view $\psi \in \Gamma(Hom(TM, S))$. 
\end{Rem}  

Next, we establish some Weitzenb\"ock-type formulas related to the Rarita-Schwinger operator $Q$. Consider the covariant exterior derivative $d^{\nabla}_k : \Lambda^k T^* M \otimes S \to \Lambda^{k+1}T^* M \otimes S$. Let $\pi_k: \Lambda^k T^* M \otimes S \to \Lambda^{k-1} T^* Y \otimes S$ be a map defined (locally) by
$$\pi_k(\psi)(v_1,\cdots, v_{k-1}) = \sum_{j}\gamma(e_j)\psi(e_j, v_1, \cdots, v_{k-1}).$$
As a convention, we set $\pi_0 = 0$. Note that $\pi_1(\psi) = \gamma(\psi)$. With the maps $\pi_k$ and $d^{\nabla}_k$, we define $D^{k,T} = \pi_{k+1} d^{\nabla}_k + d^{\nabla}_{k-1}\pi_k: \Lambda^k T^* M \otimes S \to \Lambda^k T^* M \otimes S$, which is called the \textit{higher-spin Dirac operator}. For example, consider $D^{0,T} \equiv D$ and $D^{1,T} \equiv D^T$. These are the Dirac operators on $S$ and $S \otimes TM$ that we defined above, respectively.

\begin{Lemma}\label{RS clifford lemma 2.7}
We have the following formulas
\begin{enumerate}
    \item $D^T \iota + \iota D = \displaystyle \frac{2}{n} d^{\nabla}$.
    \item $D^{T}d^{\nabla}_0 - d^{\nabla}_0 D = \displaystyle \pi_2(F^{\nabla})$, where $F^{\nabla}$ is the curvature of the connection $\nabla$ on a Clifford module $S$.
\end{enumerate}
\end{Lemma}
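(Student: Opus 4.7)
The plan is to reduce both identities to pointwise computations at an arbitrary point $p \in M$ using a local orthonormal frame $\{e_j\}_{j=1}^{n}$ of $TM$ that is synchronous at $p$, so that $\nabla^{LC}e_j(p)=0$ and $de_j^*(p)=0$. Under the metric identification $T^*M \cong TM$ I will write $\iota(\phi) = -\tfrac{1}{n}\sum_k e_k^*\otimes\gamma(e_k)\phi$ and $d^\nabla\phi = \sum_k e_k^*\otimes\nabla_{e_k}\phi$, and take the Clifford action on $T^*M \otimes S$ to be $\gamma(v)(e_k^*\otimes s) = e_k^*\otimes \gamma(v)s$. In the synchronous frame all Christoffel contributions vanish at $p$, so Leibniz expansions collapse to their derivative-on-$\phi$ terms.

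For part (1), evaluating at $p$ gives directly
\begin{equation*}
D^T\iota(\phi) = -\tfrac{1}{n}\sum_{j,k} e_k^*\otimes\gamma(e_j)\gamma(e_k)\nabla_{e_j}\phi,\qquad \iota(D\phi) = -\tfrac{1}{n}\sum_{j,k} e_k^*\otimes\gamma(e_k)\gamma(e_j)\nabla_{e_j}\phi.
\end{equation*}
Adding and applying the Clifford anticommutation relation $\gamma(e_j)\gamma(e_k)+\gamma(e_k)\gamma(e_j) = -2\delta_{jk}$ collapses the double sum to $\tfrac{2}{n}\sum_k e_k^*\otimes\nabla_{e_k}\phi = \tfrac{2}{n}d^\nabla\phi$. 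Since $p$ was arbitrary, the identity holds globally.

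For part (2), I would first verify the operator identity $D^T = \pi_2 d^\nabla_1 + d^\nabla_0 \pi_1$ on $\Gamma(T^*M \otimes S)$, i.e., that $D^T$ agrees with the higher-spin Dirac operator $D^{1,T}$. On a decomposable section $e_k^*\otimes s$ at $p$, both sides evaluate to $e_k^*\otimes Ds$: the terms $-\sum_l e_l^*\otimes\gamma(e_k)\nabla_{e_l}s$ produced by $\pi_2 d^\nabla_1$ cancel precisely against $+\sum_l e_l^*\otimes\gamma(e_k)\nabla_{e_l}s$ produced by $d^\nabla_0 \pi_1$, leaving only $e_k^*\otimes \sum_l \gamma(e_l)\nabla_{e_l}s$. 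Granted this presentation of $D^T$, applying it to $d^\nabla_0\phi$ together with the curvature identity $d^\nabla_1 d^\nabla_0 = F^\nabla$ and the tautology $\pi_1 d^\nabla_0\phi = \sum_j \gamma(e_j)\nabla_{e_j}\phi = D\phi$ yields
\begin{equation*}
D^T(d^\nabla_0 \phi) = \pi_2(d^\nabla_1 d^\nabla_0\phi) + d^\nabla_0(\pi_1 d^\nabla_0\phi) = \pi_2(F^\nabla \phi) + d^\nabla_0(D\phi),
\end{equation*}
which is the claimed identity after rearrangement.

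The only real obstacle is bookkeeping: reconciling the metric identification $T^*M \cong TM$ as it propagates through $\iota$, $d^\nabla$, and $\pi_k$; tracking the graded Leibniz sign in $d^\nabla$ on higher-degree forms; and correctly expanding $\pi_2$ on decomposable $2$-forms so that the cross terms from $\pi_2 d^\nabla_1$ and $d^\nabla_0\pi_1$ are seen to cancel. No deep analytic input is required beyond Clifford algebra, the synchronous frame, and the Bianchi-type relation $d^\nabla \circ d^\nabla = F^\nabla$.
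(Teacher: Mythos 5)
Your proof is correct and takes essentially the same approach as the paper: a direct Clifford computation in a local orthonormal frame for part (1), and for part (2) the higher-spin Dirac decomposition $D^T = \pi_2 d^\nabla_1 + d^\nabla_0 \pi_1$ combined with $d^\nabla \circ d^\nabla = F^\nabla$ and $\pi_1 d^\nabla_0 = D$. The one detail you supply that the paper elides is the pointwise verification that $\pi_2 d^\nabla_1 + d^\nabla_0 \pi_1$ actually reproduces the twisted Dirac operator on $T^*M\otimes S$ (the paper simply asserts $D^{1,T}\equiv D^T$), and incidentally your sign $+\tfrac{2}{n}\nabla_{e_j}\phi$ in part (1) is the correct one for the convention $\gamma(e_i)\gamma(e_j)+\gamma(e_j)\gamma(e_i)=-2\delta_{ij}$ — the paper's intermediate display $-\tfrac{2}{n}\nabla_{e_j}\phi$ is a typo, inconsistent with the lemma it is proving.
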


\begin{proof}
Let $\{e_j\}$ be a local orthonormal frame on $M$ and $\phi \in \Gamma(S)$. We have 
$$D^T\iota(\phi)(e_j) = \sum_i \gamma(e_i) \nabla_{e_i}\left( -\frac{1}{n}\gamma(e_j)\phi \right)= -\frac{1}{n} \sum_i \gamma(e_i)\gamma(e_j)\nabla_{e_i}\phi.$$
On the other hand,
$$\iota (D\phi)(e_j) = -\frac{1}{n}\gamma(e_j)D\phi = -\frac{1}{n}  \sum_{i}\gamma(e_j)\gamma(e_i) \nabla_{e_i} \phi.$$
Thus $D^T\iota(\phi)(e_j) + \iota(D\phi)(e_j) = \displaystyle -\frac{2}{n} \nabla_{e_j} \phi$. This proves the first part. To show the second part, we appeal to the definition of the higher-spin Dirac operator described above. We have
\begin{align}
    D^T d^{\nabla}_0 - d^{\nabla}_0 D & = (\pi_2 d^{\nabla}_1 + d^{\nabla}_0 \pi_1) d^{\nabla}_0 - d^{\nabla}_0 D \nonumber \\
    & = \pi_2 d^{\nabla}_1 d^{\nabla}_0 + d^{\nabla}_0 D - d^{\nabla}_0 D \nonumber  = \pi_2 F^{\nabla}
\end{align}
This is exactly the formula in the second part.
\end{proof}

\begin{Lemma}\label{RS clifford lemma 2.8}
We have the following formulas
\begin{enumerate}
    \item $\displaystyle \frac{2(2-n)}{n^2}PD + \frac{2}{n}QP = \frac{2}{n}\pi \pi_2(F^{\nabla})$.
    \item $\displaystyle Q^2 + \frac{4}{n}PP^*=\nabla^*\nabla + \frac{s}{4} + \pi(\gamma \otimes F^{\nabla})$, where $s$ is the scalar curvature of $g$ and $\gamma \otimes F^{\nabla}$ is understood as the Clifford contraction of the twisting curvature $F^{\nabla}$ on the Clifford module $S$.
\end{enumerate}
\end{Lemma}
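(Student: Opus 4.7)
The plan is to derive both identities by combining the matrix form of $D^T$ displayed before Definition \ref{the equations defintion 2.1} with Lemma \ref{RS clifford lemma 2.7}. For part (1), I would apply $\pi$ to the commutator identity of Lemma \ref{RS clifford lemma 2.7}(2); for part (2), I would invoke the standard Weitzenb\"ock formula for $D^T$ as a Dirac operator on the Clifford module $S\otimes TM$ and read off the $S_{3/2}$-block of $(D^T)^2$.

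For part (1), the key ingredients are $\pi = 1 - \iota\gamma$ together with $\gamma\iota = \mathrm{id}_S$ (which follows from $\sum_j\gamma(e_j)^2 = -n$), and they give the orthogonal splitting
\[\nabla\phi = \pi\nabla\phi + \iota\gamma\nabla\phi = P\phi + \iota D\phi.\]
Substituting into Lemma \ref{RS clifford lemma 2.7}(2) and applying $\pi$ on the left, the matrix form of $D^T$ delivers $\pi D^T P = QP$ (because the $2\iota P^* P$ component is killed by $\pi\iota = 0$) and $\pi D^T(\iota D) = \tfrac{2}{n}PD$ (because $\pi\iota D^2 = 0$). Combined with $\pi\nabla D = PD$, this produces
\[QP + \tfrac{2-n}{n}PD = \pi\pi_2(F^{\nabla}),\]
which after multiplication by $\tfrac{2}{n}$ is the stated identity.

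For part (2), I would apply the Weitzenb\"ock-Lichnerowicz formula to $D^T = \gamma\circ \nabla^T$ on the Clifford module $S\otimes TM$, where $\nabla^T = \nabla\otimes 1 + 1\otimes \nabla^{LC}$. Since $\gamma$ is parallel under $\nabla^T$, so is $\iota$, whence the sub-bundles $\iota(S)$ and $S_{3/2} = \ker\gamma$ are $\nabla^T$-parallel; in particular $(\nabla^T)^*\nabla^T$ preserves $\Gamma(S_{3/2})$ and restricts there to $\nabla^*\nabla$. On the other hand, squaring the matrix form of $D^T$ in the ordered basis $(\iota(S), S_{3/2})$, the $(S_{3/2},S_{3/2})$-entry is exactly $Q^2 + \tfrac{4}{n}PP^*$, so $\pi(D^T)^2\psi = Q^2\psi + \tfrac{4}{n}PP^*\psi$ for every $\psi\in\Gamma(S_{3/2})$. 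Projecting the Weitzenb\"ock identity for $(D^T)^2$ onto $\Gamma(S_{3/2})$ and matching terms then yields the claimed formula, provided the projection of the Clifford-contracted curvature of $\nabla^T$ is identified with $\tfrac{s}{4} + \pi(\gamma\otimes F^{\nabla})$.

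The main obstacle I expect is exactly this last identification: one has to split the curvature of $\nabla^T$ as $F^{\nabla}\otimes 1 + 1\otimes R^{LC}$, Clifford-contract each summand, and verify that upon projection to $S_{3/2}$ the Levi-Civita piece collapses to the scalar curvature term $\tfrac{s}{4}$ while the $F^{\nabla}$ piece organizes into the single operator $\pi(\gamma\otimes F^{\nabla})$. This is standard Clifford-module bookkeeping, but it is the one step that is not purely formal algebra from Lemma \ref{RS clifford lemma 2.7} and the displayed matrix form of $D^T$.
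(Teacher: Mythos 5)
Your approach is correct and essentially matches the paper's: both parts rest on the matrix form of $(D^T)^2$ (lower-left block for part (1), lower-right for part (2)) together with Lemma \ref{RS clifford lemma 2.7} and the standard Weitzenb\"ock formula for $(D^T)^2$. The small organizational difference in part (1) --- you apply $\pi$ to Lemma \ref{RS clifford lemma 2.7}(2) after splitting $\nabla = P + \iota D$, whereas the paper iterates $D^T$ on Lemma \ref{RS clifford lemma 2.7}(1) to compute $(D^T)^2\iota$ and then reads off the block --- is algebraically equivalent, and your remark about the curvature bookkeeping for part (2) correctly pinpoints where the Levi-Civita contribution on $TM$ must be tracked (cf. the explicit $-1\otimes Ric$ term that surfaces in Lemma \ref{RS clifford lemma 2.12}).
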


\begin{proof}
    We apply $D^T$ to both sides of Lemma \ref{RS clifford lemma 2.7} part one and part two to obtain
    \begin{align}\label{eq:RS clifford.1}
        (D^T)^2\iota & = -D^T\iota D + \frac{2}{n}D^T d^{\nabla} = \left(\iota D - \frac{2}{n}d^{\nabla}\right)D + \frac{2}{n}D^T d^{\nabla}\nonumber \\
        &= \iota D^2 + \frac{2}{n}(D^T d^{\nabla}- d^{\nabla}D) = \iota D^2 + \frac{2}{n}\pi_2(F^{\nabla})
    \end{align}
    Note that in the matrix form of $D^T$, we have
    $$(D^T)^2 = \begin{pmatrix} \left(\frac{2-n}{n}\right)^2\iota D^2 \iota^{-1} + \frac{4}{n} \iota P^*P \iota^{-1} & \frac{2(2-n)}{n}\iota DP^* + 2\iota P^*Q \\ \frac{2(2-n)}{n^2} PD\iota^{-1} + \frac{2}{n} QP \iota^{-1} & \frac{4}{n} PP^* + Q^2 \end{pmatrix}.$$
    So the lower left block of $(D^T)^2$ is determined exactly by $\pi(D^T)^2\iota$. Thus, when combined with \eqref{eq:RS clifford.1}, we have
    $$\displaystyle \frac{2(2-n)}{n^2}PD + \frac{2}{n}QP = \frac{2}{n}\pi \pi_2(F^{\nabla}).$$
    This proves the first part. To see the second part, recall the Weitzenb\"ock formula for $(D^T)^2$,
    $$(D^T)^2= \nabla^* \nabla + \frac{s}{4} + \gamma \otimes F^{\nabla}.$$
    One simply applies $\pi$ to both sides of the above formula and arrives at the Weitzenb\"ock-type formula for the Rarita-Schwinger operator $Q$.
\end{proof}

\begin{Rem}
    Lemma \ref{RS clifford lemma 2.7} and Lemma \ref{RS clifford lemma 2.8} both have versions when we restrict our attention to either only $S^{\pm}_{3/2}$ in the case, $n$ is even. This will become more apparent when we expand these formulas on a $ 4-$ dimensional manifold.
\end{Rem}

Since $P^*P$ is an elliptic operator, we can orthogonally decompose $\Gamma(S_{3/2}) = ker\, P^* \oplus im\, P$. In general, it is not true that $Q$ should preserve this decomposition. However, in an ideal situation, we do have the following.

\begin{Lemma}\label{RS clifford lemma 2.10}
    If $\nabla$ is a flat connection of $S \to M$, then $Q$ preserves the orthogonal decomposition $ker\, P^* \oplus im\, P$.
\end{Lemma}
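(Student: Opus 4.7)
The plan is to leverage the first identity of Lemma~\ref{RS clifford lemma 2.8} together with the self-adjointness of $Q$. Since $\nabla$ is flat, the curvature term $\pi\pi_2(F^\nabla)$ on the right-hand side of Lemma~\ref{RS clifford lemma 2.8}(1) vanishes, and we obtain the clean intertwining relation
\begin{equation*}
QP = \frac{n-2}{n}\, PD
\end{equation*}
between $Q$ and $D$. This is the one identity that drives the entire argument.

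First I would show that $Q$ preserves $\mathrm{im}\, P$. Let $\phi \in \Gamma(S)$. Then by the intertwining relation,
\begin{equation*}
Q(P\phi) = \frac{n-2}{n}\, P(D\phi) \in \mathrm{im}\, P,
\end{equation*}
so $\mathrm{im}\, P$ is stable under $Q$.

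Next I would show that $Q$ preserves $\ker P^*$. Because $P^*P$ is elliptic, the decomposition $\Gamma(S_{3/2}) = \ker P^* \oplus \mathrm{im}\, P$ is orthogonal, so it suffices to check that for any $\psi \in \ker P^*$ and any $\phi \in \Gamma(S)$,
\begin{equation*}
\langle Q\psi,\, P\phi\rangle_{L^2} = 0.
\end{equation*}
Using that $Q$ is formally self-adjoint (noted after Definition~\ref{the equations defintion 2.1}) and the intertwining relation above,
\begin{equation*}
\langle Q\psi,\, P\phi\rangle = \langle \psi,\, QP\phi\rangle = \frac{n-2}{n}\langle \psi,\, PD\phi\rangle = \frac{n-2}{n}\langle P^*\psi,\, D\phi\rangle = 0,
\end{equation*}
since $P^*\psi = 0$. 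Thus $Q\psi \in (\mathrm{im}\, P)^{\perp} = \ker P^*$, completing the proof.

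There is essentially no obstacle here beyond recognizing that flatness is exactly what is needed to kill the $\pi\pi_2(F^\nabla)$ obstruction in Lemma~\ref{RS clifford lemma 2.8}(1); without flatness, the relation $QP \propto PD$ fails by a curvature term that generally has nontrivial components in $\ker P^*$, and no corresponding stability can be expected. The dimensional coefficient $\tfrac{n-2}{n}$ plays no role in the argument as long as it is finite, so the statement is dimension-independent for $n \geq 1$.
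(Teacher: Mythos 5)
Your proof is correct and follows essentially the same route as the paper: both start from Lemma~\ref{RS clifford lemma 2.8}(1), use flatness to kill the curvature term and obtain $QP = \frac{n-2}{n}PD$, treat $\mathrm{im}\, P$ directly, and handle $\ker P^*$ via self-adjointness of $Q$ (the paper takes the formal adjoint of the operator identity to get $P^*Q = \frac{n-2}{n}DP^*$, which is just the operator-level version of your $L^2$-pairing computation).
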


\begin{proof}
    If $\nabla$ is a flat connection, by the first part of Lemma \ref{RS clifford lemma 2.8}, we have 
    $$\frac{2(2-n)}{n^2}PD + \frac{2}{n}QP = 0 \Rightarrow \frac{2(2-n)}{n^2}DP^* + \frac{2}{n}P^*Q = 0.$$
    Clearly, if $\psi \in ker\, P^*$, then $P^*Q\psi = 0$ by the above formula. Thus, $Q\psi \in ker\, P^*$. On the other hand, if $\psi \in im\, P$, then there is a $\phi \in \Gamma(S)$ such that $P\phi = \psi$. Thus, $\displaystyle -\frac{2}{n}Q\psi = \frac{2(n-2)}{n^2}PD\phi$. Immediately, this implies that $Q\psi \in im\, P$. 
\end{proof}

\begin{Rem}
    Lemma \ref{RS clifford lemma 2.7} and Lemma \ref{RS clifford lemma 2.8} should be compared with similar results in \cite{MR1129331}. Our formulas are more general and hold for \textit{any} Clifford module.
\end{Rem}

\begin{Rem}
    Lemma \ref{RS clifford lemma 2.10} is a general version of a result in \cite{homma2019kernel} applied to the setting of \textit{any} Clifford module over the base manifold.
\end{Rem}

\subsection{Rarita-Schwinger operator of a $\text{spin}^c$ structure}
Let $Y$ be a $3-$manifold that is smooth, oriented, and closed. It is a standard fact that any such $3-$manifold is spin and thus $\text{spin}^c$. Consider $\mathfrak{s}_{1/2}$ to be any $\text{spin}^c$ structure over $Y$. The Levi-Civita connection combined with a choice of a unitary connection $2A$ on $det\,\mathfrak{s}_{1/2}$ gives us a compatible connection $\nabla_A$ on $W_{\mathfrak{s}_{1/2}}$. Following the general construction in the previous subsection, one obtains the following operators
$$D_A: \Gamma(W_{\mathfrak{s}_{1/2}}) \to \Gamma(W_{\mathfrak{s}_{1/2}}), \quad D^T_A: \Gamma(TY \otimes W_{\mathfrak{s}_{1/2}}) \to \Gamma(TY \otimes W_{\mathfrak{s}_{1/2}}),$$
$$Q_A : \Gamma(W_{\mathfrak{s}_{3/2}}) \to \Gamma(W_{\mathfrak{s}_{3/2}}),$$
where $W_{\mathfrak{s}_{3/2}}$ notation-wise refers to the kernel of $\gamma: TY \otimes W_{\mathfrak{s}_{1/2}} \to W_{\mathfrak{s}_{1/2}}$. Respectively, they are the spinor Dirac operator on the spinor bundle, the twisted Dirac operator on the twisted spinor bundle, and the Rarita-Schwinger operator associated with the $\text{spin}^c$ structure $\mathfrak{s}_{1/2}$. We want to emphasize that the construction of these various first-order elliptic operators depends not only on the Riemannian metric of $Y$ but also on a choice of a unitary connection on $det\,\mathfrak{s}_{1/2}$. Let $L = det\,\mathfrak{s}_{1/2}$. Then we can view $W_{\mathfrak{s}_{1/2}} = W_{\mathfrak{s}} \otimes_{\CN} L^{1/2}$, where $W_{\mathfrak{s}}$ is some spinor bundle over $Y$. The connection on the spinor bundle is determined by the Levi-Civita connection $\nabla^{LC}$ of $g$. From this perspective, a unitary connection $2A$ on $L$ corresponds to a unitary connection $A$ on $L^{1/2}$. Thus, the compatible connection $\nabla_A$ on $W_{\mathfrak{s}_{1/2}}$ maybe regarded as $\nabla^{LC} \otimes 1 + 1 \otimes \nabla_A$. Hence, $F_A = F^{\nabla^{LC}} \otimes 1 + 1 \otimes F_A$, where $F_A$ genuinely is a curvature on $L^{1/2}$. Note that if $\phi \otimes s \in \Gamma(W_{\mathfrak{s}}\otimes L^{1/2})$ where $\phi \in \Gamma(W_{\mathfrak{s}})$ and $s\in \Gamma(L^{1/2})$, then for $v \in \Gamma(TY)$ locally
$$\pi_2(F^{\nabla^{LC}}(\phi) \otimes S)(v) = \left(\sum_{i} \gamma(e_i)F^{\nabla^{LC}}(e_i,v)\phi\right)\otimes s = \frac{1}{2}Ric(v)\phi \otimes s,$$
where $Ric$ is the Ricci curvature of $g$. Furthermore, the Weitzenb\"ock formula for $D^T_A$ will look like the following in this setting
$$(D^T_A)^2 = \nabla^*_A \nabla_A + \frac{s}{4} + \gamma(F_A) - 1 \otimes Ric.$$ 
Then, one can rewrite Lemma \ref{RS clifford lemma 2.8} in this setting as follows.

\begin{Lemma} \label{RS clifford lemma 2.12}
    We have the following formulas
    \begin{enumerate}
        \item $\displaystyle -\frac{2}{9}P_A D_A + \frac{2}{3} Q_A P_A = \frac{1}{3}\left(Ric - \frac{s}{3}\right) + \frac{2}{3} \pi \pi_2(1\otimes F_A).$
        \item $\displaystyle Q^2_A + \frac{4}{3}P_A P^*_A = \nabla^*_A \nabla_A + \frac{s}{4} + \pi(\gamma(F_A)) - \pi(1\otimes Ric).$ 
    \end{enumerate}
\end{Lemma}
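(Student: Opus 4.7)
The plan is to deduce Lemma~\ref{RS clifford lemma 2.12} directly from Lemma~\ref{RS clifford lemma 2.8} by specializing to $n=3$ and to the Clifford module $S = W_{\mathfrak{s}_{1/2}}$ with the connection $\nabla_A = \nabla^{LC}\otimes 1 + 1\otimes \nabla_A$ under the identification $W_{\mathfrak{s}_{1/2}}\cong W_{\mathfrak{s}}\otimes L^{1/2}$. Since the total curvature splits additively as $F^{\nabla_A} = F^{\nabla^{LC}}\otimes 1 + 1\otimes F_A$, both identities reduce to isolating the Levi-Civita contribution and rewriting it purely in terms of the Ricci and scalar curvatures of $g$.

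For part (1), I would start from Lemma~\ref{RS clifford lemma 2.8}(1) with $n=3$, namely
\begin{equation*}
-\frac{2}{9}P_A D_A + \frac{2}{3}Q_A P_A \;=\; \frac{2}{3}\pi\pi_2(F^{\nabla_A}),
\end{equation*}
and split $\pi\pi_2(F^{\nabla_A}) = \pi\pi_2(F^{\nabla^{LC}}\otimes 1) + \pi\pi_2(1\otimes F_A)$. The Levi-Civita piece is already computed in the paragraph preceding the lemma: $\pi_2(F^{\nabla^{LC}}\otimes 1)(\phi)(v) = \tfrac{1}{2}\gamma(Ric(v))\phi$. To apply $\pi = 1 - \iota\gamma$, I compute the $\gamma$-contraction
\begin{equation*}
\gamma\!\left(\pi_2(F^{\nabla^{LC}}\otimes 1)\phi\right) \;=\; \tfrac{1}{2}\sum_{j,k} Ric_{jk}\,\gamma(e_j)\gamma(e_k)\phi \;=\; -\tfrac{s}{2}\phi,
\end{equation*}
where the symmetry of $Ric$ kills the off-diagonal Clifford pairs via $\gamma(e_j)\gamma(e_k)+\gamma(e_k)\gamma(e_j)=-2\delta_{jk}$, and the diagonal contribution reduces to $-\mathrm{tr}(Ric) = -s$. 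Plugging this into $\iota(\eta)(v) = -\tfrac{1}{3}\gamma(v)\eta$ yields
\begin{equation*}
\pi\pi_2(F^{\nabla^{LC}}\otimes 1)(\phi)(v) \;=\; \tfrac{1}{2}\gamma\!\left(Ric(v) - \tfrac{s}{3}v\right)\phi,
\end{equation*}
which is automatically a $3/2$-spinor because the tracefree Ricci tensor is symmetric and tracefree, so the same Clifford computation shows its $\gamma$-contraction vanishes. Multiplying by $2/3$ produces the claimed $\tfrac{1}{3}(Ric - s/3)$ on the right-hand side.

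For part (2), I would apply $\pi$ to the Weitzenb\"ock formula for $D^T_A$ recorded just above the lemma,
\begin{equation*}
(D^T_A)^2 \;=\; \nabla_A^*\nabla_A + \tfrac{s}{4} + \gamma(F_A) - 1\otimes Ric,
\end{equation*}
and identify the $W_{\mathfrak{s}_{3/2}}$-diagonal block of $(D^T_A)^2$ with $Q_A^2 + \tfrac{4}{3}P_AP_A^*$ via the matrix form recorded in Subsection~2.1. The Bochner Laplacian $\nabla_A^*\nabla_A$ and the scalar term $s/4$ pass through $\pi$ unchanged, while $\gamma(F_A)$ and $1\otimes Ric$ do not preserve the splitting $TY\otimes W_{\mathfrak{s}_{1/2}} = W_{\mathfrak{s}_{3/2}}\oplus \iota(W_{\mathfrak{s}_{1/2}})$ and must therefore be projected, giving $\pi(\gamma(F_A)) - \pi(1\otimes Ric)$. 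The main technical obstacle is the Clifford-algebra simplification in~(1): one must verify that in dimension three the projection $\pi = 1 - \iota\gamma$ precisely converts the pointwise Ricci-action on a spinor into Clifford multiplication by the tracefree Ricci tensor $Ric - \tfrac{s}{3}g$. Everything else is a direct specialization of Lemma~\ref{RS clifford lemma 2.8}.
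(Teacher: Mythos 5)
Your proposal is correct and follows exactly the route the paper intends: specialize Lemma~\ref{RS clifford lemma 2.8} to $n=3$ and to the $\text{spin}^c$ module $W_{\mathfrak{s}_{1/2}}\cong W_{\mathfrak{s}}\otimes L^{1/2}$, split the curvature as $F^{\nabla_A}=F^{\nabla^{LC}}\otimes 1 + 1\otimes F_A$, and rewrite the Levi-Civita contribution via the Ricci identity $\pi_2(F^{\nabla^{LC}}\phi)(v)=\tfrac{1}{2}\gamma(Ric(v))\phi$. The paper gives no explicit computation and merely states ``one can rewrite Lemma~\ref{RS clifford lemma 2.8} in this setting''; you supply the one nontrivial step it elides, namely that $\pi=1-\iota\gamma$ converts $\tfrac{1}{2}\gamma(Ric(\cdot))\phi$ into $\tfrac{1}{2}\gamma\bigl((Ric-\tfrac{s}{3})(\cdot)\bigr)\phi$ by contracting to $-\tfrac{s}{2}\phi$ and feeding that back through $\iota$, and your arithmetic and the tracefree-symmetric consistency check are both right.
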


\begin{Cor} \label{RS clifford lemma 2.13}
    If $g$ is an Einstein metric on $Y$, then we have the following:
    \begin{enumerate}
        \item $\displaystyle -\frac{2}{9}P_A D_A + \frac{2}{3} Q_A P_A =  \frac{2}{3} \pi \pi_2(1\otimes F_A).$
        \item $\displaystyle Q^2_A + \frac{4}{3}P_A P^*_A = \nabla^*_A \nabla_A - \frac{s}{12} + \pi(\gamma(F_A)).$ 
    \end{enumerate}
\end{Cor}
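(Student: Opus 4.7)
The plan is to obtain both identities as direct consequences of Lemma \ref{RS clifford lemma 2.12} by specializing to the Einstein condition. On a three-manifold, the Einstein condition $\mathrm{Ric} = \frac{s}{3} g$ says precisely that the Ricci endomorphism of $TY$ coincides with the scalar operator $\frac{s}{3}\,\mathrm{id}$. Consequently, when $\mathrm{Ric}$ acts on sections of $TY \otimes W_{\mathfrak{s}_{1/2}}$ through the first tensor factor (i.e.\ as $1\otimes \mathrm{Ric}$ in the notation of Lemma \ref{RS clifford lemma 2.12}), it is simply multiplication by $\frac{s}{3}$.

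For part $(1)$, Lemma \ref{RS clifford lemma 2.12}(1) reads
\[
-\tfrac{2}{9}P_A D_A + \tfrac{2}{3}Q_A P_A = \tfrac{1}{3}\!\left(\mathrm{Ric} - \tfrac{s}{3}\right) + \tfrac{2}{3}\pi\pi_2(1\otimes F_A).
\]
Under the Einstein assumption, the endomorphism $\mathrm{Ric} - \tfrac{s}{3}$ vanishes identically, so the first summand on the right is zero and we are left with the claimed identity.

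For part $(2)$, Lemma \ref{RS clifford lemma 2.12}(2) reads
\[
Q^2_A + \tfrac{4}{3}P_A P^*_A = \nabla^*_A \nabla_A + \tfrac{s}{4} + \pi(\gamma(F_A)) - \pi(1\otimes \mathrm{Ric}).
\]
Using $1\otimes\mathrm{Ric} = \tfrac{s}{3}\,\mathrm{id}$, the operator $\pi(1\otimes\mathrm{Ric})$ acts as $\tfrac{s}{3}\pi$; but on $W_{\mathfrak{s}_{3/2}}$ the projector $\pi$ is the identity, so this contributes $\tfrac{s}{3}$. Combining the scalar terms, $\tfrac{s}{4} - \tfrac{s}{3} = -\tfrac{s}{12}$, which yields the stated formula.

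Strictly speaking there is no real obstacle here: the content is entirely bookkeeping, and the only subtlety worth flagging is the interpretation of $\mathrm{Ric}$ (resp.\ $1\otimes \mathrm{Ric}$) as an endomorphism of $TY$ (resp.\ of $TY\otimes W_{\mathfrak{s}_{1/2}}$), which is exactly the interpretation used in the derivation of Lemma \ref{RS clifford lemma 2.12} from the twisted Weitzenb\"ock formula.
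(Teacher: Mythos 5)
Your proof is correct and follows essentially the same route as the paper: specialize Lemma \ref{RS clifford lemma 2.12} to the Einstein condition $\mathrm{Ric} = \tfrac{s}{3}$, note that $\pi$ acts as the identity on $W_{\mathfrak{s}_{3/2}}$ so $\pi(1\otimes\mathrm{Ric})$ becomes scalar multiplication by $\tfrac{s}{3}$, and combine $\tfrac{s}{4}-\tfrac{s}{3}=-\tfrac{s}{12}$. The paper's proof is the same argument stated more tersely.
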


\begin{proof}
    If $g$ is an Eistein metric, then $\displaystyle Ric = \frac{s}{3}$ and $\displaystyle \pi(1\otimes Ric) = \frac{s}{3}$. Then, the results immediately follow. 
\end{proof}

Similarly, on a smooth, oriented, closed $4-$manifold $X$, one can also consider a Rarita-Schwinger operator associated to a $\text{spin}^c$ structure $\mathfrak{s}_{1/2}$ on $X$. Again, the construction of the Rarita-Schwinger operator $Q^{\pm}_A$ associated to $\mathfrak{s}_{1/2}$ depends on the Riemannian metric of $X$ and a choice of a unitary connection $2A$ on $det\, \mathfrak{s}_{1/2}$. Following the discussion above for $3-$manifold, we obtain the following lemmas regarding various Weitzenbock-type formulas for $Q^{\pm}_A$.

\begin{Lemma}
    We have the following formulas
    \begin{enumerate}
        \item $\displaystyle -\frac{1}{4}P^{\pm}_A D^{\mp}_A \iota^{-1} + \frac{1}{2} Q^{\mp}_A P^{\mp}_A \iota^{-1} = \frac{1}{4}\left(Ric - \frac{s}{4}\right) + \frac{1}{2}\pi\pi_2(1\otimes F^{\pm}_A)$.
        \item $\displaystyle Q^-_A Q^+_A + P^+_A P^{+*}_A = \nabla^*_A \nabla_A + \frac{s}{4} + \pi(\gamma(F^+_A)) - \pi(1\otimes Ric)^+.$
    \end{enumerate}
\end{Lemma}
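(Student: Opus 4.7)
The strategy is to specialize the general Lemma \ref{RS clifford lemma 2.8} to $n=4$ applied to the Clifford module $W_{\mathfrak{s}_{1/2}}$ with compatible connection $\nabla_A = \nabla^{LC}\otimes 1 + 1\otimes\nabla_A$, and then decompose the total curvature $F^{\nabla_A} = F^{\nabla^{LC}}\otimes 1 + 1\otimes F_A$ into its Levi-Civita and abelian parts, tracking the chirality throughout. This mirrors the derivation of Lemma \ref{RS clifford lemma 2.12} in dimension three, with the added bookkeeping that in dimension four the bundles $W_{\mathfrak{s}_{1/2}}$ and $W_{\mathfrak{s}_{3/2}}$ each split into $\pm$ components and every operator in sight exchanges chirality.

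For part (1), I would take Lemma \ref{RS clifford lemma 2.8}(1) with $n=4$, which reads $-\tfrac14 PD + \tfrac12 QP = \tfrac12 \pi\pi_2(F^{\nabla_A})$ as operators from $W_{\mathfrak{s}_{1/2}}$ to $W_{\mathfrak{s}_{3/2}}$. Restricting to a definite chirality and composing with $\iota^{-1}$ gives the left-hand side of the desired identity. On the right-hand side, split $F^{\nabla_A}$ into the Levi-Civita and abelian pieces. The Levi-Civita piece contributes $\tfrac12\pi\pi_2(F^{\nabla^{LC}}\otimes 1)$, which can be computed exactly as in the derivation of Lemma \ref{RS clifford lemma 2.12}: one uses $\pi_2(F^{\nabla^{LC}}(\phi)\otimes s)(v) = \tfrac12 Ric(v)\phi\otimes s$ followed by the orthogonal projection $\pi = 1 - \iota\gamma$. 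For $n=4$, extracting the trace part $\tfrac{s}{4}$ via $\iota\gamma$ produces precisely the Schouten-type combination $\tfrac14\bigl(Ric - \tfrac{s}{4}\bigr)$, leaving $\tfrac12\pi\pi_2(1\otimes F^{\pm}_A)$ as the remaining contribution.

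For part (2), I would begin with Lemma \ref{RS clifford lemma 2.8}(2), which for $n=4$ reduces to $Q^2 + PP^* = \nabla^*_A\nabla_A + \tfrac{s}{4} + \pi(\gamma\otimes F^{\nabla_A})$. Restricting to positive chirality, $Q^2$ becomes $Q^-_A Q^+_A$ (since $Q^+_A$ maps $W^+_{\mathfrak{s}_{3/2}}\to W^-_{\mathfrak{s}_{3/2}}$) and $PP^*$ restricts to $P^+_A P^{+*}_A$, yielding the correct left-hand side. Splitting the Clifford contraction $\pi(\gamma\otimes F^{\nabla_A})^+$, the Levi-Civita part contributes the term $-\pi(1\otimes Ric)^+$, with the sign and form exactly as in the standard Weitzenböck formula for $(D^T_A)^2$ recalled in Subsection 2.2, while the abelian part contributes $\pi(\gamma(F^+_A))$. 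Combining these gives the stated identity.

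The main (and essentially only) obstacle is careful chirality bookkeeping: verifying that each operator acts between the correct $\pm$ components of the $\text{spin}^c$ and $3/2$-spinor bundles, that the chiral projection of the curvature is respected when decomposing $F^{\nabla_A}$, and that the Ricci/scalar contributions coming from $F^{\nabla^{LC}}$ land in the right summand under $\pi$. No new analytic input beyond Lemmas \ref{RS clifford lemma 2.7} and \ref{RS clifford lemma 2.8} and the formula $(D^T_A)^2 = \nabla^*_A\nabla_A + \tfrac{s}{4} + \gamma(F_A) - 1\otimes Ric$ recorded in Subsection 2.2 is required.
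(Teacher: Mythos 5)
Your proposal is correct and follows the same route the paper implicitly intends: the paper gives no explicit proof for this 4D lemma, stating only that it follows "following the discussion above for 3-manifold," i.e., by specializing Lemma~\ref{RS clifford lemma 2.8} to $n=4$ and decomposing $F^{\nabla_A}=F^{\nabla^{LC}}\otimes 1 + 1\otimes F_A$ exactly as in the derivation of Lemma~\ref{RS clifford lemma 2.12}. Your coefficient bookkeeping is right ($\tfrac{2(2-4)}{16}=-\tfrac14$, $\tfrac{2}{4}=\tfrac12$, $\tfrac{4}{4}=1$), the Schouten-type computation giving $\tfrac14\bigl(Ric - \tfrac{s}{4}\bigr)$ follows from $\pi = 1 - \iota\gamma$ together with $\sum_j\gamma(e_j)\gamma(Ric(e_j))=-s$, and the chirality tracking of $P^{\pm}_A$, $Q^{\mp}_A$, $F^{\pm}_A$ is sound; the stray $\iota^{-1}$ in the 4D statement (absent from the 3D Lemma~\ref{RS clifford lemma 2.12}) is a notational artifact of reading the lower-left block of $(D^T_A)^2$ on the subbundle $\iota(W_{\mathfrak{s}_{1/2}})$ rather than on $W_{\mathfrak{s}_{1/2}}$, and you are not at fault for treating it lightly.
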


\begin{Cor}
    If $g$ is an Einstein metric on $X$, then we have the following
    \begin{enumerate}
        \item $\displaystyle -\frac{1}{4}P^{\pm}_A D^{\mp}_A \iota^{-1} + \frac{1}{2} Q^{\mp}_A P^{\mp}_A \iota^{-1} = \frac{1}{2}\pi\pi_2(1\otimes F^{\pm}_A). $
        \item $\displaystyle Q^-_A Q^+_A + P^+_A P^{+*}_A = \nabla^*_A \nabla_A +  \pi(\gamma(F^+_A)).$
    \end{enumerate}
\end{Cor}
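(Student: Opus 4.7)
The plan is to mirror the proof of Corollary \ref{RS clifford lemma 2.13} verbatim, substituting $n=4$ throughout and using the Einstein condition to kill the appropriate Ricci contributions in the preceding Lemma. Recall that an Einstein metric on $X$ is one for which $\mathrm{Ric} = \frac{s}{4}\,\mathrm{Id}_{TX}$ as an endomorphism of $TX$ (equivalently, as a bilinear form, $\mathrm{Ric} = \frac{s}{4}\,g$). The two assertions of the Corollary are then obtained by feeding this identity into the two formulas of the preceding Lemma.

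For part (1), the term $\frac{1}{4}\bigl(\mathrm{Ric} - \frac{s}{4}\bigr)$ appearing in the non-Einstein version vanishes identically as an endomorphism, so the formula collapses to the asserted identity $-\tfrac{1}{4}P^{\pm}_A D^{\mp}_A \iota^{-1} + \tfrac{1}{2} Q^{\mp}_A P^{\mp}_A \iota^{-1} = \tfrac{1}{2}\pi\pi_2(1\otimes F^{\pm}_A)$. This step is immediate.

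For part (2), I would compute $\pi(1\otimes \mathrm{Ric})^+$ in the Einstein case. Since $1\otimes \mathrm{Ric}$ acts on $W^+_{\mathfrak{s}_{1/2}}\otimes TX$ as the identity on the spinor factor tensored with $\mathrm{Ric}$ on the $TX$ factor, the Einstein condition gives $1\otimes \mathrm{Ric} = \tfrac{s}{4}\cdot \mathrm{Id}$. In particular $1\otimes \mathrm{Ric}$ preserves the orthogonal splitting $W^+_{\mathfrak{s}_{1/2}}\otimes TX = W^+_{\mathfrak{s}_{3/2}}\oplus \iota(W^+_{\mathfrak{s}_{1/2}})$, so on the first summand $\pi(1\otimes \mathrm{Ric})^+ = \tfrac{s}{4}$. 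This exactly cancels the $\tfrac{s}{4}$ term already present in the Weitzenb\"ock-type formula, yielding the stated identity $Q^-_A Q^+_A + P^+_A P^{+*}_A = \nabla^*_A \nabla_A + \pi(\gamma(F^+_A))$.

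There is no real obstacle here: the work lies entirely in having correctly established the non-Einstein formulas in the preceding Lemma (which itself generalizes Lemma \ref{RS clifford lemma 2.8} to the $4$-dimensional chiral decomposition). The only point that deserves a one-line remark in the written proof is the observation that $\pi(1\otimes \mathrm{Ric})^+$ reduces to the scalar $s/4$ on $W^+_{\mathfrak{s}_{3/2}}$ under the Einstein condition, which is a direct consequence of $1\otimes \mathrm{Ric}$ being a scalar multiple of the identity.
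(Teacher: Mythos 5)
Your proposal is correct and takes essentially the same approach as the paper's proof of the analogous three-dimensional Corollary \ref{RS clifford lemma 2.13}: under the Einstein condition one has $Ric = \tfrac{s}{4}$ and hence $\pi(1\otimes Ric)^+ = \tfrac{s}{4}$, and substituting this into the two non-Einstein formulas of the preceding Lemma immediately yields the stated identities. The paper omits the written proof for the four-dimensional case; your argument fills it in correctly.
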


\subsection{The Seiberg-Witten equations for $3/2-$spinors}

Let $\mathfrak{s}_{1/2}$ be a $\text{spin}^c$ structure on a $3$-manifold $Y$. We consider the following system
\begin{equation}\label{eq:RSSW.1}
    \begin{cases}
        Q_A \psi = 0,\\
        F_A = \gamma^{-1}(\mu(\psi)).
    \end{cases}
\end{equation}
The unknowns of the system \eqref{eq:RSSW.1}  are pairs $(A, \psi)$, where $2A \in \mathcal{A}(det\, \mathfrak{s}_{1/2}),$ which is the space of all unitary connections on the determinant line bundle, and $\psi \in \Gamma(W_{\mathfrak{s}_{3/2}})$. We have that the Clifford multiplication $\gamma : i\Lambda^2 T^*Y \to i\mathfrak{su}(W_{\mathfrak{s}_{1/2}})$ is an isometry and $\mu$ is a quadratic map defined on $\Gamma(W_{\mathfrak{s}_{3/2}})$ as following
$$\psi \in \Gamma(W_{\mathfrak{s}_{3/2}}) \subset \Gamma(W_{\mathfrak{s}_{1/2}} \otimes TY) \cong \Gamma(W_{\mathfrak{s}_{1/2}}\otimes T^*Y).$$
This means that we can view $\psi \in \Gamma(End\,(TY, W_{\mathfrak{s}_{1/2}}))$. Then $\psi \psi^*$ as composition of operators is a section of the bundle $End\,(W_{\mathfrak{s}_{1/2}})$ that is self-adjoint. And $\mu(\psi)$ is taken to be the traceless part of $\psi\psi^*$ so that $\mu(\psi) \in i\mathfrak{su}(W_{\mathfrak{s}_{1/2}})$. Unlike the quadratic map defined in the classical Seiberg-Witten equations, this map $\mu$ is not proper, i.e., there might be a non-zero $\psi$ such that $\mu(\psi) = 0$. This feature is one of the many difficulties in analyzing solutions of \eqref{eq:RSSW.1}.

The equations \eqref{eq:RSSW.1} fall under the umbrella of abelian gauge theory. The gauge group is given by $\mathcal{G} = Maps(Y \to U(1))$ and it acts on the configuration space $\mathcal{C} = \mathcal{A}(det\, \mathfrak{s}_{1/2}) \times \Gamma(W_{\mathfrak{s}_{3/2}})$ of \eqref{eq:RSSW.1} by the following: For $u \in \mathcal{G}, 2A \in \mathcal{A}, \psi \in \Gamma(W_{\mathfrak{s}_{3/2}})$,
$$u\cdot A = A - u^{-1}du, \quad u \cdot \psi = u\psi.$$
Let $\mathcal{F}$ be the defining map of \eqref{eq:RSSW.1},
$$\mathcal{F}(A,\psi) = (Q_A \psi, F_A - \gamma^{-1}(\mu(\psi))).$$
$\mathcal{F}$ can be shown to be $\mathcal{G}-$equivariant. As a result, solutions of \eqref{eq:RSSW.1} are preserved under $\mathcal{G}-$symmetry.

It turns out that \eqref{eq:RSSW.1} can be viewed as the equations of motion of the following modified Chern-Simon-Dirac functional that was introduced in the Introduction section. Let $A_0$ be a fixed referenced connection. We recall the definition of the functional (cf. \eqref{eq:csrsfunctional}). 
\begin{equation*}
    \mathcal{L}^{RS}(A,\psi) = - \int_Y (A-A_0)\wedge (F_A + F_{A_0}) + \dfrac{1}{2}\int_{Y} \la Q_A \psi, \psi\ra.
\end{equation*}
Consider a small change in $(A,\psi)$ given by $(A + tb, \psi + t\phi)$, where $t \in (-\epsilon, \epsilon)$. Note that 
\begin{align*}
(A+tb - A_0) \wedge (F_A + tdb + F_{A_0}) &= (A-A_0)\wedge(F_A + F_{A_0}) + \\
&+ t(A-A_0)\wedge db + tb\wedge (F_A + F_{A_0}) + O(t^2) \\
\la Q_{A+tb}(\psi+t\phi) , \psi + t\phi\ra &= \la Q_A \psi, \psi\ra + \\
&+2t\,Re\la \phi, Q_A \psi\ra + t \la b\cdot \psi, \psi\ra + O(t^2).
\end{align*}
As a result, by Stoke's theorem, we have
\begin{align*}
    d_{(A,\psi)}\mathcal{L}^{RS}(b,\phi) &= \int_{Y} b \wedge F_A + \int_{Y} \la b\cdot \psi, \psi \ra + \int_{Y} \,Re\la \phi, Q_A \psi \ra
\end{align*}
Recall that $b$ can be viewed as a traceless skew-adjoint endomorphism of the spinor bundle, so it can be easily seen that $\la b\cdot \psi, \psi \ra = \la b, \gamma^{-1}(\mu(\psi))\ra$. Since $\star_3^2 = 1$, we can rewrite the above formula as
\begin{align*}
    d_{(A,\psi)}\mathcal{L}^{RS}(b,\phi) = \la b, \star_3 F_A + \gamma^{-1}(\mu(\psi)) \ra_{L^2} + Re\la\phi, Q_A \psi \ra_{L^2}.
\end{align*}
Thus, the gradient of $\mathcal{L}^{RS}$ at $(A,\psi)$ is given as
\[ grad\,\mathcal{L}^{RS}(A,\psi) = (\star_3 F_A + \gamma^{-1}(\mu(\psi)), Q_A \psi). \]
Since $\gamma(\star_3 \alpha) = -\gamma(\alpha)$ for a $1-$form $\alpha$, the critical points of $\mathcal{L}^{RS}$ correspond to the solutions of \eqref{eq:RSSW.1}. We summarize the above discussion in the following proposition.

\begin{Prop}\label{chern-simon-dirac}
The RS-SW equations \eqref{eq:RSSW.1} give the minimizing condition of the modified Chern-Simon-Dirac functional associated with the Rarita-Schwinger operator
\begin{equation*}
    \mathcal{L}^{RS}(A,\psi) = - \int_Y (A-A_0)\wedge (F_A + F_{A_0}) + \dfrac{1}{2}\int_{Y} \la Q_A \psi, \psi\ra.
\end{equation*}
\end{Prop}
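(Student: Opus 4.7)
The plan is to compute the first variation of $\mathcal{L}^{RS}$ at a configuration $(A,\psi)$ in the direction of an arbitrary tangent vector $(b,\phi) \in i\Omega^1(Y) \times \Gamma(W_{\mathfrak{s}_{3/2}})$, convert it into an $L^2$-pairing, and read off the gradient. The proposition then follows by identifying the vanishing of the gradient with \eqref{eq:RSSW.1}.

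First I would expand the Chern-Simons part: since $d(A-A_0) = F_A - F_{A_0}$ and $F_{A+tb} = F_A + t\,db$, the only terms linear in $t$ are $-t\int_Y (A-A_0)\wedge db - t\int_Y b \wedge (F_A+F_{A_0})$. Applying Stokes' theorem to the first integral (there is no boundary since $Y$ is closed) rewrites $-\int_Y (A-A_0)\wedge db$ as $\int_Y d(A-A_0)\wedge b = \int_Y b\wedge(F_A - F_{A_0})$, which when combined with the second term collapses to $2\int_Y b\wedge F_A$ up to normalization, contributing $\int_Y b\wedge F_A$ (matching the author's formula). For the Dirac-type part, since $Q_A$ depends linearly on $A$, we have $Q_{A+tb} = Q_A + t\,\rho(b)$ where $\rho(b)$ denotes the zeroth-order operator induced by Clifford multiplication by $b$ on $3/2$-spinors; using self-adjointness of $Q_A$, the linear term in $\tfrac{1}{2}\langle Q_{A+tb}(\psi+t\phi),\psi+t\phi\rangle$ is $t\,\mathrm{Re}\langle \phi, Q_A\psi\rangle + \tfrac{t}{2}\langle \rho(b)\psi,\psi\rangle$.

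Next I would identify the $\rho(b)$-term with the moment map. The key algebraic fact is that for $\psi \in \Gamma(W_{\mathfrak{s}_{3/2}})$ viewed as an element of $\mathrm{Hom}(TY, W_{\mathfrak{s}_{1/2}})$, the pointwise pairing $\langle b\cdot \psi,\psi\rangle$ equals $\langle b, \mu(\psi)\rangle$ under the isomorphism $\gamma: i\Omega^2(Y) \to i\mathfrak{su}(W_{\mathfrak{s}_{1/2}})$; this is a routine computation from the definition of $\mu(\psi)$ as the traceless self-adjoint part of $\psi\psi^*$. Combining the two contributions and using $\star_3^2 = 1$ on $Y$ to turn the $2$-form pairing $\int_Y b \wedge F_A$ into the $L^2$-inner product $\langle b, \star_3 F_A\rangle_{L^2}$, I obtain
\begin{equation*}
d_{(A,\psi)}\mathcal{L}^{RS}(b,\phi) = \langle b,\ \star_3 F_A + \gamma^{-1}(\mu(\psi))\rangle_{L^2} + \mathrm{Re}\langle \phi, Q_A\psi\rangle_{L^2},
\end{equation*}
so the gradient is $(\star_3 F_A + \gamma^{-1}(\mu(\psi)),\ Q_A\psi)$.

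Finally, I would conclude by noting that the sign convention $\gamma(\star_3\alpha) = -\gamma(\alpha)$ for a $1$-form $\alpha$ on a $3$-manifold, extended appropriately to $2$-forms via the isomorphism $\gamma:i\Lambda^2 T^*Y \to i\mathfrak{su}(W_{\mathfrak{s}_{1/2}})$, implies that $\star_3 F_A + \gamma^{-1}(\mu(\psi)) = 0$ is equivalent to $F_A = \gamma^{-1}(\mu(\psi))$; together with $Q_A\psi = 0$ this gives \eqref{eq:RSSW.1}. I do not expect any substantial obstacle here: the only step that requires care is the sign bookkeeping in the Stokes' theorem step and in the identification of $\langle b\cdot\psi,\psi\rangle$ with $\langle b,\mu(\psi)\rangle$, since $\mu$ is defined as the traceless part of a pointwise composition rather than by an explicit formula, but both are standard once the conventions are fixed.
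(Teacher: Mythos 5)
Your proposal follows the paper's proof step for step: expand the first variation along $(A+tb,\psi+t\phi)$, apply Stokes to the Chern--Simons piece, identify $\langle b\cdot\psi,\psi\rangle = \langle b,\gamma^{-1}(\mu(\psi))\rangle$, convert to an $L^2$-pairing via $\star_3$, and read off the gradient. The only slip is in the Stokes step, where the Leibniz rule gives $-\int_Y(A-A_0)\wedge db = -\int_Y d(A-A_0)\wedge b$ with a minus sign that you drop; this only affects the overall normalization, which you already flag as convention-dependent, and the paper's own passage treats the $-2$ and $\tfrac12$ prefactors with similar looseness.
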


\begin{Prop}\label{RSSW proposition 3.1}
Suppose $g$ is an Einstein metric on $Y$ such that it has non-negative scalar curvature. Let $(A,\psi)$ be a solution to \eqref{eq:RSSW.1}. If $A$ is flat, correspondingly, $c_1(\mathfrak{s}_{1/2})$ is torsion in $H^2(Y;\ZN)$, then $\psi$ must be a Rarita-Schwinger field.
\end{Prop}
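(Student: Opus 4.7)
Since $(A,\psi)$ is assumed to solve \eqref{eq:RSSW.1}, the equation $Q_A\psi=0$ holds automatically, and flatness of $A$ together with the curvature equation $F_A=\gamma^{-1}(\mu(\psi))$ forces $\mu(\psi)=0$; the remaining task is therefore to show $P^*_A\psi=0$. The plan is to combine the two Weitzenb\"ock-type identities of Corollary \ref{RS clifford lemma 2.13} (which use the Einstein hypothesis) with the standard spin$^c$ Lichnerowicz formula for $D_A$ (which uses $F_A=0$).

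First I would exploit part (1) of Corollary \ref{RS clifford lemma 2.13}: with $F_A=0$ the right-hand side vanishes and one obtains $Q_A P_A=\tfrac{1}{3}P_A D_A$, so taking formal adjoints on the closed manifold $Y$ yields
\[
P^*_A Q_A \;=\; \tfrac{1}{3}\,D_A P^*_A.
\]
Evaluating at $\psi$ and using $Q_A\psi=0$ shows that $\phi := P^*_A\psi\in\Gamma(W_{\mathfrak{s}_{1/2}}\otimes\mathscr{L})$ is $D_A$-harmonic, i.e.\ $D_A\phi=0$. I would then apply the Lichnerowicz identity $D_A^2=\nabla^*_A\nabla_A+\tfrac{s}{4}$ (valid because $F_A=0$) to $\phi$, pair with $\phi$, and integrate over $Y$ to obtain
\[
0 \;=\; \|D_A\phi\|_{L^2}^2 \;=\; \|\nabla_A\phi\|_{L^2}^2 \;+\; \tfrac{s}{4}\,\|\phi\|_{L^2}^2.
\]
Because $g$ is Einstein the scalar curvature $s$ is constant, and by hypothesis $s\geq 0$; both non-negative summands must therefore vanish, and in particular $\tfrac{s}{4}\|P^*_A\psi\|_{L^2}^2=0$, which immediately gives $P^*_A\psi=0$ whenever $s>0$.

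The delicate case $s=0$ is, I expect, the main technical obstacle, since the Lichnerowicz step then only returns that $P^*_A\psi$ is $\nabla_A$-parallel. In dimension three an Einstein metric with $s=0$ is automatically flat, so $\nabla_A$ is a flat unitary connection on $W_{\mathfrak{s}_{1/2}}\otimes\mathscr{L}$, and the argument reduces to excluding non-trivial parallel sections. The plan to close this case is to substitute the resulting constancy of $|P^*_A\psi|$ into the full Weitzenb\"ock formula of Corollary \ref{RS clifford lemma 2.13} part (2), which in this regime collapses to $\tfrac{4}{3}\|P^*_A\psi\|_{L^2}^2=\|\nabla_A\psi\|_{L^2}^2$, and then exploit the pointwise algebraic identity $\psi\psi^{*}=\tfrac{1}{2}|\psi|^{2}I$ coming from $\mu(\psi)=0$ together with $\psi\in\ker\gamma$ to contradict $P^*_A\psi\neq 0$.
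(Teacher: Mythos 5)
Your route is genuinely different from the paper's and does work for the main case, modulo one gap. You transport the problem to the half-spinor $\phi:=P^*_A\psi$ by taking the $L^2$-adjoint of the intertwining relation $Q_AP_A=\tfrac13P_AD_A$ (so that $Q_A\psi=0$ gives $D_A\phi=0$) and then invoke Lichnerowicz for $D_A$. The paper instead uses the same adjoint identity to show $Q_A$ preserves the splitting $\ker P^*_A\oplus\operatorname{im}P_A$, writes $\psi=\psi_1\oplus\psi_2$, applies $Q_A$ to the intertwining relation once more to obtain the restricted Weitzenb\"ock identity $Q_A^2=\tfrac19\bigl(\nabla^*_A\nabla_A+\tfrac{s}{4}\bigr)$ on $\operatorname{im}P_A$, and concludes $\psi_2=0$. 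The two arguments are dual: your $\phi$ is $P^*_A\psi_2$, and $\phi=0$ iff $\psi_2=0$ by the orthogonality of $\ker P^*_A$ and $\operatorname{im}P_A$. Both routes give a clean proof when $s>0$.

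The gap is your treatment of $s=0$. You correctly note that the Lichnerowicz step then only yields $\nabla_A\phi=0$, but the proposed closing via $\tfrac{4}{3}\|P^*_A\psi\|^2_{L^2}=\|\nabla_A\psi\|^2_{L^2}$ together with the pointwise identities $\mu(\psi)=0$ and $\gamma(\psi)=0$ is not a proof; I do not see how those algebraic conditions on $\psi$ rule out a nonzero parallel $\phi$, and in fact $\mu(\psi)=0$ plays no role here. The correct closing is simpler: from $\nabla_A\phi=0$ you get $P_A\phi=\pi(\nabla_A\phi)=0$ immediately, and Corollary~\ref{RS clifford lemma 2.13}(2) with $s=0$, $F_A=0$, $Q_A\psi=0$ gives $\nabla^*_A\nabla_A\psi=\tfrac{4}{3}P_AP^*_A\psi=\tfrac{4}{3}P_A\phi=0$; pairing with $\psi$ gives $\nabla_A\psi=0$, hence $\phi=\nabla^*_A\psi=0$. (The analogous step in the paper is that $\nabla_A\psi_2=0$ forces $P^*_A\psi_2=0$, so $\psi_2\in\ker P^*_A\cap\operatorname{im}P_A=\{0\}$; the paper leaves this implicit in its phrase ``if $s\geq 0$, then $\psi_2\equiv 0$''.) With that repair your argument is complete.
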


\begin{proof}
    By Corollary \ref{RS clifford lemma 2.13} part one, since $A$ is flat, we have $\displaystyle Q_A P_A = \frac{1}{3}P_A D_A$. We then apply $Q_A$ to both sides of this identity and apply Corollary \ref{RS clifford lemma 2.13} part one again to obtain $Q_A^2 P_A = \frac{1}{3}Q_A P_A D_A = \frac{1}{9}P_A D_A^2= \frac{1}{9} P_A \left( \nabla^*_A \nabla_A + \frac{s}{4}\right)$.
    One can easily check that the Penrose operator commutes with the B\"ochner Laplacian $\nabla^*_A \nabla_A$. As a result, we see that $Q_A^2 = \displaystyle \frac{1}{9}\left(\nabla^*_A \nabla_A + \frac{s}{4}\right)$ on $im\, P_A$. Furthermore, by taking the adjoint of both sides of $\displaystyle Q_A P_A = \frac{1}{3}P_A D_A$, one also has $\displaystyle P_A^*Q_A = \frac{1}{3} D_A P_A^*$. Hence, a similar argument as in the proof of Lemma \ref{RS clifford lemma 2.10} will give us 
    $$Q_A : ker\, P_A^* \to ker\, P_A^*, \quad Q_A: im\, P_A \to im\, P_A.$$
    Thus if we decompose $\psi = \psi_1 \oplus \psi_2$, where $\psi_1 \in ker\, P^*_A$ and $\psi_2 \in im\, P_A$, then $Q_A \psi = 0 \Rightarrow Q_A \psi_1 = 0, \quad Q_A \psi_2 = 0$. We focus on the $\psi_2-$part. Because we also have $Q_A^2 \psi_2 = 0$, we are going to consider an $L^2-$paring with $\psi_2$ and apply the Weitzenb\"ock formula of $Q_A^2$ restricted to $im\, P_A$ to expand
    $$0 = \la Q_A^2\psi_2, \psi_2 \ra_{L^2} = \frac{1}{9}|\nabla_A \psi_2|^2 + \frac{s}{36} |\psi_2|^2 \geq \frac{s}{36} |\psi_2|^2.$$
    As a result, if $s \geq 0$, then $\psi_2 \equiv 0$. Therefore, $\psi = \psi_1$ must be divergence-free. In other words, $\psi$ is a Rarita-Schwinger field as claimed.
\end{proof}

\begin{Rem}
    Proposition \ref{RSSW proposition 3.1} should be considered to be the twisted version of Proposition $4.1$ in \cite{homma2019kernel}.
\end{Rem}

\begin{Prop} \label{RSSW proposition 2.16}
Suppose $g$ is an Einstein metric on $Y$ such that it has non-negative scalar curvature. Consider a $\text{spin}^c$ structure $\mathfrak{s}_{1/2}$ over $Y$ where $c_1({\mathfrak{s}_{1/2}})$ is torsion in $H^2(Y;\ZN)$. If $(A, \psi)$ is a solution of \eqref{eq:RSSW.1} where $A$ is flat, then $(A,\psi)$ corresponds to a solution of the following degenerate multiple-spinor Seiberg-Witten equations
\begin{equation*}
    \begin{cases}
        D^T_A \psi = 0,\\
        \mu(\psi) = 0, \quad \gamma(\psi)=0.
    \end{cases}
\end{equation*}
\end{Prop}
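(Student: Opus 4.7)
The plan is to verify the three conditions $D^T_A\psi=0$, $\mu(\psi)=0$, and $\gamma(\psi)=0$ of the target system separately, and to observe that two of them are essentially free. By construction $\psi\in\Gamma(W_{\mathfrak{s}_{3/2}})=\ker\gamma\subset\Gamma(TY\otimes W_{\mathfrak{s}_{1/2}})$, so $\gamma(\psi)=0$ is automatic. Since $A$ is flat, $F_A=0$, and then the curvature equation of \eqref{eq:RSSW.1} combined with the fact that $\gamma:i\Lambda^2 T^*Y\to i\mathfrak{su}(W_{\mathfrak{s}_{1/2}})$ is an isometry forces $\mu(\psi)=0$.

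The only genuine work is to produce $D^T_A\psi=0$. Here I would simply invoke Proposition \ref{RSSW proposition 3.1}, whose hypotheses are identical to those in the statement (Einstein with nonnegative scalar curvature, $c_1(\mathfrak{s}_{1/2})$ torsion, $A$ flat, and $Q_A\psi=0$), and which concludes that $\psi$ is a Rarita-Schwinger field, i.e. both $Q_A\psi=0$ and $P_A^*\psi=0$ hold. Combining this with the matrix expression of $D^T_A$ from Subsection 2.1, with respect to the orthogonal decomposition $TY\otimes W_{\mathfrak{s}_{1/2}}\cong \iota(W_{\mathfrak{s}_{1/2}})\oplus W_{\mathfrak{s}_{3/2}}$, one reads off that for a section lying purely in the $W_{\mathfrak{s}_{3/2}}$-summand one has $D^T_A\psi=2\iota P_A^*\psi+Q_A\psi$. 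Both summands vanish by Proposition \ref{RSSW proposition 3.1}, hence $D^T_A\psi=0$.

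There is essentially no hard step once Proposition \ref{RSSW proposition 3.1} is available. The only point requiring care is the off-diagonal entry $2\iota P_A^*$ in the matrix form of $D^T_A$: this is exactly what makes the vanishing of $D^T_A\psi$ strictly stronger than the vanishing of $Q_A\psi$, and it is precisely the divergence-free condition $P_A^*\psi=0$ supplied by Proposition \ref{RSSW proposition 3.1} that plugs this gap.
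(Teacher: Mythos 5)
Your proposal is correct and matches the paper's proof in substance: both reduce the problem to Proposition \ref{RSSW proposition 3.1}, which delivers $P_A^*\psi=0$, and both deduce $\mu(\psi)=0$ from flatness via the curvature equation and $\gamma(\psi)=0$ from $\psi\in W_{\mathfrak{s}_{3/2}}=\ker\gamma$. Your extra step — reading off $D_A^T\psi=2\iota P_A^*\psi+Q_A\psi$ from the block matrix form of $D_A^T$ — is just the explicit verification of what the paper's Definition 2.2 packages as the equivalence of ``Rarita-Schwinger field'' with ``harmonic $S$-valued $1$-form killed by $\gamma$,'' so it fills in a detail rather than taking a different route.
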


\begin{proof}
    By Proposition \ref{RSSW proposition 3.1}, we know that if $(A,\psi)$ is a solution and $A$ is flat, then $\psi$ is a Rarita-Schwinger field. This means that $D^T_A \psi = 0$ and $\gamma(\psi) = 0$. Since $A$ is flat, $F_A = 0$. As a result, we also have $\mu(\psi) = 0$. 
\end{proof}

Assuming Theorem \ref{first main theorem} and Corollary \ref{Corollary obstruction} are true, we observe the following consequence regarding the geometry of $Y$.

\begin{Cor}\label{geometry of y}
    Let $\mathfrak{s}_{1/2}$ be a $\text{spin}^c$ structure on $Y$ such that $c_1(\mathfrak{s}_{1/2})$ is torsion in $H^2(Y;\ZN)$. Let $g$ be a Riemannan metric on $Y$. If there is no $g-3/2-$Fueter section associated to the RS-SW equations corresponding to $g$ and $\mathfrak{s}_{1/2}$, then
    \begin{enumerate}
        \item The charged moduli space $\mathcal{M}_k(g)$ is compact for some charge $k$.
        \item Either $g$ is not an Einstein metric with non-negative scalar curvature or $(A,\psi)$ is never a solution where $A$ is a flat $U(1)-$connection on $det\,(\mathfrak{s}_{1/2})$.
    \end{enumerate}
\end{Cor}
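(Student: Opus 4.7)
The plan is to derive both conclusions as essentially immediate consequences of material already established in the paper, with no genuinely new analysis required.

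For part (1), I would invoke Corollary \ref{Corollary obstruction} directly. The standing assumption that no $g$-$3/2$-Fueter section exists is exactly the hypothesis of that corollary, so the charged-$k$ moduli space $\mathcal{M}_k(g)$ is compact throughout a neighborhood of $g$ in $\mathfrak{M}$, and in particular at $g$ itself. This disposes of part (1) with no additional computation.

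For part (2), I would argue by contrapositive. Assume that $g$ \emph{is} an Einstein metric of non-negative scalar curvature and that there exists an irreducible solution $(A,\psi)$ of \eqref{eq:RSSW.1} with $A$ a flat $U(1)$-connection on $\det(\mathfrak{s}_{1/2})$; the aim is to manufacture a $g$-$3/2$-Fueter section, contradicting the standing hypothesis. Since $c_1(\mathfrak{s}_{1/2})$ is torsion, the hypotheses of Proposition \ref{RSSW proposition 3.1} are met verbatim and force $\psi$ to be a Rarita-Schwinger field, i.e.\ $Q_A\psi=0$ together with $P_A^*\psi=0$. Proposition \ref{RSSW proposition 2.16} then upgrades this to the degenerate system $D_A^T\psi=0$, $\gamma(\psi)=0$, $\mu(\psi)=0$; in particular $F_A=0$ and the curvature equation of \eqref{eq:RSSW.1} together give $\mu(\psi)=0$ identically.

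It remains to package $(A,\psi)$ as a bona fide $3/2$-Fueter section in the sense of Definition \ref{3/2 fueter section}. I would normalize by replacing $\psi$ with $\psi/\|\psi\|_{L^4}$, which is permitted by irreducibility; because $Q_A\psi=0$ is linear in $\psi$ and $\mu(\psi)=0$ is quadratic and already vanishes, both equations persist after rescaling, and $\|\nabla_A\psi\|_{L^2}<\infty$ is automatic since $Y$ is closed and $(A,\psi)$ is smooth. The rescaled pair then matches the data characterizing the limiting objects in Theorem \ref{second main theorem}, producing the contradiction.

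The only conceptual step that requires care is this last normalization: one must confirm that the triple $(A,\psi/\|\psi\|_{L^4},0)$ satisfies the precise analytic conditions of Definition \ref{3/2 fueter section} (unit $L^4$-norm, finite Dirichlet energy, vanishing $\mu$) rather than merely the formal PDE system produced by Proposition \ref{RSSW proposition 2.16}; once that identification is made, the rest is a bookkeeping exercise and no new estimate is required.
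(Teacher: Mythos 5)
Your proof is correct and follows the same route the paper takes: part (1) falls out of Corollary \ref{Corollary obstruction}, and part (2) proceeds by contrapositive via Propositions \ref{RSSW proposition 3.1} and \ref{RSSW proposition 2.16}. You are in fact more careful than the published argument — which makes an unnecessary and somewhat opaque detour through a hypothetical sequence of flat solutions before falling back on Proposition \ref{RSSW proposition 2.16}, and which never spells out the $L^4$-rescaling required to match Definition \ref{3/2 fueter section} — so your explicit check that the rescaled pair has unit $L^4$-norm and finite Dirichlet energy (automatic on the closed manifold $Y$), that $\mu$ vanishes, and that $|\psi|$ is Lipschitz and hence H\"older continuous correctly closes a small gap in the paper's terse exposition.
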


\begin{proof}
    Suppose that $g$ is an Einstein metric with non-negative scalar curvature, and $(A,\psi)$ is a solution where $A$ is flat. Take a sequence of solutions $\{(A_n,\psi_n)\}$ where $A_n$ is flat. By Theorem \ref{first main theorem}, $(A_n, \psi_n)$ may converge weakly to a solution $(A_\infty,\psi_\infty)$ in $L^2_{1,loc}$ and $L^2_{2,loc}$, respectively. By definition, $(A_\infty,\psi_\infty)$ is a $g-3/2-$Fueter section. If that is not the case, then from Proposition \ref{RSSW proposition 2.16} we may take $(A,\psi)$ itself, which corresponds to a $g-3/2-$Fueter section.  
\end{proof}

\section{The first compactness theorem}
Recall that the RS-SW equations in dimension 3 read as follows
\begin{equation}\label{eq:first compactness.1}
    \begin{cases}
        Q_A\phi = 0,\\
        F_A = \mu(\phi).
    \end{cases}
\end{equation}
Here $\phi \in \Gamma(W_{\mathfrak{s}_{3/2}}\otimes \mathscr{L})$, $A$ is a $U(1)-$connection on the line bundle $\mathscr{L}\to Y$ and $\mu$ is the quadratic map defined on $\Gamma(T^{*}Y \otimes W_{\mathfrak{s}_{1/2}}\otimes \mathscr{L})= \Gamma(Hom\,(TY, W_{\mathfrak{s}_{1/2}}\otimes \mathscr{L}))$ given by $\mu(\phi) = \phi \phi^* - \dfrac{1}{2}|\phi|^2\cdot 1$. Modding out by the gauge group $\mathcal{G} = Maps(Y \to U(1))$, the moduli space of solutions of \eqref{eq:first compactness.1} could be non-compact. That is, there could be a sequence of solutions $\{(A_n, \phi_n)\}$ to \eqref{eq:first compactness.1} such that $\norm{\phi_n}_{L^4} \to \infty$. For this reason, we blow up our equation by introducing an extra unknown to the equations. Suppose $(A,\phi)$ is a solution to \eqref{eq:first compactness.1}, denote 
$$\psi = \epsilon\phi,\, \text{where } \, \epsilon = \frac{1}{\norm{\phi}_{L^4}}.$$
Note that then $\norm{\psi}_{L^4} = 1$, and $Q_A \psi = \epsilon Q_{A} \phi = 0$. Furthermore,
$$F_A = \mu(\phi) = \mu (\psi / \epsilon) \Rightarrow \epsilon^2 F_A = \mu(\psi).$$
As a result, with the new unknown $\epsilon \in (0,\infty)$, the equations \eqref{eq:first compactness.1} now read as following
\begin{equation}\label{eq:first compactness.2}
    \begin{cases}
        \norm{\psi}_{L^4} = 1,\\
        Q_A \psi = 0,\\
        \epsilon^2 F_A = \mu(\psi).
    \end{cases}
\end{equation}
Solutions of \eqref{eq:first compactness.2} are triples of the form $(A,\psi, \epsilon) \in \mathcal{A}(\mathscr{L}) \times \Gamma(W_{\mathfrak{s}_{3/2}} \otimes \mathscr{L}) \times (0, \infty)$. Similar to \eqref{eq:first compactness.1}, the equations \eqref{eq:first compactness.2} is $\mathcal{G}-$invariant. Of course, when we fix a referenced connection $A_0$ on $\mathscr{L}$, any other $U(1)$ connection can be written as $A = A_0 + a$, where $a \in i\Omega^1(Y)$. Then \eqref{eq:first compactness.2} can also be equivalently re-written as
\begin{equation}\label{eq:first compactness.3}
    \begin{cases}
        \norm{\psi}_{L^4} = 1\\
        Q_{A_0}\psi + \pi(a \cdot \psi) = 0\\
        \epsilon^2(F_{A_0} + da) = \mu(\psi).
    \end{cases}
\end{equation}
The first compactness theorem we will prove regarding the moduli space of solutions of \eqref{eq:first compactness.2} is the following.

\begin{Th}\label{first compactness theorem}
    Let $\{(A_n, \psi_n, \epsilon_n)\} \subset \mathcal{A}(\mathscr{L}) \times \Gamma(W_{\mathfrak{s}_{3/2}} \otimes \mathscr{L}) \times (0, \infty)$ be a sequence of solutions of \eqref{eq:first compactness.2}. There exists a constant $c$ depending only on the Riemannian metric $g$ on $Y$, $A_0$ such that if $\limsup \epsilon_n > c$, then after passing through a subsequence and up to gauge transformations $(A_n, \psi_n, \epsilon_n)$ converges to a solution $(A,\psi, \epsilon)$ in the $C^\infty$ topology.
\end{Th}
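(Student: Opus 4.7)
My plan is to turn the lower bound on $\epsilon_n$ into uniform Sobolev bounds on $(A_n,\psi_n)$, and then bootstrap to $C^\infty$. After passing to a subsequence, I may assume $\epsilon_n \geq c' > 0$ for all $n$. The curvature equation $\epsilon_n^2 F_{A_n} = \mu(\psi_n)$ combined with the normalization $\norm{\psi_n}_{L^4} = 1$ and the pointwise bound $|\mu(\psi_n)| \leq C|\psi_n|^2$ immediately gives $\norm{F_{A_n}}_{L^2} \leq C/c'^2$. I would then apply Uhlenbeck's gauge-fixing lemma to produce gauge transformations $u_n \in \mathcal{G}$ such that $A_n' := u_n \cdot A_n = A_0 + a_n$ is in Coulomb gauge relative to $A_0$ with $\norm{a_n}_{L^2_1} \leq C\norm{F_{A_n'} - F_{A_0}}_{L^2}$; hence $\{a_n\}$ is uniformly bounded in $L^2_1$ and, by Sobolev embedding in dimension three, in $L^6$.

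The next step is to pair the Weitzenb\"ock identity of Lemma \ref{RS clifford lemma 2.12},
\[ Q_{A_n'}^2 + \tfrac{4}{3}P_{A_n'}P_{A_n'}^* = \nabla_{A_n'}^*\nabla_{A_n'} + \tfrac{s}{4} + \pi(\gamma(F_{A_n'})) - \pi(1\otimes Ric), \]
with $\psi_n$ and integrate. Since $Q_{A_n'}\psi_n = 0$, this yields
\[ \norm{\nabla_{A_n'}\psi_n}_{L^2}^2 = \tfrac{4}{3}\norm{P_{A_n'}^*\psi_n}_{L^2}^2 - \int_Y\Big(\tfrac{s}{4}|\psi_n|^2 + \la\pi(\gamma(F_{A_n'}))\psi_n,\psi_n\ra - \la\pi(1\otimes Ric)\psi_n,\psi_n\ra\Big). \]
The Ricci and scalar-curvature integrals are bounded by $C\norm{\psi_n}_{L^2}^2 \leq C\norm{\psi_n}_{L^4}^2 = C$ via H\"older on the closed manifold $Y$, and the curvature pairing is bounded by $\norm{F_{A_n'}}_{L^2}\norm{\psi_n}_{L^4}^2 \leq C$. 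The Penrose term $\norm{P_{A_n'}^*\psi_n}_{L^2}^2$ should be controlled by $\norm{F_{A_n'}}_{L^2}$ through a divergence-curvature estimate (anticipated as Lemma 1.5 in the outline) whose proof uses precisely the unit $L^4$-normalization of $\psi_n$. Combining gives a uniform bound on $\norm{\nabla_{A_n'}\psi_n}_{L^2}$; passing to $\nabla_{A_0}$ costs only a lower-order error $\norm{a_n \cdot \psi_n}_{L^2}$ bounded by H\"older via the $L^6$-bound on $a_n$ and the $L^3$-bound on $\psi_n$.

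With uniform $L^2_1$-bounds on both $a_n$ and $\psi_n$ in hand, a standard elliptic bootstrap finishes the argument. Writing the spinor equation as $Q_{A_0}\psi_n = -\pi(a_n\cdot\psi_n)$ and the curvature equation in Coulomb gauge as the elliptic system $(d\oplus d^*)a_n = (\epsilon_n^{-2}\mu(\psi_n) - F_{A_0},\,0)$, Sobolev multiplication together with elliptic regularity for $Q_{A_0}$ and for $d\oplus d^*$ alternately improves the Sobolev exponents of $\psi_n$ and $a_n$. Iterating yields uniform $C^k$-bounds for every $k$, and a diagonal Arzel\`a-Ascoli argument then extracts a $C^\infty$-convergent subsequence (together with a subsequence of $\epsilon_n$ converging to some $\epsilon \geq c'$), whose limit $(A,\psi,\epsilon)$ solves \eqref{eq:first compactness.2} by continuity of each term. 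The main obstacle, and the feature that distinguishes this argument from its Dirac-type predecessors, is the divergence term $\norm{P_A^*\psi}_{L^2}^2$ appearing in the Weitzenb\"ock identity for $Q_A^2$: this Penrose contribution is what forces the blow-up by the reciprocal of the $L^4$-norm rather than the customary $L^2$-norm, since only the $L^4$-normalization is strong enough to absorb the resulting divergence term against the curvature.
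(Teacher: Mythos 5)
Your argument is essentially the paper's: pair the Weitzenb\"ock identity of $Q_A^2$ with $\psi$, integrate over $Y$, control the Penrose/divergence term $\norm{P_A^*\psi}_{L^2}$ by the curvature through Uhlenbeck gauge-fixing and the $L^4$-normalization (this is exactly the paper's Lemma \ref{first compactness lemma 1.5}), extract a uniform $L^2_1$ bound on $\psi$, and bootstrap. The only cosmetic difference is in how the threshold $c$ is produced: you feed the direct bound $\norm{F_{A_n}}_{L^2} \lesssim \epsilon_n^{-2}$ (from $\epsilon_n^2 F_{A_n} = \mu(\psi_n)$ and $\norm{\psi_n}_{L^4}=1$) into all estimates, whereas the paper keeps the curvature term $-\int \epsilon^{-2}|\mu(\psi)|^2$ paired against the Penrose bound $\le c_3\int \epsilon^{-4}|\mu(\psi)|^2 + c_4$ and reads off the explicit threshold where $\tfrac{4c_3}{3}\epsilon^{-4}-\epsilon^{-2}\le 0$, then runs a second integration by parts against the Green's function of $\Delta+1$ to get pointwise $L^\infty$ control before bootstrapping; both routes close the argument, and your closing remark on why the $L^4$ normalization is needed matches the paper's own rationale.
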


We begin the proof of Theorem \ref{first compactness theorem}. Firstly, recall that we have the Weitzenb\"ock formula for the twisted Rarita-Schwinger operator.

\begin{Lemma}[\textbf{Weitzenb\"ock formula of $Q_A$}]\label{weitzenbock formula}
    For all $(A,\psi) \in \mathcal{A}(\mathscr{L}) \times \Gamma(W_{\mathfrak{s}_{3/2}} \otimes \mathscr{L})$, we have
    $$Q^2_A \psi + \dfrac{4}{3} P_A P_A^* \psi = \nabla^*_A \nabla_A \psi + \dfrac{s}{4} \psi + \pi(F_A \psi) - \pi(1 \otimes Ric) \psi,$$
    where $P_A = \pi \circ \nabla_A$ is the  twisted Penrose operator, $s$ is the scalar curvature of the Riemannian metric $g$ on $Y$, and $Ric$ is the associated Ricci curvature. \qed
\end{Lemma}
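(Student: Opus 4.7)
The plan is to observe that this formula is nothing more than the specialization of Lemma~\ref{RS clifford lemma 2.12}(2) to the current setting in which the spinor bundle carries an additional twist by the line bundle $\mathscr{L}$ with $U(1)$-connection $A$. In other words, I would not try to re-derive everything from scratch; I would build on the general Weitzenb\"ock identity of Lemma~\ref{RS clifford lemma 2.8}(2) and simply unpack the curvature term in the presence of the abelian twist.

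The first step is to verify that $W_{\mathfrak{s}_{1/2}}\otimes\mathscr{L}$ is a Clifford module over $Y$ with Clifford multiplication acting on the first factor only, and that the covariant derivative $\nabla_A=\nabla^{\mathrm{spin}}\otimes 1+1\otimes\nabla^{\mathscr{L}}_{A}$ is compatible with both the Clifford structure and the Hermitian metric. Then the Rarita-Schwinger operator $Q_A$ and the Penrose operator $P_A$ built from $\nabla_A$ fit exactly into the framework of Subsection~2.1, so Lemma~\ref{RS clifford lemma 2.8}(2) (specialized to $n=3$) yields
$$Q_A^{2}+\tfrac{4}{3}P_A P_A^{*}=\nabla_A^{*}\nabla_A+\tfrac{s}{4}+\pi\bigl(\gamma\otimes F^{\nabla_A}\bigr).$$

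Next I would decompose the total twisting curvature as $F^{\nabla_A}=F^{\nabla^{LC}}\otimes 1+1\otimes F_A$, mirroring the discussion preceding Lemma~\ref{RS clifford lemma 2.12}. Applying $\pi(\gamma\otimes\cdot)$ to the Levi-Civita piece and using the identity $\sum_i\gamma(e_i)F^{\nabla^{LC}}(e_i,v)=\tfrac{1}{2}\mathrm{Ric}(v)$ from the excerpt contributes $-\pi(1\otimes\mathrm{Ric})\psi$ to the right-hand side, exactly as in the untwisted case; meanwhile, applying $\pi(\gamma\otimes\cdot)$ to the abelian piece $1\otimes F_A$ contributes $\pi(F_A\psi)$, where $F_A$ is regarded as an $i\RN$-valued $2$-form acting by Clifford multiplication on the spinor. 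Reassembling these two contributions reproduces the stated right-hand side.

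The only nontrivial bookkeeping is keeping track of which tensor factor each curvature operator acts on, and confirming that the Ricci correction enters with the correct sign through the projection $\pi$ rather than cancelling against $\pi(\gamma(F_A))$. This is precisely the calculation that already appears just before Lemma~\ref{RS clifford lemma 2.12}, so no new analytic input is required; once the bookkeeping is done, the identity coincides with Lemma~\ref{RS clifford lemma 2.12}(2) with the twist by $\mathscr{L}$ absorbed into the definition of $F_A$.
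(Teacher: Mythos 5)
Your proposal is correct and matches the paper's own treatment: the lemma is stated with no separate proof precisely because it is Lemma~\ref{RS clifford lemma 2.12}(2) (itself obtained from the general Clifford-module identity of Lemma~\ref{RS clifford lemma 2.8}(2)) with the extra $U(1)$-twist by $\mathscr{L}$ absorbed into $\nabla_A$ and $F_A$, and your decomposition $F^{\nabla_A}=F^{\nabla^{LC}}\otimes 1+1\otimes F_A$ with the Ricci contraction is exactly the bookkeeping done just before Lemma~\ref{RS clifford lemma 2.12}. No gaps.
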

Since $P_A^* = (\pi \circ \nabla_A)^* = \nabla_A^* \circ \pi^*$, for any $\psi \in \Gamma(W_{\mathfrak{s}_{3/2}} \otimes \mathscr{L})$, we have $\pi^* \psi = \pi(\psi) = \psi$. As a result, $P_A^* \psi = \nabla^*_A \psi$. With respect to the referenced connection $A_0$, we re-write the divergence of $\psi$ as $\nabla^*_A \psi = \nabla^*_{A_0}\psi + a^*\psi$.

\begin{Lemma}\label{first compactness lemma 1.3}
    For any $(a, \psi) \in i\Omega^1(Y) \times \Gamma(W_{\mathfrak{s}_{3/2}}\otimes \mathscr{L})$, we have $\norm{a^*\psi}_{L^2} \leq \sqrt{3}\norm{a \cdot \psi}_{L^2}$.
\end{Lemma}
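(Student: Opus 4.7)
The plan is to reduce this inequality to a pointwise algebraic estimate in a local orthonormal frame and then integrate. In a local orthonormal coframe $\{e^j\}_{j=1}^3$ of $Y$, I would write $a = \sum_j a_j e^j$ and $\psi = \sum_j e^j \otimes \phi_j$ with $\phi_j$ sections of $W_{\mathfrak{s}_{1/2}}\otimes \mathscr{L}$. Since $a^*$ is defined by $\nabla_A^* = \nabla_{A_0}^* + a^*$ and $\nabla_A - \nabla_{A_0}$ is the map $\phi \mapsto a\otimes \phi$, pointwise $a^*$ is the formal adjoint of tensoring with $a$, i.e. the metric contraction $a^*\psi = \sum_j a_j \phi_j$ (up to a conjugate, which does not affect norms). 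The quantity $a\cdot\psi$ is the Clifford action of $a$ on the spinor factor, so $a\cdot\psi = \sum_j e^j \otimes \gamma(a)\phi_j$.

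Next I would compute the two pointwise norms. Using $\gamma(a)^*\gamma(a) = |a|^2 \cdot \mathrm{id}$ on $W_{\mathfrak{s}_{1/2}}\otimes \mathscr{L}$, one gets
\[
|a\cdot\psi|^2 = \sum_{j=1}^3 |\gamma(a)\phi_j|^2 = |a|^2 \sum_{j=1}^3 |\phi_j|^2 = |a|^2 |\psi|^2.
\]
For $|a^*\psi|^2$, a single application of Cauchy--Schwarz to a sum of $\dim Y = 3$ terms yields
\[
|a^*\psi|^2 = \Bigl|\sum_{j=1}^{3} a_j \phi_j\Bigr|^2 \leq 3 \sum_{j=1}^3 |a_j|^2 |\phi_j|^2 \leq 3\,|a|^2 |\psi|^2 = 3\,|a\cdot\psi|^2.
\]
Integrating over $Y$ and taking square roots gives the claimed $L^2$-bound.

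There is no genuine obstacle here: the argument is pointwise and mechanical, and the constant $\sqrt{3}$ is simply $\sqrt{\dim Y}$, reflecting the number of coframe directions in which $a$ contracts against $\psi$. It is worth noting that the $3/2$-spinor constraint $\gamma(\psi)=0$ plays no role in this estimate; the same bound holds for an arbitrary section of $T^*Y \otimes W_{\mathfrak{s}_{1/2}}\otimes \mathscr{L}$. The role of this lemma in the subsequent analysis will be to convert pointwise control of the Clifford perturbation $a\cdot\psi$, which arises naturally in $Q_A - Q_{A_0}$, into control of the divergence perturbation $a^*\psi = \nabla_A^*\psi - \nabla_{A_0}^*\psi$ that appears in the Weitzenb\"ock/integration-by-parts step underlying the proof of Theorem \ref{first compactness theorem}.
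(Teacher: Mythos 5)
Your proof is correct and follows essentially the same route as the paper: write $\psi$ in a local orthonormal coframe, identify $a^*\psi$ as the contraction $\sum_j a(e_j)\psi_j$, apply Cauchy--Schwarz over the $3$ frame directions, use $|a\cdot\psi|^2 = |a|^2|\psi|^2$ from the Clifford relation, and integrate. Your added remark that the constraint $\gamma(\psi)=0$ is not used is accurate and a fair observation.
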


\begin{proof}
    Note that $a^*\psi$ is defined as following: Since $\psi$ can be viewed as a spinor-valued $1-$form, in local frame $\{e_i\}_{i=1}^{3}$, it can be written as $\psi = \sum_{i=1}^{3}\psi_i \otimes e^i$, where $\psi_i \in \Gamma(W_{\mathfrak{s}_{1/2}}\otimes \mathscr{L})$. Then simply, $a^* \psi = \sum_{i=1}^{3} a(e_i)\psi_i$. By triangle inequality and the Cauchy-Schwarz inequality, we have
    $$|a^* \psi|^2 \leq \left( \sum_{i=1}^{3} |a(e_i)\psi_i| \right)^2 \leq 3 \sum_{i=1}^{3} |a(e_i)|^2|\psi_i|^2 \leq 3 |a|^2 |\psi|^2.$$
    At the same time, recall that the Clifford multiplication by $a \in i\Omega^1(Y)$ on $\Gamma(T^* Y \otimes W_{\mathfrak{s}_{1/2}} \otimes \mathscr{L})$ gives us a skew-adjoint linear map. Thus,
    $$|a\cdot \psi|^2 = \la a \cdot \psi, a \cdot \psi\ra = - \la a^2 \cdot \psi, \psi \ra= |a|^2 |\psi|^2.$$
    As a result, we have 
    $$\norm{a^*\psi}^2_{L^2} = \int_{Y} |a^*\psi|^2 \leq 3 \int_{Y} |a \cdot \psi|^2 = 3\norm{a \cdot \psi}^2_{L^2}.$$
    After taking the square root, we have the desired estimate.
\end{proof}

The following $U(1)-$gauge fixing lemma is well-known.

\begin{Lemma}[\textbf{Gauge fixing lemma for $U(1)-$bundle}]\label{first compactness lemma 1.4}
    Suppose $\mathscr{L}$ is a complex line bundle over $Y$ equipped with a Hermitian metric. Fix a $C^{\infty}$ unitary connection $A_0$ on $\mathscr{L}$. For any $k\geq 0$, there exists constants $c_1(g,k), c_2(g, A_0, k)$ such that for $L^2_k$ unitary connection $A$ on $\mathscr{L}$, there is an $h \in \mathcal{G}_{L^2_{k+1}}$ such that $h \cdot A = A_0 + a$, where $a \in L^2_k(iT^* Y)$ and 
    $$d^* a = 0,\, \, \, \norm{a}^2_{L^2_k} \leq c_1(g,k) \norm{F_A}^2_{L^2_{k-1}} + c_2(g, A_0, k).$$ 
\end{Lemma}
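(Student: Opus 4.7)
Write $A = A_0 + \alpha$ with $\alpha \in L^2_k(iT^*Y)$; then for any $h\in \mathcal{G}$, $h\cdot A = A_0 + (\alpha - h^{-1}dh)$, so the task is to pick $h\in\mathcal{G}_{L^2_{k+1}}$ so that $a := \alpha - h^{-1}dh$ is co-closed and obeys the required bound. Apply the Hodge decomposition $\alpha = i(d\xi + d^*\eta + h_0)$, with $\xi\in L^2_{k+1}(Y;\RN)$, $\eta\in L^2_{k+1}(\Lambda^2 T^*Y)$ and $h_0$ a real harmonic $1$-form. Setting $h_1 = e^{i\xi}$ gives an identity-component gauge transformation of regularity $L^2_{k+1}$ with $h_1^{-1}dh_1 = id\xi$, so $h_1 \cdot A = A_0 + i(d^*\eta + h_0)$, which already satisfies $d^*a = 0$ because $d^*d^* = 0$ and $h_0$ is harmonic.

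The co-exact piece is then controlled by elliptic regularity. Applying a Gaffney-type estimate to $d^*\eta \in \ker d^*$, together with the identity $d\alpha = i\,d d^*\eta$ and the relation $F_A - F_{A_0} = d\alpha$, one gets
\[
\norm{d^*\eta}_{L^2_k}^2 \leq C_1(g,k)\,\norm{d(d^*\eta)}_{L^2_{k-1}}^2 = C_1(g,k)\,\norm{d\alpha}_{L^2_{k-1}}^2 \leq 2C_1(g,k)\bigl(\norm{F_A}_{L^2_{k-1}}^2 + \norm{F_{A_0}}_{L^2_{k-1}}^2\bigr).
\]
The harmonic part $h_0$ is invisible to $F_A$, so it must be handled separately. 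Here one uses the non-identity components of $\mathcal{G}$: the de Rham classes of $h^{-1}dh/i$ as $h$ ranges over representatives of $\pi_0(\mathcal{G})\cong H^1(Y;\ZN)$ realise a cocompact lattice $\Lambda$ inside the finite-dimensional space of harmonic $1$-forms $\mathcal{H}^1(Y;\RN)$. Post-composing $h_1$ with a smooth representative of a suitable class translates $h_0$ into a fixed, precompact fundamental domain of $\Lambda$; since all Sobolev norms are equivalent on the finite-dimensional space $\mathcal{H}^1(Y;\RN)$, the translated harmonic part satisfies $\norm{h_0}_{L^2_k}\leq C_2(g)$. Combining this with the co-exact estimate and absorbing $\norm{F_{A_0}}_{L^2_{k-1}}^2$ together with $C_2(g)^2$ into a single constant $c_2(g,A_0,k)$ yields the bound.

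The main obstacle is precisely the harmonic contribution: closed $1$-forms are annihilated by $d$, so $h_0$ cannot be bounded in terms of $F_A$ by any local elliptic estimate, and one is forced to exploit compactness of the Picard torus $\mathcal{H}^1(Y;\RN)/2\pi\Lambda$. The regularity requirement $h\in\mathcal{G}_{L^2_{k+1}}$ is essentially automatic: $\xi\in L^2_{k+1}$ because $\alpha\in L^2_k$ by Hodge theory, and the "large" gauge transformation used to reduce $h_0$ to the fundamental domain is smooth.
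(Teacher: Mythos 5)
The paper cites Lemma~\ref{first compactness lemma 1.4} as well known and provides no proof, so there is no argument of the authors' to compare against; your proof is correct and is the standard one (Hodge decomposition, kill the exact part with $e^{i\xi}$, Gaffney/elliptic estimate for $d+d^*$ on the orthogonal complement of the harmonic $1$-forms, translate the harmonic part by a large gauge transformation into a fundamental domain of the lattice). The one step you should make explicit is the choice of representatives of $\pi_0(\mathcal{G})\cong H^1(Y;\ZN)$: you must pick $h_c$ so that $h_c^{-1}dh_c/i$ is itself the \emph{harmonic} representative $\omega_c$ of the class $2\pi c$, for instance $h_c(y)=\exp\bigl(i\int_{y_0}^{y}\omega_c\bigr)$, which lands in $U(1)$ because $\omega_c$ has periods in $2\pi\ZN$. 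A generic smooth representative only yields a closed $1$-form, whose exact piece would reintroduce a nonzero $d^*a$ and undo the Coulomb gauge already arranged; one can instead repair this with a further exact $e^{i\zeta}$-gauge transformation, but either way the choice deserves a sentence. With that pinned down, absorbing $\norm{F_{A_0}}^2_{L^2_{k-1}}$ and the (finite) diameter of the fundamental domain into $c_2(g,A_0,k)$ gives exactly the stated bound, with $c_1(g,k)$ coming from the elliptic constant for $d+d^*$ restricted to $\bigl(\ker(d+d^*)\bigr)^{\perp}$.
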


\begin{Lemma}\label{first compactness lemma 1.5}
    For any $(A, \psi, \epsilon)$ that solves \eqref{eq:first compactness.3}, there are  constants $c_3, c_4$ depending only the Riemannian metric $g$ of $Y$ and the fixed reference connection $A_0$ such that 
    $$\int_{Y} |P^*_A \psi|^2 \leq c_3 \int_{Y} \epsilon^{-4} |\mu(\psi)|^2 + c_4.$$
\end{Lemma}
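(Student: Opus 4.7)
The plan is to combine Uhlenbeck's gauge-fixing with G\aa rding's inequality for the fixed reference elliptic operator $Q_{A_0}$, thereby sidestepping the circular dependence that arises in a direct Weitzenb\"ock approach.

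First I would apply Lemma \ref{first compactness lemma 1.4} with $k=1$ to put $A = A_0 + a$ in Coulomb gauge, so that $\norm{a}^2_{L^2_1} \leq c_1\norm{F_A}^2_{L^2} + c_2$. Since $F_A = \epsilon^{-2}\mu(\psi)$ from \eqref{eq:first compactness.3}, this becomes $\norm{a}^2_{L^2_1} \leq c_1 \int_Y \epsilon^{-4}|\mu(\psi)|^2 + c_2$. The Sobolev embedding $L^2_1 \hookrightarrow L^4$ in dimension three then controls $\norm{a}^2_{L^4}$ by the same quantity.

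Next, decompose $P^*_A\psi = P^*_{A_0}\psi + a^*\psi$ (as noted just before Lemma \ref{first compactness lemma 1.3}) to reduce, via the triangle inequality, to bounding $\norm{P^*_{A_0}\psi}^2_{L^2}$ and $\norm{a^*\psi}^2_{L^2}$ separately. For the latter, Lemma \ref{first compactness lemma 1.3} gives $\norm{a^*\psi}^2_{L^2} \leq 3\norm{a\cdot\psi}^2_{L^2} = 3\int_Y |a|^2|\psi|^2$, and H\"older combined with $\norm{\psi}_{L^4}=1$ yields $\int_Y |a|^2|\psi|^2 \leq \norm{a}^2_{L^4}$, which we have just bounded.

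For the main term $\norm{P^*_{A_0}\psi}^2_{L^2}$ I would invoke G\aa rding's inequality for the elliptic first-order operator $Q_{A_0}$ on the closed manifold $Y$:
\[
\norm{\psi}^2_{L^2_{1,A_0}} \leq C\left( \norm{Q_{A_0}\psi}^2_{L^2} + \norm{\psi}^2_{L^2} \right).
\]
The equation $Q_A\psi = 0$ combined with $Q_A = Q_{A_0} + \pi(a\cdot)$ yields $Q_{A_0}\psi = -\pi(a\cdot\psi)$, so $\norm{Q_{A_0}\psi}_{L^2} \leq \norm{a\cdot\psi}_{L^2}$, which we have already controlled. Since also $\norm{\psi}_{L^2} \leq \mathrm{Vol}(Y)^{1/4}\norm{\psi}_{L^4} = \mathrm{Vol}(Y)^{1/4}$, we obtain a bound on $\norm{\nabla_{A_0}\psi}^2_{L^2}$ of the required form. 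The pointwise estimate $|\nabla^*_{A_0}\psi|^2 \leq 3|\nabla_{A_0}\psi|^2$, valid in dimension three because the divergence is a contraction of the covariant derivative, then gives $\norm{P^*_{A_0}\psi}^2_{L^2} = \norm{\nabla^*_{A_0}\psi}^2_{L^2} \leq 3\norm{\nabla_{A_0}\psi}^2_{L^2}$, also bounded as desired.

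The main obstacle is that the natural Weitzenb\"ock identity (Lemma \ref{weitzenbock formula}) applied with $Q_A\psi=0$ gives $\tfrac{4}{3}\norm{P^*_A\psi}^2_{L^2}$ as an equality with $\norm{\nabla_A\psi}^2_{L^2}$ plus lower-order curvature terms, which cannot directly yield an upper bound on $\norm{P^*_A\psi}^2_{L^2}$ without independent control of $\norm{\nabla_A\psi}^2_{L^2}$. Shifting to the fixed reference operator $Q_{A_0}$, whose G\aa rding estimate is unconditional, is the critical move, with the defining equation $Q_A\psi = 0$ supplying exactly the control on $\norm{Q_{A_0}\psi}_{L^2}$ needed to close the argument.
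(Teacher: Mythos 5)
Your proposal is correct and follows essentially the same route as the paper: decompose $P^*_A\psi = \nabla^*_{A_0}\psi + a^*\psi$, bound $a^*\psi$ via Lemma \ref{first compactness lemma 1.3}, bound $\nabla^*_{A_0}\psi$ via the elliptic estimate for $Q_{A_0}$ combined with the identity $Q_{A_0}\psi = -\pi(a\cdot\psi)$ and $\norm{\psi}_{L^4}=1$, and then invoke the gauge-fixing lemma to convert $\norm{a}_{L^2_1}$ into the curvature $\int_Y \epsilon^{-4}|\mu(\psi)|^2$. The only cosmetic difference is that you pass from $\norm{\psi}_{L^2_1}$ to $\norm{\nabla^*_{A_0}\psi}_{L^2}$ via the pointwise contraction bound $|\nabla^*_{A_0}\psi|^2 \leq 3|\nabla_{A_0}\psi|^2$, whereas the paper simply cites boundedness of $\nabla^*_{A_0}:L^2_1\to L^2$; both are valid.
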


\begin{proof}
    Firstly, based on the above observation and the Cauchy-Schwarz inequality, we have
    \begin{align}\label{eq:first compactness.4}
        \int_{Y} |P^*_A \psi|^2 &= \int_{Y} |\nabla^*_A \psi|^2 \leq \int_{Y} (|\nabla^*_{A_0}\psi| + |a^*\psi|)^2 \leq 2 \int_Y (|\nabla^*_{A_0} \psi|^2 + | a^* \psi|^2 ).
    \end{align}
    Since $\nabla_{A_0}$ is a first-order differential operator, $\nabla_{A_0}^*$ is also a first-order differential operator which extends to a bounded operator from $L^2_1 (W_{\mathfrak{s}_{3/2}} \otimes \mathscr{L}) \to L^2(W_{\mathfrak{s}_{1/2}} \otimes \mathscr{L})$. Thus, there is a constant $c(g, A_0)\geq 0$ such that 
    \begin{align}\label{eq:first compactness.5}
        \norm{\nabla^*_{A_0} \psi}^2_{L^2} \leq c(g, A_0) \norm{\psi}^2_{L^2_1}.
    \end{align}
    On the other hand, by the elliptic estimate and the Cauchy-Schwarz inequality, there is a constant $c'(g, A_0)$ such that
    \begin{align}
        \norm{\psi}^2_{L^2_1} &\leq c'(g, A_0) ( \norm{Q_{A_0}\psi}^2_{L^2} + \norm{\psi}^2_{L^2})\nonumber \\
        & \leq c'(g, A_0)( \norm{\pi(a\cdot \psi)}^2_{L^2}+vol^{1/2}_g(Y)\norm{\psi}^2_{L^4}) \label{eq:first compactness.6}
    \end{align}
    Since $\pi$ is an orthogonal projection, $\norm{\pi(a\cdot \psi)}_{L^2} \leq \norm{a \cdot \psi}_{L^2}$. Hence, \eqref{eq:first compactness.6} can be further estimated by
    \begin{align} \label{eq:first compactness.7}
        \norm{\psi}^2_{L^2_1}\leq c'(g, A_0)( \norm{a\cdot \psi}^2_{L^2} + 1).
    \end{align}
    Combine \eqref{eq:first compactness.5} and \eqref{eq:first compactness.7}, there is a non-negative constant which we shall denote by $c$ again that depends only on $g$ and $A_0$ satisfying
    \begin{equation}\label{eq:first compactness.8}
        \norm{\nabla^*_{A_0}\psi}^2_{L^2}\leq c\norm{a \cdot \psi}^2_{L^2} + c.
    \end{equation}
    On the other hand, by Lemma \ref{first compactness lemma 1.3}, we already have $\norm{a^*\psi}^2_{L^2} \leq 3 \norm{a \cdot \psi}^2_{L^2}$. Thus, combine \eqref{eq:first compactness.4} , \eqref{eq:first compactness.8}, and Lemma \ref{first compactness lemma 1.3}, we have the following estimate
    \begin{equation}\label{eq:first compactness.9}
        \int_{Y}|P^*_A \psi|^2 \leq 2(c+3)\norm{a \cdot \psi}^2_{L^2}+ 2c
    \end{equation}
    Now, by the Sobolev multiplication theorem, there is a constant $c''(g,A_0) \geq 0$ such that $\norm{a\cdot \psi}^2_{L^2} \leq c''(g,A_0) \norm{a}^{2}_{L^2_1} \norm{\psi}^2_{L^4} = c''(g,A_0) \norm{a}^{2}_{L^2_1}$. Then the gauge-fixing lemma (Lemma \ref{first compactness lemma 1.4}) tells us that up to a gauge transformation, we have
    \begin{align}
        \norm{a\cdot \psi}^2_{L^2} &\leq c''c_1(g)\norm{F_A}^2_{L^2} + c''c_2(g,A_0)\nonumber \\
        & = c''c_1(g) \int_{Y} \epsilon^{-4}|\mu(\psi)|^2 + c''c_2(g,A_0). \label{eq:first compactness.10}
    \end{align}
    Combine \eqref{eq:first compactness.9} and \eqref{eq:first compactness.10}, there exists non-negative constants $c_3$ and $c_4$ depending only $g$ and $A_0$ such that
    we have the desired estimate for $\norm{P^*_A\psi}^2_{L^2}$ as claimed.
\end{proof}

\begin{Lemma}\label{first compactness lemma 1.6}
    Suppose $(A, \psi, \epsilon) \in \mathcal{A}(\mathscr{L}) \times \Gamma(W_{\mathfrak{s}_{3/2}}\otimes \mathscr{L}) \times (0,\infty)$ is a solution of \eqref{eq:first compactness.2} (or \eqref{eq:first compactness.3}). Let $f$ be any smooth function on $Y$, and $U$ is a closed subset of $Y$ with a smooth boundary. Denote $\nu$ by the outward pointing normal vector field of $U$. Then we have
    \begin{align}
        \int_U \Delta f \cdot |\psi|^2 &+ f \left( \dfrac{-8}{3}|P^*_A\psi|^2 + \dfrac{s}{2}|\psi|^2 + 2 \epsilon^{-2}|\mu(\psi)|^2 - 2\la (1\otimes Ric)\psi, \psi \ra + 2 |\nabla_A \psi|^2\right) \nonumber \\
        &= \int_{\partial U} f\cdot \partial_{\nu}|\psi|^2 - \partial_{\nu}f \cdot |\psi|^2.\nonumber
    \end{align}
\end{Lemma}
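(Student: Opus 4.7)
The plan is to combine the Weitzenb\"ock formula of Lemma~\ref{weitzenbock formula} with the pointwise Bochner identity $\Delta|\psi|^2 = 2\langle\nabla_A^*\nabla_A\psi,\psi\rangle - 2|\nabla_A\psi|^2$, and then integrate against $f$ over $U$ using Green's second identity. Three algebraic inputs are what collapse the right-hand side to the form stated. First, the equation $Q_A\psi=0$ removes the $Q_A^2$ term from the Weitzenb\"ock formula. Second, the orthogonal projection $\pi$ onto $W_{\mathfrak{s}_{3/2}}$ satisfies $\pi(\psi)=\psi$, so every occurrence of $\pi$ inside an inner product against $\psi$ drops out. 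Third, the curvature equation $\epsilon^2 F_A=\mu(\psi)$ combined with the standard Seiberg--Witten identity $\langle\mu(\psi)\psi,\psi\rangle = |\mu(\psi)|^2$, which follows from $\mu(\psi)=\psi\psi^*-\tfrac{1}{2}|\psi|^2\cdot 1$ being the traceless part of a positive Hermitian endomorphism of trace $|\psi|^2$, converts the curvature coupling into $\epsilon^{-2}|\mu(\psi)|^2$.

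Concretely, after pairing the Weitzenb\"ock formula with $\psi$ and substituting $\langle\nabla_A^*\nabla_A\psi,\psi\rangle = \tfrac{1}{2}\Delta|\psi|^2 + |\nabla_A\psi|^2$ from the Bochner identity, one arrives at a pointwise equation expressing $\Delta|\psi|^2$ as a combination of $\langle P_A P_A^*\psi,\psi\rangle$, $|\nabla_A\psi|^2$, $|\mu(\psi)|^2$, and the curvature scalars $s$ and $\mathrm{Ric}$. I would then multiply by $f$, integrate over $U$, and apply Green's second identity to shift the Laplacian off $|\psi|^2$ and onto $f$; this generates the term $\int_U \Delta f\cdot|\psi|^2$ together with the boundary piece $\int_{\partial U}(f\partial_\nu|\psi|^2 - \partial_\nu f\cdot|\psi|^2)$ on the right. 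Finally, the interior pairing $\int_U f\langle P_A P_A^*\psi,\psi\rangle$ is converted into $\int_U f|P_A^*\psi|^2$ by one more integration by parts, using the adjoint relation $\langle P_A\eta,\psi\rangle = \langle\eta,P_A^*\psi\rangle$ with $\eta = P_A^*\psi = \nabla_A^*\psi \in \Gamma(W_{\mathfrak{s}_{1/2}}\otimes\mathscr{L})$.

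The step requiring genuine care is this last integration by parts: because $f$ is an arbitrary smooth function, moving the derivative across contributes both a boundary term on $\partial U$ and an interior correction involving $\nabla f$ paired with the $T^*Y$-valued $1$-form $\langle P_A^*\psi,\psi\rangle$. The main obstacle will be to reconcile all of these correction terms with the boundary integral already appearing on the right-hand side, using $\partial_\nu|\psi|^2 = 2\,\mathrm{Re}\,\langle\nabla_{A,\nu}\psi,\psi\rangle$ and the decomposition of $\nabla_A\psi$ into tangential and normal parts along $\partial U$, so that everything groups cleanly into $\int_{\partial U}(f\partial_\nu|\psi|^2 - \partial_\nu f\cdot|\psi|^2)$. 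Once that bookkeeping is complete, collecting the interior simplifications produces precisely the identity claimed.
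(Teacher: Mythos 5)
Your route is the paper's: Bochner's identity, the Weitzenb\"ock formula for $Q_A$, $Q_A\psi=0$, dropping the projections against $\psi$, substituting the curvature equation via $\la\mu(\psi)\psi,\psi\ra=|\mu(\psi)|^2$, and then Green's second identity. The one step you flag as \emph{requiring genuine care} is indeed where the argument is delicate, but the cancellation you anticipate does not occur. The pointwise identity these inputs produce is
\[-\Delta|\psi|^2 = -\tfrac{8}{3}\la P_AP_A^*\psi,\psi\ra + \tfrac{s}{2}|\psi|^2 + 2\epsilon^{-2}|\mu(\psi)|^2 - 2\la(1\otimes Ric)\psi,\psi\ra + 2|\nabla_A\psi|^2,\]
and Green's second identity applied to $f$ and $|\psi|^2$ yields the lemma verbatim \emph{except with $\la P_AP_A^*\psi,\psi\ra$ rather than $|P_A^*\psi|^2$} in the interior integrand. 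Those two quantities are not equal pointwise; trading one for the other under the weight $f$ is a separate integration by parts. Writing $P_A^*=\nabla_A^*$ on $\Gamma(W_{\mathfrak{s}_{3/2}}\otimes\mathscr{L})$ and $\nabla_A^*(f\psi)=f\nabla_A^*\psi-\psi(\nabla f)$, where $\psi$ is viewed as a section of $Hom(TY,W_{\mathfrak{s}_{1/2}}\otimes\mathscr{L})$, one gets
\[\int_U f\la P_AP_A^*\psi,\psi\ra = \int_U f|P_A^*\psi|^2 - \int_U\la P_A^*\psi,\psi(\nabla f)\ra + \int_{\partial U}f\la P_A^*\psi,\psi(\nu)\ra.\]
The interior correction $\la P_A^*\psi,\psi(\nabla f)\ra$ is a genuine bulk term, not a boundary one, and the extra boundary contribution $f\la P_A^*\psi,\psi(\nu)\ra$ is not of the form $f\partial_\nu|\psi|^2-\partial_\nu f\cdot|\psi|^2$; neither is absorbed by the stated right-hand side. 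So the lemma as written with $|P_A^*\psi|^2$ cannot come out of the clean bookkeeping you describe --- it is the $\la P_AP_A^*\psi,\psi\ra$ form that Green's identity actually delivers, and the two coincide only when the corrections vanish (for instance $f\equiv 1$ on $U=Y$, which is how the paper invokes the lemma in proving Theorem~\ref{first compactness theorem}). Either prove the $\la P_AP_A^*\psi,\psi\ra$ version, or restrict to $f$ with $\nabla f=0$, or carry the two correction terms explicitly rather than assuming they group into the stated boundary integral.
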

\begin{proof}
    Since for any $f_1, f_2$ smooth function on $Y$, we already have 
    $$\int_{U} \Delta f_1 \cdot f_2 - f_1 \cdot \Delta f_2 = \int_{\partial U} f_1 \cdot \partial_{\nu} f_2 - \partial_{\nu}f_1 \cdot f_2.$$
    The identity in the lemma will follow as soon as we apply the above for $f_1:= f, f_2= |\psi|^2,$ and compute $\Delta |\psi|^2$. Recall that for each $x \in Y$, we have the following point-wise formula
    \begin{equation}
        \dfrac{1}{2} \Delta |\psi|^2 = \la \nabla^*_{A}\nabla_A \psi, \psi\ra - |\nabla_A \psi|^2. \label{eq:first compactness.11}
    \end{equation}
    By the Weitzenb\"ock formula of the twisted Rarita-Schwinger operator (Lemma \ref{weitzenbock formula}), we can expand the RHS of \eqref{eq:first compactness.11} further as
    \begin{align}
        \dfrac{1}{2}\Delta |\psi|^2 & = \left\la Q_A^2 \psi + \dfrac{4}{3}P_A P^*_A \psi - \dfrac{s}{4}\psi -\pi(F_A \psi) + \pi(1\otimes Ric)\psi, \psi \right \ra - |\nabla_A \psi|^2 \nonumber \\
        & = \dfrac{4}{3}\la P_A P^*_A \psi, \psi \ra -\dfrac{s}{4}|\psi|^2 -\la\pi(F_A \psi),\psi\ra +\la \pi(1\otimes Ric)\psi, \psi\ra - |\nabla_A \psi|^2. \label{eq:first compactness.12}
    \end{align}
    Note that for any $\psi_1 \in \Gamma(T^* Y \otimes W_{\mathfrak{s}_{1/2}}\otimes \mathscr{L})$ and $\psi_2 \in \Gamma(W_{\mathfrak{s}_{3/2}}\otimes \mathscr{L})$, point-wise we have $\la \pi(\psi_1), \psi_2\ra = \la \psi_1, \pi(\psi_2)\ra = \la \psi_1, \psi_2\ra$. Furthermore, $\la \mu(\psi)\psi, \psi\ra = |\mu(\psi)|^2$ for any twisted $3/2-$spinor $\psi$. Hence, we can re-write \eqref{eq:first compactness.12} as following
    \begin{equation} \label{eq:first compactness.13}
        -\Delta |\psi|^2 = -\dfrac{8}{3}\la P_A P^*_A \psi, \psi \ra + \dfrac{s}{2}|\psi|^2 +2\epsilon^{-2}|\mu(\psi)|^2 - 2\la (1\otimes Ric)\psi, \psi\ra + 2|\nabla_A \psi|^2.
    \end{equation}
    Multiply both sides of \eqref{eq:first compactness.13} by $f$ and integrate over $U$, we obtain
    \begin{align}
        \int_{U}-f\cdot\Delta|\psi|^2 
        & = \int_{U} f\left( -\frac{8}{3}|P^*_A \psi|^2 + \dfrac{s}{2}|\psi|^2 +2\epsilon^{-2}|\mu(\psi)|^2 - 2\la (1\otimes Ric)\psi, \psi\ra + 2|\nabla_A \psi|^2\right)\nonumber
    \end{align}
    The last equality is true as long as $\psi$ is compactly supported in $U$. As a result, we have the desired identity.
\end{proof}
\noindent
\textit{Proof of Theorem \ref{first compactness theorem}.} Firstly, we apply Lemma \ref{first compactness lemma 1.6} when $U:= Y$ and $f\equiv 1$ to obtain the following
$$\int_Y \left( -\frac{8}{3}|P^*_A \psi|^2 + \dfrac{s}{2}|\psi|^2 +2\epsilon^{-2}|\mu(\psi)|^2 - 2\la (1\otimes Ric)\psi, \psi\ra + 2|\nabla_A \psi|^2\right) = 0.$$
This immediately implies that
\begin{equation}
    \int_{Y} |\nabla_A \psi|^2 = \dfrac{4}{3}\int_Y |P^*_A\psi|^2 + \int_{Y} -\dfrac{s}{4}|\psi|^2-\int_{Y}\epsilon^{-2}|\mu(\psi)|^2+\int_{Y} \la(1\otimes Ric)\psi,\psi\ra. \label{eq:first compactness.14}
\end{equation}
Now, by the Cauchy-Schwarz inequality, there is a non-negative constant $c_5(g)$ depending only on $g$ such that
$$\int_{Y} \la (1\otimes Ric)\psi, \psi\ra \leq \int_{Y} c_5(g) |\psi|^2 \leq c_5(g)\cdot vol_g^{1/2}(Y) \norm{\psi}^2_{L^4}.$$
Of course, it is not hard to see that
$$\int_{Y} -\dfrac{s}{4} |\psi|^2 \leq \int_{Y} \max_{Y} \dfrac{|s|}{4} |\psi|^2 \leq \max_{Y} \frac{|s|}{4}\cdot vol_g^{1/2}(Y)\norm{\psi}^2_{L^4}.$$
The two estimates above combined with Lemma \ref{first compactness lemma 1.5}, we estimate \eqref{eq:first compactness.14} further by
\begin{align}
    \int_{Y} |\nabla_A \psi|^2 &\leq \dfrac{4}{3}\left(c_3\int_{Y} \epsilon^{-4}|\mu(\psi)|^2 + c_4 \right) - \int_{Y} \epsilon^{-2}|\mu(\psi)|^2 + \int_{Y} \left(c_5(g) + \max_{Y} \dfrac{|s|}{4}\right) |\psi|^2\nonumber \\
    & = \int_{Y} \left(\dfrac{4c_3}{3}\epsilon^{-4}- \epsilon^{-2}\right)|\mu(\psi)|^2 + \int_{Y} \left(c_5(g) + \max_{Y} \dfrac{|s|}{4}\right) |\psi|^2 + \dfrac{4c_3c_4}{3}\,vol_g(Y) \label{eq:first compactness.15}
\end{align}
Since $\limsup_{\epsilon \to \infty} (4c_3 \epsilon^{-4}/3 - \epsilon^{-2}) = 0$, there is a positive constant $c$ such that if $\epsilon > c >0$, then $4c_3\epsilon^{-4}/3 - \epsilon^{-2} \leq 0$. As a result with all such $\epsilon$, we estimate the RHS of \eqref{eq:first compactness.15} by
$$\int_{Y} |\nabla_A \psi|^2 \leq \int_{Y} \left(c_5(g) + \max_{Y} \dfrac{|s|}{4}\right) |\psi|^2 + \dfrac{4c_3c_4}{3}\,vol_g(Y) =O(1). $$
Apply Kato's inequality and the Sobolev embedding theorem for $L^2_1 \hookrightarrow L^6$, we have $\norm{\psi}_{L^6} = O(1)$.

We still assume the condition that $\epsilon > c >0$. Next, we apply Lemma \ref{first compactness lemma 1.6} when $U := Y\setminus B(x,\delta)$ and $f := G(x,y)$, which is the positive Green's function of the invertible operator $\Delta + 1$. Passing to the limit as $\delta \to 0$ and apply Lemma \ref{first compactness lemma 1.5}, we have 
\begin{align}
    \dfrac{1}{2}|\psi|^2(x) &+ \int_{Y} G(x, \bullet)(\epsilon^{-2}|\mu(x)|^2+|\nabla_A \psi|^2) \lesssim \int_{Y}
G(x,\bullet)|\psi|^2 + \int_{Y} G(x, \bullet)|P^*_A \psi|^2\nonumber \\
& \lesssim \int_{Y} G(x,\bullet)|\psi|^2 + \int_{Y}  G(x,\bullet)c_3\epsilon^{-4}|\mu(\psi)|^2 + \max_{Y}G \cdot c_4 \, vol_g(Y). \label{eq:first compactness.16}
\end{align}
As a result of re-arranging \eqref{eq:first compactness.16}, we obtain
$$\dfrac{1}{2}|\psi|^2(x)+ \int_{Y}G(x,\bullet)(\epsilon^{-2}-c_3\epsilon^{-4})|\mu(\psi)|^2 + \int_{Y} G(x,\bullet)|\nabla_A \psi|^2 \lesssim  \int_{Y} G(x,\bullet)|\psi|^2.$$
Since $\norm{\psi}_{L^6} = O(1)$, the RHS of the above estimate is also $O(1)$. Therefore, when taking supremum of the LHS overall $x \in Y$, we obtain
\begin{align}
\norm{\psi}_{L^{\infty}} &= O(1), \label{eq:first compactness.17}\\
\norm{\nabla_{A}\psi}_{L^2(B(x,r))} &= O(r^{1/2}), \label{eq:first compactness.18}\\
\norm{\mu(\psi)}_{L^2(B(x,r))} &= O(r^{1/2}(\epsilon^{-2}-c_3\epsilon^{-4})^{-1/2}). \label{eq:first compactness.19}
\end{align}
Because of the above estimates, if $\{(A_n,\psi_n,\epsilon_n)\}$ be a sequence of solutions of \eqref{eq:first compactness.2} with $\limsup \epsilon_n > c > 0$ for some universal constant c, then the standard elliptic boot-strapping technique will give us a sub-sequence $(A_n, \psi_n, \epsilon_n)$ up to some gauge transformations that converges to a solution $(A,\psi, \epsilon)$ in $C^{\infty}-$sense as desired. \qed

\section{Curvature controls the $3/2-$spinors}
From the previous section, we already see that if $\limsup \epsilon_n$ is bounded away from zero by a universal positive constant, then the sequence of solutions $\{(A_n, \psi_n, \epsilon_n)\}$ of \eqref{eq:first compactness.2} convergers to a $C^{\infty}$ solution up to a subsequence and gauge transformations. Now we deal with the case when $\limsup \epsilon_n = 0$. The main idea is to use the \textit{frequency function method} to analyze the convergence behavior of a sequence of solutions of such type. To put ourselves in the set-up where we could apply this method, we need to establish some more local estimates.  Note that the estimates \eqref{eq:first compactness.17},\eqref{eq:first compactness.18},\eqref{eq:first compactness.19} do not hold anymore. However, one can obtain similar estimates for connections whose curvature densities are controlled by a small constant on a small geodesic ball of $Y$. All $3/2-$spinors in this section are considered to have compact support inside the interior of a geodesic ball around a point unless otherwise stated.

We recall Uhlenbeck's local slice theorem for connections with $L^p-$bounds on curvatures. 

\begin{Th}[\textbf{Local slice theorem}]\label{local slice theorem}\cite{MR0648356}
Let $B(x,r)$ be a small geodesic ball in $Y$ where $x$ is any point on $Y$ and $r>0$. There are constants $c, k_0 >0$ (possibly depending on $x,r,g$) such that for any connection $A= A_0 + a$ and $\norm{F_A}^2_{L^2(B(x,r))} \leq k_0$, there exists a gauge transformation $h \in L^2_2(B(x,r), U(1))$ such that for $b = a + hdh^{-1}$, we have
$$d^*b = 0, \, \, \star b|_{\partial B(x,r)} = 0, \, \, \, \norm{b}_{L^2_1(B(x,r))}^2 \leq c\norm{F_A}_{L^2(B(x,r))}^2.$$
Here $A_0$ is the connection from the trivialization of $\mathscr{L}$ on $B(x,r)$ and $a$ is an $L^2_1$ purely imaginary valued $1$-form on $B(x,r)$. 
\end{Th}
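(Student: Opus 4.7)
Because $B(x,r)$ is contractible, $\mathscr{L}|_{B(x,r)}$ trivializes; writing $A = A_0 + a$ with $A_0$ the flat reference, we have $a \in L^2_1(B(x,r), iT^*Y)$ and $F_A = da$. Since $B(x,r)$ is also simply connected, every $U(1)$-gauge transformation lifts (up to a global constant) as $h = e^{i\xi}$ for a real-valued $\xi$, and the action $a \mapsto a + h\,dh^{-1}$ reduces to $a \mapsto b := a - i\,d\xi$. Note in particular that $d(h\,dh^{-1}) = d(-i\,d\xi) = 0$, so $db = da = F_A$ regardless of the choice of $\xi$. The two gauge-fixing conditions required of $b$ now become the scalar Neumann problem
\begin{equation*}
\Delta \xi \,=\, -i\,d^*a \ \text{ in }\ B(x,r), \qquad \partial_\nu \xi \,=\, -i\,a(\nu) \ \text{ on }\ \partial B(x,r).
\end{equation*}

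The plan is then to solve this Neumann problem and read off the estimate. Its compatibility condition --- that the integral of the right-hand side plus the integral of the boundary data vanish --- reduces, via integration by parts applied to the constant function, to $\int_B d^*a + \int_{\partial B} a(\nu)\,dS = 0$, which is automatic by Stokes's theorem. Standard elliptic theory on a smooth bounded domain therefore yields a solution $\xi \in L^2_2(B(x,r))$, unique up to an additive constant, with the bound $\|\xi\|_{L^2_2(B)} \le c_1(r,g)\,\|a\|_{L^2_1(B)}$ once the $H^{1/2}$-trace of $a(\nu)$ is absorbed. Setting $h := e^{i\xi}$ produces the required element of $L^2_2(B(x,r), U(1))$, and by construction $b = a - i\,d\xi$ satisfies both $d^*b = 0$ and $\star b|_{\partial B(x,r)} = 0$.

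For the final curvature estimate I would invoke Gaffney's inequality on the geodesic ball: for $1$-forms $b$ obeying the relative boundary condition,
\begin{equation*}
\|b\|_{L^2_1(B(x,r))}^2 \;\le\; c_2(r,g)\bigl(\|db\|_{L^2}^2 + \|d^*b\|_{L^2}^2 + \|b\|_{L^2}^2\bigr).
\end{equation*}
Since $H^1(B(x,r);\mathbb{R})=0$, there are no nonzero harmonic $1$-forms satisfying $\star(\cdot)|_{\partial B}=0$, so a standard compactness-contradiction argument absorbs the $\|b\|_{L^2}^2$ term and yields $\|b\|_{L^2_1(B)}^2 \le c\,\|F_A\|_{L^2(B)}^2$, using $db = F_A$ and $d^*b = 0$.

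The main obstacle is not the analysis on any one ball but making the constant $c$ genuinely uniform in $x \in Y$ and $r \le r_0$. I would address this by working in $g$-geodesic normal coordinates centered at $x$, in which the metric is uniformly $C^1$-close to the Euclidean one on the scale $r \le r_0 \ll 1$; the constants in the Neumann estimate and in Gaffney's inequality then depend only on $g$ and $r$. The smallness hypothesis $\|F_A\|_{L^2(B)}^2 \le k_0$ plays no essential role in the abelian case, since the entire argument is linear in $a$; it is retained for compatibility with the nonabelian Uhlenbeck theorem from which this statement descends.
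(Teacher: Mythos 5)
The paper does not prove this statement: it is quoted verbatim as Uhlenbeck's local slice theorem and cited to \cite{MR0648356}, so there is no in-text proof against which to compare. Your argument is a correct self-contained proof of the \emph{abelian} case, and it takes a genuinely simpler route than Uhlenbeck's. Because $\mathscr{L}|_{B(x,r)}$ trivializes and the ball is simply connected, you reduce the gauge-fixing conditions $d^*b=0$, $\star b|_{\partial B}=0$ to the scalar Neumann problem $\Delta\xi=-i\,d^*a$, $\partial_\nu\xi=-i\,a(\nu)$, observe that its compatibility condition is automatic by Stokes, obtain $\xi\in L^2_2$ by linear elliptic theory (hence $e^{i\xi}\in L^2_2$ by Sobolev multiplication in dimension three), and then close the estimate with Gaffney's inequality and the vanishing of harmonic $1$-forms on the ball. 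Your remark that the curvature smallness hypothesis $\|F_A\|^2_{L^2}\le k_0$ is vacuous here is exactly right: the $U(1)$ gauge action on $a$ is affine, so the whole problem is linear and needs no continuity method, whereas Uhlenbeck's theorem handles nonabelian $G$ where a fixed-point/continuity argument and the smallness of $\|F_A\|$ are essential. What the cited theorem buys is generality and a uniform treatment that the paper can reuse in a form that looks like the nonabelian statement; what your proof buys is transparency and the observation that $k_0$ can be taken arbitrary in the abelian setting, which is worth knowing when the lemma is invoked later.

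One small terminological correction: $\star b|_{\partial B(x,r)}=0$ prescribes the vanishing of the \emph{normal} component of $b$, which is the \emph{absolute} (Neumann) boundary condition in Hodge theory, not the relative one. Gaffney's inequality holds under either condition, and both $H^1(B;\RN)$ and $H^1(B,\partial B;\RN)$ vanish for a ball, so the absorption of $\|b\|^2_{L^2}$ goes through unchanged; but the cohomology group actually governing the absolute condition is $H^1(B;\RN)$, which happens to be the one you invoked.
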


With the local slice theorem (Theorem \ref{local slice theorem}), on a small geodesic ball $B(x,r)$, various estimates established in the previous section still hold (e.g, \eqref{eq:first compactness.9}, \eqref{eq:first compactness.10}, etc.). Hence, for any $(A, \psi, \epsilon) \in V_{k_0}(B(x,r)) \times \Gamma(W_{\mathfrak{s}_{3/2}}\otimes \mathscr{L}|_{B(x,r)}) \times (0,\infty)$ that is a solution of \eqref{eq:first compactness.2}, we have
$$\int_{B(x,r)} |P^*_A \psi|^2 \leq c\left(\int_{B(x,r)} |F_A|^2 + 1\right) \leq c(k_0 +1).$$
Then a similar argument using the integration by parts formula (cf. Lemma \ref{first compactness lemma 1.6}) for $f \equiv 1$ and $U = B(x,r)$ yields for us 
$$\int_{B(x,r)} |\nabla_A \psi|^2 \leq \dfrac{4}{3}c(k_0+1) + c' \norm{\psi}^2_{L^4} = O(1).$$
As a result, by Kato's inequality and the fact that $L^2_1 \hookrightarrow L^6$ is a compact embedding, $\norm{\psi}_{L^6(B(x,r))} = O(1)$.

Again, we apply the integration by parts formula (cf. Lemma \ref{first compactness lemma 1.6}) for $f := G(x,y)-$the positive Green's function of $\Delta + 1$ and $U:= B(x,r)\setminus B(x,\delta)$ where $0\leq \delta \leq r$ and let $\delta \to 0$ to obtain
\begin{align}
\norm{\psi}_{L^{\infty}(B(x,r))} &= O(1), \label{eq:curvature controls.20}\\
\norm{\nabla_{A}\psi}_{L^2(B(x,r))} &= O(r^{1/2}), \label{eq:curvature controls.21}\\
\norm{\mu(\psi)}_{L^2(B(x,r))} &= O(r^{1/2}\epsilon). \label{eq:curvature controls.22}
\end{align}
We summarize the above discussion in the form of the following lemma.

\begin{Lemma}\label{curvature controls lemma 2.3}
     For any solution $(A,\psi,\epsilon) \in V_{k_0}(B(x,r)) \times \Gamma(W_{\mathfrak{s}_{3/2}}\otimes \mathscr{L}|_{B(x,r)}) \times (0,\infty)$ of \eqref{eq:first compactness.2}, we have 
    $$\norm{\psi}_{L^\infty (B(x,r))} = O(1), \, \, \, \norm{\nabla_A \psi}_{L^2(B(x,r))} = O(r^{1/2}), \, \, \, \norm{\mu(\psi)}_{L^2(B(x,r))} = O(r^{1/2}\epsilon).$$
    Here $B(x,r)$ is a small geodesic ball on $Y$, and $k_0$ is a constant as in Theorem \ref{local slice theorem}. \qed
\end{Lemma}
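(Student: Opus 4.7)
The plan is to adapt the three-step integration-by-parts argument from the proof of Theorem \ref{first compactness theorem} to the local setting, with the role of the global hypothesis $\limsup \epsilon_n > c$ played by the curvature bound $\|F_A\|_{L^2(B(x,r))}^2 \le k_0$ built into the class $V_{k_0}(B(x,r))$. The essential tool that makes this work is Uhlenbeck's local slice theorem (Theorem \ref{local slice theorem}): first I would use it to replace $A$ by a gauge-equivalent connection $A_0 + b$ on $B(x,r)$ with $d^*b = 0$ and $\|b\|_{L^2_1(B(x,r))}^2 \le c k_0$. This provides a uniform, $\epsilon$-independent bound on $b$ in $L^2_1$, which is the local substitute for the assumption $\epsilon > c$ in the global theorem.

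Next I would derive the local analog of Lemma \ref{first compactness lemma 1.5}, namely $\int_{B(x,r)} |P_A^*\psi|^2 \le c(k_0+1)$. The argument is identical to the proof of Lemma \ref{first compactness lemma 1.5}: one writes $P_A^*\psi = \nabla_{A_0}^*\psi + b^*\psi$, controls $\|b^*\psi\|_{L^2}$ by $\sqrt{3}\|b\cdot\psi\|_{L^2}$ via Lemma \ref{first compactness lemma 1.3}, bounds $\|\nabla_{A_0}^*\psi\|_{L^2}$ using the elliptic estimate for $Q_{A_0}$ (with $Q_A\psi = 0$) together with Sobolev multiplication, and then invokes the gauge-fixed bound on $b$ instead of the global gauge-fixing Lemma \ref{first compactness lemma 1.4}. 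The key improvement is that here the bound on $\|F_A\|_{L^2}^2$ is the uniform constant $k_0$, not the $\epsilon$-dependent quantity $\int \epsilon^{-4}|\mu(\psi)|^2$.

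With the $P_A^*\psi$ bound in hand, I would apply the Green's identity of Lemma \ref{first compactness lemma 1.6} first with $f \equiv 1$ and $U = B(x,r)$, which after rearrangement yields $\int_{B(x,r)}(|\nabla_A\psi|^2 + \epsilon^{-2}|\mu(\psi)|^2) = O(1)$; combined with Kato's inequality and the Sobolev embedding $L^2_1 \hookrightarrow L^6$ this gives $\|\psi\|_{L^6(B(x,r))} = O(1)$. Then, applying Lemma \ref{first compactness lemma 1.6} a second time with $f = G(x,\cdot)$ the positive Green's function of $\Delta + 1$ and $U = B(x,r)\setminus B(x,\delta)$, and letting $\delta \to 0$, produces the pointwise estimate
\[
\tfrac{1}{2}|\psi|^2(x) + \int_{B(x,r)} G(x,\cdot)\bigl(\epsilon^{-2}|\mu(\psi)|^2 + |\nabla_A\psi|^2\bigr) \le O(1).
\]
Taking the supremum over $x$ gives $\|\psi\|_{L^\infty(B(x,r))} = O(1)$. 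The remaining two estimates come from the elementary observation that the three-dimensional Green's function satisfies $G(x,y) \gtrsim 1/d(x,y) \gtrsim 1/r$ on $B(x,r)$, so $\int_{B(x,r)} |\nabla_A\psi|^2 \le C r$ and $\int_{B(x,r)}\epsilon^{-2}|\mu(\psi)|^2 \le C r$, yielding $\|\nabla_A\psi\|_{L^2(B(x,r))} = O(r^{1/2})$ and $\|\mu(\psi)\|_{L^2(B(x,r))} = O(r^{1/2}\epsilon)$.

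The only real obstacle is the one already addressed in the global theorem: one must absorb the indefinite-sign term $-\tfrac{8}{3}|P_A^*\psi|^2$ appearing on the left of the Green's identity. In the global setting this forced the threshold $\epsilon > c$, because curvature could only be bounded through $\epsilon^{-2}\mu(\psi)$. The whole point of working in $V_{k_0}(B(x,r))$ is that the curvature bound is built in, so $|P_A^*\psi|^2$ is controlled by a fixed constant regardless of how small $\epsilon$ is; this is what makes the local estimates of the lemma extend uniformly to the regime $\epsilon \to 0$ that will be needed for the frequency function analysis in the subsequent sections.
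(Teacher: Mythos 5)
Your proposal reproduces the paper's argument almost exactly: gauge-fix via Theorem \ref{local slice theorem} so the $L^2_1$ norm of the connection form is bounded by $ck_0$, re-run the proof of Lemma \ref{first compactness lemma 1.5} with this $\epsilon$-independent curvature bound in place of $\int \epsilon^{-4}|\mu(\psi)|^2$ to get $\int_{B(x,r)}|P_A^*\psi|^2 \le c(k_0+1)$, then apply the Green's identity (Lemma \ref{first compactness lemma 1.6}) first with $f\equiv 1$ on $U=B(x,r)$ and then with $f=G(x,\cdot)$ on $U=B(x,r)\setminus B(x,\delta)$, $\delta\to 0$, finally using $G(x,\cdot)\gtrsim r^{-1}$ on $B(x,r)$ to extract the $O(r^{1/2})$ and $O(r^{1/2}\epsilon)$ bounds. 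This is precisely the paper's route, and your identification of the curvature hypothesis as the local surrogate for the global threshold $\epsilon>c$ is the right way to read it.
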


Next, we control $\norm{\nabla_A^2 \psi}_{L^2}$ in terms of the critical radius of a connection $A \in V_{k_0}(B(x,r))$. For us to achieve this, we need to establish some more local estimates. Firstly, we recall the following Weitzenb\"ock-type formula that relates $P_A, Q_A,$ and $D_A$ on a three-manifold. 

\begin{Lemma}\label{curvature controls lemma 2.4}
    For any $(A,\psi) \in \mathcal{A}(\mathscr{L}) \times \Gamma(W_{\mathfrak{s}_{3/2}}\otimes \mathscr{L})$, we have
    $$-\dfrac{1}{3}P_A D_A\psi + Q_A P_A\psi = \dfrac{1}{2}\left(Ric - \frac{s}{3}\right)\psi + \pi\pi_2 F_A\psi.$$
    Here $\pi_2$ is the map that takes spinor-valued $2-$forms to spinor-valued $1-$forms. \qed
\end{Lemma}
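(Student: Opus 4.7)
The identity is the twisted–bundle, rescaled incarnation of Lemma \ref{RS clifford lemma 2.12}(1), so my plan is to apply the general Clifford-module identity of Lemma \ref{RS clifford lemma 2.8}(1) to the $U(1)$–twisted spinor bundle $W_{\mathfrak{s}_{1/2}}\otimes\mathscr{L}$ and then specialize to $n=3$. Concretely, I view $W_{\mathfrak{s}_{1/2}}\otimes\mathscr{L}$ as a Clifford module over $Y$ with the tensor-product connection $\nabla_A=\nabla^{LC}\otimes 1+1\otimes\nabla_A^{\mathscr{L}}$, whose curvature splits as $F^\nabla=F^{\nabla^{LC}}\otimes 1+1\otimes F_A$. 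Lemma \ref{RS clifford lemma 2.8}(1) with $n=3$ reads
$$-\tfrac{2}{9}P_A D_A+\tfrac{2}{3}Q_A P_A=\tfrac{2}{3}\pi\pi_2(F^\nabla),$$
and multiplying through by $3/2$ yields
$$-\tfrac{1}{3}P_A D_A+Q_A P_A=\pi\pi_2(F^{\nabla^{LC}})+\pi\pi_2(1\otimes F_A),$$
so what remains is to identify the first summand on the right with $\tfrac{1}{2}(Ric-s/3)$.

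For this identification I would reuse the local calculation in the paragraph preceding Lemma \ref{RS clifford lemma 2.12}: working in a local orthonormal frame $\{e_i\}$,
$$\pi_2(F^{\nabla^{LC}}\phi)(v)=\sum_i\gamma(e_i)F^{\nabla^{LC}}(e_i,v)\phi=\tfrac{1}{2}Ric(v)\cdot\phi.$$
Combining this with the explicit formula $\pi=1-\iota\circ\gamma$ and the Clifford identity $\sum_i e_i\cdot Ric(e_i)=-s$ (which follows from the symmetry of the Ricci tensor and $e_ie_j+e_je_i=-2\delta_{ij}$), I compute
$$\pi\pi_2(F^{\nabla^{LC}}\phi)(v)=\tfrac{1}{2}Ric(v)\cdot\phi-\iota\bigl(-\tfrac{s}{2}\phi\bigr)(v)=\tfrac{1}{2}\bigl(Ric(v)-\tfrac{s}{3}v\bigr)\cdot\phi,$$
which is exactly the Ricci contribution on the right-hand side of the claimed formula. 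Substituting back delivers the lemma.

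The argument is essentially a bookkeeping exercise with no genuine technical obstacle, since the heavy lifting has already been carried out in Lemmas \ref{RS clifford lemma 2.7}--\ref{RS clifford lemma 2.12}. The only care needed is to track the rescaling factor $3/2$, to check that after tensoring with $\mathscr{L}$ the Riemannian (Ricci) summand is unaffected, and to verify that the new $U(1)$ curvature enters cleanly through the extra term $\pi\pi_2(1\otimes F_A)$ coming from the additive splitting of $F^\nabla$.
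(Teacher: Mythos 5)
Your proof is essentially the paper's implicit argument: Lemma \ref{curvature controls lemma 2.4} is Lemma \ref{RS clifford lemma 2.12}(1) rescaled by $3/2$, which in turn is Lemma \ref{RS clifford lemma 2.8}(1) specialized to $n=3$ with the curvature split $F^{\nabla}=F^{\nabla^{LC}}\otimes 1+1\otimes F_A$ and the identification $\pi\pi_2(F^{\nabla^{LC}})=\tfrac{1}{2}\left(Ric-\tfrac{s}{3}\right)$. Your explicit computation of the latter via $\pi=1-\iota\circ\gamma$ and the Clifford contraction $\sum_i e_i\cdot Ric(e_i)=-s$ is correct and simply fills in the detail the paper leaves unstated.
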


\begin{Lemma}\label{curvature controls lemma 2.5}
    For any $(A,\psi, \epsilon) \in V_{k_0}(B(x,r)) \times \Gamma(W_{\mathfrak{s}_{3/2}}\otimes \mathscr{L}|_{B(x,r)}) \times (0,\infty)$ that solves \eqref{eq:first compactness.2}, we have
    $$\norm{P^*_A \psi}^2_{L^2_1(B(x,r))} \leq c\left(  \norm{\psi}^2_{L^2(B(x,r))} + \int_{B(x,r)} |F_A|^2 \cdot |\psi|^2 + k_0 \sqrt{k_0^2+1} + k_0 + 1 \right),$$
    where $B(x,r)$ is a small geodesic ball on $Y$ and $k_0$ is as in Theorem \ref{local slice theorem}.
\end{Lemma}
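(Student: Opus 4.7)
The plan is to turn the estimate on $P_A^*\psi$, which is a $1/2$-spinor, into a first-order Dirac-type equation and then apply the standard Weitzenb\"ock formula for $D_A^2$ to pass from an $L^2$ bound on $D_A(P_A^*\psi)$ to the desired $L^2_1$ bound on $P_A^*\psi$.

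First I would take the formal adjoint of the identity in Lemma \ref{curvature controls lemma 2.4} and apply it to the compactly supported $3/2$-spinor $\psi$. Since $\psi$ satisfies $Q_A\psi=0$, the $P_A^*Q_A\psi$ term vanishes, leaving
\[
D_A(P_A^*\psi) \;=\; -\tfrac{3}{2}\bigl(Ric - \tfrac{s}{3}\bigr)\psi \;-\; 3\,(\pi\pi_2 F_A)^*\psi .
\]
As $(\pi\pi_2 F_A)^*$ is a zero-order operator whose pointwise norm is controlled by $|F_A|$, this immediately yields
\[
\norm{D_A(P_A^*\psi)}^2_{L^2(B(x,r))} \;\leq\; c\Bigl(\norm{\psi}^2_{L^2(B(x,r))} + \int_{B(x,r)} |F_A|^2\,|\psi|^2\Bigr),
\]
which already accounts for the first two terms on the right-hand side of the desired estimate.

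Next I would apply the $1/2$-spinor Weitzenb\"ock formula $D_A^2 = \nabla_A^*\nabla_A + \tfrac{s}{4} + \tfrac{1}{2}\gamma(F_A)$, pair both sides with $P_A^*\psi$, and integrate by parts. Compact support of $\psi$ inside $B(x,r)$ kills the boundary terms, giving
\[
\int_{B(x,r)} |\nabla_A P_A^*\psi|^2 \;\leq\; \int_{B(x,r)} |D_A P_A^*\psi|^2 + c\int_{B(x,r)}|P_A^*\psi|^2 + c\int_{B(x,r)} |F_A|\,|P_A^*\psi|^2 .
\]
The first term has just been bounded, and the second is handled by the preliminary estimate $\norm{P_A^*\psi}^2_{L^2(B(x,r))} \leq c(k_0+1)$ derived at the start of this section.

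The main obstacle is the cross-term $\int |F_A|\,|P_A^*\psi|^2$. The strategy is to combine H\"older (using $\norm{F_A}_{L^2(B(x,r))}^2\leq k_0$ in the Uhlenbeck slice $V_{k_0}$), three-dimensional Gagliardo--Nirenberg together with Kato's inequality to obtain $\norm{P_A^*\psi}_{L^4}^2 \leq c\norm{\nabla_A P_A^*\psi}_{L^2}^{3/2}\norm{P_A^*\psi}_{L^2}^{1/2}$, and Young's inequality with exponents $(4/3,4)$ to conclude
\[
\int_{B(x,r)} |F_A|\,|P_A^*\psi|^2 \;\leq\; \delta\,\norm{\nabla_A P_A^*\psi}_{L^2}^2 + C_\delta\,k_0^2\,\norm{P_A^*\psi}_{L^2}^2 .
\]
Choosing $\delta$ small enough absorbs the gradient contribution into the left-hand side of the Weitzenb\"ock bound; collecting the remaining constants via Cauchy--Schwarz (rather than Young's) in the last step produces the $k_0\sqrt{k_0^2+1}+k_0$ contribution appearing in the statement. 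The delicate point is that the Uhlenbeck slice hypothesis is essential: without the $L^2$ curvature bound, the pointwise $|F_A|$ factor cannot be converted into an integral quantity small enough to be absorbed, and the whole argument collapses.
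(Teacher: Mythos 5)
Your proposal is essentially correct, and it reaches the estimate by a genuinely different route than the paper. The paper also starts from the adjoint of Lemma~\ref{curvature controls lemma 2.4}, but then writes $A=A_0+a$ and invokes the \emph{linear elliptic estimate} for the fixed reference Dirac operator $D_{A_0}$ to get $\norm{P^*_A\psi}^2_{L^2_1}\lesssim \norm{D_{A_0}P^*_A\psi}^2_{L^2}+\norm{P^*_A\psi}^2_{L^2}$. This trades the cross-term $\int |F_A||P^*_A\psi|^2$ in your Weitzenb\"ock argument for the commutator term $\norm{a\cdot P^*_A\psi}^2_{L^2}$, which the paper then controls by Sobolev multiplication ($\lesssim k_0\norm{P^*_A\psi}^2_{L^4}$) and a direct $L^4$ bound on $P^*_A\psi$ obtained by bootstrapping from $Q_A\psi=0$ together with the $L^\infty$ bound on $\psi$ from Lemma~\ref{curvature controls lemma 2.3}. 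You instead integrate the twisted Weitzenb\"ock identity for $D_A^2$ against $P^*_A\psi$, isolate $\int|F_A||P^*_A\psi|^2$, and close the estimate by H\"older + Gagliardo--Nirenberg/Kato + Young's inequality with a small absorption parameter. Your route is a little cleaner in that it avoids the paper's detour through $\norm{\psi}_{L^\infty}$, but it pays with an absorption argument whose small parameter must be calibrated against $k_0^{1/2}$; the paper's route is more bookkeeping-heavy but purely multiplicative. Both are valid.

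One small thing to tighten: your Weitzenb\"ock step controls $\norm{\nabla_A P^*_A\psi}_{L^2}$, whereas the Lemma's $L^2_1$ norm is taken with respect to the fixed reference connection $A_0$. You still need the conversion term $\norm{a\cdot P^*_A\psi}_{L^2}$, but this is harmless: once the absorption has produced a bound on $\norm{\nabla_A P^*_A\psi}_{L^2}$, Gagliardo--Nirenberg gives the $L^4$ bound on $P^*_A\psi$, and Sobolev multiplication with $\norm{a}^2_{L^2_1}\lesssim k_0$ from the Uhlenbeck slice finishes it. Worth a sentence in the write-up. Also, the exact $k_0\sqrt{k_0^2+1}+k_0$ combination in the statement comes out of the paper's multiplicative bookkeeping (Cauchy--Schwarz on $\norm{P^*_A\psi}^2_{L^4}$); your absorption argument will produce a different polynomial in $k_0$, which is fine since $k_0$ is a fixed geometric constant and the lemma only asserts the existence of some universal~$c$.
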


\begin{proof}
    We take the adjoint of the Weitzenb\"ock-type formula in Lemma \ref{curvature controls lemma 2.4} to obtain
    $$-\dfrac{1}{3}D_A P_A^* \psi = \frac{1}{2}\left(Ric-\frac{s}{3}\right)^* \psi + F_A \pi_2^* \pi(\psi) = \frac{1}{2}\left(Ric - \frac{s}{3}\right)^* \psi + F_A \pi_2^* \psi.$$
    If we write $A = A_0 + a$, then we can re-arrange the above identity as
    \begin{equation}\label{eq:curvature controls.23}
        D_{A_0}P^*_A \psi = -a \cdot P^*_A \psi - \frac{3}{2}\left(Ric - \frac{s}{3} \right)^* \psi - 3 F_A \pi^* \psi. 
    \end{equation}
    Combine \refeq{eq:curvature controls.23} with the elliptic estimate and the Cauchy-Schwarz inequality, we have
    \begin{align} \label{eq:curvature controls.24}
        &\norm{P^*_A \psi}^2_{L^2_1(B(x,r))}  \leq c\left(\norm{D_{A_0}P^*_A \psi}^2_{L^2(B(x,r))}+ \norm{P^*_A \psi}^2_{L^2(B(x,r))}\right) \nonumber \\
        &\leq c \left( \norm{a \cdot P^*_A \psi}^2_{L^2} + \int_{B(x,r)}\left | \left(Ric - \frac{s}{3}\right)^*\psi \right|^2 + \int_{B(x,r)} |F_A \pi_2^* \psi|^2 + k_0 + 1 \right).
    \end{align}
    It is not hard to see that 
    \begin{align}
        \int_{B(x,r)}\left | \left(Ric - \frac{s}{3}\right)^*\psi \right|^2 &\leq c_1 \int_{B(x,r)} |\psi|^2 \nonumber \\
        \int_{B(x,r)} |F_A \pi^*_2 \psi|^2 &\leq c_2 \int_{B(x,r)} |F_A|^2 \cdot |\psi|^2.\nonumber
    \end{align}
    Thus, \eqref{eq:curvature controls.24} can be further estimated by
    \begin{equation} \label{eq:curvature controls.25}
        \norm{P^*_A \psi}^2_{L^2_1} \leq c \left(\norm{\psi}^2_{L^2(B(x,r))} + \int_{B(x,r)}|F_A|^2  |\psi|^2 + \norm{a \cdot P^*_A \psi}^2_{L^2} + k_0 + 1 \right).
    \end{equation}
    It remains for us to control $\norm{a \cdot P^*_A \psi}^2_{L^2(B(x,r))}$. The Sobolev multiplication theorem gives us 
    $$\norm{a \cdot P^*_A \psi}^2_{L^2} \leq c_3 \norm{a}^2_{L^2_1} \norm{P^*_A \psi}^2_{L^4}.$$ 
    And the local slice theorem (Theorem \ref{local slice theorem}) already tells us that 
    $$\norm{a}^2_{L^2_1} \leq c_4 \int_{B(x,r)} |F_A|^2 \leq c_4k_0.$$
    Thus, we further control $\norm{a \cdot P^*_A \psi}^2_{L^2(B(x,r))}$ by controlling $\norm{P^*_A \psi}_{L^4}$. To achieve this, firstly, by applying the Minkowski inequality, we have
    \begin{equation} \label{eq:curvature controls.26}
        \int_{B(x,r)} |P^*_A \psi|^4 \leq \int_{B(x,r)} |\nabla^*_{A_0}\psi + a \cdot \psi|^4 \leq 2^3 \int_{B(x,r)}(|\nabla^*_{A_0}\psi|^4 + |a\cdot \psi|^4).
    \end{equation}
    Since $\nabla^*_{A_0}$ is a first-order differential operator, it extends to a bounded operator from $L^4_1 \to L^4$. Hence, the elliptic estimate obtains for us
    \begin{align}
        \norm{\nabla^*_{A_0}\psi}^4_{L^4} & \leq c_5 \norm{\psi}^4_{L^4_1(B(x,r))} \nonumber \\
        & \leq c_6 (\norm{Q_{A_0}\psi}^4_{L^4}+\norm{\psi}^4_{L^4})\leq c_7 (\norm{Q_{A_0}\psi}^4_{L^4}+1). \nonumber
    \end{align}
    Since $(A,\psi, \epsilon)$ is a solution of \eqref{eq:first compactness.2}, the above estimate implies $\norm{\nabla^*_{A_0}\psi}^4_{L^4} \leq c_8 (\norm{a\cdot \psi}^4_{L^4} + 1)$. Hence, combine with \eqref{eq:curvature controls.26}, we get that
    \begin{equation} \label{eq:curvature controls.27}
        \int_{B(x,r)} |P^*_{A}\psi|^4 \leq c_9\left(\int_{B(x,r)} |a \cdot \psi|^4 + 1 \right). 
    \end{equation}
    By the Sobolev multiplication theorem, for any $p>12$, we have
    \begin{align} \label{eq:curvature controls.28}
        \norm{a\cdot \psi}^4_{L^4} &\leq c_{10} \norm{a}^4_{L^2_1} \norm {\psi}^4_{L^p} \nonumber \\
        & \leq c_{11} \norm{a}^4_{L^2_1}\cdot \norm{\psi}^4_{L^\infty (B(x,r))} \leq c_{12} \norm{F_A}^4_{L^2(B(x,r))}.
    \end{align}
    The last estimate of \eqref{eq:curvature controls.28} is given by Uhlenbeck's local slice theorem (Theorem \ref{local slice theorem}) and Lemma \ref{curvature controls lemma 2.3} (cf. \eqref{eq:curvature controls.20}). We use \eqref{eq:curvature controls.28} to further estimate the RHS of \eqref{eq:curvature controls.27} as following
    \begin{equation} \label{eq:curvature controls.29}
        \int_{B(x,r)}|P^*_A \psi|^4 \leq c_{13}\left(\norm{F_A}^4_{L^2(B(x,r))} + 1 \right) = c_{14}(k_0^2 + 1).
    \end{equation}
    Combine \eqref{eq:curvature controls.29} and the Sobolev multiplication estimate for $\norm{a \cdot P^*_A \psi}^2_{L^2}$ earlier, we have $\norm{a \cdot P^*_{A} \psi}^2_{L^2}$ $\leq c_3c_4c_{15} k_0 \sqrt{k_0^2 + 1}$. Apply this estimate to the RHS of \eqref{eq:curvature controls.25}, we immediately obtain our desired estimate.
\end{proof}

We need one more useful identity before getting to the main point of this section, which is the curvature-control of $\norm{\nabla^2_{A} \psi}_{L^2_{loc}}$. 

\begin{Lemma}\label{curvature controls lemma 2.6}
    Let $\psi \in \Gamma(W_{\mathfrak{s}_{3/2}}\otimes \mathscr{L})$ such that $Q_A \psi = 0$ and and arbitrary $f \in C^{\infty}(Y)$. Then one has the following
    $$\nabla^*_A \nabla_A (f\psi) = -\frac{s}{4} f\psi - \pi(F_A(f\psi)) + \pi(1\otimes Ric) f\psi + \frac{4}{3}f P_A P^*_A \psi - 2\pi \nabla^A_{\nabla f} \psi + (\Delta f) \psi.$$
\end{Lemma}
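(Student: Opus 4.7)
The plan is to combine the standard product rule for the Bochner Laplacian with the Weitzenböck formula from Lemma \ref{weitzenbock formula} and use the hypothesis $Q_A\psi=0$ to eliminate the second-order term.

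First I would compute $\nabla^*_A\nabla_A(f\psi)$ directly in a local orthonormal geodesic frame $\{e_i\}$ around an arbitrary point. Expanding $\nabla^A_{e_i}(f\psi)=e_i(f)\psi+f\nabla^A_{e_i}\psi$ and differentiating again, the Leibniz rule splits the result into three pieces: the cross terms give $-2\sum_i e_i(f)\nabla^A_{e_i}\psi = -2\nabla^A_{\nabla f}\psi$, the pure $f$ term gives $f\nabla^*_A\nabla_A\psi$, and the pure $\psi$ term gives $(\Delta f)\psi$ (with the sign convention $\Delta=-\sum_i e_i e_i$ at the base point). This yields the identity
\begin{equation*}
\nabla^*_A\nabla_A(f\psi)=(\Delta f)\psi-2\nabla^A_{\nabla f}\psi+f\,\nabla^*_A\nabla_A\psi.
\end{equation*}

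Next, I would invoke the twisted Rarita-Schwinger Weitzenböck formula of Lemma \ref{weitzenbock formula}, rearranged as
\begin{equation*}
\nabla^*_A\nabla_A\psi = Q_A^2\psi+\tfrac{4}{3}P_AP^*_A\psi-\tfrac{s}{4}\psi-\pi(F_A\psi)+\pi(1\otimes Ric)\psi,
\end{equation*}
and use the hypothesis $Q_A\psi=0$ to drop $Q_A^2\psi$. Multiplying through by $f$ and substituting into the previous display yields every term on the right-hand side of the claimed identity, since $\pi(F_A(f\psi))=f\pi(F_A\psi)$ and $\pi(1\otimes Ric)(f\psi)=f\pi(1\otimes Ric)\psi$ by scalarity of $f$. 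The only cosmetic point is the appearance of $\pi$ in front of $\nabla^A_{\nabla f}\psi$ in the statement: because Clifford multiplication $\gamma$ is parallel, the compatible connection preserves the subbundle $W_{\mathfrak{s}_{3/2}}\otimes\mathscr{L}=\ker\gamma\otimes\mathscr{L}$, so $\nabla^A_{\nabla f}\psi$ already lies in this subbundle and $\pi$ acts as the identity on it.

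There is no genuine obstacle here; this is a bookkeeping lemma whose content is entirely captured by the product rule plus Lemma \ref{weitzenbock formula}. The only mild care needed is keeping track of sign conventions (the positive Laplacian $\Delta=d^*d$ on functions versus the Bochner Laplacian $\nabla^*_A\nabla_A$), and confirming that the $3/2$-spinor subbundle is preserved by $\nabla_A$ so that writing either $\nabla^A_{\nabla f}\psi$ or $\pi\nabla^A_{\nabla f}\psi$ amounts to the same thing.
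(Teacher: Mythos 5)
Your proof is correct, and it takes a genuinely more direct route than the paper's. The paper starts from the Leibniz identity for the \emph{twisted Dirac Laplacian}, $D^{T\,2}_A(f\psi) = fD^{T\,2}_A\psi - 2\nabla^A_{\nabla f}\psi + (\Delta f)\psi$, applies the projection $\pi$ to split into the $Q^2_A$ and $P_AP^*_A$ blocks, uses $Q_A\psi = 0$ to obtain an expression for $Q^2_A(f\psi)$, and then equates it with the Weitzenb\"ock formula of Lemma \ref{weitzenbock formula} applied to $f\psi$; the $P_AP^*_A(f\psi)$ terms cancel in the final comparison. You instead prove the Leibniz identity for the \emph{Bochner Laplacian} $\nabla^*_A\nabla_A(f\psi)$ directly in a geodesic frame and then substitute the rearranged Weitzenb\"ock formula applied to $\psi$ itself (not $f\psi$), using $Q_A\psi=0$ to drop $Q^2_A\psi$. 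The two routes are built from the same two ingredients (a product rule for a second-order operator plus Lemma \ref{weitzenbock formula}), but yours bypasses the detour through $D^{T\,2}_A$ and the block decomposition entirely; what the paper obtains as a cancellation between two expressions for $Q^2_A(f\psi)$, you obtain by substitution. Your aside that $\pi\nabla^A_{\nabla f}\psi = \nabla^A_{\nabla f}\psi$ because $\gamma$ is parallel is correct — the $3/2$-spinor subbundle $\ker\gamma$ is preserved by the compatible connection, so the $\pi$ written in the statement is acting as the identity at that spot; the paper retains it because the projection appears naturally when one projects the $D^{T\,2}$ identity, but it carries no extra content.
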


\begin{proof}
    Firstly, it is not difficult to see that 
    \begin{align}
        D^{T \, 2}_A (f\psi) &= D^T_A (D^T_A(f\psi)) = D^T_A (fD^T_A \psi + df \cdot \psi) \nonumber \\
        &= fD^{T\, 2}_{A} \psi + df\cdot D^T_A \psi - df \cdot D^T_{A} \psi - 2\nabla^A_{\nabla f} \psi + (\Delta f) \psi \nonumber \\
        &= f D^{T\, 2}_A \psi - 2\nabla^A_{\nabla f}\psi + (\Delta f) \psi. \nonumber
    \end{align}
    Thus, when applying the orthogonal projection $\pi$ onto the $3/2-$spinor bundle to both sides of the above identity, we have
    $$Q^2_A (f\psi) + \frac{4}{3}P_A P^*_A (f\psi) = f\left(Q^2_A \psi + \frac{4}{3}P_A P^*_A \psi\right) - 2\pi \nabla^A_{\nabla f} \psi + (\Delta f) \psi.$$
    Remember that $Q_A \psi = 0$ so that we can re-arrange the above formula as
    \begin{equation} \label{eq:curvature controls.30}
    Q^2_A (f\psi) = -\frac{4}{3} P_A P^*_A (f\psi) + \frac{4}{3} f P_A P^*_A \psi - 2\pi \nabla^A_{\nabla f} \psi + (\Delta f)\psi.
    \end{equation}
    On the other hand, the Weitzenb\"ock formula for $Q^2_A$ (Lemma \ref{weitzenbock formula}) gives us
    \begin{equation} \label{eq:curvature controls.31}
        Q^2_A (f\psi) = -\frac{4}{3}P_A P^*_A (f\psi) + \nabla^*_A \nabla_A (f\psi) + \frac{s}{4}f\psi + \pi(F_A (f\psi)) - \pi(1\otimes Ric)f\psi.
    \end{equation}
    Compare \eqref{eq:curvature controls.30} and \eqref{eq:curvature controls.31}, we have our formula.
\end{proof}

We are now ready to state and prove the main result of this section. 

\begin{Prop}\label{curvature controls proposition 2.7}
    Suppose $(A,\psi, \epsilon) \in V_{k_0}(B(x,r)) \times \Gamma(W_{\mathfrak{s}_{3/2}}\otimes \mathscr{L}|_{B(x,r)}) \times (0,\infty)$. For any $\delta \in (0,1]$, we have
    \begin{align}
    r^{1/2}\norm{\nabla^2_A \psi}_{L^2(B(x,r(1-\delta)))} \lesssim_{\delta}\,\,\, & r^{-3/2}\norm{\psi}_{L^2(B(x,r))} + r^{-1/2}\norm{\nabla_A \psi}_{L^2(B(x,r))} \nonumber\\
    & + r^{1/2}\norm{F_A}_{L^2(B(x,r))}\norm{\psi}_{L^\infty (B(x,r))} + O(1).\nonumber
    \end{align}
    Here, $B(x,r)$ is a small geodesic ball around $x$ on $Y$, and $k_0$ is a constant as in Theorem \ref{local slice theorem}.
\end{Prop}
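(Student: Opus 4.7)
The plan is to run a cutoff-and-elliptic-regularity argument centered on Lemma \ref{curvature controls lemma 2.6}, which uses the equation $Q_A\psi=0$ to rewrite the rough Laplacian of $\chi\psi$ in a controllable way, and then upgrade the resulting $L^2$-bound on $\nabla_A^*\nabla_A(\chi\psi)$ to a bound on the full Hessian $\nabla_A^2(\chi\psi)$ via a Bochner-type identity. The $L^2_1$-control of $P_A^*\psi$ furnished by Lemma \ref{curvature controls lemma 2.5}, together with the a priori bounds of Lemma \ref{curvature controls lemma 2.3}, will do the heavy lifting on the right-hand side.

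First, I would fix a smooth cutoff $\chi\in C^\infty_c(B(x,r))$ with $\chi\equiv 1$ on $B(x,r(1-\delta))$ and $\|\nabla^k\chi\|_{L^\infty}\lesssim_\delta r^{-k}$ for $k=1,2$. Applying Lemma \ref{curvature controls lemma 2.6} with $f=\chi$ gives an explicit expression for $\nabla_A^*\nabla_A(\chi\psi)$ whose summands split into three types: curvature-coupled algebraic terms involving $s$, $Ric$, and $F_A$; the second-order term $\tfrac{4}{3}\chi P_AP_A^*\psi$; and cutoff-derivative terms $\pi\nabla^A_{\nabla\chi}\psi$ and $(\Delta\chi)\psi$. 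Since $P_A=\pi\nabla_A$, the dangerous piece satisfies $\|\chi P_AP_A^*\psi\|_{L^2(B)}\lesssim \|P_A^*\psi\|_{L^2_{1,A}(B)}$, which after converting between $A$- and $A_0$-Sobolev norms (using the Uhlenbeck bound $\|a\|_{L^2_1}^2\lesssim k_0$) is absorbed by Lemma \ref{curvature controls lemma 2.5} into $\|\psi\|_{L^2(B)}$, the quantity $\|F_A\|_{L^2(B)}\|\psi\|_{L^\infty(B)}$ (via $\int_B|F_A|^2|\psi|^2\le \|F_A\|_{L^2(B)}^2\|\psi\|_{L^\infty(B)}^2$), and constants that depend only on $g$, $A_0$, and $k_0$ and so contribute only $O(1)$.

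Next, to pass from $\|\nabla_A^*\nabla_A(\chi\psi)\|_{L^2}$ to $\|\nabla_A^2(\chi\psi)\|_{L^2}$, I would integrate $|\nabla_A^*\nabla_A\xi|^2$ by parts on the compactly supported $\xi:=\chi\psi$, yielding a Bochner-type identity
\begin{equation*}
\|\nabla_A^2\xi\|_{L^2}^2 = \|\nabla_A^*\nabla_A\xi\|_{L^2}^2 + \int_{B(x,r)}\langle\mathcal{R}_{A,g}\cdot\nabla_A\xi,\nabla_A\xi\rangle + \int_{B(x,r)}\langle\mathcal{R}'_{A,g}\cdot\xi,\xi\rangle,
\end{equation*}
where $\mathcal{R}_{A,g},\mathcal{R}'_{A,g}$ are algebraic in $F_A$ and the Riemann tensor. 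Since $\|F_A\|_{L^2(B)}^2\le k_0$ and $\|\psi\|_{L^\infty(B)}$, $\|\nabla_A\xi\|_{L^4(B)}$ are controlled by Lemma \ref{curvature controls lemma 2.3} (via Kato and Sobolev interpolation), H\"older absorbs these correction integrals. Using $\|\nabla^A_{\nabla\chi}\psi\|_{L^2}\lesssim_\delta r^{-1}\|\nabla_A\psi\|_{L^2(B)}$ and $\|(\Delta\chi)\psi\|_{L^2}\lesssim_\delta r^{-2}\|\psi\|_{L^2(B)}$, noting that $\nabla_A^2(\chi\psi)=\nabla_A^2\psi$ on $B(x,r(1-\delta))$ where $\chi\equiv 1$, and multiplying through by $r^{1/2}$ produces the claimed estimate. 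The main obstacle I anticipate is the Bochner step: one must check that every curvature-commutator contribution can be genuinely absorbed, given that $F_A$ is only $L^2$-controlled; this is what forces the appearance of $\|F_A\|_{L^2(B)}\|\psi\|_{L^\infty(B)}$ on the right, rather than a cleaner bound, and the $O(1)$ ultimately collects all constants depending on $g$, $A_0$, and $k_0$.
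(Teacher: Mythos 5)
Your overall strategy matches the paper's: rescale to $r=1$, use a cutoff $\chi$, apply Lemma \ref{curvature controls lemma 2.6} to rewrite $\nabla_A^*\nabla_A(\chi\psi)$, control $P_AP_A^*\psi$ via Lemma \ref{curvature controls lemma 2.5}, and then pass from $\|\nabla_A^*\nabla_A(\chi\psi)\|_{L^2}$ to $\|\nabla_A^2(\chi\psi)\|_{L^2}$ by integration by parts. The paper's version of the Bochner step produces the estimate
\begin{align*}
\int |\nabla^2_A (\chi \psi)|^2 \lesssim \int \bigl( |\nabla^*_A \nabla_A (\chi \psi)|^2 + |F_A|\, |\nabla_A (\chi \psi)|^2 + |F_A| \, | \psi| \, |\nabla^2_A (\chi \psi)| \bigr),
\end{align*}
and the second term on the right is where your proposal has a real gap. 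You assert that $\|\nabla_A\xi\|_{L^4(B)}$ ``is controlled by Lemma \ref{curvature controls lemma 2.3} (via Kato and Sobolev interpolation), H\"older absorbs these correction integrals.'' This is not true: Lemma \ref{curvature controls lemma 2.3} gives only an $L^\infty$ bound on $\psi$ and an $L^2$ bound on $\nabla_A\psi$, and the Sobolev embedding you need to reach $L^4$ of the gradient would have to consume $\|\nabla_A^2(\chi\psi)\|_{L^2}$ — the very quantity you are trying to bound — so the term cannot be dispatched by H\"older alone.

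The missing idea is the absorption argument in the paper's Step \eqref{eq:curvature controls.37}--\eqref{eq:curvature controls.39}: after Cauchy--Schwarz gives
\begin{equation*}
\int |F_A|\,|\nabla_A(\chi\psi)|^2 \le \|F_A\|_{L^2}\,\|\nabla_A(\chi\psi)\|_{L^4}^2 \le \sqrt{k_0}\,\|\nabla_A(\chi\psi)\|_{L^4}^2,
\end{equation*}
one applies the Gagliardo--Nirenberg interpolation
$\|\nabla_A(\chi\psi)\|_{L^4}^2 \lesssim \|\nabla_A^2(\chi\psi)\|_{L^2}^{3/2}\,\|\nabla_A(\chi\psi)\|_{L^2}^{1/2}$
and then the Peter--Paul form of Young's inequality with exponents $(4/3,4)$ to split off a small multiple of $\|\nabla_A^2(\chi\psi)\|_{L^2}^2$ (absorbed into the left-hand side) and a term in $\|\nabla_A(\chi\psi)\|_{L^2}^2$. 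The same small-parameter absorption is also used for $\int|F_A|\,|\psi|\,|\nabla_A^2(\chi\psi)|$. Your proposal recognizes that ``curvature-commutator contributions must be genuinely absorbed'' but does not supply this two-stage interpolation/Young mechanism, and the reliance on Lemma \ref{curvature controls lemma 2.3} for an $L^4$ gradient bound is an error that, if taken literally, makes the argument circular.
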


\begin{proof}
    Since the statement is scale-invariant, without loss of generality, we shall work on the geodesic ball $B(x,1)$ of radius $1$. Let $\chi$ be a cut-off function such that $supp\, \chi \subseteq B(x, 1- \delta/2)$ and $\chi$ equals to $1$ on $B(x, 1- \delta)$. By integration by parts, we have
    \begin{equation} \label{eq:curvature controls.32}
        \int |\nabla^2_A (\chi \psi)|^2 \lesssim \int \left( |\nabla^*_A \nabla_A (\chi \psi)|^2 + |F_A| \cdot |\nabla_A (\chi \psi)|^2 + |F_A| \cdot | \psi| \cdot |\nabla^2_A (\chi \psi)| \right).
    \end{equation}
    Using the identity in Lemma \ref{curvature controls lemma 2.6}, we can estimate
    \begin{equation} \label{eq:curvature controls.33}
        \int |\nabla^*_A \nabla_A (\chi \psi)|^2 \lesssim_{\delta}\int\left(|\psi|^2 + |F_A|^2 |\chi \psi|^2  + |\nabla_A \psi|^2 + |P_A P^*_A \psi|^2 \right).
    \end{equation}
    Keeping the first three terms of the RHS of \eqref{eq:curvature controls.33} unchanged, we estimate the RHS of \eqref{eq:curvature controls.33} further as follows. By the Cauchy-Schwarz inequality, we have
    \begin{equation} \label{eq:curvature controls.34}
        \int |P_A P_A^* \psi|^2 \leq \int |\nabla_{A_0}P^*_A \psi + a\cdot P^*_A \psi|^2 \lesssim \int (|\nabla_{A_0}P^*_A\psi|^2 + |a\cdot P^*_A \psi|^2).
    \end{equation}
    Since $\nabla_{A_0}: L^2_1 \to L^2$ is bounded, $\norm{\nabla_{A_0}P^*_A \psi}^2_{L^2} \lesssim \norm{P^*_A\psi}^2_{L^2_1}$. From this, by Lemma \ref{curvature controls lemma 2.5}, we then have
    $$\norm{P^*_A \psi}^2_{L^2_1(B(x,1))} \leq c\left(  \norm{\psi}^2_{L^2(B(x,1))} + \int_{B(x,1)} |F_A|^2 \cdot |\psi|^2 + k_0 \sqrt{k_0^2+1} + k_0 + 1 \right).$$
    Combine with the fact that somewhere in the proof of Lemma \ref{curvature controls lemma 2.5}, we also established that $\norm{a \cdot P^*_A \psi}^2_{L^2}$ $\leq c_3 c_4 c_{15} k_0 \sqrt{k_0^2+1}$ to obtain a further estimate of \eqref{eq:curvature controls.34}
    $$\int |P_A P^*_A \psi|^2 \lesssim c \left( \int_{B(x,1)} |\psi|^2 + \int_{B(x,1)} |F_A|^2|\psi|^2 + 2k_0\sqrt{k_0^2+1} + k_0 + 1 \right).$$
    This means that we have the following estimate for \eqref{eq:curvature controls.33}
    $$\int |\nabla^*_A \nabla_A (\chi \psi)|^2 \lesssim_{\delta} \int\left( |\psi|^2 + |F_A|^2|\psi|^2 + |\nabla_A \psi|^2 \right) + O(1).$$
    In turn, we obtain an estimate for the LHS of \eqref{eq:curvature controls.32}
    \begin{align} \label{eq:curvature controls.35}
        \int |\nabla^2_A (\chi\psi)|^2 \lesssim_{\delta} \, \, \, & \norm{\psi}^2_{L^2}
     + \norm{\nabla_A \psi}^2_{L^2} + \int |F_A|^2 |\psi|^2 \nonumber \\
    & + \int \left(|F_A|\cdot |\nabla_A (\chi \psi)|^2 + |F_A| \cdot |\chi \psi| \cdot |\nabla^2_A (\chi \psi)| \right) + O(1).
    \end{align}
    Keeping the first three terms of the RHS of \eqref{eq:curvature controls.35} the same, we give estimates for the last two terms in the integrand of the RHS of \eqref{eq:curvature controls.35}. By the Peter-Paul version of the Cauchy-Schwarz inequality, for all $\tau >0$, we have
    \begin{align}\label{eq:curvature controls.36}
        \int |F_A| \cdot |\chi \psi| \cdot |\nabla^2_A (\chi \psi) | & \leq \int\left( \tau^{-1}|F_A|^2 \cdot |\chi \psi|^2 + \tau |\nabla^2_A (\chi \psi)|^2 \right) \nonumber \\
        & \leq \tau^{-1}\norm{\psi}^2_{L^\infty} \norm{F_A}^2_{L^2} + \tau \norm{\nabla^2_A (\chi \psi)}^2_{L^2}.
    \end{align}
    If $\tau$ is small enough, the second term of the RHS of \eqref{eq:curvature controls.36} can be moved to the LHS of \eqref{eq:curvature controls.35}. Now for the fourth term in the integrand of the RHS of \eqref{eq:curvature controls.35}, we use the Cauchy-Schwarz inequality to obtain
    \begin{align} \label{eq:curvature controls.37}
        \int |F_A| \cdot |\nabla_A (\chi\psi)|^2 &\leq \left(\int |F_A|^2\right)^{1/2} \left(\int |\nabla_A (\chi \psi)|^4 \right)^{1/2} \nonumber \\
        &= \norm{F_A}_{L^2}\cdot \norm{\nabla_A (\chi \psi)}^2_{L^4} \leq \sqrt{k_0}\cdot \norm{\nabla_A (\chi \psi)}^2_{L^4}.
    \end{align}
    By the Gagliardo-Nirenberg interpolation inequality
        $$\norm{f}_{L^4} \lesssim \norm{\nabla f}^{3/4}_{L^2} \norm{f}^{1/4}_{L^2}$$
    the second factor on the RHS of \eqref{eq:curvature controls.37} is estimated by
    \begin{equation} \label{eq:curvature controls.38}
        \norm{\nabla_A (\chi \psi)}^2_{L^4} \lesssim \norm{\nabla_A^2 (\chi \psi)}^{3/2}_{L^2} \norm{\nabla_A (\chi \psi)}^{1/2}_{L^2}.
    \end{equation}
    Finally, by the Peter-Paul version of Young's inequality, for all $u, v \geq 0$ , $\tau >0$, and $1/p + 1/q = 1$ with $p,q >1$, 
    $$uv \leq \frac{1}{p} \tau^{p} u^p + \frac{1}{q} \tau^{-q} v^q$$
    applied in the situation when $u = \norm{\nabla^2_A (\chi \psi)}^{3/2}_{L^2}$, $v = \norm{\nabla_A (\chi \psi)}^{1/2}_{L^2}$, and $(p,q) = (4/3, 4)$, we have 
    \begin{equation} \label{eq:curvature controls.39}
        \norm{\nabla_A(\chi \psi)}^2_{L^4} \lesssim \frac{3}{4}\tau^{4/3}\norm{\nabla^2_A (\chi \psi)}^2_{L^2} + \frac{1}{4}\tau^{-4}\norm{\nabla_A(\chi \psi)}^2_{L^2}.
    \end{equation}
    When $\tau$ is sufficiently small, the first term on the RHS of \eqref{eq:curvature controls.39} can be moved to the LHS of \eqref{eq:curvature controls.35}. Therefore, with all of the above estimates combined, we obtained the desired estimate of the proposition. 
\end{proof}

\begin{Def}\label{first compactness defintion 2.8}
    The \textit{critical radius $\rho(x)$} of a connection $A \in \mathcal{A}(\mathscr{L})$ is
    $$\rho(x) = \sup \left\{ r \in (0, r_0]: r^{1/2} \norm{F_A}_{L^2(B(x,r))} \leq 1 \right\}.$$
\end{Def}

We combine Lemma \ref{curvature controls lemma 2.3} (cf. \eqref{eq:curvature controls.20}, \eqref{eq:curvature controls.21}, \eqref{eq:curvature controls.22}) with Proposition \ref{curvature controls proposition 2.7} to immediate obtain the following corollary.

\begin{Cor}\label{curvature controls corollary 2.9}
    For all $(A,\psi, \epsilon) \in V_{k_0}(B(x,\rho(x)/2)) \times \Gamma(W_{\mathfrak{s}_{3/2}}\otimes \mathscr{L}|_{B(x,\rho(x)/2)}) \times (0,\infty)$ that are solutions of \eqref{eq:first compactness.2}, we have 
    $$\rho(x)^{1/2}\norm{\nabla^2_A \psi}_{L^2(B(x,\rho(x)/2))} = O(1).$$
    Here, as before, $B(x,\rho(x)/2)$ is a geodesic ball around $x$ on $Y$, and $k_0$ is its associated constant as in Theorem \ref{local slice theorem}. \qed
\end{Cor}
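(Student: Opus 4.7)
The plan is to apply Proposition \ref{curvature controls proposition 2.7} at scale $r=\rho(x)$ with cut-off parameter $\delta=1/2$, so that the inner ball $B(x,r(1-\delta))$ coincides with $B(x,\rho(x)/2)$, and then show that each of the four terms on the right-hand side of that proposition is $O(1)$ by invoking Lemma \ref{curvature controls lemma 2.3} together with the defining inequality $\rho(x)^{1/2}\norm{F_A}_{L^2(B(x,\rho(x)))}\leq 1$ of the critical radius.

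Specifically, the gradient term $r^{-1/2}\norm{\nabla_A\psi}_{L^2(B(x,r))}$ is handled directly by the middle estimate of Lemma \ref{curvature controls lemma 2.3}, which gives $\norm{\nabla_A\psi}_{L^2(B(x,r))}=O(r^{1/2})$. The curvature--spinor term $r^{1/2}\norm{F_A}_{L^2(B(x,r))}\norm{\psi}_{L^\infty(B(x,r))}$ is controlled by combining the critical-radius bound $r^{1/2}\norm{F_A}_{L^2(B(x,r))}\leq 1$ with the pointwise bound $\norm{\psi}_{L^\infty(B(x,r))}=O(1)$ from the same lemma. For the zeroth-order piece $r^{-3/2}\norm{\psi}_{L^2(B(x,r))}$, I would use once more $\norm{\psi}_{L^\infty}=O(1)$ together with $\mathrm{vol}_g(B(x,r))=O(r^3)$ to obtain $\norm{\psi}_{L^2(B(x,r))}\leq\mathrm{vol}_g(B(x,r))^{1/2}\norm{\psi}_{L^\infty}=O(r^{3/2})$, which after dividing by $r^{3/2}$ is again $O(1)$.

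Summing these contributions with the explicit additive $O(1)$ term already present in Proposition \ref{curvature controls proposition 2.7} yields $\rho(x)^{1/2}\norm{\nabla_A^2\psi}_{L^2(B(x,\rho(x)/2))}=O(1)$, as claimed. I do not expect a genuine technical obstacle at this stage: the heavy lifting was already carried out in Proposition \ref{curvature controls proposition 2.7} (where a second-derivative term had to be absorbed via Peter--Paul and Gagliardo--Nirenberg) and in Lemma \ref{curvature controls lemma 2.5} (which supplied the $L^2_1$ bound on the divergence $P_A^*\psi$). The only mild point worth flagging is that the choice $\delta=1/2$ is inessential---any fixed $\delta\in(0,1)$ works, with the implicit constant depending only on $\delta$, $g$, $A_0$, and $k_0$.
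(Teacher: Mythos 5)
Your proposal is correct and is exactly the computation the paper intends: the corollary is presented as an immediate consequence of Proposition \ref{curvature controls proposition 2.7} at $r=\rho(x)$, $\delta=1/2$, together with the three estimates of Lemma \ref{curvature controls lemma 2.3} and the defining inequality $\rho(x)^{1/2}\norm{F_A}_{L^2(B(x,\rho(x)))}\leq 1$ of the critical radius, and the paper gives no further detail. Your handling of the $r^{-3/2}\norm{\psi}_{L^2}$ term via $\norm{\psi}_{L^\infty}=O(1)$ and $\mathrm{vol}_g(B(x,r))=O(r^3)$ is the right filling-in, and your remark that $\delta=1/2$ is inessential is accurate.
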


\section{A frequency function for the RS-SW equations}
We continue with our analysis of the sequence of solutions $(A_n, \psi_n, \epsilon_n)$ of \eqref{eq:first compactness.2} when $\limsup \epsilon_n = 0$. Roughly speaking, we expect that up to a subsequence and gauge transformations, such a sequence of solutions of \eqref{eq:first compactness.2} will converge to a solution away from a certain singular set on $Y$. To proceed, we first define a \textit{frequency function} for the RS-SW equations. The notion of frequency function was first introduced by Almgren in the study of critical sets of elliptic partial differential equations \cite{MR0574247}. It was also used to analyze the moduli space of solutions of the multiple-spinor Seiberg-Witten equations. In fact, the frequency function for the RS-SW equations is an adaptation of the ones used in \cite{haydys2015compactness} and \cite{taubes2012psl}.

\begin{Def}\label{frequency function definition 3.1}
    Let $x$ be any point on $Y$ and $0<r\leq r_0 <<1$ where $r_0$ is the injective radius of $Y$. For any $(A,\psi, \epsilon) \in \mathcal{A}(\mathscr{L}) \times \Gamma(W_{\mathfrak{s}_{3/2}} \otimes \mathscr{L}) \times (0,\infty)$, we define
    $$H_x(r) = \int_{B(x,r)} \left(|\nabla_A \psi|^2 + \epsilon^{-2} |\mu(\psi)|^2 - \dfrac{4}{3}\la P_A P^*_A \psi, \psi \ra \right), \quad h_x(r) = \int_{\partial B(x,r)} |\psi|^2.$$
    The \textit{frequency function} $N_x: (0, r_0] \to (0, r_0]$ of \eqref{eq:first compactness.2} is given by 
    $$N_x(r) = \dfrac{rH_x(r)}{h_x(r)}.$$
    When the context is clear, sometimes we will ignore the subscript of the base point $x$ and write $N, H, h$ instead.
\end{Def}

    For the rest of this section, we will assume that $x$ is a fixed base-point on $Y$, and $(A, \psi, \epsilon) \in V_{k_0}(B(x,r)) \times \Gamma(W_{\mathfrak{s}_{3/2}}\otimes \mathscr{L}|_{B(x,r)}) \times (0,\infty)$ is a solution of \eqref{eq:first compactness.2}, where $k_0$ is the constant that appears in Theorem \ref{local slice theorem}. We will study the dependence of $N$ on the base point (cf. Proposition \ref{frequency function proposition 3.14}). Furthermore, we will establish the following important properties of $N$:
    \begin{enumerate}
        \item $N$ is almost monotone in $r$. (cf. Corollary \ref{frequency function corollary 3.11})
        \item $N$ controls the growth of $h$. (cf. Corollary \ref{frequency function corollary 3.13})
        \item $|\psi|(x)$ controls $N$. (cf. Proposition \ref{frequency function proposition 3.16})
    \end{enumerate}

Versions of Proposition \ref{frequency function proposition 3.14}, Corollary \ref{frequency function corollary 3.11}, Corollary \ref{frequency function corollary 3.13}, and Proposition \ref{frequency function proposition 3.16} appear in the original context of \cite{MR0252808} and \cite{MR0574247}. They also appear in the gauge-theoretic setting of \cite{haydys2015compactness}, \cite{taubes2012psl} and \cite{Taubes:2016voz}. The novelty of these statements in the current context is the appearance of the Penrose operator in our definition of the frequency function. Once a control of $\la P_A P^*_A \psi, \psi\ra$ is well understood, we demonstrate that standard technique can also be applied to deduce the expected properties of $N$ listed above. Closely following \cite{haydys2015compactness}, we include the proofs of these statements in the next few subsections.

Before we get into the analysis of the frequency function $N$, we need to establish some preliminary observations about $3/2-$spinors on the boundary of a three-ball. These facts might be well-known, but we will nevertheless record them in the following subsection for the sake of self-containment and clarity.

\subsection{$3/2-$spinors on the boundary} Consider the punctured ball $\dot{B}(x,r)$ by removing the base-point $x$.  It is foliated by the surfaces $\partial B(x,r)$ with the normal vector field $\partial_r$ in the radial direction. The restriction of the spinor bundle on $\dot{B}(x,r)$ to the boundary is exactly the spinor bundle on $\partial B(x,r)$, which we will denote by $S$ and $\tilde{S}$, respectively.  Let $\tilde{\gamma}, \tilde{\nabla},$ and $\tilde{\mathbf{D}}$ be the Clifford multiplication, compatible connection, and the Dirac operator associated to the bundle of spinor bundle on $\partial B(x,r)$. For any vector $v \in T \partial B(x,r)$, we have
\begin{equation}\label{eq:40}
    \gamma(v) = -\gamma(\partial_r)\tilde{\gamma}(v), \quad \nabla_v = \tilde{\nabla}_v + \dfrac{e^{O(r^2)}}{2r}\tilde{\gamma}(v), \quad \mathbf{D} = \gamma(\partial_r)\left(\nabla_r + \dfrac{e^{O(r^2)}}{r} - \tilde{\mathbf{D}}\right).
\end{equation}
Denote $\mathbf{D}^{TB}$ by the Dirac operator associated with the bundle of spinor-valued $1-$forms. Let $\mathbf{P}$ be the Penrose operator and $\mathbf{Q}$ be the Rarita-Schwinger operator on $B(x,r)$. We also consider $\mathbf{D}^{T\partial B}$, $\mathbf{\tilde{P}}$, $\mathbf{\tilde{Q}}$ to be the Dirac operators on $T\partial B(x,r) \otimes \tilde{S}$, the Penrose operator and the Rarita-Schwinger operator on $\partial B(x,r)$, respectively. Finally, we write $W$, $\tilde{W}$ as the $3/2-$spinor bundles on $B(x,r)$ and $\partial B(x,r)$, respectively. The objective of this subsection is to derive an analog of the last formula of \eqref{eq:40} for the Rarita-Schwinger operators. 

Suppose $\Phi \in \Gamma(\tilde{W})$ is a $3/2-$spinor on $\partial B(x,r)$. Naturally, we can view it as a $3/2-$spinor on $B(x,r)$ but restricted to the boundary. Note that then we have the following orthogonal decomposition $\Gamma(W|_{\partial B(x,r)}) = \Gamma(\tilde{W}) \oplus \Gamma(\tilde{W}^{\perp})$. First, we would like to describe $\Gamma(\tilde{W}^{\perp})$ explicitly. 

Let $\{\partial_r, e_1, e_2\}$ be orthonormal vector fields on $B(x,r)$ where $\{e_1, e_2\}$ are orthonormal vector fields on $\partial B(x,r)$. Then in local normal coordinate, we can write $\phi$ as $\Phi = \phi_1 \otimes e_1 + \phi_2 \otimes e_2$, where $\phi_i \in \Gamma(\tilde{S})$. By virtue of $\Phi$ being a $3/2-$spinor, it is not hard to see that $\phi_1 = -\partial_r \cdot \phi_2$. As a result, 
$$\Gamma(\tilde{W}) = \{ -\partial_r \cdot \phi \otimes e_1 + \phi \otimes e_2 : \phi \in \Gamma(\tilde{S})\}.$$
Suppose $\Psi$ is any $3/2-$spinor of $B(x,r)$ restricted to the boundary such that $\Psi \in \Gamma(\tilde{W}^{\perp})$. In local normal coordinate, we write $\Psi = \psi \otimes \partial_r + \psi_1 \otimes e_1 + \psi_2 \otimes e_2$, where $\psi, \psi_i \in \Gamma(\tilde{S})$. Then for any $\phi \in \Gamma(\tilde{S})$, we must have 
$$\la -\partial_r \cdot \phi \otimes e_1 + \psi \otimes e_2, \psi \otimes \partial_r + \psi_1 \otimes e_1 + \psi_2 \otimes e_2\ra = 0.$$
Since the Clifford multiplication is skew-adjoint, the above is equivalent to
$$\la\phi, \partial_r \cdot \psi_1 + \psi_2\ra = 0, \quad \quad \forall \phi \in \Gamma(\tilde{S}).$$
As a result, we must have $\psi_1 = \partial_r \cdot \psi_2$. Furthermore, since $\Psi$ is a $3/2-$spinor, we have
$$\partial_r \cdot \psi + e_1 \cdot \psi_1 + e_2 \cdot \psi_2 = 0.$$
Combine with the fact that $\psi_1 = \partial_r \cdot \psi_2 \equiv \partial_r \cdot \psi$, we obtain $\psi = 2 \tilde{\gamma}(e_2)\psi$. Therefore, 
$$\Gamma(\tilde{W}^{\perp}) = \{2\tilde{\gamma}(e_2)\psi \otimes \partial_r + \partial_r \cdot \psi \otimes e_1 + \psi \otimes e_2: \psi \in \Gamma(\tilde{S})\}.$$
By a change of variable, setting $\psi \to e_2\cdot \psi$, then in local normal coordinate any $3/2-$spinor $\Psi \in \Gamma(\tilde{W}^{\perp})$ can be written as
\begin{align}
    \Psi &= -2 \partial_r \cdot \psi \otimes \partial_r + e_1 \cdot \psi \otimes e_1 + e_2 \cdot \psi \otimes e_2 \nonumber \\
    & = -3\iota(\psi)- 3 \partial_r \cdot \psi \otimes \partial_r.\nonumber
\end{align}
Therefore, if we define the embedding $\hat{\iota} : \tilde{S} \to TB(x,r) \otimes S |_{\partial B(x,r)} = T\partial B(x,r) \otimes \tilde{S} \oplus ( \CN\partial_r \otimes \tilde{S})$ to be $\hat{\iota}(\psi) = -3\iota(\psi) - 3\partial_r \cdot \psi \otimes \partial_r$, then $\Gamma(\tilde{W}^{\perp}) = \hat{\iota}(\tilde{S})$. We summarize the above discussion in the following lemma.

\begin{Lemma}\label{frequency function lemma 3.2}
    Let $W$, $\tilde{W}$ be the $3/2-$spinor bundle on $B(x,r)$ and $\partial B(x,r)$, respectively. Let $\tilde{S}$ be the spinor bundle on $\partial B(x,r)$ induced by restricting $S$ to the boundary. We have the following orthogonal decomposition
    $W|_{\partial B(x,r)} = \tilde{W} \oplus \hat{\iota}(\tilde{S})$. \qed
\end{Lemma}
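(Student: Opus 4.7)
The plan is to verify the decomposition pointwise in a local orthonormal frame adapted to the radial direction, since both summands are well-defined subbundles of $W|_{\partial B(x,r)}$ and all of the relevant relations are $C^{\infty}$-linear over the boundary. I would then pull the local identifications together by checking that each constructed summand is intrinsic (frame-independent).

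First, I would fix a local orthonormal frame $\{\partial_r, e_1, e_2\}$ near a boundary point, with $\{e_1, e_2\}$ tangent to $\partial B(x,r)$, and expand any element of $W|_{\partial B(x,r)}$ as $\Psi = \psi \otimes \partial_r + \psi_1 \otimes e_1 + \psi_2 \otimes e_2$ with $\psi, \psi_i \in \Gamma(\tilde S)$. The identification of $\tilde W$ is then immediate: boundary $3/2$-spinors are exactly those with $\psi = 0$ whose remaining components satisfy the Clifford constraint $e_1 \cdot \psi_1 + e_2 \cdot \psi_2 = 0$, which via the frame relation $\gamma(v) = -\gamma(\partial_r)\tilde\gamma(v)$ of \eqref{eq:40} rearranges to $\psi_1 = -\partial_r \cdot \psi_2$.

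Second, I would compute the fiberwise orthogonal complement of $\tilde W$ inside $W|_{\partial B(x,r)}$ by imposing both orthogonality against every element of $\tilde W$ and the $3/2$-constraint $\gamma(\Psi)=0$. Pairing $\Psi$ with the general element $-\partial_r \cdot \phi \otimes e_1 + \phi \otimes e_2$ of $\tilde W$ and using skew-adjointness of Clifford multiplication yields $\psi_1 = \partial_r \cdot \psi_2$; substituting this into the Clifford constraint and using $\partial_r \cdot \partial_r = -1$ together with anti-commutativity of distinct Clifford generators forces $\psi = 2\tilde\gamma(e_2)\psi_2$.

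Finally, the substitution $\psi_2 = e_2 \cdot \chi$ for $\chi \in \Gamma(\tilde S)$ rewrites an arbitrary element of $\tilde W^{\perp}$ in the closed form $-3\iota(\chi) - 3\partial_r \cdot \chi \otimes \partial_r = \hat\iota(\chi)$, so $\hat\iota : \tilde S \to \tilde W^{\perp}$ is a fiberwise isomorphism. Combined with the standard orthogonal decomposition $W|_{\partial B(x,r)} = \tilde W \oplus \tilde W^{\perp}$, this gives the lemma. The only real obstacle is careful bookkeeping of the signs imposed by \eqref{eq:40} and the Clifford relations; frame-independence of the final identification is automatic, since both $\tilde W$ and $\hat\iota$ have been defined without reference to the choice of $\{e_1, e_2\}$.
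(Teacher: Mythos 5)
Your plan reproduces the paper's own argument essentially step for step: fix the adapted frame $\{\partial_r,e_1,e_2\}$, characterize $\tilde W\subset W|_{\partial B(x,r)}$ as the $\partial_r$-free sections, impose orthogonality (giving $\psi_1=\partial_r\cdot\psi_2$) and the $3/2$-constraint (giving $\psi=2\tilde\gamma(e_2)\psi_2$), then make the change of variable $\psi_2\mapsto e_2\cdot\chi$ to recognize the complement as $\hat\iota(\tilde S)$. The only cosmetic gap is that the reduction of $e_1\cdot\partial_r\cdot\psi_2$ to $e_2\cdot\psi_2$ and the identity $\tilde\gamma(e_2)=\gamma(e_1)$ use the three-dimensional volume-element relation, not merely anticommutativity and $\partial_r^2=-1$, but this does not affect the correctness of the argument.
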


Define $L : \tilde{S} \to \tilde{S} \otimes  \CN \partial_r$ to be a vector bundle isomorphism (in fact, it is also an isometry) given by $L(\phi) = \phi \otimes \partial_r$. Note that then $ \hat{\iota}(\tilde{S}) \oplus \tilde{W} \cong (\tilde{S}\otimes \CN\partial_r)\oplus\tilde{W}$ via
$$T = \begin{pmatrix} L\hat{\iota}^{-1} & 0 \\ 0 & 1 \end{pmatrix}.$$
This fact will be useful later when we perform a coordinate change on the matrix representation of $\mathbf{D}^{TB}$.

Recall that the Levi-Civita connection on the boundary is related to the Levi-Civita connection on the whole $B(x,r)$ as follows
$$\nabla^{LC}_{v} \bullet  = \tilde{\nabla}^{LC}_{v} \bullet + \frac{e^{O(r^2)}}{r}\la v, \bullet\ra \partial_r, \quad \quad \forall v \in \Gamma(T\partial B(x,r)).$$
With this fact, we compute $\mathbf{D}^{TB}|_{\partial B(x,r)}$ in matrix form with respect to the orthogonal decomposition $TB(x,r) \otimes S|_{\partial B(x,r)} = (\tilde{S} \otimes \CN \partial_r)\oplus T\partial B(x,r) \otimes \tilde{S}$. By definition of the Dirac operator, we have
\begin{align*}
   \mathbf{D}^{TB} &= \sum_{i=0}^{2}e_i \cdot (\nabla_{e_i} \otimes 1 + 1 \otimes \nabla^{LC}_{e_i}) = \mathbf{D}\otimes 1 + \sum_{i=1}^{2}e_i \cdot \otimes \nabla^{LC}_{e_i}, \quad \quad \text{ where } e_0 \equiv \partial_r \\
   & = \partial_r\cdot \left(\nabla_r + \frac{e^{O(r^2}}{r} - \mathbf{\tilde{D}}\right)\otimes 1 + \partial_r \cdot \otimes \nabla^{LC}_r -\sum_{i=1}^{2}\partial_r \cdot e_i \tilde{\cdot} \otimes\left( \tilde{\nabla}^{LC}_{e_i} + \frac{e^{O(r^2)}}{r}\la e_i, \bullet\ra \partial_r \right)\\
   & = \partial_r\cdot \left\{\nabla_r + \frac{e^{O(r^2)}}{r} -\left (\mathbf{\tilde{D}}\otimes 1 + \sum_{i=1}^{2}e_i \tilde{\cdot}\otimes \tilde{\nabla}^{LC}_{e_i}\right) - \frac{e^{O(r^2)}}{r}\sum_{i=1}^{2}e_i\tilde{\cdot} \otimes \la e_i, \cdot\ra\partial_r\right\}.
\end{align*}
Hence, when we apply $\mathbf{D}^{TB}$ to $\psi \otimes \partial_r$, we obtain
$$\mathbf{D}^{TB}(\psi \otimes \partial_r) = \partial_r\cdot \left(\nabla_r(\psi \otimes \partial_r) + \frac{e^{O(r^2)}}{r}\psi\otimes \partial_r - \mathbf{\tilde{D}}\psi\otimes \partial_r\right).$$
On the other hand, if we apply $\mathbf{D}^{TB}$ to $\psi_1 \otimes e_i$, we get
$$\mathbf{D}^{TB}(\psi_i\otimes e_i) = \partial_r \cdot \left(\nabla_r(\psi_i \otimes e_i) + \frac{e^{O(r^2)}}{r}\psi_i\otimes e_i - \mathbf{D}^{T\partial B}(\psi_i \otimes e_i) - \frac{e^{O(r^2)}}{r}e_i \tilde{\cdot} \psi_i \otimes \partial_r\right).$$
As a result, we obtain the following proposition. 

\begin{Prop}\label{frequency function proposition 3.3}
    With respect to the orthogonal decomposition $TB(x,r) \otimes S = (S \otimes \CN \partial_r) \oplus T\partial B(x,r) \otimes S$, $\mathbf{D}^{TB}$ has the following matrix form
    $$\displaystyle \mathbf{D}^{TB} = \begin{pmatrix}
        \partial_r \cdot (\nabla_r + e^{O(r^2)}/r - \mathbf{\tilde{D}}\otimes 1) & e^{O(r^2)}/r\,\tilde{\gamma}\otimes \partial_r \\ 0 & \partial_r \cdot (\nabla_r + e^{O(r^2)}/r - \mathbf{D}^{T\partial B})
    \end{pmatrix}.$$
\end{Prop}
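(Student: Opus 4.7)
The plan is that Proposition 3.3 is essentially a bookkeeping exercise: the two computations of $\mathbf{D}^{TB}$ displayed immediately before the statement already evaluate it on sections of each summand of the orthogonal decomposition $TB(x,r)\otimes S|_{\partial B(x,r)} = (S\otimes\CN\partial_r)\oplus(T\partial B(x,r)\otimes S)$, and what remains is to read off the four matrix entries.

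First, I would start from the expansion $\mathbf{D}^{TB} = \mathbf{D}\otimes 1 + \sum_{i=1}^{2} e_i\cdot \otimes \nabla^{LC}_{e_i}$ in the local orthonormal frame $\{\partial_r, e_1, e_2\}$ with $\{e_1,e_2\}$ tangent to $\partial B(x,r)$, and substitute both the three identities from \eqref{eq:40}, relating the restricted spin geometry to the boundary spin geometry, and the Levi-Civita comparison $\nabla^{LC}_{e_i} = \tilde{\nabla}^{LC}_{e_i} + (e^{O(r^2)}/r)\la e_i,\cdot\ra\partial_r$ on the tangent bundle. This yields the displayed formula for $\mathbf{D}^{TB}|_{\partial B(x,r)}$ in the paragraph preceding the statement.

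Next I would feed into this expression a section of each summand. For $\psi\otimes\partial_r\in\Gamma(S\otimes\CN\partial_r)$, every term produced is of the form (spinor)$\otimes\partial_r$, so the image lies entirely in the first summand; this gives the $(1,1)$ entry $\partial_r\cdot(\nabla_r + e^{O(r^2)}/r - \tilde{\mathbf{D}}\otimes 1)$ and simultaneously confirms that the $(2,1)$ entry vanishes. For $\psi_i\otimes e_i\in\Gamma(T\partial B(x,r)\otimes S)$, the terms that remain in $T\partial B(x,r)\otimes S$ yield the $(2,2)$ entry $\partial_r\cdot(\nabla_r + e^{O(r^2)}/r - \mathbf{D}^{T\partial B})$, while the shape-operator contribution $-(e^{O(r^2)}/r)\,\partial_r\cdot (e_i\tilde{\cdot}\psi_i)\otimes\partial_r$ lies in $S\otimes\CN\partial_r$ and provides the off-diagonal $(1,2)$ entry. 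To recognize this in the claimed form $(e^{O(r^2)}/r)\,\tilde{\gamma}\otimes\partial_r$, I would use $\gamma(\partial_r)^2 = -1$ together with the relation $\gamma(e_i) = -\gamma(\partial_r)\tilde{\gamma}(e_i)$ to rewrite $\partial_r\cdot e_i\tilde{\cdot}\psi_i$ as a scalar multiple of $\tilde{\gamma}(e_i)\psi_i$, with the resulting sign absorbed into the notation $e^{O(r^2)}$.

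The main obstacle is not conceptual depth but careful sign and convention management: one must check that each scalar factor produced by Clifford multiplication between $\partial_r$ and tangential directions correctly fits into the $e^{O(r^2)}$ notation, and that the image components in each summand of the codomain are correctly identified. Once these identifications are verified, the four matrix entries read off directly from the two computations already displayed in the preceding paragraph.
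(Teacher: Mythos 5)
Your overall strategy is the same as the paper's: the proposition is established by the displayed computation immediately preceding it (expanding $\mathbf{D}^{TB}=\mathbf{D}\otimes 1+\sum_i e_i\cdot\otimes\nabla^{LC}_{e_i}$, substituting \eqref{eq:40} and the Levi--Civita comparison, and then applying the resulting expression to sections $\psi\otimes\partial_r$ and $\psi_i\otimes e_i$ of each summand to read off the blocks). Your identification of the $(1,1)$, $(2,1)$, and $(2,2)$ entries matches the paper's computation exactly.

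However, your justification of the $(1,2)$ entry contains an algebraic error. You claim that $\gamma(\partial_r)^2=-1$ and $\gamma(e_i)=-\gamma(\partial_r)\tilde\gamma(e_i)$ let you rewrite $\partial_r\cdot(e_i\tilde\cdot\psi_i)=\gamma(\partial_r)\tilde\gamma(e_i)\psi_i$ as a scalar multiple of $\tilde\gamma(e_i)\psi_i$. They do not: those identities give $\gamma(\partial_r)\tilde\gamma(e_i)\psi_i=-\gamma(e_i)\psi_i$, i.e., the \emph{ambient} Clifford contraction, and the residual factor $\gamma(\partial_r)$ is an invertible endomorphism, not a sign, so it cannot be hidden inside $e^{O(r^2)}$. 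The block one actually derives is $(e^{O(r^2)}/r)(\partial_r\cdot\tilde\gamma)\otimes\partial_r$, which differs from the stated $(e^{O(r^2)}/r)\,\tilde\gamma\otimes\partial_r$ by this same $\gamma(\partial_r)$. To be fair, the paper's own preceding computation and the stated $(1,2)$ entry exhibit the same apparent mismatch, and in the only downstream use (Corollary \ref{frequency function corollary 3.4}) the off-diagonal block acts on $\Gamma(\tilde W)=\ker\tilde\gamma$ and so vanishes regardless of the extra $\gamma(\partial_r)$, which renders the discrepancy harmless. But the specific algebraic reduction you propose does not go through, and you should either record the entry with the $\gamma(\partial_r)$ present or simply note that it is annihilated on $3/2$-spinors.
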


\begin{Cor}\label{frequency function corollary 3.4}
    Let $\Phi$ be a $3/2-$spinor on $B(x,r)$.  The restriction of $\Phi$ to  $\partial B(x,r)$ can be written as $\Phi = \hat{\iota}(\phi) + \Psi$, where $\Psi \in \Gamma(\tilde{W})$ and $\phi \in \Gamma(\tilde{S})$. Then $\mathbf{Q}\Phi|_{\partial B(x,r)}$ equals to
    $$  \begin{pmatrix}
        \hat{\iota}L^{-1}\partial_r\cdot\left(\nabla_r + \frac{e^{O(r^2)}}{r}-\mathbf{\tilde{D}}\otimes 1\right)L\hat{\iota}^{-1} & 0 \\ 0 & \tilde{\pi}\partial_r \cdot\left(\nabla_r + \frac{e^{O(r^2}}{r}-\mathbf{D}^{T\partial B}\right) 
        \end{pmatrix}\cdot \begin{pmatrix}
            \hat{\iota}(\phi) \\ \Psi
    \end{pmatrix},$$
    where $\tilde{\pi}$ is the orthogonal projection from $T\partial B(x,r) \otimes \tilde{S}$ onto the $3/2-$spinor bundle $\tilde{W}$.
\end{Cor}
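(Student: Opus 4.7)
The plan is to compute $\mathbf{Q}\Phi|_{\partial B(x,r)} = (\pi \circ \mathbf{D}^{TB})\Phi|_{\partial B(x,r)}$ directly by combining Proposition \ref{frequency function proposition 3.3} with the decomposition $W|_{\partial B(x,r)} = \hat{\iota}(\tilde{S}) \oplus \tilde{W}$ from Lemma \ref{frequency function lemma 3.2}, using the isometry $T = \begin{pmatrix} L\hat{\iota}^{-1} & 0 \\ 0 & 1 \end{pmatrix}$ as a change of coordinates. By linearity of $\mathbf{Q}$ it suffices to compute $\pi \mathbf{D}^{TB}\hat{\iota}(\phi)$ and $\pi \mathbf{D}^{TB}\Psi$ separately and check that each lands in the appropriate block.

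For the $\Psi$-term, viewing $\Psi \in \tilde{W}$ inside $T\partial B(x,r)\otimes \tilde{S} \subset TB(x,r)\otimes S$, one has $\tilde{\gamma}(\Psi) = 0$ by definition of $\tilde{W}$; thus the upper-right block $\frac{e^{O(r^2)}}{r}\tilde{\gamma}\otimes\partial_r$ of Proposition \ref{frequency function proposition 3.3} annihilates $\Psi$, and $\mathbf{D}^{TB}\Psi = \partial_r \cdot(\nabla_r + e^{O(r^2)}/r - \mathbf{D}^{T\partial B})\Psi$ remains in $T\partial B(x,r)\otimes \tilde{S}$. By \eqref{eq:40} one has $\gamma = -\gamma(\partial_r)\tilde{\gamma}$ on this subspace, so the kernels of $\gamma$ and $\tilde{\gamma}$ coincide there, giving $W \cap (T\partial B(x,r)\otimes \tilde{S}) = \tilde{W}$; projecting to the $\tilde{W}$-component of $W|_{\partial B(x,r)}$ then reproduces $\tilde{\pi}\partial_r \cdot (\nabla_r + e^{O(r^2)}/r - \mathbf{D}^{T\partial B})\Psi$, matching the second diagonal entry.

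For the $\hat{\iota}(\phi)$-term, apply $T$ to pass to the coordinate $L(\phi) = \phi \otimes \partial_r \in \tilde{S}\otimes \mathbb{C}\partial_r$, the first factor of the splitting used in Proposition \ref{frequency function proposition 3.3}. Because the lower-left entry of $\mathbf{D}^{TB}$ in that proposition vanishes, $\mathbf{D}^{TB}(\phi \otimes \partial_r)$ stays in $\tilde{S}\otimes \mathbb{C}\partial_r$ and equals $\partial_r \cdot (\nabla_r + e^{O(r^2)}/r - \tilde{\mathbf{D}}\otimes 1)(\phi \otimes \partial_r)$; conjugating back by $\hat{\iota}L^{-1}$ yields the upper-left entry of the claimed matrix, which lies in $\hat{\iota}(\tilde{S})$.

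The main obstacle is verifying that the two off-diagonal blocks of the final matrix vanish, i.e.\ that $\pi \mathbf{D}^{TB}\hat{\iota}(\phi)$ has no $\tilde{W}$-component and $\pi \mathbf{D}^{TB}\Psi$ has no $\hat{\iota}(\tilde{S})$-component. The former uses the lower-left zero in Proposition \ref{frequency function proposition 3.3} together with the observation that $T$ carries $\hat{\iota}(\phi)$ entirely into $\tilde{S}\otimes \mathbb{C}\partial_r$. The latter, which is more delicate, requires expanding $\pi = 1 - \iota\gamma$ on the image of $\mathbf{D}^{TB}\Psi$: since that image lives in $T\partial B(x,r)\otimes \tilde{S}$, one evaluates $\iota(\gamma(\cdot)) = -\iota(\gamma(\partial_r)\tilde{\gamma}(\cdot))$ using the Clifford identities implicit in \eqref{eq:40}, in particular $\gamma(\partial_r)^2 = -1$ and $e_i\cdot = -\gamma(\partial_r) e_i\tilde{\cdot}$, and checks that the correction is orthogonal to $\hat{\iota}(\tilde{S})$. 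The bookkeeping is mechanical once $\hat{\iota}$ and $\iota$ are written out in the $(\tilde{S}\otimes \mathbb{C}\partial_r)\oplus(T\partial B(x,r)\otimes \tilde{S})$-decomposition.
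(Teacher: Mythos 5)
Your proposal follows the paper's route: both observe that $\tilde{\gamma}$ annihilates $\tilde{W}$, and then reduce the claim to a change of coordinates by $T$ applied to the upper-triangular matrix form of $\mathbf{D}^{TB}$ from Proposition \ref{frequency function proposition 3.3}. The paper compresses this to ``a straightforward calculation yields the result,'' and your write-up is essentially an unpacking of that intermediate step.

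There is, however, a gap that you correctly call ``the main obstacle'' but do not close, and it is exactly the place where the argument cannot be waved away as a formal change of basis. The step for the $\hat{\iota}(\phi)$-term --- ``apply $T$ to pass to the coordinate $L(\phi)=\phi\otimes\partial_r$, \ldots conjugating back by $\hat{\iota}L^{-1}$'' --- implicitly assumes $T$ intertwines $\mathbf{D}^{TB}$ with itself, which it does not. The elements $\hat{\iota}(\phi)$ and $\phi\otimes\partial_r$ are different vectors in $TB(x,r)\otimes S$: writing $\hat{\iota}(\phi)=-2\gamma(\partial_r)\phi\otimes\partial_r+2\gamma(\partial_r)\tilde{\iota}(\phi)$, one sees $\hat{\iota}(\phi)$ has components in \emph{both} native summands $\tilde{S}\otimes\CN\partial_r$ and $T\partial B\otimes\tilde{S}$ of Proposition \ref{frequency function proposition 3.3}, whereas $\phi\otimes\partial_r$ lives only in the first, so $\mathbf{D}^{TB}\hat{\iota}(\phi)$ picks up contributions from both columns of the matrix. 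Consequently the identity $\pi\mathbf{D}^{TB}|_{W|_{\partial B}}=T^{-1}\begin{pmatrix}1&0\\0&\tilde{\pi}\end{pmatrix}\mathbf{D}^{TB}T$ underlying both your proof and the paper's one-liner is a genuine assertion, not a tautology, and what it amounts to is precisely that the two cross-terms vanish: that $\pi\mathbf{D}^{TB}\hat{\iota}(\phi)$ has no $\tilde{W}$-component and $\pi\mathbf{D}^{TB}\Psi$ has no $\hat{\iota}(\tilde{S})$-component. To close the gap you need to compute these directly --- apply the entries of Proposition \ref{frequency function proposition 3.3} to both native components of $\hat{\iota}(\phi)$, use the $n=2$ matrix form of $\mathbf{D}^{T\partial B}$ on $\tilde{\iota}(\tilde{S})\oplus\tilde{W}$ together with Lemmas \ref{frequency function lemma 3.5} and \ref{frequency function lemma 3.6} to track which summand each piece lands in, and pair against $\hat{\iota}(\chi)$ and $\tilde{W}$ to verify the orthogonality you sketch in your last paragraph. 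That computation is the real content of the corollary, and the ``mechanical bookkeeping'' you defer is not automatic; it is the proof.
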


\begin{proof}
    Note that $\tilde{\gamma} \equiv 0$ when restricted to $\Gamma(\tilde{W})$. And a change of basis via $T$ immediately tells us that the matrix representing $\mathbf{Q}|_{\Gamma(\tilde{W})}$ is given by
    $$T^{-1}\begin{pmatrix}
        1 & 0 \\ 0 & \tilde{\pi}
    \end{pmatrix} \cdot \begin{pmatrix}
        \partial_r \cdot (\nabla_r + e^{O(r^2)}/r - \mathbf{\tilde{D}}\otimes 1) & e^{O(r^2)}/r\,\tilde{\gamma}\otimes \partial_r \\ 0 & \partial_r \cdot (\nabla_r + e^{O(r^2)}/r - \mathbf{D}^{T\partial B})
    \end{pmatrix} T.$$
    Then, a straightforward calculation yields the result.
\end{proof}

\begin{Lemma}\label{frequency function lemma 3.5}
    $\nabla_r$ maps $3/2-$spinors to $3/2-$spinors on $\partial B(x,r)$.
\end{Lemma}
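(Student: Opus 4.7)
The plan is as follows.  First I would fix the interpretation: the foliation $\dot{B}(x,r_0) = \bigsqcup_{r\in(0,r_0]} \partial B(x,r)$ determines a sub-bundle $\tilde{W} \subset W|_{\dot{B}(x,r_0)}$ whose fibre at $y \in \partial B(x,r)$ consists of those elements of $T_y\partial B(x,r) \otimes S_y$ lying in $\ker\gamma$.  The claim is then that the covariant derivative $\nabla_r$ on $W$ preserves $\Gamma(\tilde{W})$.

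The key device is a synchronous orthonormal frame $\{e_0 = \partial_r, e_1, e_2\}$ on a tubular neighborhood of any radial geodesic, obtained by parallel transporting an orthonormal frame at $x$ along radial geodesics.  In such a frame $\nabla^{LC}_{\partial_r}e_i = 0$ for $i = 0, 1, 2$.  Writing $\Psi = \psi_1 \otimes e_1 + \psi_2 \otimes e_2$ with $\psi_i \in \Gamma(S)$ (no $e_0$-component) and $\gamma(\Psi) = e_1\cdot \psi_1 + e_2 \cdot \psi_2 = 0$, the Leibniz rule gives
\[
\nabla_r \Psi \;=\; (\nabla_r \psi_1) \otimes e_1 + (\nabla_r \psi_2) \otimes e_2,
\]
which manifestly has no $\partial_r$-component.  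For the Clifford-kernel condition, I would invoke compatibility of the spin connection with Clifford multiplication together with $\nabla^{LC}_{\partial_r}e_i = 0$ to obtain $\nabla_r(e_i\cdot \psi_i) = e_i \cdot \nabla_r \psi_i$; summing,
\[
\gamma(\nabla_r \Psi) \;=\; e_1\cdot \nabla_r \psi_1 + e_2\cdot \nabla_r\psi_2 \;=\; \nabla_r(e_1\cdot \psi_1 + e_2\cdot \psi_2) \;=\; \nabla_r \gamma(\Psi) \;=\; 0,
\]
so $\nabla_r \Psi \in \Gamma(\tilde{W})$ as required.

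I do not anticipate a genuine obstacle here: once the synchronous frame is in place, the statement reduces to the compatibility of the Clifford structure with the spin connection, which is the same mechanism that produces the formulas in \eqref{eq:40}.  The only preliminary subtlety is interpreting the phrase ``$\nabla_r$ of a $3/2$-spinor on $\partial B(x,r)$'', which the foliation picture together with the radially parallel frame resolves cleanly by turning the question into one about a well-defined operator on sections of $W$ over an open set of radii.
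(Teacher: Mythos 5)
Your argument is correct and follows the same route as the paper's own proof: express $\Psi$ in a frame $\{\partial_r, e_1, e_2\}$ that is parallel along radial geodesics (so $\nabla_r e_i = 0$), apply the Leibniz rule to see $\nabla_r\Psi$ has no $\partial_r$-component, and then use compatibility of the connection with Clifford multiplication to commute $\nabla_r$ past $\gamma(e_i)$ and conclude $\gamma(\nabla_r\Psi) = \nabla_r\gamma(\Psi) = 0$. You are slightly more explicit than the paper about the synchronous frame and the vanishing of the $\partial_r$-component, but the mechanism is identical.
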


\begin{proof}
    Let $\Psi$ be a $3/2-$spinor on $\partial B(x,r)$. In local normal coordinate, we have
    $$\nabla_r \Psi =  \nabla_r \psi_1 \otimes e_1 + \nabla_r \psi_2 \otimes e_2.$$
    Applying $\gamma$ to both sides of the above identity, we get 
    $$\gamma(\nabla_r \psi) =  \gamma(e_1)\nabla_r \psi_1 + \gamma(e_2)\nabla_r \psi_2.$$
    By the compatibility of the connection with the Clifford multiplication and the fact that we are working with normal coordinates, we can re-write the above identity as $\gamma(\nabla_r \psi) = \nabla_r (\gamma(e_1) \psi_1) + \nabla_r (\gamma(e_2)\psi_2) = \nabla_r(\gamma(e_1)\psi_1 + \gamma(e_2)\psi_2) = 0$. Therefore, $\nabla_r \Psi$ is also another $3/2-$spinor as claimed.
\end{proof}

\begin{Lemma}\label{frequency function lemma 3.6}
    $\gamma(\partial_r)$ maps $3/2-$spinors to $3/2-$spinors on $\partial B(x,r)$. 
\end{Lemma}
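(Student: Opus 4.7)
The plan is to mirror the coordinate-based argument used for Lemma \ref{frequency function lemma 3.5}, but with the connection-compatibility step replaced by a purely algebraic Clifford anticommutation identity. First, I would fix a local orthonormal frame $\{\partial_r, e_1, e_2\}$ along $\partial B(x,r)$ and expand a $3/2$-spinor $\Psi$ on the boundary as $\Psi = \psi_1 \otimes e_1 + \psi_2 \otimes e_2$, with $\psi_i \in \Gamma(\tilde{S})$, noting that the $3/2$-spinor condition reads $\gamma(\Psi) = \gamma(e_1)\psi_1 + \gamma(e_2)\psi_2 = 0$. Since $\gamma(\partial_r)$ acts only on the spinor factor, one immediately has
\[
\gamma(\partial_r)\Psi \;=\; \gamma(\partial_r)\psi_1 \otimes e_1 + \gamma(\partial_r)\psi_2 \otimes e_2,
\]
which has no $\partial_r$-component in the vector slot, so it sits inside $T\partial B(x,r) \otimes \tilde{S}$. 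This already handles half of what it means to be a $3/2$-spinor on the boundary.

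The remaining step is to verify the Clifford-trace constraint $\gamma(\gamma(\partial_r)\Psi) = 0$. I would compute
\[
\gamma\bigl(\gamma(\partial_r)\Psi\bigr) = \gamma(e_1)\gamma(\partial_r)\psi_1 + \gamma(e_2)\gamma(\partial_r)\psi_2,
\]
and then invoke the Clifford relation $\gamma(u)\gamma(v) + \gamma(v)\gamma(u) = -2\la u, v\ra$. Since $e_i \perp \partial_r$ for $i=1,2$, this yields $\gamma(e_i)\gamma(\partial_r) = -\gamma(\partial_r)\gamma(e_i)$, and hence
\[
\gamma\bigl(\gamma(\partial_r)\Psi\bigr) = -\gamma(\partial_r)\bigl(\gamma(e_1)\psi_1 + \gamma(e_2)\psi_2\bigr) = -\gamma(\partial_r)\gamma(\Psi) = 0.
\]
Thus $\gamma(\partial_r)\Psi \in \Gamma(\tilde{W})$, which is exactly the claim.

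There is no real obstacle here: unlike the $\nabla_r$ case, we do not even need connection-compatibility with Clifford multiplication, only orthogonality of the frame vectors and the defining relation of the Clifford algebra. The one cosmetic care point is to be explicit that $\gamma(\partial_r)$ denotes Clifford multiplication on the spinor slot alone, so that no $\partial_r$-component is introduced in the $TB(x,r)$ factor; once this is stated, the computation above gives the lemma in two lines.
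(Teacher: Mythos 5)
Your proof is correct and is essentially identical to the paper's: both expand $\Psi = \psi_1\otimes e_1 + \psi_2\otimes e_2$, apply $\gamma(\partial_r)$ on the spinor slot, and use the Clifford anticommutation $\gamma(e_i)\gamma(\partial_r) = -\gamma(\partial_r)\gamma(e_i)$ to pull $\gamma(\partial_r)$ out and invoke $\gamma(\Psi)=0$. The only difference is cosmetic: you explicitly flag that $\gamma(\partial_r)\Psi$ has no $\partial_r$-component in the vector slot, a point the paper leaves implicit.
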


\begin{proof}
    Let $\Psi$ be $3/2-$spinor on $\partial B(x,r)$. In the local normal coordinate, we have $\psi = \psi_1 \otimes e_1 + \psi_2 \otimes e_2$. Thus, $\gamma(\partial_r)\psi = \gamma(\partial_r)\psi_1 \otimes e_1 + \gamma(\partial_r)\psi_2 \otimes e_2$. We compute
    \begin{align}
        \gamma(\gamma(\partial_r)\psi) & = \gamma(e_1)\gamma(\partial_r) \psi_1 + \gamma(e_2)\gamma(\partial_r)\psi_2 \nonumber \\
        & = -\gamma(\partial_r)\gamma(e_1)\psi_1  - \gamma(\partial_r)\gamma(e_2)\psi_2 = -\gamma(\partial_r)(\gamma(e_1)\psi_1 + \gamma(e_2)\psi_2). \nonumber
    \end{align}
    Clearly, this implies that $\gamma(\partial_r)\Psi$ must also be a $3/2-$spinor. 
\end{proof}

\begin{Cor}\label{frequency function corollary 3.7}
    Let $\Phi$ be a harmonic $3/2-$spinor on $B(x,r)$, i.e, $\mathbf{Q}\Phi = 0$. Consider the restriction of $\Phi$ to $\partial B(x,r)$ that is written as $\Phi = \hat{\iota}(\phi) + \Psi$, where $\phi \in \Gamma(\tilde{S})$ and $\Psi \in \Gamma(\tilde{W})$. Then we must have
    \begin{align}
    \mathbf{\tilde{D}}\phi \otimes \partial_r &= \nabla_r \phi \otimes \partial_r + \frac{e^{O(r^2)}}{r}\phi \otimes \partial_r \label{eq:frequency function.41} \\
    \mathbf{\tilde{Q}}\Psi &= \nabla_r \Psi + \frac{e^{O(r^2)}}{r}\Psi. \label{eq:frequency function.42}
    \end{align}
\end{Cor}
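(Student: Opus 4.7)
The plan is to apply Corollary~\ref{frequency function corollary 3.4} to $\Phi|_{\partial B(x,r)} = \hat{\iota}(\phi) + \Psi$, observe that $\mathbf{Q}\Phi = 0$ forces each component of the resulting matrix-vector product to vanish separately, and then strip away invertible operators to read off \eqref{eq:frequency function.41} and \eqref{eq:frequency function.42}.

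For the first component, the top-left block of the matrix in Corollary~\ref{frequency function corollary 3.4} yields the vanishing of
$$\hat{\iota}L^{-1}\partial_r\cdot\left(\nabla_r + \frac{e^{O(r^2)}}{r}-\mathbf{\tilde{D}}\otimes 1\right)(\phi \otimes \partial_r).$$
Since $\hat{\iota}$ and $L^{-1}$ are isomorphisms onto their images and Clifford multiplication by the unit vector $\partial_r$ is an isometry (with inverse $-\partial_r\cdot$), I would simply remove them, noting also that in normal coordinates $\nabla_r \partial_r = 0$ so that $\nabla_r(\phi\otimes\partial_r) = \nabla_r\phi\otimes\partial_r$, to arrive directly at \eqref{eq:frequency function.41}.

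For the second component, the bottom-right block gives
$$\tilde{\pi}\,\partial_r\cdot\left(\nabla_r + \frac{e^{O(r^2)}}{r}-\mathbf{D}^{T\partial B}\right)\Psi = 0.$$
Here I would invoke Lemmas~\ref{frequency function lemma 3.5} and~\ref{frequency function lemma 3.6}: since $\Psi \in \Gamma(\tilde{W})$, both $\nabla_r \Psi$ and $\partial_r\cdot \Psi$ still lie in $\Gamma(\tilde{W})$, so $\tilde{\pi}$ acts as the identity on those two terms. For the remaining term, I would argue that $\partial_r\cdot$ commutes with $\tilde{\pi}$, which via $\tilde{\pi} = 1 - \iota\circ\tilde{\gamma}$ reduces to $\partial_r\cdot$ commuting with $\tilde{\gamma}$; this follows from the identity $\tilde{\gamma}(v) = \gamma(\partial_r)\gamma(v)$ combined with $\gamma(v)\gamma(\partial_r) = -\gamma(\partial_r)\gamma(v)$ for $v$ tangent to $\partial B(x,r)$. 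Using moreover the defining identity $\tilde{\pi}\mathbf{D}^{T\partial B}|_{\tilde{W}} = \mathbf{\tilde{Q}}$, the equation reduces to
$$\partial_r\cdot\left(\nabla_r \Psi + \frac{e^{O(r^2)}}{r}\Psi - \mathbf{\tilde{Q}}\Psi\right) = 0,$$
and one final application of the invertibility of $\partial_r\cdot$ yields \eqref{eq:frequency function.42}.

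The only non-mechanical step is verifying the commutation of $\partial_r\cdot$ with $\tilde{\pi}$; the rest of the argument is a matter of inverting isomorphisms and invoking the auxiliary lemmas already established in this subsection.
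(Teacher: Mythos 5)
Your identification of the two blocks from Corollary~\ref{frequency function corollary 3.4} and the treatment of~\eqref{eq:frequency function.41} is fine, and your overall strategy for~\eqref{eq:frequency function.42} is the same as the paper's: reduce to showing $\tilde{\pi}(\partial_r \cdot \mathbf{D}^{T\partial B}\Psi) = \partial_r\cdot \mathbf{\tilde{Q}}\Psi$. However, the reasoning you offer for the one step you flag as ``non-mechanical'' is wrong. You claim that commutation of $\partial_r\cdot$ with $\tilde{\pi} = 1 - \tilde\iota\circ\tilde\gamma$ ``reduces to $\partial_r\cdot$ commuting with $\tilde\gamma$,'' and that this commutation follows from $\tilde\gamma(v) = \gamma(\partial_r)\gamma(v)$ together with the Clifford anticommutation $\gamma(v)\gamma(\partial_r) = -\gamma(\partial_r)\gamma(v)$. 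Neither half of this is right. First, the reduction ignores the embedding $\tilde\iota$; one must track how $\partial_r\cdot$ interacts with \emph{both} factors of $\tilde\iota\circ\tilde\gamma$. Second, the two identities you cite yield $\tilde\gamma(v)\gamma(\partial_r) = \gamma(v)$ and $\gamma(\partial_r)\tilde\gamma(v) = -\gamma(v)$, i.e.\ $\tilde\gamma$ \emph{anti}commutes with $\gamma(\partial_r)$, not commutes with it. The conclusion you want — that $\tilde{\pi}$ commutes with $\partial_r\cdot$ — happens to be correct, but only because $\tilde\iota$ \emph{also} anticommutes with $\partial_r\cdot$ (this is a consequence of $e_i\tilde\cdot\partial_r = -\partial_r\cdot e_i\tilde\cdot$), so the two sign flips in $\tilde\iota\circ\tilde\gamma$ cancel. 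As written, your argument would actually give the wrong sign and lead you to conclude that $\tilde{\pi}$ anticommutes with $\partial_r\cdot$ in the traceless part, which would not close the proof.

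The paper's proof avoids claiming an abstract commutation rule and instead writes out the commutator $\tilde{\pi}(\partial_r\cdot \mathbf{D}^{T\partial B}\Psi) - \partial_r\cdot\tilde{\pi}(\mathbf{D}^{T\partial B}\Psi)$ explicitly as the sum of two terms, $-\tilde\iota\tilde\gamma(\partial_r\cdot\mathbf{D}^{T\partial B}\Psi) + \partial_r\cdot\tilde\iota\tilde\gamma(\mathbf{D}^{T\partial B}\Psi)$, and computes both using $\tilde\gamma(\mathbf{D}^{T\partial B}\Psi) = 2\mathbf{\tilde{P}}^*\Psi$ (together with the relation $e_i\tilde\cdot\partial_r = -\partial_r\cdot e_i\tilde\cdot$) to show they cancel termwise. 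If you want to keep your cleaner ``commutation'' framing, you should state and prove the actual fact — that $\tilde\gamma$ and $\tilde\iota$ each anticommute with $\gamma(\partial_r)$, so their composite commutes — rather than the incorrect reduction you currently give.
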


\begin{proof}
    \eqref{eq:frequency function.41} follows directly from Corollary \ref{frequency function corollary 3.4}. It remains for us to prove \eqref{eq:frequency function.42}. From Corollary \ref{frequency function corollary 3.4}, Lemma \ref{frequency function lemma 3.5} and Lemma \ref{frequency function lemma 3.6}, we have that
    $$\partial_r\cdot \nabla_r \Psi + \partial_r \cdot \frac{e^{O(r^2)}}{r}\Psi - \tilde{\pi}(\partial_r \cdot \mathbf{D}^{T\partial B}\Psi) = 0.$$
    It is not hard to see that 
    $$\tilde{\pi}(\partial_r \cdot \mathbf{D}^{T\partial B} \Psi) = \partial_r \cdot \mathbf{\tilde{Q}}\Psi - \tilde{\iota}\tilde{\gamma}(\partial_r \cdot \mathbf{D}^{T\partial B}\Psi) + \partial_r \cdot \tilde{\iota}\tilde{\gamma}(\mathbf{D}^{T\partial B}\psi).$$
    To complete the proof, we must calculate the second and third terms on the RHS of the above identity. Note that in local normal coordinate, we have $\mathbf{D}^{T\partial B} \Psi = \mathbf{\tilde{D}}\psi_1 \otimes e_1 + \mathbf{\tilde{D}}\psi_2 \otimes e_2$. As a result, 
    \begin{align*}
        \tilde{\gamma}(\mathbf{D}^{T\partial B}\Psi) & = e_1 \tilde{\cdot}\,\mathbf{\tilde{D}}\psi_1 + e_2 \tilde{\cdot}\,\mathbf{\tilde{D}}\psi_2 \\
        & = -\mathbf{\tilde{D}}(e_1 \tilde{\cdot}\psi_1) - 2\tilde{\nabla}_{e_1}\psi_1 - \mathbf{\tilde{D}}(e_2 \tilde{\cdot}\psi_2) - 2 \tilde{\nabla}_{e_2}\psi_2 = 2 \mathbf{\tilde{P}}^* \Psi.
    \end{align*}
    Applying $\partial_r \cdot \tilde{\iota}$ to both sides of the above equation, we get
    $$\partial_r \cdot \tilde{\iota}(\tilde{\gamma}(\mathbf{D}^{T\partial B}\psi)) = -\partial_r \cdot e_1 \tilde{\cdot} \, \mathbf{\tilde{P}}^*\Psi \otimes e_1 - \partial_r \cdot e_2 \tilde{\cdot} \, \mathbf{\tilde{P}}^*\Psi \otimes e_2.$$
    On the other hand, it is not difficult to see that $\tilde{\gamma}(\partial_r \cdot \mathbf{D}^{T\partial B} \Psi) = -2 \partial_r \cdot \mathbf{\tilde{P}}^* \Psi$. Note that $e_i \tilde{\cdot} \partial_r = - \partial_r \cdot e_i \tilde{\cdot}$. Then we have
    \begin{align*}
        -\tilde{\iota}\tilde{\gamma}(\partial_r \cdot \mathbf{D}^{T\partial B}\Psi) & = 2 \tilde{\iota}(\partial_r \cdot \mathbf{\tilde{P}}^*\Psi) \\
        &= -e_1 \tilde{\cdot} \partial \cdot \mathbf{\tilde{P}}^*\Psi \otimes e_1 - e_2 \tilde{\cdot} \partial \cdot \mathbf{\tilde{P}}^*\Psi \otimes e_2 \\
        & = \partial_r \cdot e_1 \tilde{\cdot} \, \mathbf{\tilde{P}}^*\Psi \otimes e_1 + \partial_r \cdot e_2 \tilde{\cdot} \, \mathbf{\tilde{P}}^*\Psi \otimes e_2.
    \end{align*}
    Thus, $\partial_r\cdot \nabla_r \Psi + \partial_r \cdot \frac{e^{O(r^2)}}{r}\Psi - \partial_r \cdot \mathbf{\tilde{Q}}\Psi = 0$. By applying the Clifford multiplication by $\partial_r$ to both sides, we obtain \eqref{eq:frequency function.42} as claimed.
\end{proof}

We are now ready to prove the properties of $N$ stated at the beginning of the section.

\subsection{Analysis of $N$} Note that the first derivative of the frequency function $N(r)$ is given by
\begin{equation} \label{eq:frequency function.49}
    N'(r) = \frac{H(r)}{h(r)}+\frac{rH'(r)}{h(r)} - \frac{rh'(r)H(r)}{h^2(r)}.
\end{equation}
To understand the monotonicity of $N$, we need to analyze $H', h'$ further.

\begin{Lemma}\label{frequency function lemma 3.8}
    $h(r)$ satisfies the following properties:
    \begin{equation}\label{eq:frequency function.50}
        h'(r) = \frac{2h(r)}{r}+\int_{\partial B(x,r)} \partial_r |\psi|^2 + O(r)h(r).
     \end{equation}
     \begin{equation}\label{eq:frequency function.51}
         h'(r) = \frac{(2+2N(r)+O(r^2))h(r)}{r}.
     \end{equation}
     \begin{equation}\label{eq:frequency function.52}
         \int_{B(x,r)}|\psi|^2\lesssim rh(r).
     \end{equation}
\end{Lemma}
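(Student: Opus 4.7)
I would address the three parts in the order Part 1, Part 3, Part 2, since Part 2 follows almost immediately once the first two are in hand.

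\emph{Part 1.} The plan is a direct computation in geodesic normal coordinates centered at $x$. Writing the metric as $g_{ij} = \delta_{ij} + O(r^2)$, the Jacobian of the exponential map restricted to the sphere of radius $r$ satisfies $\sqrt{\det g(r,\theta)} = 1 + O(r^2)$, so
\[
h(r) = \int_{S^2} |\psi|^2(r\theta)\, r^2 \sqrt{\det g(r,\theta)}\, d\omega.
\]
Differentiating under the integral splits into three contributions: $\partial_r(r^2)$ yields the main term $2h(r)/r$; the radial derivative of $|\psi|^2$ yields $\int_{\partial B(x,r)}\partial_r |\psi|^2$; and $\partial_r \sqrt{\det g} = O(r)$ produces the remainder $O(r) h(r)$.

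\emph{Part 3.} I would prove this by an Almgren-type monotonicity bootstrap. Starting from the pointwise Bochner identity $\frac{1}{2}\Delta|\psi|^2 = \la \nabla^*_A \nabla_A \psi, \psi \ra - |\nabla_A \psi|^2$ and the Weitzenb\"ock formula (Lemma \ref{weitzenbock formula}), and using $Q_A \psi = 0$ together with $\epsilon^2 F_A = \mu(\psi)$, one obtains
\[
\frac{1}{2}\Delta|\psi|^2 = -\bigl(|\nabla_A \psi|^2 + \epsilon^{-2}|\mu(\psi)|^2 - \frac{4}{3}\la P_A P^*_A \psi, \psi \ra\bigr) + O(|\psi|^2).
\]
Integration over $B(x,r)$ and the divergence theorem then yield $\int_{\partial B(x,r)} \partial_r |\psi|^2 = 2H(r) + O(1) \int_{B(x,r)} |\psi|^2$. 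Combining with Part 1 gives a preliminary inequality $h'(r) \geq 2h(r)/r + 2H(r) - O(r) h(r) - O(1)\int_{B(x,r)}|\psi|^2$. To close the circularity, I would invoke the curvature-versus-spinor estimates of Section 4 --- in particular Lemma \ref{first compactness lemma 1.5} and its local analogue --- to control the sign-indefinite Penrose contribution $-\frac{4}{3}\la P_A P^*_A \psi, \psi \ra$ inside $H(r)$ in terms of $\norm{F_A}_{L^2}^2$. A Gronwall-type integration will then show that $r^{-2} h(r)$ is almost non-decreasing, i.e., $h(\rho) \leq C(\rho/r)^2 h(r)$ for $0 < \rho \leq r$, and the co-area formula $\int_{B(x,r)} |\psi|^2 = \int_0^r h(\rho)\, d\rho$ will finish the argument via $\int_{B(x,r)} |\psi|^2 \leq C h(r) \int_0^r (\rho/r)^2\, d\rho \lesssim r h(r)$.

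\emph{Part 2.} Once Parts 1 and 3 are in hand, this is essentially a substitution. Inserting $\int_{\partial B(x,r)} \partial_r |\psi|^2 = 2H(r) + O(1)\int_{B(x,r)}|\psi|^2$ into the identity of Part 1 and using Part 3 to absorb $O(1)\int_{B(x,r)}|\psi|^2 = O(r) h(r)$, one finds $h'(r) = 2h(r)/r + 2H(r) + O(r) h(r)$, and rewriting $H(r) = N(r) h(r)/r$ delivers the claimed formula $h'(r) = (2 + 2N(r) + O(r^2)) h(r)/r$.

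The hard part will be Part 3: in the classical Almgren setting, and in its Dirac-operator adaptations by Taubes and Haydys--Walpuski, the analog of $\frac{1}{2}\Delta|\psi|^2$ is pointwise bounded above by $C|\psi|^2$, which immediately delivers monotonicity of $r^{-2}h(r)$. Here the Weitzenb\"ock formula for $Q_A$ contains the extra Penrose contribution $-\frac{4}{3}\la P_A P^*_A\psi,\psi\ra$ whose sign is a priori unclear; recovering the monotonicity will require the curvature-controls-divergence estimates of Section 4, without which the Gronwall step cannot be closed.
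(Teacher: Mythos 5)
Your treatment of Part 1 matches the paper: differentiation in normal coordinates produces the $2h(r)/r$ term from the Jacobian, the boundary integral of $\partial_r|\psi|^2$, and an $O(r)h(r)$ metric-error term. Your Part 2 substitution (combining Part 1 with the integration-by-parts identity from Lemma \ref{first compactness lemma 1.6} and absorbing $\int_{B(x,r)}|\psi|^2$ by $O(r)h(r)$) is also how the paper derives \eqref{eq:frequency function.51}, up to the caveat that the paper gets the absorption from its Claim 1 rather than from (5.52).

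Part 3 is where there is a genuine gap. You propose establishing the strong Almgren-type monotonicity $h(\rho)\lesssim(\rho/r)^2 h(r)$ by a Gronwall iteration, but your own preliminary inequality reads
\[
h'(r) \geq \frac{2h(r)}{r} + 2H(r) - O(r)h(r) - O(1)\int_{B(x,r)}|\psi|^2,
\]
and the obstruction to ``Gronwalling'' is the $O(1)\int_{B(x,r)}|\psi|^2$ term --- which is exactly the quantity that \eqref{eq:frequency function.52} is supposed to bound. Taming the Penrose contribution inside $H(r)$ by the curvature estimates of Section 4, as you suggest, does nothing to break this circularity: even with $H(r)\geq 0$ granted, you still have $\int_{B(x,r)}|\psi|^2$ with a bad sign, and no way to compare it to $h(r)$ without already knowing (5.52) or something equivalent. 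The paper's essential ingredient, which your proposal omits, is a Hardy-type inequality,
\[
\int_{B(x,r)} \frac{f^2}{d(x,\cdot)^2} \;\lesssim\; r^{-1}\int_{\partial B(x,r)} f^2 + \int_{B(x,r)}|df|^2,
\]
applied to $f=|\psi|$ and combined with Kato's inequality. This yields the \emph{non-circular} bound $\int_{B(x,r)}|\psi|^2 \lesssim r\,h(r) + r^2\|\nabla_A\psi\|_{L^2(B(x,r))}^2$, i.e.\ what the paper records as $\|\psi\|^2_{L^2(B(x,r))}\lesssim(1+N(r))h(r)$ (Claim 1). With that in hand, combining \eqref{eq:frequency function.50} and \eqref{eq:frequency function.57} gives \eqref{eq:frequency function.51} directly and in particular $h'(r)>0$; then \eqref{eq:frequency function.52} follows from the mere monotonicity of $h$ via the coarea identity $\int_{B(x,r)}|\psi|^2 = \int_0^r h(\rho)\,d\rho \leq r\,h(r)$. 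So you are aiming for a strictly stronger monotonicity statement ($r^{-2}h$ nondecreasing --- which the paper only proves later, in Proposition \ref{frequency function proposition 3.12}, \emph{using} (5.51)) and lack the Hardy lemma needed to get the argument off the ground; the paper needs only $h'>0$ and gets it through the Hardy inequality.
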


\begin{proof}
    The proof of \eqref{eq:frequency function.50} is a standard computation in PDE. When the metric is flat, note that a change of variable that turns the circle centered at $x$ of radius $r$ to a unit concentric circle and the product rule allows us to write 
    $$h'(r) = \frac{2}{r}\int_{\partial B(x,r)} |\psi|^2 + \int_{\partial B(x,r)} \partial_r |\psi|^2.$$
    If the metric is not flat, there is an additional error term given by $O(r)h(r)$. We move on to the proof of \eqref{eq:frequency function.51} and \eqref{eq:frequency function.52}. Firstly, we make the following claim.

    \textit{Claim 1:} $\norm{\psi}^2_{L^2(B(x,r))} \lesssim (1+N(r))h(r)$. \\
    Indeed, recall the following fact
    \begin{equation}\label{eq:frequency function.53}
        \int_{B(x,r)} d(x,\bullet)^{-2}f^2 \lesssim r^{-1}\int_{\partial B(x,r)}f^2 + \int_{B(x,r)} |df|^2.
    \end{equation}
    We apply \eqref{eq:frequency function.53} to $f := |\psi|$ to obtain
    \begin{align}\label{eq:frequency function.54}
        r^{-2}\int_{B(x,r)} |\psi|^2 &\lesssim r^{-1}\int_{\partial B(x,r)} |\psi|^2 + \int_{B(x,r)} |d|\psi||^2 \nonumber \\
        & = r^{-1}h(r) + \int_{B(x,r)}|d|\psi||^2.
    \end{align}
    The second term on the RHS of \eqref{eq:frequency function.54} can be estimated further by Kato's inequality to arrive at $r^{-2} \norm{\psi}^2_{L^2(B(x,r))} \lesssim rh(r) + \norm{\nabla_A \psi}^2_{L^2(B(x,r))}$. As a result, we do have 
    \begin{equation}\label{eq:frequency function.55}
        \norm{\psi}^2_{L^2(B(x,r))} \lesssim (1+N(r))h(r).
    \end{equation}

    \textit{Claim 2:} $h'(r) > 0$.\\
   We apply the integration by parts formula from Lemma \ref{first compactness lemma 1.6} to $U = B(x,r)$ and $f \equiv 1$ to obtain
   \begin{align}\label{eq:frequency function.56}
       \int_{B(x,r)}& \left(-\frac{8}{3}\la P_A P^*_A \psi, \psi \ra + \frac{s}{2}|\psi|^2 - 2\la(1\otimes Ric)\psi, \psi\ra + 2\epsilon^{-2}|\mu(\psi)|^2 + 2|\nabla_A \psi|^2 \right) \nonumber \\
       & = \int_{\partial B(x,r)} \partial_r |\psi|^2.
   \end{align}
   The LHS of \eqref{eq:frequency function.56} can be re-arranged to become $2H(r) + O(1)\norm{\psi}^2_{L^2(B(x,r))}$. So, we do have 
   \begin{equation}\label{eq:frequency function.57}
       2H(r) + O(1) \norm{\psi}^2_{L^2(B(x,r))} = \int_{\partial B(x,r)} \partial_r |\psi|^2.
   \end{equation}
    Now, we combine \eqref{eq:frequency function.57} with \eqref{eq:frequency function.55} and \eqref{eq:frequency function.50} to immediately obtain
    $$h'(r) = \frac{(1+O(r^2))(2+ 2N(r))h(r)}{r}.$$
    Since $r\leq r_0$, the above implies that $h'(r)>0$ as claimed. Lastly, \eqref{eq:frequency function.51} follows directly from the fact that $h'(r) >0$. And \eqref{eq:frequency function.51} can be verified by combining \eqref{eq:frequency function.57} with \eqref{eq:frequency function.51} and \eqref{eq:frequency function.50}. 
\end{proof}

\begin{Lemma}\label{frequency function lemma 3.9}
    $H$ satisfies the following
    \begin{equation} \label{eq:frequency function.58}
        H'(r) = \frac{H(r)}{r} + \int_{\partial B(x,r)} \left(3 |\nabla^A_r \psi|^2 +\epsilon^{-2}|i(\partial_r)\mu(\psi)|^2\right) + O((1+N(r))h(r)) + O(1).
    \end{equation}
\end{Lemma}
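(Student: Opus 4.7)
The plan is to obtain \eqref{eq:frequency function.58} by combining the coarea formula with a Rellich--Pohozaev identity adapted to the Rarita-Schwinger setting. Since $H(r) = \int_{B(x,r)}\mathcal{I}$ where $\mathcal{I} := |\nabla_A\psi|^2 + \epsilon^{-2}|\mu(\psi)|^2 - \tfrac{4}{3}\la P_A P_A^*\psi, \psi\ra$, the coarea formula gives
$$H'(r) = \int_{\partial B(x,r)}\mathcal{I} + O(r) H(r),$$
with the $O(r) H(r)$ accounting for the deviation of the geodesic ball from a Euclidean one. On $\partial B(x,r)$ we decompose $\nabla_A\psi = \nabla_r^A\psi \otimes \partial_r + \nabla_A^{\tan}\psi$, split $\mu(\psi) = dr\wedge i(\partial_r)\mu(\psi) + \mu(\psi)^{\tan}$, and write $\psi|_{\partial B(x,r)} = \hat\iota(\phi) + \Psi$ via Lemma \ref{frequency function lemma 3.2}.

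The core of the argument is a Rellich-type identity obtained by pairing the Weitzenb\"ock formula (Lemma \ref{weitzenbock formula}) against $r\partial_r\psi$ and integrating over $B(x,r)$. Since $Q_A\psi = 0$, the Weitzenb\"ock formula reads $\tfrac{4}{3} P_A P_A^*\psi = \nabla_A^*\nabla_A\psi + \tfrac{s}{4}\psi + \pi(F_A\psi) - \pi(1\otimes Ric)\psi$, reducing the Penrose cross-term in $\mathcal{I}$ to a Bochner Laplacian plus curvature. Integrating the pairing $\la r\partial_r\psi, \nabla_A^*\nabla_A\psi\ra$ by parts on $B(x,r)$ yields the $H(r)/r$ contribution through the dimensional factor $\mathrm{div}(r\partial_r) = 3 + O(r^2)$, together with boundary integrals of $|\nabla_r^A\psi|^2$ and cross-terms. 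Using Corollary \ref{frequency function corollary 3.7} to express the residual tangential Rarita-Schwinger/Dirac operators acting on the decomposition $\hat\iota(\phi)+\Psi$ in terms of $\nabla_r^A \psi$ (modulo $O(1/r)$ corrections), these combine with the boundary integral of $|\nabla_A\psi|^2$ from the coarea step to give precisely $3\int_{\partial B(x,r)}|\nabla_r^A\psi|^2$. The boundary term $\epsilon^{-2}|i(\partial_r)\mu(\psi)|^2$ arises by pairing $\pi(F_A\psi)$ against $r\partial_r\psi$ and invoking the constraint $\epsilon^2 F_A = \mu(\psi)$.

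The error bookkeeping is standard. Scalar- and Ricci-curvature contributions yield $O(1)\|\psi\|_{L^2(B(x,r))}^2$, which by \eqref{eq:frequency function.52} is $O((1+N(r))h(r))$. Ambient $F_A$-contributions not absorbed into the principal boundary term are controlled by Lemma \ref{curvature controls lemma 2.3} (the $L^\infty$-bound on $\psi$) together with $\|F_A\|_{L^2(B(x,r))}^2 \leq k_0$, giving $O(1)$; the metric corrections from geodesic normal coordinates likewise produce an $O(r)H(r)$ piece absorbed similarly. The main technical obstacle is the Penrose cross-term $\la P_A P_A^*\psi,\psi\ra$: it has no analogue in the classical or multi-spinor Seiberg-Witten setting and must be handled via the Weitzenb\"ock reduction before the Pohozaev calculation can proceed. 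The appearance of the coefficient $3$, rather than $2$ as in the standard Laplacian Pohozaev identity on $\mathbb{R}^3$, traces back through this reduction combined with the Rarita-Schwinger boundary structure of Corollary \ref{frequency function corollary 3.7}.
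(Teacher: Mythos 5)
Your proposal takes a genuinely different route from the paper's. The paper does not use a Rellich--Pohozaev multiplier; instead it starts from the boundary integral of the integrand of $H(r)$, decomposes $\psi|_{\partial B(x,r)} = \hat\iota(\phi) + \xi$ via Lemma \ref{frequency function lemma 3.2}, expands $|\nabla_{e_i}\psi|^2$ in normal coordinates using the structure equations \eqref{eq:40}, applies the \emph{boundary} Weitzenb\"ock formula on $\partial B(x,r)$ to convert $|\tilde\nabla T\psi|^2$ into $|\mathbf{\tilde D}\phi|^2,\ |\mathbf{\tilde Q}\xi|^2,\ |\mathbf{\tilde P}^*\xi|^2$, and only then uses Corollary \ref{frequency function corollary 3.7} to rewrite the boundary Dirac and Rarita-Schwinger operators as radial derivatives. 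Your plan keeps the interior Weitzenb\"ock formula and transfers the burden to a radial multiplier $r\partial_r\psi$ integrated over $B(x,r)$ --- the classical Almgren/Garofalo--Lin methodology --- with Corollary \ref{frequency function corollary 3.7} serving in the same role at the boundary. The two routes share those two ingredients but organize the calculation differently: the paper never produces an explicit interior Pohozaev term, whereas your approach generates the $H(r)/r$ contribution from $\operatorname{div}(r\partial_r)$.

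There is, however, a gap in the coefficient bookkeeping that is precisely the crux of this lemma. The standard Pohozaev identity for $\nabla_A^*\nabla_A$ in dimension $3$ yields $\int_{\partial B_r}|\nabla_A\psi|^2 = \tfrac{1}{r}\int_{B_r}|\nabla_A\psi|^2 + 2\int_{\partial B_r}|\nabla_r^A\psi|^2 + \cdots$, i.e.\ coefficient $2$, not $3$. The extra unit must be supplied by the $-\tfrac{4}{3}\la P_AP_A^*\psi,\psi\ra$ piece of $\mathcal{I}$, and your reduction of this term via $\tfrac{4}{3}P_AP_A^*\psi = \nabla_A^*\nabla_A\psi + (\text{curvature})$ only converts one copy of the Bochner Laplacian; you still have to track the boundary contributions of $\int_{B_r}\la P_AP_A^*\psi, r\partial_r\psi\ra$ (equivalently of $P_A^*(r\partial_r\psi)$ and the commutator $[P_A^*, r\partial_r]$) and show that, together with Corollary \ref{frequency function corollary 3.7}, they contribute exactly $+\int_{\partial B_r}|\nabla_r^A\psi|^2$ and nothing more at leading order. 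Your sketch asserts this combination "traces back" to the boundary structure, but does not show it, and this is exactly where the Penrose operator departs from the Dirac case and where the paper's boundary computation (\eqref{eq:frequency function.60}--\eqref{eq:frequency function.63}) does the real work. A secondary inaccuracy: the coarea formula gives $H'(r) = \int_{\partial B(x,r)}\mathcal{I}$ exactly (the Riemannian area element is already accounted for), so the $O(r)H(r)$ you attribute to the coarea step does not exist there; the $H(r)/r$ term must come entirely from the multiplier identity, and this is consistent with your Pohozaev framework but should not be double-counted.
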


\begin{proof}
    Using Stoke's theorem and a change of variable that turns the ball centered at $x$ of radius $r$ to a unit concentric ball, we already have 
    \begin{equation}\label{eq:frequency function.59}
        H'(r) = \frac{H(r)}{r} + \int_{\partial B(x,r)} \left(|\nabla_A \psi|^2 + \epsilon^{-2} |\mu(\psi)|^2 - \dfrac{4}{3}\la P_A P^*_A \psi, \psi \ra \right).
    \end{equation}
    It remains for us to match up the second terms of the RHS's of \eqref{eq:frequency function.59} and \eqref{eq:frequency function.58}. When restrict $\psi$ to the boundary, we write it as $\psi = \hat{\iota}(\phi) + \xi$. In local normal coordinate, $\xi = \xi_1 \otimes e_1 + \xi_2 \otimes e_2$, where $\xi_i \in \Gamma(\tilde{S})$. Note that for each $e_i$, we have
    \begin{align*}
        |\nabla_{e_i}\psi|^2 &= \left | \tilde{\nabla}_{e_i}\phi \otimes \partial_r + \frac{e^{O(r^2)}}{r}e_i \tilde{\cdot}\phi \otimes \partial_r + \frac{e^{O(r^2)}}{r}\xi_i\otimes \partial_r + \tilde{\nabla}_{e_i}\xi + \frac{e^{O(r^2)}}{2r}e_i \tilde{\cdot}\xi\right|^2 \\
        &= |\tilde{\nabla}_{e_i}T\psi|^2 + \frac{e^{O(r^2)}}{r}\la \tilde{\nabla}_{e_i}\phi \otimes \partial_r, e_i \tilde{\cdot}\phi \otimes \partial_r \ra + \frac{e^{O(r^2)}}{r}\la \tilde{\nabla}_{e_i}\xi, e_i \tilde{\cdot}\xi\ra + \\
        &+ \frac{e^{O(r^2)}}{r^2}\la e_i \tilde{\cdot}\phi, \xi_i\ra + \frac{e^{O(r^2)}}{r^2}|\xi_i|^2 + \frac{e^{O(r^2)}}{4r^2}|\phi|^2+\frac{e^{O(r^2)}}{4r^2}|\xi|^2 + \frac{2e^{O(r^2)}}{r}\la \tilde{\nabla}_{e_i}\phi, \xi_i\ra.
    \end{align*}
    Thus, $|\nabla_{e_1}\psi|^2 + |\nabla_{e_2}\psi|^2$ can be written as
    \begin{align*}
        |\tilde{\nabla}T\psi|^2 &- \frac{e^{O(r^2)}}{r}\la \mathbf{\tilde{D}}\phi, \phi\ra - \frac{e^{O(r^2)}}{r}\la \mathbf{D}^{TB}\xi, \xi \ra +  \frac{3e^{O(r^2)}}{2r^2}|\xi|^2 + \frac{e^{O(r^2)}}{2r^2}|\phi|^2 + \\
        &  + \frac{2e^{O(r^2)}}{r} \la \tilde{\nabla}_{e_1}\phi, \xi_1\ra + \frac{2e^{O(r^2)}}{r} \la \tilde{\nabla}_{e_2}\phi, \xi_2\ra.
    \end{align*}
    When integrating the above expression over $\partial B(x,r)$, we can replace the last two terms with
    $$\int_{\partial B(x,r)} \frac{2e^{O(r^2)}}{r}\la \phi, \mathbf{\tilde{P}}^*\xi\ra.$$
    Therefore, combining with Corollary \ref{frequency function corollary 3.7}, we obtain
    \begin{align}
        \int_{\partial B(x,r)} |\nabla \psi|^2 - |\nabla_r \psi|^2 & = \int_{\partial B(x,r)} |\tilde{\nabla}T\psi|^2 - \frac{e^{O(r^2)}}{r}\la \nabla_r \phi, \phi\ra - \frac{e^{O(r^2)}}{r}\la\nabla_r \xi, \xi \ra + \nonumber \\
        & + \frac{2e^{O(r^2)}}{r}\la \phi, \mathbf{\tilde{P}}\xi\ra - \frac{e^{O(r^2)}}{2r^2}|\phi|^2 + \frac{e^{O(r^2)}}{2r^2}|\xi|^2. \label{eq:frequency function.60}
    \end{align}
    On the other hand, by the Weitzenb\"ock formulae of the Rarita-Schwinger operator and Dirac operator on $\partial B(x,r)$, we have 
    \begin{align}
        \int_{\partial B(x,r)} |\tilde{\nabla}T\psi|^2 & = \int_{\partial B(x,r)} \la \tilde{\nabla}^* \tilde{\nabla}T\psi, T\psi\ra \nonumber \\
        & = \int_{\partial B(x,r)} \left \la \begin{pmatrix}
            \tilde{\nabla}^* \tilde{\nabla} & 0 \\ 0 &\tilde{\nabla}^* \tilde{\nabla}
        \end{pmatrix} \cdot \begin{pmatrix}
            \phi \otimes \partial_r \\ \xi
        \end{pmatrix} , \begin{pmatrix}
            \phi \otimes \partial_r \\ \xi
        \end{pmatrix}\right \ra. \nonumber \\
        &=\int_{\partial B(x,r)} |\mathbf{\tilde{D}}\phi|^2 - \frac{e^{O(r^2)}}{2r^2}|\phi|^2 + | \mathbf{\tilde{Q}}\xi|^2 + 2 |\mathbf{\tilde{P}}^*\xi|^2 - \frac{3 e^{O(r^2)}}{2r^2}|\xi|^2 \label{eq:frequency function.61}
    \end{align}
    Using Corollary \ref{frequency function corollary 3.7} again, we can expend the first and the third term on the RHS of \eqref{eq:frequency function.61} and re-arrange to obtain
    \begin{align}
        \int_{\partial B(x,r)}|\tilde{\nabla}T\psi|^2 &=\int_{\partial B(x,r)} |\nabla_r \phi|^2 + \frac{2e^{O(r^2)}}{r}\la \nabla_r \phi, \phi\ra + \frac{e^{O(r^2)}}{2r^2}|\phi|^2 + \nonumber \\
        &+ |\nabla_r \xi|^2 + \frac{2e^{O(r^2)}}{r} \la \nabla_r \xi, \xi\ra - \frac{e^{O(r^2)}}{2r^2}|\xi|^2 + 2 |\mathbf{\tilde{P}}^*\xi|^2. \label{eq:frequency function.62}
    \end{align}
    Plug in \eqref{eq:frequency function.62} into the first term on the RHS of \eqref{eq:frequency function.60}, we get
    \begin{align}
        \int_{\partial B(x,r)} |\nabla \psi|^2 - |\nabla_r \psi|^2 &= \int_{\partial B(x,r)} |\nabla_r T\psi |^2 + \frac{e^{O(r^2)}}{r}\la\nabla_r T\psi, T\psi\ra + \nonumber \\
        &+ \frac{2e^{O(r^2)}}{r}\la \phi, \mathbf{\tilde{P}}^*\xi\ra + 2 |\mathbf{\tilde{P}}^*\xi|^2. \label{eq:frequency function.63}
    \end{align}
    By the Cauchy-Schwarz inequality and Lemma \ref{curvature controls lemma 2.3}, it is not hard to see that 
    $$\norm{\nabla_r T\psi}^2_{L^2(\partial B(x,r))} \lesssim 2 \norm{T\nabla_r \psi}^2 + \norm{\psi}^2 = 2\norm{\nabla_r \psi}^2 + O(1).$$
    Hence, we can re-write \eqref{eq:frequency function.63} as
    \begin{align*}
        \norm{\nabla\psi}^2_{L^2} = \int_{\partial B(x,r)}3 |\nabla_r \psi|^2 + \frac{e^{O(r^2)}}{r}\la \nabla_r T\psi, T\psi\ra + \frac{2e^{O(r^2)}}{r}\la \phi, \mathbf{\tilde{P}}^*\xi\ra + 2 |\mathbf{\tilde{P}}^*\xi|^2 +O(1).
    \end{align*}
    The above immediately implies that $\norm{\nabla \psi}^2_{L^2(\partial B(x,r))} - \dfrac{4}{3}\norm{\mathbf{P}\psi}^2_{L^2(\partial B(x,r))}$ is exactly equals to
    \begin{align}
        \int_{\partial B(x,r)} 3 |\nabla_r \psi|^2 +\frac{e^{O(r^2)}}{r}\la \nabla_r T\psi, T\psi\ra + \frac{2e^{O(r^2)}}{r}\la \phi, \mathbf{\tilde{P}}^*\xi\ra + 2 |\mathbf{\tilde{P}}^*\xi|^2 - \frac{4}{3}|\mathbf{P}^*\psi|^2 + O(1). \label{eq:frequency function.64}
    \end{align}
    Via the change of basis given by $T$, we can think of the norm of the divergence of $\psi$ as follows
    \begin{align*}
        |\mathbf{P}^*\psi|^2 &= - 2 \la \tilde{\mathbf{P}}^*\xi, \nabla_r \phi\ra + |\tilde{\mathbf{P}}^* \xi|^2 + |\nabla_r \phi|^2 \\
        & = -2 d\,\la \tilde{\mathbf{P}}^*\xi, \phi \ra + \la \nabla_r \tilde{\mathbf{P}}^*\xi, \phi\ra +|\tilde{\mathbf{P}}^* \xi|^2 + |\nabla_r \phi|^2.
    \end{align*}
    As a result, \eqref{eq:frequency function.64} can be re-written as
    \begin{align}
        \int_{\partial B(x,r)} 3|\nabla_r \psi|^2  &+\frac{e^{O(r^2)}}{r}\la \nabla_r T\psi, T\psi\ra + \left\la \phi, \frac{2e^{O(r^2)}}{r}\mathbf{\tilde{P}}^*\xi + \frac{8}{3}\nabla_r \mathbf{\tilde{P}}^*\xi\right\ra \nonumber \\
        &+ \frac{2}{3} |\mathbf{\tilde{P}}^*\xi|^2 - \frac{4}{3}|\nabla_r \phi|^2 + O(1). \label{eq:frequency function.65}
    \end{align}
    But by Cauchy-Schwarz inequality, we have
    \begin{align*}
        \frac{2}{3}|\tilde{\mathbf{P}}^*\xi|^2 &= \frac{2}{3}|\mathbf{P}^*\psi + \nabla_r \phi |^2 \leq \frac{4}{3}(|\mathbf{P}^*\psi|^2 + |\nabla_r \phi|^2).
    \end{align*}
    At the same time by Lemma \ref{curvature controls lemma 2.3},
    \begin{align*}
        \frac{2e^{O(r^2)}}{r}\la\phi, \mathbf{\tilde{P}}^*\xi\ra_{L^2(\partial B(x,r))} & = \frac{2e^{O(r^2)}}{r} \la \mathbf{\tilde{P}}\phi, \xi \ra \leq \frac{2e^{O(r^2)}}{r}\norm{\tilde{\nabla}\phi}^2 \cdot \norm{\xi}^2 = O(1).
    \end{align*}
    Another application of the Cauchy-Schwarz inequality combined with Lemma \ref{curvature controls lemma 2.3} also yields
    \begin{align*}
        \left \la \phi, \frac{8}{3}\nabla_r \tilde{\mathbf{P}}^*\xi\right \ra & \lesssim \frac{8}{3} \norm{\phi}^2 \cdot \norm{\tilde{\mathbf{P}}^*\xi}^2 \lesssim \norm{\phi}^2\cdot (\norm{\mathbf{P}^*\psi}^2 + \norm{\nabla_r \phi}^2) \\
        & \lesssim \norm{\phi}^2\cdot (\norm{\mathbf{P}^*\psi}^2 + \norm{ \phi}^2) = O(1).
    \end{align*}
    Finally, from somewhere at the beginning of Section 2, we have already shown that $\norm{\mathbf{P}^*\psi}^2_{L^2}$ $= O(1)$. Combine all of the above estimates with \eqref{eq:frequency function.57}, we have our desired identity in the case when $A$ is a product connection.
    Lastly, when $A$ is not a product connection, a similar calculation as the above, up to a change in notation, yields a similar result with two additional terms. The first is estimated by $O(1)h(r)$, whereas the second term is the curvature term which can now be written as 
    $$\int_{\partial B(x,r)} \epsilon^{-2}|i(\partial_r)\mu(\psi)|^2.$$
    Therefore, \eqref{eq:frequency function.58} is verified as claimed.
\end{proof}

Now, we are ready to state one of the main propositions of this subsection. With this proposition, we can finally prove the almost monotonicity of $N(r)$. 

\begin{Prop}\label{frequency function proposition 3.10}
    The frequency function for the RS-SW equations satisfies 
    $$N'(r) \geq O(r)(1+N(r)).$$
\end{Prop}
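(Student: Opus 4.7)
The plan is to apply the quotient rule to $N(r)=rH(r)/h(r)$, substitute the formulas for $H'(r)$ and $h'(r)$ from Lemma \ref{frequency function lemma 3.9} and Lemma \ref{frequency function lemma 3.8} respectively, and then deploy a Cauchy--Schwarz inequality on $\partial B(x,r)$ to recover a non-negative $N^2/r$ contribution that absorbs the one genuinely obstructive term. Starting from
\begin{equation*}
N'(r)=\frac{H(r)}{h(r)}+\frac{rH'(r)}{h(r)}-\frac{rh'(r)H(r)}{h(r)^2},
\end{equation*}
using $H(r)/h(r)=N(r)/r$ and plugging in \eqref{eq:frequency function.58} together with \eqref{eq:frequency function.51}, the linear-in-$N$ pieces cancel and I arrive at an identity of the shape
\begin{equation*}
N'(r)=\frac{3r}{h(r)}\int_{\partial B(x,r)}|\nabla^A_r\psi|^2+\frac{r}{h(r)}\int_{\partial B(x,r)}\epsilon^{-2}|i(\partial_r)\mu(\psi)|^2-\frac{2N(r)^2}{r}+O\bigl(r(1+N(r))\bigr).
\end{equation*}

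The decisive step is to bound the first boundary integral from below by $N(r)^2/r$. For this I would return to the integration-by-parts identity \eqref{eq:frequency function.57} established inside the proof of Lemma \ref{frequency function lemma 3.8}, namely $2H(r)=\int_{\partial B(x,r)}\partial_r|\psi|^2+O(\|\psi\|^2_{L^2(B(x,r))})$. Combining the pointwise bound $|\partial_r|\psi|^2|\le 2|\psi|\,|\nabla^A_r\psi|$ with the integral Cauchy--Schwarz inequality and the volume estimate \eqref{eq:frequency function.52}, and then squaring, yields
\begin{equation*}
\bigl(2H(r)+O(rh(r))\bigr)^2\le 4h(r)\int_{\partial B(x,r)}|\nabla^A_r\psi|^2.
\end{equation*}
Expanding the square and rearranging produces $\int_{\partial B(x,r)}|\nabla^A_r\psi|^2\ge H(r)^2/h(r)+O(rH(r))$, and multiplying by $3r/h(r)$ together with $rH(r)/h(r)=N(r)$ promotes the leading term to $3N(r)^2/r$ while the correction is absorbed into $O(r(1+N(r)))$.

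Assembling the two ingredients, the $3N^2/r$ produced by Cauchy--Schwarz overcomes the obstruction $-2N^2/r$, the additional boundary integral with $\epsilon^{-2}|i(\partial_r)\mu(\psi)|^2$ is non-negative and can simply be discarded, and what remains is
\begin{equation*}
N'(r)\ge \frac{N(r)^2}{r}+O\bigl(r(1+N(r))\bigr)\ge O\bigl(r(1+N(r))\bigr),
\end{equation*}
as claimed. The hard part will be the careful bookkeeping of error terms: the formula for $H'$ in Lemma \ref{frequency function lemma 3.9} already carries $O((1+N)h)$ and $O(1)$ remainders coming from the Penrose contribution $\tfrac{4}{3}\langle P_A P_A^*\psi,\psi\rangle$ built into $H$ and from the Ricci/scalar-curvature pieces of the Weitzenb\"ock formula (Lemma \ref{weitzenbock formula}), and one must verify that each such error, once divided by $h(r)$ and multiplied by $r$, lies uniformly in $O(r(1+N(r)))$. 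This is exactly where the a priori curvature-control of $\|P_A^*\psi\|_{L^2}$ from Lemma \ref{first compactness lemma 1.5} (together with its local refinements in Section 4) and the standing hypothesis $A\in V_{k_0}(B(x,r))$ are used.
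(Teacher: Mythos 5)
Your proposal is correct and follows essentially the same route as the paper: quotient rule on $N=rH/h$, substitution of \eqref{eq:frequency function.51} and \eqref{eq:frequency function.58}, and then the pair of facts \eqref{eq:frequency function.57} plus Cauchy--Schwarz on $\partial B(x,r)$ to show the boundary term $\tfrac{3r}{h}\int_{\partial B}|\nabla^A_r\psi|^2$ dominates the negative $-2N^2/r$ contribution. The only cosmetic difference is that the paper packages the negative terms as $-\tfrac{2r}{h^2}\bigl(\int_{\partial B}\langle\nabla^A_r\psi,\psi\rangle\bigr)^2$ and applies Cauchy--Schwarz to that square, whereas you bound $\int_{\partial B}|\nabla^A_r\psi|^2$ from below directly — the same inequality read in the opposite direction.
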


\begin{proof}
    We substitute \eqref{eq:frequency function.51} and \eqref{eq:frequency function.58} into \eqref{eq:frequency function.49} to obtain
    \begin{align}\label{eq:frequency function.66}
        N'(r) & = \frac{2H(r)}{h(r)} + \frac{r}{h(r)}\left(\int_{\partial B(x,r)} 3|\nabla^A_r \psi|^2 + \epsilon^{-2}|i(\partial_r)\mu(\psi)|^2\right) +  \nonumber \\
        &+ rO((1+N(r)) + \frac{rO(1)}{h(r)} - \frac{(2+2N(r) + O(r^2))H(r)}{h(r)} \nonumber \\
        &= \frac{3r}{h(r)}\left(\int_{\partial B(x,r)} |\nabla^A_r \psi|^2 + \frac{\epsilon^{-2}}{3}|i(\partial_r)\mu(\psi)|^2\right)+ \frac{rO(1)}{h(r)}+\nonumber\\
        &- \frac{2r H^2(r)}{h^2(r)}- O(r^2)\frac{H(r)}{h(r)}  + rO((1+N(r))).
    \end{align}
    Now, if we use \eqref{eq:frequency function.51} and \eqref{eq:frequency function.57}, we can re-write the fourth and fifth term on the RHS of \eqref{eq:frequency function.66} as
    \begin{equation} \label{eq:frequency function.67}
        - \frac{2r H^2(r)}{h^2(r)}- O(r^2)\frac{H(r)}{h(r)} = -\frac{2r}{h^2(r)} \left(\int_{\partial B(x,r)} \la \nabla^A_r \psi, \psi \ra \right)^2.
    \end{equation}
    As a result, \eqref{eq:frequency function.67} and \eqref{eq:frequency function.66} yields
    \begin{align}\label{eq:frequency function.68}
        N'(r) = \frac{3r}{h(r)}&\left(\int_{\partial B(x,r)} |\nabla^A_r \psi|^2 + \frac{\epsilon^{-2}}{3}|i(\partial_r)\mu(\psi)|^2\right) + \frac{rO(1)}{h(r)} +  \nonumber \\
        & -\frac{2r}{h^2(r)} \left(\int_{\partial B(x,r)} \la \nabla^A_r \psi, \psi \ra \right)^2 + O(r)(1+N(r)).
    \end{align}
    Note that by the Cauchy-Schwarz inequality,
    \begin{align}
        &\left(\int_{\partial B(x,r)} \la \nabla^A_r \psi, \psi \ra \right)^2 \leq \left( \int_{\partial B(x,r)} |\nabla^A_r \psi|^2 \right)\cdot h(r) \nonumber 
    \end{align}
    As a result, the first term on the RHS of \eqref{eq:frequency function.68} is greater or equal to the fourth term on the RHS of the same equation. Therefore, we have $N'(r) \geq O(r)(1+N(r))$ as claimed.
\end{proof}

\begin{Cor}[\textbf{$N(r)$ is almost monotone in $r$}]\label{frequency function corollary 3.11}
    For all $0<s\leq r$, we have 
    $$N(s) \leq e^{O(r^2-s^2)}N(r) + O(r^2-s^2).$$
\end{Cor}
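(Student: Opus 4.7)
The plan is to view Proposition \ref{frequency function proposition 3.10} as a Gronwall-type differential inequality for $N$ on $(0, r_0]$ and integrate it. By the explicit form of the $O(r)$ error term (which unwinds to a bound of the shape $-Cr(1+N(r))$ for a constant $C$ depending only on $g$, $A_0$, and the universal constants from Section 4), Proposition \ref{frequency function proposition 3.10} gives
\[
N'(r) + Cr\bigl(1 + N(r)\bigr) \;\geq\; 0, \qquad r \in (0, r_0].
\]
Granting that $1 + N(r) > 0$ on $(0, r_0]$ (see the final paragraph), division by $1 + N(r)$ yields the scalar differential inequality
\[
\frac{d}{dr}\log\bigl(1 + N(r)\bigr) \;\geq\; -Cr.
\]

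Next, I would integrate this inequality on the interval $[s, r] \subseteq (0, r_0]$ and exponentiate. This gives
\[
\log\bigl(1 + N(r)\bigr) - \log\bigl(1 + N(s)\bigr) \;\geq\; -\tfrac{C}{2}(r^2 - s^2),
\]
so that
\[
1 + N(s) \;\leq\; e^{C(r^2 - s^2)/2}\bigl(1 + N(r)\bigr).
\]
Separating the right-hand side and using the Taylor expansion $e^x - 1 = x + O(x^2)$ for $x = C(r^2-s^2)/2$ small (valid since $r \leq r_0 \ll 1$), one obtains
\[
N(s) \;\leq\; e^{C(r^2-s^2)/2} N(r) + \bigl(e^{C(r^2-s^2)/2} - 1\bigr) \;=\; e^{O(r^2 - s^2)} N(r) + O(r^2 - s^2),
\]
which is exactly the statement of the corollary.

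The only genuine obstacle is the positivity $1 + N(r) > 0$, which must be verified before the step $\log(1+N)$ makes sense. For this, I would argue as follows. By equation \eqref{eq:frequency function.51} in Lemma \ref{frequency function lemma 3.8}, $h'(r) > 0$ and in particular $h(r) > 0$ on $(0, r_0]$; hence the sign of $N(r)$ agrees with the sign of $H(r)$. Any negative contribution to $H$ coming from the $-\tfrac{4}{3}\langle P_AP_A^*\psi,\psi\rangle$ term has already been absorbed into the $O((1+N(r))h(r))$ and $O(1)$ error terms in Lemma \ref{frequency function lemma 3.9} when deriving Proposition \ref{frequency function proposition 3.10}; consequently the effective differential inequality $N'(r) \geq -Cr(1 + N(r))$ is exactly of the form required by Gronwall. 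Should one prefer to avoid discussing positivity at all, the same conclusion may be reached by applying Gronwall directly to the auxiliary quantity $\tilde{N}(r) := 1 + N(r) + K$ for a suitably large constant $K$ depending on the a priori bounds of Section 4, and then translating back; either approach closes the argument.
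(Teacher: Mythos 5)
Your proposal is correct and follows essentially the same route as the paper: both rewrite Proposition~\ref{frequency function proposition 3.10} as the differential inequality $\frac{d}{dr}\log(1+N(r)) \geq O(r)$, integrate over $[s,r]$, and exponentiate to obtain $1+N(s) \leq e^{O(r^2-s^2)}(1+N(r))$, from which the stated bound is immediate. The only addition in your writeup is the explicit discussion of the positivity of $1+N$, which the paper takes for granted from the definition of $N$; this is a harmless elaboration rather than a genuinely different argument.
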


\begin{proof}
    By Proposition \ref{frequency function proposition 3.10}, we have
    \begin{equation}\label{eq:frequency function.69}
        \frac{d}{dr}\, \log(1+N(r)) = \frac{N'(r)}{1+N(r)} \geq O(r) = -2cr.
    \end{equation}
    We integrate both sides of \eqref{eq:frequency function.69} to obtain
    \begin{equation*}
        \int_{s}^{r}\frac{d}{dt}\, \log(1+N(t))\, dt \geq \int_{s}^{r} -2ct \,dt = -c(r^2-s^2).
    \end{equation*}
    Equivalently, $\log(1+N(r)) - \log(1+N(s)) \geq -c(r^2 -s^2)$, which immediately gives us the desired estimate.
\end{proof}

Our next item on the agenda is to exhibit the fact that $N(r)$ controls the growth of $h(r)$. 

\begin{Prop}\label{frequency function proposition 3.12}
    For any $0<s<r$, we have $h(s) \lesssim (s/r)^2h(r)$. Consequently, positive $h(s)$ implies that $h(r)$ is also positive. Furthermore, $|\psi|^2(x) \lesssim h(r)/r^2$.
\end{Prop}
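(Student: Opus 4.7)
My plan is to integrate the ODE for $h$ from Lemma~\ref{frequency function lemma 3.8}, equation~\eqref{eq:frequency function.51}, which after subtracting the $2/r$ contribution reads
\[
\frac{d}{dr}\log\!\left(\frac{h(r)}{r^2}\right) \;=\; \frac{2N(r)}{r} + O(r).
\]
Integrating this over $[s,r] \subset (0, r_0]$ gives
\[
\log\!\frac{h(r)/r^2}{h(s)/s^2} \;=\; 2\int_s^r \frac{N(t)}{t}\,dt + O(r^2 - s^2),
\]
and since the first claim $h(s) \lesssim (s/r)^2 h(r)$ is equivalent to $h(s)/s^2 \lesssim h(r)/r^2$, the task reduces to bounding $\int_s^r N(t)/t\,dt$ below by a universal constant.

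The hard part is obtaining a uniform lower bound on $N$. The strategy is to combine the almost-monotonicity of Corollary~\ref{frequency function corollary 3.11} with the behaviour of $N$ near the origin. In the generic case $|\psi|(x) \neq 0$, the $L^\infty$ estimate of Lemma~\ref{curvature controls lemma 2.3} and the $L^2_2$ estimate of Corollary~\ref{curvature controls corollary 2.9} yield $h(r) \sim 4\pi r^2 |\psi|^2(x)$ and $H(r) = O(r^3)$, so $N(r) = rH(r)/h(r) = O(r^2) \to 0$ as $r \to 0^+$; in the vanishing case the limit should be the non-negative vanishing order of $\psi$ at $x$. In both cases $\liminf_{r \to 0^+} N(r) \geq 0$, and the rearrangement $N(r) \geq e^{-O(r^2-s^2)}(N(s) - O(r^2-s^2))$ of Corollary~\ref{frequency function corollary 3.11}, together with $r_0 \ll 1$, upgrades this to the uniform bound $N(t) \geq -O(r_0^2)$ on $(0, r_0]$. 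This gives $\int_s^r N(t)/t\,dt \geq -O(r_0^2)\log(r/s) \geq -O(1)$ and finishes the proof of the first claim. I expect this lower-bound step to be the main technical obstacle: in contrast to the Dirac-type settings of \cite{haydys2015compactness} and \cite{Taubes:2016voz} where $N \geq 0$ holds by inspection of the defining expression, the Penrose-operator term $-\tfrac{4}{3}\la P_A P^*_A \psi, \psi\ra$ in the definition of $H(r)$ here obstructs a direct sign argument and forces the detour through monotonicity.

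The remaining two assertions are short consequences. Positivity is immediate: $h(s) > 0$ together with $h(s) \lesssim (s/r)^2 h(r)$ forces $h(r) > 0$. For the pointwise bound, Corollary~\ref{curvature controls corollary 2.9} places $\psi$ in $L^2_2$ locally, which in three dimensions embeds into $C^{0,1/2}$, so $\psi$ is H\"older continuous in a neighbourhood of $x$ and $h(s)/s^2 \to 4\pi |\psi|^2(x)$ as $s \to 0^+$; taking this limit in $h(s)/s^2 \lesssim h(r)/r^2$ yields $|\psi|^2(x) \lesssim h(r)/r^2$.
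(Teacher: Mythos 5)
Your proof follows the same core strategy as the paper---integrate the ODE \eqref{eq:frequency function.51} for $h$ to reach the identity \eqref{eq:frequency function.70}, then rearrange---but you correctly flag a subtlety that the paper's one-line conclusion ``the desired estimates follow directly from \eqref{eq:frequency function.70}'' glosses over: one needs $\int_s^r N(t)/t\,dt$ bounded below uniformly, which is automatic when $N\geq 0$ as in \cite{haydys2015compactness} and \cite{taubes2012psl}, but is not manifest here because of the indefinite-sign Penrose term in $H$. However, the argument you give to fill this gap has a concrete error: the chain $\int_s^r N(t)/t\,dt \geq -O(r_0^2)\log(r/s) \geq -O(1)$ is false, since $\log(r/s)$ is unbounded as $s\to 0$. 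You weakened the bound to the constant $-O(r_0^2)$ too early. Sending $s\to0$ in the rearranged Corollary~\ref{frequency function corollary 3.11}, granting $\liminf_{s\to 0}N_x(s)\geq 0$, actually yields the $t$-dependent bound $N(t)\geq -O(t^2)$, and keeping this dependence gives $\int_s^r N(t)/t\,dt \geq -O\bigl(\int_s^r t\,dt\bigr)=-O(r^2-s^2)\geq -O(r_0^2)$, which does suffice.

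More seriously, you only establish $\liminf_{s\to 0}N_x(s)\geq 0$ at points where $|\psi|(x)\neq 0$. At a zero of $\psi$ you appeal to ``the non-negative vanishing order of $\psi$ at $x$'', but identifying $\lim_{r\to 0}N_x(r)$ with a vanishing order is precisely the output of the Almgren machinery, and that machinery takes a sign or lower bound on $N$ as \emph{input}; so the argument is circular exactly at the points that matter for the nodal-set analysis in Theorem~\ref{second compactness theorem}. Closing this case seems to require extra work---for instance, exploiting that \eqref{eq:frequency function.57} rewrites $H(r)$ as $\tfrac{1}{2}\int_{\partial B(x,r)}\partial_r|\psi|^2$ up to an $O(\|\psi\|^2_{L^2(B(x,r))})$ error, together with the a priori H\"older continuity of $|\psi|$---and neither your proposal nor the paper's terse proof carries this out. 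So you have found a real soft spot in the paper's argument, and your plan of attack is the right one, but as written the fill has a computational error and a circular step.
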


\begin{proof}
    Firstly, we claim that
    \begin{equation}\label{eq:frequency function.70}
     h(r)=e^{O(r^2)}\left(\dfrac{r}{s}\right)^2\exp \left(2 \int_{s}^{r} \dfrac{N(t)}{t}\,dt\right)h(s).
     \end{equation}
    From \eqref{eq:frequency function.51}, we have 
    \begin{align*}
        \dfrac{d}{dr}\, \log\,h(r) = \dfrac{h'(r)}{h(r)} = \dfrac{2+2N(r)}{r} + O(r).
    \end{align*}
    Integrating both sides of the above equation, we obtain
    \begin{align*}
        \log\, \dfrac{h(r)}{h(s)} &= \int^{r}_{s} \dfrac{2}{t}\, dt + 2\int^{r}_{s} \dfrac{N(t)}{t}\, dt + O(r^2-s^2) \\
        & =  \log\, \left(\dfrac{r}{s}\right)^2 + 2 \int^{r}_{s} \dfrac{N(t)}{t}\, dt + O(r^2)
    \end{align*}
    Exponentiate both sides and re-arrange to get \eqref{eq:frequency function.70}. As a result, the desired estimates follow directly from \eqref{eq:frequency function.70}.
\end{proof}

\begin{Cor}[\textbf{$N(r)$ controls the growth of $h(r)$}]\label{frequency function corollary 3.13} For any $0<s<r$, we have
$$e^{O(r^2)}\left(\dfrac{s}{r}\right)^{e^{O(r^2)}(2+2N(r))}h(r)\leq h(s) \leq e^{O(r^2)}\left(\dfrac{s}{r}\right)^{e^{O(r^2)}(2+2N(r))}h(r).$$  
\end{Cor}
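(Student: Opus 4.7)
The plan is to combine the exact formula \eqref{eq:frequency function.70} already produced in the proof of Proposition \ref{frequency function proposition 3.12} with the almost monotonicity statement of Corollary \ref{frequency function corollary 3.11}. From \eqref{eq:frequency function.70} we have
\[
h(s)=e^{O(r^{2})}\left(\frac{s}{r}\right)^{2}\exp\!\left(-2\int_{s}^{r}\frac{N(t)}{t}\,dt\right)h(r),
\]
so the entire task reduces to controlling the integral $\displaystyle\int_{s}^{r}\frac{N(t)}{t}\,dt$ from above and below in terms of the endpoint value $N(r)$ (or $N(s)$, which by almost monotonicity is comparable).

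First I would use Corollary \ref{frequency function corollary 3.11} in the form $N(t)\leq e^{O(r^{2}-t^{2})}N(r)+O(r^{2}-t^{2})$ for every $t\in[s,r]$, which provides a pointwise upper bound of the shape $N(t)\leq e^{O(r^{2})}N(r)+O(r^{2})$. Integrating this against $dt/t$ gives
\[
\int_{s}^{r}\frac{N(t)}{t}\,dt\;\leq\;\bigl(e^{O(r^{2})}N(r)+O(r^{2})\bigr)\log(r/s).
\]
Symmetrically, applying Corollary \ref{frequency function corollary 3.11} with the roles of $s$ and $t$ reversed yields $N(t)\geq e^{-O(r^{2})}\bigl(N(s)-O(r^{2})\bigr)$, and since $N(s)$ and $N(r)$ are comparable up to the same exponential factor, this delivers a lower bound of the same shape with $N(r)$ on the right hand side.

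Substituting back into the rearranged formula, the two inequalities on $\int_{s}^{r}N(t)/t\,dt$ turn into the claimed two-sided inequality after the identity $\exp(-c\log(r/s))=(s/r)^{c}$ and after absorbing all constants of size $e^{O(r^{2})}$ into the overall prefactor and into the $e^{O(r^{2})}$ decoration on the exponent of $(s/r)$. The additive $O(r^{2})$ terms produced by the almost monotonicity inequality contribute a factor $(s/r)^{O(r^{2})}=e^{O(r^{2})\log(s/r)}$, which, because $r\leq r_{0}\ll 1$ forces $\log(s/r)$ to be bounded on the scales relevant to the argument, is reabsorbed into $e^{O(r^{2})}$.

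I do not expect any genuine obstacle here: once Proposition \ref{frequency function proposition 3.12} has produced the exponential integral formula and Corollary \ref{frequency function corollary 3.11} has given pointwise two-sided control of $N(t)$, the corollary is a bookkeeping exercise in calculus. The only mildly delicate point will be keeping track of the precise way in which the multiplicative constants $e^{O(r^{2})}$ interact with the additive corrections $O(r^{2})$ when they sit inside an exponent of $(s/r)$; grouping all such errors uniformly into a single $e^{O(r^{2})}$ factor (multiplying both $(s/r)$ and its exponent) is exactly the form the statement takes.
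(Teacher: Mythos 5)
Your overall strategy matches the paper's one-line proof exactly: combine the exponential integral formula \eqref{eq:frequency function.70} with the almost monotonicity of Corollary \ref{frequency function corollary 3.11}. The lower bound on $h(s)$ is handled correctly: for $t\in[s,r]$, Corollary \ref{frequency function corollary 3.11} gives $N(t)\leq e^{O(r^2)}N(r)+O(r^2)$, which after integration against $dt/t$ and substitution into \eqref{eq:frequency function.70} yields the claimed lower bound once the additive $O(r^2)$ in the exponent is absorbed into the multiplicative factor $e^{O(r^2)}$. That part is fine.

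The upper bound is where your argument has a genuine gap. You assert that Corollary \ref{frequency function corollary 3.11} applied "with the roles reversed" gives $N(t)\geq e^{-O(r^2)}(N(s)-O(r^2))$, which is correct, and then claim that "since $N(s)$ and $N(r)$ are comparable up to the same exponential factor" one can replace $N(s)$ by $N(r)$ on the right. This last step does not follow. Corollary \ref{frequency function corollary 3.11} is strictly one-sided: it says $N$ is almost increasing, i.e.\ $N(s)\leq e^{O(r^2-s^2)}N(r)+O(r^2-s^2)$, but it gives no lower bound on $N(s)$ in terms of $N(r)$. In general $N(s)$ can be far smaller than $N(r)$ (for instance if $N$ increases steeply near $r$), and almost monotonicity does not forbid this. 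Consequently the lower bound on $\int_s^r N(t)/t\,dt$ you obtain is in terms of $N(s)$, not $N(r)$, and cannot be upgraded. What almost monotonicity does give for the upper bound on $h(s)$ is simply $N(t)\geq 0$, which through \eqref{eq:frequency function.70} produces $h(s)\leq e^{O(r^2)}(s/r)^2h(r)$; this is precisely Proposition \ref{frequency function proposition 3.12}, already established. So the honest content of the upper bound is the $(s/r)^2$ growth (no dependence on $N(r)$ in the exponent), and your attempt to place an $N(r)$-dependent exponent on that side rests on an unjustified comparison.
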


\begin{proof}
    The estimate is a direct consequence of \eqref{eq:frequency function.70} and Corollary \ref{frequency function corollary 3.11}.
\end{proof}

Finally, we will exhibit the dependence of $N$ on the base point.

\begin{Prop}[\textbf{Dependence of $N$ on the base-point}]\label{frequency function proposition 3.14}
    Let $x \in Y$ and $r>0$ such that $N_x(10r) \leq 1$. For any $y\in B(x,r)$, we have $N_y(5r)\lesssim N_x(10r)$.
\end{Prop}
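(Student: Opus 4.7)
The plan is to bound $h_y(5r)$ from below and $H_y(5r)$ from above separately, each in terms of the corresponding quantities at $x$. The key geometric input is the sandwich $B(x, 4r) \subset B(y, 5r) \subset B(x, 6r) \subset B(x, 10r)$ that holds whenever $y \in B(x, r)$, and the hypothesis $N_x(10r) \leq 1$ which, via Corollary \ref{frequency function corollary 3.13}, forces the radial profile of $h_x$ to be comparable across the scale $[3r, 10r]$.

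For the lower bound on $h_y(5r)$, I would use $B(y, 5r) \supset B(x, 4r)$ and the coarea formula to write
\begin{equation*}
\int_{B(y, 5r)} |\psi|^2 \;\geq\; \int_{B(x, 4r)\setminus B(x, 3r)} |\psi|^2 \;=\; \int_{3r}^{4r} h_x(s)\, ds.
\end{equation*}
Corollary \ref{frequency function corollary 3.13} applied at $x$ together with $N_x(10r) \leq 1$ gives $h_x(s) \gtrsim h_x(10r)$ uniformly for $s \in [3r, 4r]$, so the right-hand side is $\gtrsim r\, h_x(10r)$. In the reverse direction, \eqref{eq:frequency function.51} implies that $s \mapsto h_y(s)/s^2$ is almost increasing, yielding $\int_0^{5r} h_y(s)\, ds \lesssim r\, h_y(5r)$. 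Combining the two produces $h_y(5r) \gtrsim h_x(10r)$.

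For the upper bound on $H_y(5r)$, I would bound the positive contributions $\int_{B(y, 5r)}|\nabla_A \psi|^2$ and $\int_{B(y, 5r)}\epsilon^{-2}|\mu(\psi)|^2$ by their counterparts on the larger ball $B(x, 10r)$. The Penrose term requires integration by parts: for $s \in [5r, 6r]$,
\begin{equation*}
\int_{B(y, s)} \langle P_A P^*_A \psi, \psi\rangle \;=\; \int_{B(y, s)} |P^*_A \psi|^2 \;+\; (\textrm{boundary on } \partial B(y, s)),
\end{equation*}
and the bulk piece is controlled by Lemma \ref{curvature controls lemma 2.5}, which bounds $\|P^*_A \psi\|^2_{L^2_1(B(x, 10r))}$ by $\|\psi\|^2_{L^2(B(x, 10r))} + \int |F_A|^2|\psi|^2 + O(1)$; via \eqref{eq:frequency function.52} and Lemma \ref{curvature controls lemma 2.3} this is $\lesssim r\, h_x(10r) + O(1)$. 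I would dispose of the boundary term by a mean-value argument on $s \in [5r, 6r]$ to select some $s_\ast$ where the boundary contribution is suitably small, and then use the almost-monotonicity of $N_y$ (Corollary \ref{frequency function corollary 3.11}) to transfer the bound at $s_\ast$ back to radius $5r$. Altogether this gives
\begin{equation*}
N_y(5r) \;\lesssim\; \frac{5r\bigl(H_x(10r) + r\, h_x(10r)\bigr)}{h_x(10r)} \;\lesssim\; N_x(10r) + r^2,
\end{equation*}
with the $O(r^2)$ term absorbed into the implicit constant.

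The main obstacle will be the Penrose operator term $\langle P_A P^*_A \psi, \psi\rangle$ appearing in $H_y$, which has no analogue in the Dirac setting of \cite{haydys2015compactness}. The gauge-theoretic $L^2$-control on $P^*_A \psi$ provided by Lemma \ref{curvature controls lemma 2.5}, combined with the radial averaging trick on $[5r, 6r]$ that also underlies the proofs of Lemma \ref{frequency function lemma 3.9} and Proposition \ref{frequency function proposition 3.10}, is precisely what is needed to absorb this extra contribution and push the argument through.
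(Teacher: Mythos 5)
Your overall strategy mirrors the paper's: bound $H_y(5r)$ from above by $H_x(10r)$, bound $h_y(5r)$ from below by $h_x(10r)$, and divide. Your lower bound on $h_y(5r)$ via the coarea/shell decomposition over $B(x,4r)\setminus B(x,3r)$, Corollary \ref{frequency function corollary 3.13}, and the almost-increase of $s\mapsto h_y(s)/s^2$ is a valid alternative to the paper's chain $h_x(10r)\lesssim h_x(r)\lesssim h_y(5r)$, which goes through Proposition \ref{frequency function proposition 3.12} and the preliminary claim \eqref{eq:frequency function.71}. Both rest on the same ingredients (the $h$-comparison inequality and Corollary \ref{frequency function corollary 3.13} under $N_x(10r)\leq 1$) and arrive at the same conclusion, so that part is fine.

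The gap is in your upper bound on $H_y(5r)$ and the final absorption. Handling the Penrose term by integration by parts, Lemma \ref{curvature controls lemma 2.5}, and a radial averaging on $s\in[5r,6r]$ costs you an \emph{additive} error $O(rh_x(10r)) + O(1)$, which after dividing by $h_x(10r)$ turns into an additive $O(r^2)$ (plus an $O(r)/h_x(10r)$ term you do not track). You then claim this can be ``absorbed into the implicit constant,'' but a multiplicative constant cannot eat an additive term: when $N_x(10r)$ is much smaller than $r^2$, the inequality $N_x(10r)+r^2\lesssim N_x(10r)$ is simply false. The paper avoids this entirely by using the inclusion $B(y,5r)\subset B(x,10r)$ to assert $H_y(5r)\leq H_x(10r)$ directly, which yields the purely multiplicative estimate $N_y(5r)\leq N_x(10r)\cdot h_x(10r)/h_y(5r)\lesssim N_x(10r)$. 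If you want to rescue your route, you would need to show that the Penrose correction is itself $\lesssim N_x(10r)\,h_x(10r)$ rather than merely $\lesssim r\,h_x(10r)$, which your integration-by-parts accounting does not give.
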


\begin{proof}
    We claim that for any $x,y \in Y$ and $r>0$, 
    \begin{equation}\label{eq:frequency function.71}
        h_x(r) \lesssim \dfrac{2r+d(x,y)}{r}h_y(2r+d(x,y)).
    \end{equation}
    Indeed, by Proposition \ref{frequency function proposition 3.12}, we already have 
    $$h(r) \leq \left(\dfrac{r}{2r}\right)^2 h(2r) = \dfrac{1}{4} \int_{\partial B(x,2r)} |\psi|^2.$$
    Remember that we always take $r<<1$. As a result, 
    $$r h_x(r) \leq \dfrac{r}{4}\int_{\partial B(x,2r)} |\psi|^{2} \lesssim \int_{B(x,2r)} |\psi|^2 \leq \int_{B(y,2r + d(x,y))}|\psi|^2.$$
    The RHS of the above estimate can be estimated further by \eqref{eq:frequency function.51} of Lemma \ref{frequency function lemma 3.8}, which gives us \eqref{eq:frequency function.71} as claimed.

    Now note that since $B(y,5r) \subseteq B(x,10r)$, we obviously have 
    $$5r\,H_y(5r) \leq 5r\, H_x(10r) \leq 10r\,H_x(10r).$$
    Hence, 
    \begin{equation}\label{eq:frequency function.72}
    N_y(5r) \leq \dfrac{10r\,H_x(10r)}{h_y(5r)} = N_x(10r)\, \dfrac{h_x(10r)}{h_y(5r)}.
    \end{equation}
    At the same time, by Corollary \ref{frequency function corollary 3.13} applied to $s :=r$ and $r:= 10r$ and the fact that $N_x(10r)\leq 1$,
    \begin{align}
        &\quad \quad e^{O(r^2)}\left(\dfrac{r}{10r}\right)^{e^O(r^2)(2+2N_x(10r))}h_x(10r)\leq h_x(r)\nonumber \\
        & \Rightarrow h_x(10r) \leq e^{-O(r^2)}10^{e^{O(r^2)}(2+2N_x(10r))}h_x(r) \lesssim h_x(r). \label{eq:frequency function.73}
    \end{align}
    Finally, by \eqref{eq:frequency function.71} and Proposition \ref{frequency function proposition 3.12} applied to $s:=2r+d(x,y)$ and $r:= 5r$,
    \begin{align}
        h_x(r) \lesssim 2\,\left(\dfrac{2r+d(x,y)}{5r}\right)^2 h_y(5r) \lesssim h_y(5r). \label{eq:frequency function.74}
    \end{align}
    Combine \eqref{eq:frequency function.72}, \eqref{eq:frequency function.73} and \eqref{eq:frequency function.74}, we have our estimate as claimed.
\end{proof}

\subsection{Control of the critical radius}
In this subsection, we will show that 

\begin{Prop}\label{frequency function proposition 3.15}
    There exists a constant $\omega >0$ such that for each  $(A,\psi, \epsilon) \in V_{k_0}(B(x,r)) \times \Gamma(W_{\mathfrak{s}_{3/2}}\otimes \mathscr{L}|_{B(x,r)}) \times (0,\infty)$ that is a solution of \eqref{eq:first compactness.2}, we have 
    $$\min\{1, |\psi|^{1/\omega}(x)\} \lesssim \rho(x).$$
\end{Prop}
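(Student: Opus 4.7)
The statement is trivial when $\rho(x) \geq 1$, so assume $\rho(x) < 1$; the critical-radius condition then saturates to $\rho(x)^{1/2}\|F_A\|_{L^2(B(x,\rho(x)))} = 1$. Set $u := |\psi|(x)$, which is universally bounded above by Lemma~\ref{curvature controls lemma 2.3} (applied on a sub-ball where the $k_0$-curvature bound is admissible). The aim is to show $u \lesssim \rho(x)^{\omega}$ for some universal $\omega > 0$, which combined with the trivial $u\leq 1$ bound gives the assertion.

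The main tools are: the mean-value inequality $|\psi|^{2}(x) \lesssim h(r)/r^{2}$ from Proposition~\ref{frequency function proposition 3.12}; the almost-monotonicity from Corollary~\ref{frequency function corollary 3.11} and the $N$-controlled growth of $h$ from Corollary~\ref{frequency function corollary 3.13}; and the local bounds from Lemma~\ref{curvature controls lemma 2.3}, Corollary~\ref{curvature controls corollary 2.9}, and Lemma~\ref{curvature controls lemma 2.5}. The strategy is to show that $N(\rho(x))$ is bounded above by a (negative) power of $u$, and then invert the growth law for $h$ to extract a polynomial decay of $u$ in $\rho(x)$.

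Step one (scale matching at the critical radius): combining $F_{A} = \epsilon^{-2}\mu(\psi)$, the bound $\|\mu(\psi)\|_{L^{2}(B(x,\rho(x)))} = O(\rho(x)^{1/2}\epsilon)$ from Lemma~\ref{curvature controls lemma 2.3}, and the saturation identity, one checks that $\epsilon \sim \rho(x)$. Step two (H\"older regularity and lower bound on $h$): by Corollary~\ref{curvature controls corollary 2.9} and the Sobolev embedding $L^{2}_{2} \hookrightarrow C^{0,1/2}$ in three dimensions, $\psi$ is $C^{0,1/2}$ on $B(x,\rho(x)/2)$ with H\"older seminorm $O(\rho(x)^{-1/2})$, so $|\psi| \geq u/2$ on a sub-ball of radius $\gtrsim u^{2}\rho(x)$; integrating gives $h(\rho(x)) \gtrsim u^{c_{0}}\rho(x)^{2}$ for an explicit universal $c_{0}$. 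Step three (bound on $N(\rho(x))$): writing $N(\rho(x)) = \rho(x)H(\rho(x))/h(\rho(x))$, the three contributions to $H(\rho(x))$ are controlled by Lemma~\ref{curvature controls lemma 2.3} (the $|\nabla_{A}\psi|^{2}$ and $\epsilon^{-2}|\mu(\psi)|^{2}$ terms, both of size $O(\rho(x))$ after step one) and by Lemma~\ref{curvature controls lemma 2.5} (the Penrose term $-(4/3)\int\langle P_{A}P_{A}^{*}\psi,\psi\rangle$); combined with step two this yields $N(\rho(x)) \lesssim u^{-c_{1}}$ for a universal $c_{1}$. Step four (inversion): by Corollary~\ref{frequency function corollary 3.11} the bound propagates as $N(t) \lesssim u^{-c_{1}}$ for all $t \leq \rho(x)$; feeding this into the growth formula from the proof of Proposition~\ref{frequency function proposition 3.12} and combining with $h(\rho(x)) \lesssim \rho(x)^{2}$ from Lemma~\ref{curvature controls lemma 2.3} converts the mean-value inequality into the desired polynomial estimate $u \lesssim \rho(x)^{\omega}$.

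The main obstacle is Step three: the Penrose contribution is not sign-definite pointwise, and the $L^{2}_{1}$ bound on $P_{A}^{*}\psi$ from Lemma~\ref{curvature controls lemma 2.5} involves the curvature norm, which is large at the critical radius. Carefully arranging the estimates so that the $u$-dependence of the bound on $N(\rho(x))$ is polynomial (rather than degenerating) is where the bulk of the analytic work lies; this is the analogue in our setting of the key quantitative step in the proof of the corresponding statement for the multiple-spinor equations in \cite{haydys2015compactness}.
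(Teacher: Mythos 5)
Your Steps 1--2 are essentially sound (with small caveats: the scale matching gives only $\epsilon \lesssim \rho(x)$, not $\epsilon \sim \rho(x)$; and the lower bound $h(\rho(x)) \gtrsim u^2\rho(x)^2$ should come directly from the mean-value inequality $|\psi|^2(x)\lesssim h(r)/r^2$ in Proposition~\ref{frequency function proposition 3.12}, since a pointwise lower bound for $|\psi|$ on an interior sub-ball does not by itself control the boundary integral $h$). The genuine gap is in Steps 3--4. You bound $N(\rho(x)) \lesssim u^{-c_1}$ and then try to ``invert,'' but this estimate points in the wrong direction: it degenerates precisely when $u$ is small, which is the only regime where $\rho(x)\gtrsim u^{1/\omega}$ is a nontrivial claim. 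Concretely, feeding $N(t)\lesssim u^{-c_1}$ into the growth law~\eqref{eq:frequency function.70} and pairing it with $h(\rho)\lesssim\rho^2$ and the mean-value inequality at a smaller scale $s$ yields only statements of the form $1 \gtrsim (s/\rho)^{Cu^{-c_1}}u^2$, which hold trivially and give no lower bound on $\rho$. What you are missing is an implication of the opposite logical type: that smallness of the frequency at some scale \emph{forces} the critical radius to be at least that scale.

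That implication is exactly Proposition~\ref{frequency function proposition 3.17} in the paper ($N(50r)\leq\omega \Rightarrow \rho\geq\min\{r,\rho_0\}$), and it is the heart of the argument; your proposal never engages with it. The paper's route is: Proposition~\ref{frequency function proposition 3.16} shows that for $s\lesssim_\omega\min\{1,u^{1/\omega}\}$ one has $N(s)\lesssim\omega$ (this uses the mean-value inequality at scale $s$, the $h$-growth bound of Corollary~\ref{frequency function corollary 3.13}, $h(r)\lesssim r^2$, and a logarithmic inversion to bound $N(s)$ by $\log(u^{-1})/\log(r/s)$ --- which, unlike your $N(\rho)\lesssim u^{-c_1}$, becomes \emph{small} as $s\to 0$); then Proposition~\ref{frequency function proposition 3.17} converts this into a lower bound on $\rho(x)$. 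Proving Proposition~\ref{frequency function proposition 3.17} in turn requires the quantitative curvature decay of Proposition~\ref{frequency function proposition 3.22} (derived from the second-order equation satisfied by $\mu(\psi)$ and the nonvanishing of $|\psi|$, via Lemma~\ref{frequency function lemma 3.18}), its consequence Corollary~\ref{frequency function corollary 3.23}, a finite covering argument, and an inductive ``toss-pick'' selection of a center $x'$ where the critical radius is nearly minimal. None of this machinery is replaceable by the scaling estimates you assemble, so the proposal as written does not close.
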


Proposition \ref{frequency function proposition 3.15} follows from the following two propositions.

\begin{Prop}[\textbf{$|\psi|(x)$ controls the growth of $N$}]\label{frequency function proposition 3.16}
    Suppose we have that $0 < \omega <<1$ and $s\lesssim_{\omega} \min \{1, |\psi|^{1/\omega}(x)\}$. Then we must have $N(s) \lesssim \omega$.
\end{Prop}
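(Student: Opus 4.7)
The plan is to combine three ingredients in order to turn the pointwise lower bound on $|\psi|(x)$ into an upper bound on $N(s)$: the exponential growth formula for $h$ established inside the proof of Proposition~\ref{frequency function proposition 3.12}, the almost-monotonicity of $N$ from Corollary~\ref{frequency function corollary 3.11}, and the twin size bounds $|\psi|^{2}(x) \lesssim h(r)/r^{2}$ (also Proposition~\ref{frequency function proposition 3.12}) together with $\norm{\psi}_{L^{\infty}(B(x,r))} = O(1)$ (Lemma~\ref{curvature controls lemma 2.3}). I would work at a fixed mesoscopic scale $r_{\ast} \in (0, r_{0}]$ chosen depending only on $\omega$.

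First I take the identity
\[h(r_{\ast}) = e^{O(r_{\ast}^{2})}\Bigl(\frac{r_{\ast}}{s}\Bigr)^{2} \exp\Bigl(2\int_{s}^{r_{\ast}} \frac{N(t)}{t}\,dt\Bigr) h(s)\]
from the proof of Proposition~\ref{frequency function proposition 3.12}, and lower-bound the integral using Corollary~\ref{frequency function corollary 3.11} in the form $N(t) \geq e^{-O(r_{\ast}^{2})}\bigl(N(s) - O(r_{\ast}^{2})\bigr)$ valid for $t \in [s, r_{\ast}]$. Inverting the identity, inserting the upper bound $h(r_{\ast}) \lesssim r_{\ast}^{2}$ from Lemma~\ref{curvature controls lemma 2.3} and the lower bound $h(s) \gtrsim s^{2}|\psi|^{2}(x)$ from Proposition~\ref{frequency function proposition 3.12}, dividing by $s^{2}$ and taking logarithms yields the uniform estimate
\[N(s) \leq (1+O(r_{\ast}^{2}))\,\frac{\log(1/|\psi|(x))}{\log(r_{\ast}/s)} + O(r_{\ast}^{2}) + \frac{O(1)}{\log(r_{\ast}/s)}.\]
Crucially, this bound holds regardless of the a priori size of $N(s)$ as soon as $r_{\ast}$ is chosen small enough that the coefficient $(1-O(r_{\ast}^{2}))$ of $N(s)$ in the exponent stays bounded away from zero, so that one may divide through and solve for $N(s)$.

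The final step is to select $r_{\ast}$ so that $O(r_{\ast}^{2}) \leq \omega/3$, which takes care of the middle term, and then to invoke the hypothesis $s \lesssim_{\omega} \min\{1,|\psi|^{1/\omega}(x)\}$ with a sufficiently small $\omega$-dependent constant $c_{\omega} \leq r_{\ast} e^{-C/\omega}$: the bound $s \leq c_{\omega}$ forces $\log(r_{\ast}/s) \geq C/\omega$, dispatching the third term, while when $|\psi|(x) < 1$ the bound $s \leq c_{\omega}|\psi|^{1/\omega}(x)$ forces $\log(r_{\ast}/s) \geq \omega^{-1}\log(1/|\psi|(x)) + \log(r_{\ast}/c_{\omega}) \geq \omega^{-1}\log(1/|\psi|(x))$, dispatching the first term; if $|\psi|(x) \geq 1$ the first term is non-positive. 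The main technical delicacy is the book-keeping of the $O(r_{\ast}^{2})N(s)$ error contributed by the almost-monotonicity, which could a priori compete with the leading $2N(s)$ in the exponent; this is resolved by choosing $r_{\ast}$ small purely as a function of $\omega$ and independently of $s$ and $|\psi|(x)$, so that the net coefficient of $N(s)$ remains uniformly bounded below by, say, $1/2$.
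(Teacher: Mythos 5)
Your proposal is correct and follows essentially the same route as the paper's proof: combine the exponential growth formula for $h$ (from the proof of Proposition~\ref{frequency function proposition 3.12}) with the almost-monotonicity of $N$ (Corollary~\ref{frequency function corollary 3.11}), sandwich $h$ via $h(r)\lesssim r^2$ (Lemma~\ref{curvature controls lemma 2.3}) and $h(s)\gtrsim s^2|\psi|^2(x)$ (Proposition~\ref{frequency function proposition 3.12}), take logarithms, and close with a case analysis on whether $|\psi|(x)$ exceeds $1$ while choosing the outer scale $r$ small as a function of $\omega$. The only difference is cosmetic: you make the book-keeping of the $O(r^2)N(s)$ error term explicit, whereas the paper absorbs it silently into the $e^{O(r^2)}$ factors.
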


\begin{Prop}\label{frequency function proposition 3.17}
    There are constants $\omega>0$ and $\rho_0 >0$ such that for any $(A,\psi,\epsilon) \in V_{k_0}(B(x,r)) \times \Gamma(W_{\mathfrak{s}_{3/2}}\otimes \mathscr{L}|_{B(x,r)}) \times (0,\infty)$ that is a solution of \eqref{eq:first compactness.2}, we have 
    $$N(50r) \leq \omega \Rightarrow \rho \geq \min \{r, \rho_0\}.$$
\end{Prop}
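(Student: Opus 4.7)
The plan is to argue by contradiction, exploiting the frequency function to upgrade the trivial $L^2$-curvature bound coming from $V_{k_0}$ into one that is independent of $k_0$. Suppose $\rho(x)<\min\{r,\rho_0\}$; then by continuity of $s\mapsto s\|F_A\|_{L^2(B(x,s))}^2$ together with the supremum defining $\rho$, one has at $s=\rho(x)$ the saturation identity
\[
s\,\|F_A\|_{L^2(B(x,s))}^2 = 1,
\]
which, via the curvature equation $\epsilon^2 F_A=\mu(\psi)$, is equivalent to $\int_{B(x,s)}|\mu(\psi)|^2=\epsilon^4/s$. The goal is to show that the hypothesis $N(50r)\leq\omega$ forces the left-hand side to be strictly smaller than $\epsilon^4/s$, producing the desired contradiction.

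First I would propagate the smallness of $N$ from $x$ to every nearby base point via Proposition \ref{frequency function proposition 3.14}, which yields $N_y(5r)\lesssim\omega$ for every $y\in B(x,r)$. Corollary \ref{frequency function corollary 3.13} then gives the sharp power-law decay $h_y(s)\lesssim (s/r)^{2+2\omega}h_y(r)$, and inserting this into inequality \eqref{eq:frequency function.52} produces the integral bound $\int_{B(x,s)}|\psi|^2\lesssim s\,h_x(s)$. Combined with $|\mu(\psi)|\leq|\psi|^2$ and the uniform $L^\infty$-bound of Lemma \ref{curvature controls lemma 2.3}, this yields a quantitative upper estimate for $\int_{B(x,s)}|\mu(\psi)|^2$ in terms of $s$, $r$, and $\omega$.

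Next I would apply the integration-by-parts formula of Lemma \ref{first compactness lemma 1.6} with $f\equiv 1$ on $U=B(x,s)$, which expresses $\int_{B(x,s)}\epsilon^{-2}|\mu(\psi)|^2$ as a boundary term proportional to $h_x'(s)$ plus bulk contributions of $\int|\nabla_A\psi|^2$ and $\int|P_A^*\psi|^2$; the boundary term is controlled by the frequency growth law \eqref{eq:frequency function.51}, while the bulk terms are controlled by $H(s)=N(s)h_x(s)/s$ together with Lemma \ref{curvature controls lemma 2.5}. Using the almost-monotonicity of $N$ (Corollary \ref{frequency function corollary 3.11}) to propagate $N(s)\lesssim\omega$ across the whole interval $(0,50r]$ and chaining the resulting estimates, I expect to arrive at an inequality of the form
\[
\int_{B(x,s)}|\mu(\psi)|^2 \;\leq\; C\,\omega\,\frac{\epsilon^4}{s} + \text{(small correction in $s$)},
\]
which, for $\omega$ and $\rho_0$ chosen sufficiently small and universal, contradicts the saturation identity.

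The main obstacle is reconciling the $\epsilon^{-4}$ factor appearing in $|F_A|^2=\epsilon^{-4}|\mu(\psi)|^2$ with the frequency function, which only directly controls $\int\epsilon^{-2}|\mu(\psi)|^2$ through $H(s)$. Bridging this gap requires using both the pointwise equality $\epsilon^2|F_A|=|\mu(\psi)|\leq|\psi|^2$ and the $L^\infty$-bound of Lemma \ref{curvature controls lemma 2.3}, paired against the frequency-controlled smallness of $|\psi|$ near $x$ supplied by Proposition \ref{frequency function proposition 3.12}. Carefully balancing these estimates is what determines the universal thresholds $\omega$ and $\rho_0$, and is also the reason why the stronger Theorem \ref{second compactness theorem} further assumes a uniform $L^6$-bound on $F_{A_n}$.
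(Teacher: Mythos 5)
Your proposal identifies the correct starting point (contradiction via the saturation identity $\rho\,\|F_A\|^2_{L^2(B(x,\rho))}=1$) and several true auxiliary facts, but it never actually closes the gap you yourself flag at the end, and that gap is the entire content of the proposition. Controlling $\int_{B(x,s)}\epsilon^{-2}|\mu(\psi)|^2$ by $H(s)=N(s)h(s)/s$ and then trying to reach a bound on $\int_{B(x,s)}\epsilon^{-4}|\mu(\psi)|^2=\|F_A\|^2_{L^2(B(x,s))}$ requires comparing $\epsilon^2 H(s)$ with $\epsilon^4/s$, i.e.\ showing $N(s)h(s)\lesssim\epsilon^2$ --- but $h(s)$ and $\epsilon$ are unrelated a priori, and the frequency hypothesis $N(50r)\leq\omega$ says nothing about their ratio. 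The $L^\infty$ bound of Lemma \ref{curvature controls lemma 2.3} gives $|\psi|=O(1)$, not smallness, and Proposition \ref{frequency function proposition 3.12} gives $|\psi|^2(x)\lesssim h(r)/r^2$, again not the kind of $\epsilon$-dependent smallness that would let you trade one factor of $\epsilon^{-2}$ for free. So the chain of estimates you sketch does not produce a contradiction of the form $\int|\mu(\psi)|^2 \leq C\omega\,\epsilon^4/s$, and no amount of careful balancing of those ingredients will.

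What the paper actually does, and what your proposal is missing, has two structural pieces. First, the bridge across the $\epsilon^{-2}$-vs-$\epsilon^{-4}$ gap is Proposition \ref{frequency function proposition 3.22} (and its Corollary \ref{frequency function corollary 3.23}): one introduces the normalized parameter $\tau=\epsilon/\|\psi\|_{L^2(\partial B_r)}$ and the quantity $\mathfrak{c}$, shows that small $N(\rho)$ forces both to be small, and then uses the elliptic PDE satisfied by $\mu(\psi)$ (Step 5 of that proof), a \emph{lower} bound on $|\psi|$ away from its zero set via the technical Lemma \ref{frequency function lemma 3.18} (Step 3), and the coercive term $\epsilon^{-2}|\psi|^2|\mu(\psi)|^2$ to absorb the bad contributions and conclude $r^{1/2}\|F_A\|_{L^2(B_{r(1-\delta)})}\leq\kappa$. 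None of this machinery appears in your outline. Second, the constants in Proposition \ref{frequency function proposition 3.22} degrade with the ball, so the paper does not apply the estimate at $x$ directly: it first runs a toss-pick (Besicovitch-type) argument to select a point $x'\in B(x,2\rho(x))$ whose critical radius is essentially minimal among nearby points, covers $B(x',\rho(x'))$ by finitely many balls $B(y_i,\rho(y_i)/2)$, applies Corollary \ref{frequency function corollary 3.23} on each, sums, and contradicts $\rho(x')\|F_A\|^2_{L^2(B(x',\rho(x')))}=1$. Your argument has no point selection and no cover, and without the toss-pick step there is no way to get the uniform per-ball estimate to add up against the saturation identity. In short: the high-level strategy (contradict saturation) is right, but the proposal lacks both load-bearing ideas and the ``small correction'' you hope to control is actually the main term.
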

\noindent 
\textit{Proof of Proposition \ref{frequency function proposition 3.15} assuming Proposition \ref{frequency function proposition 3.16} and \ref{frequency function proposition 3.17}.} Let $0<\omega << 1$ and $0<c << 1$
be as in Proposition \ref{frequency function proposition 3.16} so that for
\[s=c\cdot\min\{1,|\psi|^{1/\omega}(x)\}\]
one has
\[N(s)\leq \omega.\]
We may assume $\rho_0$ to be as in Proposition \ref{frequency function proposition 3.17}, and additionally, $\omega$ also satisfies the assumption of Proposition \ref{frequency function proposition 3.17}. Hence, we have inequality 
\[\rho(x) \geq\min\left\{\rho_0,\dfrac{s}{50}\right\}=\min\{\rho_0,c,c\cdot|\psi|^{1/\omega}(x)\}.\]
Since $50$, $c$, and $\rho_0$ are constants, we may rewrite the latter inequality as follows 
$$\rho(x) \gtrsim\min\{1,|\psi|^{1/\omega}(x)\}$$ 
as claimed. \qed

We give a proof of Proposition \ref{frequency function proposition 3.16} first.\\
\noindent
\textit{Proof of Proposition \ref{frequency function proposition 3.16}.} Combine Lemma \ref{curvature controls lemma 2.3} with Proposition \ref{frequency function proposition 3.12} and Corollary \ref{frequency function corollary 3.13} and remember that we always take $0<s<r<<1$ to obtain
$$s^2 |\psi|^2(x) \lesssim h_x(s) \leq c^{2}\left(\dfrac{s}{r}\right)^{e^{O(r^2)}(2+2N_x(s))}r^2.$$
The above implies that
\begin{equation*}
    c^2|\psi|^{-2}(x) \geq s^2 \left(\dfrac{r}{s}\right)^{e^{O(r^2)}(2+2N(s))}r^{-2} = \left(\dfrac{r}{s}\right)^{-2 + 2e^{O(r^2)}+e^{O(r^2)}2N(s)}.
\end{equation*}
Since $2e^{O(r^2)}-2-O(r^2)\geq 0$, we can further esimate the above by
\begin{equation}\label{eq:frequency function.75}
    c^2|\psi|^{-2}(x)\geq \left(\dfrac{r}{s}\right)^{e^{O(r^2)}2N(s) + O(r^2)}.
\end{equation}
Taking the logarithm of \eqref{eq:frequency function.75}, we have 
\begin{align}
    &\quad \quad \quad (e^{O(r^2)}2N(s)+O(r^2))\log\,\left( \dfrac{r}{s}\right) \leq 2 \log\,(c|\psi|^{-1}(s))\nonumber  \\
    &\Rightarrow N(s) \leq \dfrac{\log\,(c|\psi|^{-1}(x))}{\log\,\left(\dfrac{r}{s}\right)}\, e^{-O(r^2)}-O(r^2) \lesssim \dfrac{\log\,(c|\psi|^{-1}(x))}{\log\,\left(\dfrac{r}{s}\right)} + O(r^2).\label{eq:frequency function.76}
\end{align}

\textit{Case 1:} $c|\psi|^{-1}(x) \leq 1$. 

By the above estimate, we have $N(s) \lesssim O(r^2)$ for all $0<s<r$. Since $s \lesssim_{\omega} \min\{1, |\psi|^{1/\omega}\}$, $s \lesssim_{\omega} 1$. So we can pick $r$ to be large enough in terms of $\omega$ to immediately get our desired estimate for $N(s)$.

\textit{Case 2:} $c|\psi|^{-1}(x) > 1$.

Note in this case, we have $c^{-1/\omega}|\psi|^{1/\omega}(x)<1$. Thus, $\omega \, c^{-1/\omega} |\psi|^{1/\omega}(x) < \omega$. Since $s\lesssim_{\omega} |\psi|^{1/\omega}$, we can choose $c$ such that $s \leq \omega\, c^{-1/\omega} |\psi|^{1/\omega}(x) < \omega := r$. As a result, we can use \eqref{eq:frequency function.76} to estimate
$$N(s) \lesssim \omega + O(\omega^2) \lesssim \omega,$$
as claimed. \qed

The rest of this subsection will be devoted to the proof of Proposition \ref{frequency function proposition 3.17}. We start with the following potentially well-known technical lemma.

\begin{Lemma}\label{frequency function lemma 3.18}
    Let $\delta \in (0,1)$. let $f$ be a non-negative function on $B^3(0,1)$ such that $f \in L^2((B^3(x,1-\delta)) \cap L^2(\partial B^3(x,1-\delta))$ (with respect to their natural respective measure). Let $c>0$. Suppose that $f$ satisfies the following
    \begin{itemize}
        \item $\displaystyle \norm {f}_{C^{0,1/4}(B(x,1-\delta))}\lesssim_{\delta} c$
        \item $\displaystyle \int_{\partial B(x,1-\delta)} f^2 \gtrsim \int_{\partial B(x,1)} f^2$
        \item $\displaystyle \norm{f^2}_{L^2(\partial B(x,1))} \neq 0$
    \end{itemize}
    If $c <<_{\delta} 1$, then $f$ never vanishes in $B(x,1-\delta)$. Consequently, there exists $\lambda >0$ such that $f(y) \geq \lambda$ for all $y \in B(x,1-\delta)$.
    \end{Lemma}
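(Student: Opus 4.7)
The plan is a contradiction argument combining the mean value property with the H\"older modulus of continuity, exploiting the tension between the small H\"older bound (which forces $f$ to vary slowly) and the $L^2$ lower bound on the inner sphere (which forces $f$ to be sizeable somewhere). Set
\[ M := \int_{\partial B(x,1-\delta)} f^2, \]
which by the second and third hypotheses satisfies $M \gtrsim \|f^2\|_{L^2(\partial B(x,1))} > 0$; thus $M$ is a fixed strictly positive constant depending only on $\delta$ and on $\|f\|_{L^2(\partial B(x,1))}$. Applying the mean value property to the continuous function $f^2$ on the sphere $\partial B(x,1-\delta)$ produces a point $y_0\in \partial B(x,1-\delta)$ with
\[ f(y_0) \geq \lambda_0 := \bigl(M/|\partial B(x,1-\delta)|\bigr)^{1/2} > 0, \]
so $f$ has a definite pointwise lower bound at some point of the inner sphere.

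Next, suppose for contradiction that $f(y_1)=0$ for some $y_1 \in B(x,1-\delta)$. Reading the H\"older control as a bound on the H\"older seminorm (the only reading consistent with $f$ simultaneously carrying non-trivial $L^2$ mass) we estimate
\[ \lambda_0 \leq f(y_0) = |f(y_0)-f(y_1)| \leq [f]_{C^{0,1/4}(B(x,1-\delta))}\,|y_0-y_1|^{1/4} \leq C(\delta)\,c\,\bigl(2(1-\delta)\bigr)^{1/4}. \]
Since $\lambda_0$ depends only on $\delta$ and on $\|f\|_{L^2(\partial B(x,1))}$, while the right-hand side is linear in $c$, choosing $c$ sufficiently small in terms of $\delta$ (the precise meaning of $c \ll_\delta 1$) contradicts the inequality. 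Hence $f$ cannot vanish on $B(x,1-\delta)$.

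For the final assertion, the H\"older continuity of $f$ on $B(x,1-\delta)$ forces a unique continuous extension to the closed ball $\overline{B(x,1-\delta)}$, which is compact; combined with non-vanishing (valid up to shrinking $\delta$ by an arbitrarily small amount), this produces a strictly positive minimum $\lambda > 0$, depending only on $\delta$ and on the $L^2$ data of $f$ on $\partial B(x,1)$. The only subtlety in the proof is notational: one must interpret the H\"older bound as a seminorm bound and keep careful track of the dependence of $\lambda_0$ (and hence of the smallness threshold for $c$) on $\delta$ and on $\|f\|_{L^2(\partial B(x,1))}$. The underlying geometric content is simply that a slowly varying function cannot fall from a definite positive value at $y_0$ to zero at a nearby point $y_1$.
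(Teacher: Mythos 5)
Your proof is correct and takes a genuinely different — and substantially more direct — route than the paper's. The paper normalizes $\|f\|_{L^2(\partial B(x,1))}=1$, fixes a point $y$, estimates $\int_{B(x,1)}\chi_{y,r(\delta)}f^2$ from below by chaining two ratio bounds (boundary-to-interior and interior-to-small-ball, each obtained by a somewhat awkward contradiction), and then passes to a limit via the bounded convergence theorem to extract $f^2(y)\cdot \mathrm{vol}(B)\gtrsim 1$. Your argument instead uses the averaging inequality on the inner sphere to produce one point $y_0\in\partial B(x,1-\delta)$ with $f(y_0)\geq\lambda_0>0$, where $\lambda_0$ depends only on $\delta$ and $\|f\|_{L^2(\partial B(x,1))}$, and then the H\"older seminorm bound to rule out any zero inside the ball — and indeed, as a byproduct, to get the uniform lower bound $f\geq\lambda_0-C(\delta)c(2(1-\delta))^{1/4}\geq\lambda_0/2$ for $c$ small, so you do not even need the compactness remark about shrinking $\delta$ at the end. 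Your approach is shorter, quantitatively explicit, avoids the measure-theoretic limit, and makes the dependence of the smallness threshold for $c$ on $\|f\|_{L^2(\partial B(x,1))}$ transparent rather than hidden behind a WLOG normalization.

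Two small cautions. First, you correctly flag that the first hypothesis must be read as a bound on the H\"older \emph{seminorm}; this is the only internally consistent reading, since a bound $\|f\|_{C^{0,1/4}}\lesssim_\delta c\ll 1$ on the full norm would make $\sup|f|$ small and directly contradict the positive $L^2$ mass forced by the other two hypotheses (the paper's own proof, read literally, suffers from exactly this tension). Second, your inequality $M\gtrsim\|f^2\|_{L^2(\partial B(x,1))}$ conflates the paper's hypotheses slightly: the second hypothesis gives $M\gtrsim\int_{\partial B(x,1)}f^2=\|f\|_{L^2(\partial B(x,1))}^2$, and the third gives $\int_{\partial B(x,1)}f^4>0$; either implies $M>0$ for measurable $f$, so the conclusion stands, but the two norms are not the same object.
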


    \begin{proof}
        Since the statement is scale-invariant, without loss of generality, we may assume that the $f$ has unit $L^2-$norm on $\partial B(x,1)$. Let $y$ be any point inside $B(x,1-\delta)$. Consider $B(y,r(\delta))$ to be a neighborhood around $x$ inside $B(x,1-\delta)$. Let $\chi_{y,r(\delta)}$ to be the indicating function of $B(y,r(\delta))$. Note that
        $$\int_{B(y,r(\delta))} f^2 = \int_{B(x,1)} \chi_{y,r(\delta)}f^2.$$
        Firstly, we claim that there exists an $n \in \mathbb{N}^*$ such that
        \begin{equation}\label{eq:frequency function.77}
            \int_{\partial B(x,1-\delta)} f^2 \leq n \, \int_{B(x,1-\delta)} f^2.
        \end{equation}
        Indeed, otherwise and since $\norm{f}_{L^2(\partial B(x,1-\delta))} \gtrsim 1$, for all $n \in \mathbb{N}^*$ we have 
        \[\displaystyle \frac{\norm{f}_{L^2(B(x,1-\delta))}}{\norm{f}_{L^2(\partial B(x,1-\delta))}} < \dfrac{1}{n} \Longrightarrow 0 \leq \frac{\norm{f}_{L^2(B(x,1-\delta))}}{\norm{f}_{L^2(\partial B(x,1-\delta))}} <0, \quad \quad \text{ which is a contradiction.} \]
        By the same argument as above, we also have an $m \in \mathbb{N}^*$ such that
        \begin{equation}\label{eq:frequency function.78}
            \int_{B(x,1-\delta)} f^2 \leq m \int_{B(y,r(\delta))} f^2 = m\int_{B(0,1)}\chi_{x,r(\delta)}f^2
        \end{equation}
        With \eqref{eq:frequency function.77} and \eqref{eq:frequency function.78} in mind, by the hypothesis, we have 
        \[\int_{B(x,1)} \chi_{y,r(\delta)}f^2 \gtrsim \int_{\partial B(x,1-\delta)} f^2 \gtrsim 1.\]
        At the same time, as $\delta \to 1$, $\chi_{y,r(\delta)}f^2(\bullet) \to f^2(y)$ almost everywhere. Furthermore, 
        \[ \sup_{B(x,1)} |\chi_{y,r(\delta)}f^2| = \sup_{B(x,1-\delta)}|f|^2 \leq \norm{f}^{2}_{C^{0,1/4}(B(x,1-\delta))} \lesssim_{\delta}c^2 <<_{\delta} 1.\]
        This means that the family of function $\{\chi_{y,r(\delta)}f^2\}_{\delta \in (0,1)}$ is uniformly bounded. As a result, by the bounded convergence theorem, for each $y \in B(0,1-\delta)$ we must have that 
        $$f^2(y)\cdot vol(B(x,1)) \gtrsim 1.$$
        In other words, there is a $\lambda >0$ such that $f \geq \lambda$ inside $B(x,1-\delta)$ as desired. 
    \end{proof}

Now we give an $L^6-$bound for $P_A P_A^* \psi$, where $(A,\psi)$ is a solution to the $3/2-$monopole Seiberg-Witten equations. The proof is a variation of the proof in Lemma \ref{curvature controls lemma 2.5} and Proposition \ref{curvature controls proposition 2.7}. Recall that on a geodesic ball $B(x,r)$, by Uhlenbeck's local slice theorem, there are constants $c,k_0>0$ (possibly depending on $x,r,g$) such that any connection $A = A_0 + a$ and $\norm{F_A}_{L^6(B(x,r))}\leq k_0$, $A$ is gauge equivalent to $A_0 + b$ where 
\[\norm{b}_{L^6_1(B(x,r))} \leq c \norm{F_A}_{L^6(B(x,r))}, \quad \quad d^* b = 0.\]
Assume $r=1$. Denote all such $U(1)-$connections satisfy the above by $V_{k_0,6}$. For any solution $(A,\psi) \in V_{k_0,6} \times \Gamma(W_{\mathfrak{s}_{3/2}}\otimes \mathscr{L}|_{B(x,1)})$, we first claim that

\begin{Lemma}\label{frequency function lemma 3.19}
    $\norm{P^*_A \psi}_{L^6(B(x,1))} = O(1)$.
\end{Lemma}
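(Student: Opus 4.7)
The plan is to reduce Lemma \ref{frequency function lemma 3.19} to the $L^2_1$-estimate already proved in Lemma \ref{curvature controls lemma 2.5}, followed by the critical Sobolev embedding $L^2_1 \hookrightarrow L^6$ that is available on three-dimensional domains. The preliminary observation is that on the unit ball $B(x,1)$, H\"older's inequality gives $\norm{F_A}_{L^2(B(x,1))} \lesssim \norm{F_A}_{L^6(B(x,1))} \leq k_0$, so the hypothesis $A \in V_{k_0, 6}$ implies $A \in V_{k_0'}(B(x,1))$ for some constant $k_0'$ depending only on $k_0$. Consequently every estimate obtained earlier for connections in $V_{k_0'}(B(x,1))$ is in force in our setting; in particular Lemma \ref{curvature controls lemma 2.3} gives $\norm{\psi}_{L^\infty(B(x,1))} = O(1)$, and Lemma \ref{curvature controls lemma 2.5} gives
\begin{equation*}
    \norm{P^*_A \psi}^2_{L^2_1(B(x,1))} \lesssim \norm{\psi}^2_{L^2(B(x,1))} + \int_{B(x,1)} |F_A|^2\, |\psi|^2 + O(1).
\end{equation*}

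Next I would bound the two active terms on the right-hand side. The constraint $\norm{\psi}_{L^4(B(x,1))} = 1$ together with H\"older's inequality already implies $\norm{\psi}^2_{L^2(B(x,1))} = O(1)$. For the curvature-weighted term, combining the $L^\infty$-bound on $\psi$ with the $L^2$-bound on $F_A$ yields
\begin{equation*}
    \int_{B(x,1)} |F_A|^2\,|\psi|^2 \leq \norm{\psi}^2_{L^\infty(B(x,1))}\,\norm{F_A}^2_{L^2(B(x,1))} = O(1).
\end{equation*}
Therefore $\norm{P^*_A\psi}_{L^2_1(B(x,1))} = O(1)$, and the Sobolev embedding $L^2_1 \hookrightarrow L^6$ in dimension three delivers $\norm{P^*_A \psi}_{L^6(B(x,1))} = O(1)$, as required.

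The main point to monitor is that the intermediate steps inside the proof of Lemma \ref{curvature controls lemma 2.5}, in particular the Sobolev-multiplication estimate on $\norm{a\cdot P^*_A\psi}^2_{L^2}$ and the $L^4$-control of $P^*_A\psi$ via the Gagliardo--Nirenberg interpolation and the Uhlenbeck gauge-fixing, remain valid in the $V_{k_0,6}$ setting. This is automatic, because an $L^6$-bound on the curvature over a bounded domain is strictly stronger than an $L^2$-bound; the stronger hypothesis is only genuinely used at the very end to promote the $L^2_1$-control to the $L^6$-control claimed in the lemma via the three-dimensional Sobolev embedding.
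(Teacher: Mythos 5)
Your proof is correct, and it takes a genuinely different route from the one in the paper. The paper works directly at the $L^6$ level: it decomposes $P^*_A\psi = \nabla^*_{A_0}\psi + a^*\psi$, bounds $\norm{\nabla^*_{A_0}\psi}_{L^6}$ by $\norm{\psi}_{L^6_1}$, controls $\norm{\psi}_{L^6_1}$ via the elliptic estimate for $Q_{A_0}$ together with a Sobolev-multiplication bound on $\norm{a\cdot\psi}_{L^6}$, and then handles $\norm{a^*\psi}_{L^6}$ by the analogous argument. You instead observe that the $L^2_1$-estimate already established in Lemma \ref{curvature controls lemma 2.5} is strong enough: the $L^6$-curvature bound dominates the $L^2$-curvature bound on the unit ball by H\"older, so that lemma (and Lemma \ref{curvature controls lemma 2.3}) apply, both terms on its right-hand side are $O(1)$ by $\norm{\psi}_{L^4}=1$ and $\norm{\psi}_{L^\infty}=O(1)$, and the critical three-dimensional Sobolev embedding $L^2_1\hookrightarrow L^6$ then finishes. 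Your route is more economical because it reuses Lemma \ref{curvature controls lemma 2.5} directly instead of re-running the elliptic argument in $L^6$; the one point you rightly flag but should be slightly more careful about is that the Coulomb slice used in Lemma \ref{curvature controls lemma 2.5} is the $L^2$-Uhlenbeck gauge, while the ambient hypothesis here is the $L^6$-Uhlenbeck gauge of $V_{k_0,6}$ -- these produce the same $d^*$-closed representative with the same boundary condition, and the $L^6_1$ control of $a$ dominates the $L^2_1$ control by H\"older, so the intermediate estimates indeed carry over, but this is the step that deserves an explicit sentence rather than the phrase ``this is automatic.''
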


\begin{proof}
    By the Minkowski inequality, we have
    \begin{equation}\label{eq:frequency function.79}
        \int_{B(x,1)}|P^*_A \psi|^6 = \int_{B(x,1)}|\nabla^*_A \psi|^6 \lesssim \int_{B(x,1)} |\nabla^*_{A_0}\psi|^6 + |a^*\psi|^6.
    \end{equation}
    Since $\nabla^*_{A_0}$ is a first-order differential operator, we have 
    \begin{equation}\label{eq:frequency function.80}
        \int_{B(x,1)}|\nabla^*_{A_0}\psi|^6 \lesssim \norm{\psi}^6_{L^6_1(B(x,1))}.
    \end{equation}
    Note that we have already established that $\norm{\psi}_{L^\infty(B(0,1))} = O(1)$ (cf. Lemma \ref{curvature controls lemma 2.3}). By the elliptic estimate, Minkowski inequality, and the Sobolev multiplication theorem, we have
    \begin{align}\label{eq:frequency function.81}
        \norm{\psi}^6_{L^6_1} &\lesssim \norm{Q_{A_0}\psi}^6_{L^6}+\norm{\psi}^6_{L^6} \nonumber \\
        & \lesssim \norm{a\cdot \psi}^6_{L^6} + O(1) \lesssim \norm{a}^6_{L^6_1}\cdot \norm{\psi}^6_{L^4} + O(1) = O(1).
    \end{align}
    By Lemma \ref{first compactness lemma 1.3} and a similar argument as above, we also have $\norm{a^* \psi}_{L^6(B(0,1))} = O(1)$. As a result, combining \eqref{eq:frequency function.79}, \eqref{eq:frequency function.80} and \eqref{eq:frequency function.81} yields for us the desired estimate as claimed.
\end{proof}

Next, we control $\norm{P^*_A \psi}_{L^6_1(B(x,1))}$. 

\begin{Lemma}\label{frequency function lemma 3.20}
    For any solution $(A,\psi) \in V_{k_0,6} \times \Gamma(W_{\mathfrak{s}_{3/2}}\otimes \mathscr{L}|_{B(x,1)})$, we have
    \[\norm{P^*_A \psi}_{L^6_1(B(x,1))}=O(1).\]
\end{Lemma}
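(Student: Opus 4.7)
The plan is to upgrade the $L^6$ bound from Lemma \ref{frequency function lemma 3.19} to an $L^6_1$ bound by applying an elliptic estimate for the reference Dirac operator $D_{A_0}$, using the same Weitzenb\"ock-type identity that was exploited in the proof of Lemma \ref{curvature controls lemma 2.5}. Specifically, I would start from the adjoint identity \eqref{eq:curvature controls.23},
\[D_{A_0}P^*_A\psi = -a\cdot P^*_A\psi \;-\; \tfrac{3}{2}\!\left(\mathrm{Ric}-\tfrac{s}{3}\right)^{\!*}\psi \;-\; 3\,F_A\,\pi_2^{*}\psi,\]
and combine it with the standard elliptic inequality
\[\norm{P^*_A\psi}_{L^6_1(B(x,1))} \;\lesssim\; \norm{D_{A_0}P^*_A\psi}_{L^6(B(x,1))} \;+\; \norm{P^*_A\psi}_{L^6(B(x,1))}.\]
The second term on the right is $O(1)$ by Lemma \ref{frequency function lemma 3.19}, so the whole problem reduces to estimating the three contributions to $D_{A_0}P^*_A\psi$ in $L^6$.

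Two of those contributions are immediate. Because Lemma \ref{curvature controls lemma 2.3} already provides $\norm{\psi}_{L^\infty(B(x,1))}=O(1)$, the Ricci/scalar-curvature term is bounded in $L^\infty$, hence in $L^6$. For the curvature term, H\"older's inequality gives
\[\norm{F_A\,\pi_2^{*}\psi}_{L^6(B(x,1))} \;\leq\; \norm{F_A}_{L^6(B(x,1))}\,\norm{\psi}_{L^\infty(B(x,1))} \;\leq\; k_0\cdot O(1).\]

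The main technical point is the product term $a\cdot P^*_A\psi$, since $a$ is only controlled in $L^6_1$ after Uhlenbeck gauge fixing. Here I would use the $L^6$-version of the local slice theorem recalled just above Lemma \ref{frequency function lemma 3.19}, which yields $\norm{a}_{L^6_1(B(x,1))}\lesssim \norm{F_A}_{L^6(B(x,1))}\leq k_0$ in a suitable gauge. In dimension three the critical Sobolev embedding $L^6_1 \hookrightarrow L^\infty$ (in fact into $C^{0,1/2}$) is available, so $\norm{a}_{L^\infty(B(x,1))} \lesssim \norm{a}_{L^6_1(B(x,1))} = O(1)$. Hence
\[\norm{a\cdot P^*_A\psi}_{L^6(B(x,1))} \;\leq\; \norm{a}_{L^\infty(B(x,1))}\,\norm{P^*_A\psi}_{L^6(B(x,1))} \;=\; O(1),\]
once again using Lemma \ref{frequency function lemma 3.19} for the second factor.

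Summing the three $L^6$ bounds yields $\norm{D_{A_0}P^*_A\psi}_{L^6(B(x,1))}=O(1)$, and feeding this back into the elliptic inequality produces the desired conclusion $\norm{P^*_A\psi}_{L^6_1(B(x,1))}=O(1)$. The only subtlety worth flagging is the gauge-invariance issue for the left-hand side: the $L^6_1$ norm of $P^*_A\psi$ is defined using $\nabla_{A_0}$, so one should verify (as in the earlier proofs of Lemma \ref{curvature controls lemma 2.5} and Proposition \ref{curvature controls proposition 2.7}) that the estimate survives after the Uhlenbeck gauge transformation was applied to put $a$ in Coulomb gauge, which is automatic because all the ingredients are controlled in gauge-invariant norms of $F_A$ and $\psi$.
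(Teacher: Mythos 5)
Your proof is correct and follows essentially the same line of argument as the paper: apply the adjoint Weitzenb\"ock-type identity \eqref{eq:curvature controls.23}, use the elliptic estimate for $D_{A_0}$ to reduce to $L^6$ bounds on the right-hand side terms, and invoke Lemma \ref{frequency function lemma 3.19} together with the $L^\infty$ bound on $\psi$ and the curvature hypothesis. The only (cosmetic) divergence is in the treatment of the product term $a\cdot P^*_A\psi$: you use the embedding $L^6_1\hookrightarrow L^\infty$ followed by H\"older against $\|P^*_A\psi\|_{L^6}$, whereas the paper cites a Sobolev multiplication inequality $\|a\cdot P^*_A\psi\|_{L^6}\lesssim\|a\|_{L^6_1}\|P^*_A\psi\|_{L^4}$; your route is at least as clean and avoids the slightly delicate question of whether the multiplication $L^6_1\times L^4\to L^6$ holds as stated, since on a bounded domain $L^6_1\hookrightarrow L^\infty$ only gives $L^\infty\cdot L^4\hookrightarrow L^4$.
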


\begin{proof}
    We use the Weitzenb\"ock-type formula in Lemma \ref{curvature controls lemma 2.4}, the elliptic estimate with the Minkowski inequality, and Lemma \ref{frequency function lemma 3.19} to estimate
    \begin{align}\label{eq:frequency function.82}
        \norm{P^*_A \psi}^6_{L^6_1(B(x,1))} & \lesssim \norm{D_{A_0}P^*_A\psi}^6_{L^6}+ \norm{P^*_A\psi}^6_{L^6} \nonumber \\
        & \lesssim \norm{\psi}^6_{L^6} + \norm{\psi}^6_{L^\infty}\norm{F_A}^6_{L^6}+\norm{a\cdot P^*_A\psi}^6_{L^6} + O(1)
    \end{align}
    Since the first two terms on the RHS of \eqref{eq:frequency function.82} are $O(1)$, it remains for us to control the third term. By the Sobolev multiplication theorem, we have
    \[ \norm{a \cdot P^*_A \psi}^6_{L^6} \lesssim \norm{a}^6_{L^6_1} \norm{P^*_A \psi}^6_{L^4}.\]
    From either Lemma \ref{frequency function lemma 3.19} or in the proof of Lemma \ref{curvature controls lemma 2.5}, we have already established the second factor on the RHS of the above estimate is $O(1)$. Therefore, $\norm{P^*_A \psi}_{L^6_1(B(x,1))} = O(1)$ as desired.
\end{proof}

Finally, we establish a control for $\norm{P_AP^*_A\psi}_{L^6(B(x,1))}$.

\begin{Lemma}\label{frequency function lemma 3.21}
    Let $(A,\psi) \in V_{k_0,6} \times \Gamma(W_{\mathfrak{s}_{3/2}}\otimes \mathscr{L}|_{B(x,1)})$ be a solution of the RS-SW equations. Then
    \[\int_{B(x,1)}|P_AP^*_A \psi|^6 = O(1).\]
\end{Lemma}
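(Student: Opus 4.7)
The plan is to exploit Lemma \ref{frequency function lemma 3.20}, which already supplies the $L^6_1$-control of $P^*_A\psi$, together with the Uhlenbeck gauge and the three-dimensional Sobolev embedding $L^6_1(B(x,1))\hookrightarrow L^\infty(B(x,1))$.

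First, since $P_A=\pi\circ\nabla_A$ and $\pi$ is the orthogonal projection onto the $3/2$-spinor summand (so a pointwise contraction), one has the pointwise estimate
\[
|P_AP^*_A\psi|\;\leq\;|\nabla_A(P^*_A\psi)|\;\leq\;|\nabla_{A_0}(P^*_A\psi)|+|a\cdot P^*_A\psi|,
\]
where as usual we have written $A=A_0+a$ in the Coulomb gauge furnished by Uhlenbeck's local slice theorem (Theorem \ref{local slice theorem}), so that $\|a\|_{L^6_1(B(x,1))}\leq c\|F_A\|_{L^6(B(x,1))}\leq ck_0$. Taking $L^6$-norms and using the Minkowski inequality then reduces the lemma to bounding the two summands separately.

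For the first term I would simply invoke the fact that $\nabla_{A_0}$ is bounded from $L^6_1$ to $L^6$, whence
\[
\|\nabla_{A_0}(P^*_A\psi)\|_{L^6(B(x,1))}\;\lesssim\;\|P^*_A\psi\|_{L^6_1(B(x,1))}=O(1)
\]
by Lemma \ref{frequency function lemma 3.20}. For the second term, the cleanest route is to use that on the $3$-ball $B(x,1)$ one has the Morrey embedding $L^6_1\hookrightarrow C^{0,1/2}\hookrightarrow L^\infty$. Consequently $\|P^*_A\psi\|_{L^\infty(B(x,1))}=O(1)$ by Lemma \ref{frequency function lemma 3.20}, and therefore
\[
\|a\cdot P^*_A\psi\|_{L^6(B(x,1))}\;\leq\;\|P^*_A\psi\|_{L^\infty(B(x,1))}\cdot\|a\|_{L^6(B(x,1))}\;\lesssim\;\|a\|_{L^6_1(B(x,1))}=O(1).
\]
Combining the two estimates gives $\|P_AP^*_A\psi\|_{L^6(B(x,1))}=O(1)$, as desired.

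There is no real obstacle here: all the analytic work has already been absorbed into the preceding three lemmas (the $L^\infty$-bound on $\psi$ from Lemma \ref{curvature controls lemma 2.3}, the $L^6$-bound on $P^*_A\psi$ from Lemma \ref{frequency function lemma 3.19}, and the $L^6_1$-bound on $P^*_A\psi$ from Lemma \ref{frequency function lemma 3.20}). The only point one must be slightly careful about is that the estimate takes place on a fixed radius-$1$ geodesic ball, so the Sobolev and Morrey constants are uniform; everything is then a one-line application of Hölder (or, alternatively, of the Sobolev multiplication $L^6_1\cdot L^6_1\hookrightarrow L^6$ in dimension three).
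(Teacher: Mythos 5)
Your proof is correct and follows essentially the same route as the paper: pointwise bound $|P_AP_A^*\psi|\le|\nabla_{A_0}P_A^*\psi|+|a\cdot P_A^*\psi|$, Minkowski, $\nabla_{A_0}\colon L^6_1\to L^6$ for the first term, and a Sobolev/Morrey-plus-H\"older multiplication estimate for the cross term, all fed by Lemma \ref{frequency function lemma 3.20}. The only cosmetic difference is that you apply the Morrey embedding to $P_A^*\psi$ rather than to $a$ (the paper, inside the proof of Lemma \ref{frequency function lemma 3.20}, puts the embedding on $a$ and cites that sub-estimate directly), which is a harmless reshuffling of the same ingredients.
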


\begin{proof}
    By the Minkowski inequality and the fact that $\nabla_{A_0}$ is a first order differential operator, we have
    \begin{align*}
        \int_{B(x,1)} |P_A P^*_A \psi|^6 &\lesssim \int_{B(x,1)} |\nabla_{A_0}P^*_A|^6 + |a\cdot P^*_A \psi|^6 \\
        & \lesssim \norm{P^*_A\psi}^6_{L^6_1} + \norm{a \cdot P^*_A \psi}^6_{L^6}.
    \end{align*}
    By Lemma \ref{frequency function lemma 3.20}, the first term on the RHS of the above estimate is $O(1)$. In the proof of Lemma \ref{frequency function lemma 3.20}, the second term is $O(1)$. Hence, we obtain the desired estimate as claimed.
\end{proof}

With the above technical lemmas out of the way, the proof of Proposition \ref{frequency function proposition 3.17} will be divided into several parts.

\textit{Part 1.} In this part, we prove the following proposition.

\begin{Prop}\label{frequency function proposition 3.22}
    Let $B_r$ be a geodesic ball of radius $r>0$ in $Y$. Let $(A,\psi,\epsilon) \in V_{6,k_0} \times \Gamma(W_{\mathfrak{s}_{3/2}}\otimes \mathscr{L}|_{B_r}) \times (0,\infty)$ be a solution to \eqref{eq:first compactness.2}. We denote
    \begin{itemize}
        \item $\mathfrak{c} = \dfrac{r^4 \norm{\nabla_A \psi}^8_{L^2(B_r)}}{\norm{\psi}^8_{L^2(\partial B_r)}}+O(r^2)$
        \item $\tau = \dfrac{\epsilon}{\norm{\psi}_{L^2(\partial B_r)}}$
    \end{itemize}
    Let $\delta \in (0,1)$ and $\kappa > 0$. If we have $r^{1/2}\norm{F_A}_{L^2(B_r)} \leq 1$ and $\mathfrak{c} <<_{\delta, \kappa} 1$ and $\tau <<_{\delta, \kappa} 1$, then $r^{1/2}\norm{F_A}_{L^2(B_{r(1-\delta)})} \leq \kappa$.
\end{Prop}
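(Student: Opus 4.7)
The strategy is to convert the target inequality into a weighted $L^2$-estimate on $\mu(\psi)$ via the curvature equation $\epsilon^2 F_A = \mu(\psi)$: the claim $r^{1/2}\|F_A\|_{L^2(B_{r(1-\delta)})}\leq\kappa$ is equivalent to
\[
\int_{B_{r(1-\delta)}} \epsilon^{-2}|\mu(\psi)|^2 \;\leq\; \kappa^{2}\epsilon^{2}/r.
\]
The hypothesis $r^{1/2}\|F_A\|_{L^2(B_r)}\leq 1$ already yields the trivial bound $\int_{B_r} \epsilon^{-2}|\mu(\psi)|^2 \leq \epsilon^2/r$; the point is to improve by a factor of $\kappa^{2}$ on the slightly shrunken ball by exploiting the smallness of $\mathfrak{c}$ and $\tau$.

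The main analytic tool is the integration-by-parts identity of Lemma~\ref{first compactness lemma 1.6}. I would apply it with $U=B_r$ and a smooth cutoff $f$ supported in $B_r$, equal to $1$ on $B_{r(1-\delta)}$, with $|\nabla f|\lesssim_\delta r^{-1}$ and $|\Delta f|\lesssim_\delta r^{-2}$. Since $f$ vanishes near $\partial B_r$, the boundary terms drop and the identity rearranges to
\[
2\int f\epsilon^{-2}|\mu(\psi)|^2 + 2\int f|\nabla_A\psi|^2 \;=\; \tfrac{8}{3}\int f|P_A^*\psi|^2 - \int (\Delta f)|\psi|^2 + \int f\bigl(\tfrac{s}{2}|\psi|^2 - 2\langle(1\otimes Ric)\psi,\psi\rangle\bigr).
\]
I would then control $\int|P_A^*\psi|^2$ through Lemma~\ref{curvature controls lemma 2.5} (which uses the local slice Theorem~\ref{local slice theorem} and the $L^6$-bound on $F_A$ that is the content of $V_{k_0,6}$), convert the volume integrals of $|\psi|^2$ to boundary data by equation~\eqref{eq:frequency function.52} (namely $\|\psi\|_{L^2(B_r)}^2 \lesssim r\|\psi\|_{L^2(\partial B_r)}^2$), and absorb the gradient term via $r\|\nabla_A\psi\|_{L^2(B_r)}^2 \lesssim \mathfrak{c}^{1/4}\|\psi\|_{L^2(\partial B_r)}^2$ read off from the definition of $\mathfrak{c}$.

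The hard part will be the final quantitative balancing. A naive application of the scheme above only reproduces the bound $\int\epsilon^{-2}|\mu(\psi)|^2 \lesssim_\delta \|\psi\|_{L^2(\partial B_r)}^2/r$, which after division by $\epsilon^2/r$ gives a pre-factor of order $\tau^{-2}$ and is therefore useless; indeed, taking the minimum with the hypothesis bound merely recovers the hypothesis. I therefore expect the cleanest finish to be a compactness/contradiction argument: if the conclusion failed, there would be a sequence $(A_n,\psi_n,\epsilon_n)$ with $\mathfrak{c}_n,\tau_n\to 0$ and $r^{1/2}\|F_{A_n}\|_{L^2(B_{r(1-\delta)})} > \kappa$; rescaling $\tilde\psi_n := \psi_n/\|\psi_n\|_{L^2(\partial B_r)}$ gives $\|\tilde\psi_n\|_{L^2(\partial B_r)}=1$, $\|\nabla_{A_n}\tilde\psi_n\|_{L^2(B_r)}\to 0$, and $\epsilon_n^2 F_{A_n} = \|\psi_n\|_{L^2(\partial B_r)}^2 \mu(\tilde\psi_n)$; after placing $A_n$ in Uhlenbeck slice via Theorem~\ref{local slice theorem}, one extracts a weak limit $(A_\infty,\tilde\psi_\infty)$ on $B_{r(1-\delta)}$ with $\tilde\psi_\infty$ covariantly constant and $F_{A_\infty}=0$, contradicting the persistent lower bound on $\|F_{A_n}\|_{L^2(B_{r(1-\delta)})}$.
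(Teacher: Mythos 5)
Your reformulation of the target inequality and your observation that a naive application of Lemma~\ref{first compactness lemma 1.6} merely reproduces the hypothesis are both correct, but the compactness/contradiction argument you propose to finish has a genuine gap. From the Uhlenbeck-slice bound you only get weak $L^2_1$ convergence of $b_n$, hence weak $L^2$ convergence $F_{A_n}\rightharpoonup F_{A_\infty}$ on $B_{r(1-\delta)}$; even granting $F_{A_\infty}=0$, weak $L^2$ convergence to $0$ is compatible with $\|F_{A_n}\|_{L^2(B_{r(1-\delta)})}>\kappa$ persisting (think of an oscillating sequence), so there is no contradiction. To close the argument you would need strong $L^2$ convergence of the curvatures on the shrunken ball, and neither the $L^6$-bound on $F_{A_n}$ built into $V_{k_0,6}$ nor the resulting $L^6_1$-bound on $b_n$ provides the compact embedding into $L^2$ (resp.\ $L^2_1$) that this requires. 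The quantitative refactoring of the conclusion into a decay rate in $\mathfrak{c}$ and $\tau$ is precisely the information that a soft compactness argument discards.

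The paper's proof is quantitative and exploits a mechanism your sketch never taps: a uniform pointwise lower bound $|\psi|\geq\lambda>0$ on $B_{1-\delta}$. After rescaling to $r=1$ and $\|\psi\|_{L^2(\partial B_1)}=1$, the authors establish (i) $\|\psi\|_{L^\infty(B_{1-\delta})}\lesssim_\delta 1$, (ii) $\|\psi\|_{C^{0,1/4}(B_{1-\delta})}\lesssim_\delta\mathfrak{c}^{1/8}$ via Proposition~\ref{curvature controls proposition 2.7} and Gagliardo--Nirenberg, and then invoke Lemma~\ref{frequency function lemma 3.18} to get $|\psi|\geq\lambda$ once $\mathfrak{c}$ is small. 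The decisive step is to derive from the Weitzenb\"ock formula a second-order equation of the form $\nabla^*\nabla\mu(\psi)+\bigl(\epsilon^{-2}|\psi|^2+\tfrac{s}{2}\bigr)\mu(\psi)=(\text{terms controlled by }\mathfrak{c},\tau,P_AP^*_A\psi)$, test it against $\chi\,\mu(\psi)$, and use $|\psi|\geq\lambda$ so that the zeroth-order coefficient contributes $\epsilon^{-2}\lambda^2\int\chi|\mu(\psi)|^2$ on the good side. That $\epsilon^{-2}$ is exactly what cancels the $\epsilon^{-4}$ in $|F_A|^2=\epsilon^{-4}|\mu(\psi)|^2$, yielding $\int\chi|F_A|^2\lesssim_\delta(\mathfrak{c}+\mathfrak{c}^{1/8})$ directly. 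This absorption, via the nonvanishing of $\psi$, is the idea missing from your proposal.
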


\begin{proof}
    Without loss of generality, because of scale invariance, it is enough if we can prove the proposition for $r = 1$ and $\norm{\psi}_{L^2(\partial B_r)} = 1$. With respect to the re-normalization, the proof will be divided into several steps.
    
    \textit{Step 1.} We claim that if $\mathfrak{c} \leq 1$ and $\tau \leq 1$, then $|\psi|^2(x)\lesssim d(x,\partial B_1)^{-2}$ for all $x \in B_1$. This implies that there is $\Lambda(\delta) = O(1/\delta) >0$ such that $|\psi| \leq \Lambda(\delta)$ on $B_{1-\delta}$.\\
    \noindent
    \textit{Proof of Step 1.} We apply Lemma \ref{first compactness lemma 1.6} to $f:= G(x,\bullet)-$the Green's function of $\Delta$ and $U:= B_1 \setminus B_\sigma$ and let $\sigma \to 0$ to get
    \begin{align*}
        |\psi|^2(x) \lesssim \int_{B_1} G(x,\bullet) |\psi|^2 &+ \int_{\partial B_1} G(x,\bullet) \partial_r |\psi|^2 + |\nabla G(x,\bullet)| \cdot |\psi|^2+ \\
        & + \int_{B_1}G(x,\bullet)\la P_AP^*_A \psi, \psi \ra.
    \end{align*}
    Since $G(x,\bullet) \lesssim d(x,\bullet)^{-1}$ and $|\nabla G(x,\bullet)| \lesssim d(x,\bullet)^{-2}$, by the Cauchy-Schwarz inequality we can estimate $|\psi|^2(x)$ further by
    \begin{align}\label{eq:frequency function.83}
        |\psi|^2(x) \lesssim \int_{B_1} \dfrac{|\psi|^2}{d(x,\bullet)} + \int_{B_1}\dfrac{|P_A P^*_A \psi|^2}{d(x,\bullet)} + \dfrac{1}{d(x,\partial B_1)} \int_{\partial B_1} \partial_r |\psi|^2 + \dfrac{1}{d(x,\partial B_1)^2}.
    \end{align}
    In the above estimate, via the H\"older inequality, the first and second terms on the RHS can be estimated by
    \begin{align*}
        \int_{B_1} \dfrac{|\psi|^2}{d(x,\bullet)} + \int_{B_1}\dfrac{|P_A P^*_A \psi|^2}{d(x,\bullet)} &\leq \norm{\dfrac{1}{d(x,\bullet)}}_{L^{3/2}(B_1)}\left(\norm{\psi}_{L^6(B_1)}+ \norm{P_A P^*_A \psi}_{L^6(B_1)}\right).
    \end{align*}
    The RHS is clearly $O(1)$ via Lemma \ref{frequency function lemma 3.21} and the fact $L^{3/2}-$norm of $d(x,\bullet)^{-1}$ is also $O(1)$. Since $d(x,\partial B_1)^{-1}=O(1)$, it remains for us to estimate the integral of $\partial_r |\psi|^2$ over the boundary. To do that, we apply Lemma \ref{first compactness lemma 1.6} again to $f:=1$ and $U:= B_1$,
    \begin{align*}
        \int_{\partial B_1 } \partial_r |\psi|^2 \lesssim \int_{B_1}|P_AP^*_A \psi|^2 + |\psi|^2 + |\nabla_A \psi|^2 + \epsilon^{-2}|\mu(\psi)|^2.
    \end{align*}
    The first three terms on the RHS are $O(1)$ because of Lemma \ref{frequency function lemma 3.21}, Lemma \ref{curvature controls lemma 2.3}, and the fact that $\mathfrak{c} \leq 1$. Since $\tau = \epsilon$ due to re-normalization and $\tau \leq 1$, 
    \[\int_{B_1}\epsilon^{-2}|\mu(\psi)|^2 \leq \int_{B_1}  \epsilon^{-4}|\mu(\psi)^2| = \norm{F_A}^2_{L^2(B_1)} \leq 1.\]
    As a result, the entire RHS of the above estimate is $O(1)$. Hence, from \eqref{eq:frequency function.83}, we have the bound on $|\psi|$ as claimed.

    \textit{Step 2.} We claim that $\norm{\psi}_{C^{0,1/4}(B_{1-\delta})}\lesssim_{\delta} \mathfrak{c}^{1/8}$.\\
    \noindent
    \textit{Proof of Step 2.} By Morrey's inequality, we have
    \[ \norm{\psi}_{C^{0,1/4}(B_{1-\delta})} \lesssim \norm{\psi}_{L^4_1(B_{1-\delta})}.\]
    Since $\norm{\psi}_{L^4(B_{1-\delta})} = O(1)$, it is enough for us to estimate $\norm{\nabla_A \psi}_{L^4(B_{1-\delta})}$. We apply the Gagliardo-Nirenberg interpolation inequality
    \[\norm{f}_{L^4(B_{1-\delta})}\lesssim_{\delta} \norm{\nabla f}^{3/4}_{L^2(B_{1-\delta})}\norm{f}^{1/4}_{L^2(B_{1-\delta})} + \norm{f}_{L^2(B_{1-\delta)}}\]
    to $f:=|\nabla_A \psi|$ and combine with the Minkowski inequality and Kato's inequality to obtain
    \begin{align}\label{eq:frequency function.84}
        \norm{\nabla_A \psi}_{L^4}^8 & \lesssim_{\delta} \norm{\nabla|\nabla_A\psi|}^6_{L^2}\norm{\nabla_A\psi}^2_{L^2} + \norm{\nabla_A\psi}^{8}_{L^2} \nonumber \\
        &\lesssim_{\delta}\norm{\nabla_A^2\psi}^6_{L^2}\norm{\nabla_A\psi}^2_{L^2} + \norm{\nabla_A\psi}^{8}_{L^2}.
    \end{align}
    By Proposition \ref{curvature controls proposition 2.7}, we have
    \[\norm{\nabla^2_A \psi}^6_{L^2} \lesssim_{\delta} \norm{\psi}^6_{L^2}+\norm{\nabla_{A}\psi}^6_{L^2}+\norm{F_A}^6_{L^2}\norm{\psi}^6_{L^\infty}+O(1).\]
    As a result, we estimate \eqref{eq:frequency function.84} further by
    \begin{align} \label{eq:frequency function.85}
        \norm{\nabla_A\psi}^8_{L^4}  \lesssim_{\delta} \norm{\psi}^6_{L^2}\norm{\nabla_A\psi}^2_{L^2} &+ \norm{\nabla_A\psi}^8_{L^2}+\norm{F_A}^6_{L^2}\norm{\psi}^6_{L^\infty}\norm{\nabla_A\psi}^2_{L^2} +  \nonumber \\
        & + O(1)\norm{\nabla_A\psi}^2_{L^2}+ \norm{\nabla_A\psi}^8_{L^2} \lesssim_{\delta} \mathfrak{c}.
    \end{align}
    The above gives us the desired bound for the H\"older norm of $\psi$.

    \textit{Step 3.} We claim that if $\mathfrak{c}<<_{\delta}1$, then $\psi$ never vanishes in $B_{1-\delta}$. In particular, there is a $\lambda>0$ such that $|\psi|(x)\geq \lambda$ for all $x\in B_{1-\delta}$.\\
    \noindent
    \textit{Proof of Step 3.} The statement is obtained by a direct application of Lemma \ref{frequency function lemma 3.18} to $f:=|\psi|$.

    \textit{Step 4.} In this step, we claim that if $\mathfrak{c}\leq 1$, then $\norm{\mu(\psi)}_{L^{\infty}(B_{1-\delta})}\lesssim_{\delta}\tau^{1/8}$.\\
    \noindent
    \textit{Proof of Step 4.} The statement is true via the previous steps, \eqref{eq:frequency function.85}, Kato's inequality, the Gagliardo-Nirenberg inequality, and Morrey's inequality.

    \textit{Step 5.} We assemble the ingredients established and give proof to the proposition. Let $\la \bullet, \bullet\ra : T^*Y \otimes T^* Y \to \RN$ be the contraction operator. Then combine with the Weitzenb\"ock formula for the Rarita-Schwinger operator (cf. Lemma \ref{weitzenbock formula}), we have
    \begin{align}\label{eq:frequency function.86}
        \nabla^*\nabla \mu(\psi) &= 2 \mu(\nabla^*_A \nabla_A \psi, \psi) - 2 \la\mu(\nabla_A \psi, \nabla_A \psi)\ra \nonumber \\
        &=2\mu\left(\dfrac{4}{3}P_AP^*_A\psi - \dfrac{s}{4}\psi - \pi(F_A\psi)-\pi(1\otimes Ric)\psi,\psi\right)-2\la\mu(\nabla_A\psi, \nabla_A\psi)\ra. 
    \end{align}
    By the definition of the quadratic map $\mu$ and some straight-forward calculations, we note
    \begin{align*}
        \mu(P_AP^*_A\psi,\psi) &= P_AP^*_A\psi \psi^* - \frac{1}{2}\la P_A P^*_A \psi, \psi \ra 1 \\
        \mu(\pi(\mu(\psi)\psi),\psi) &= \mu(\mu(\psi)\psi,\psi) + 2\mu\left(\iota\left(\sum_i \alpha_i\psi_i\right),\psi\right) \\
        & = \dfrac{1}{2}|\psi|^2\mu(\psi)+\mu(\psi)\circ\mu(\psi)-\frac{1}{2}tr(\mu(\psi)\circ\mu(\psi))1 + \\
        &+2\mu\left(\iota\left(\sum_i \alpha_i\psi_i\right),\psi\right),
    \end{align*}
    where $\mu(\psi) = \alpha_1 I + \alpha_2 J + \alpha_3 K$ with $I, J, K$ being identified with $\gamma(e_1), \gamma(e_2), \gamma(e_3)$, respectively; and $\displaystyle \psi = \sum_i \psi_i \otimes e_i$. As a result, we can rewrite \eqref{eq:frequency function.86} as
    \begin{align}\label{eq:frequency function.87}
        \nabla^*\nabla \mu(\psi)  = \dfrac{8}{3}P_AP^*_A\psi\psi^*&-\dfrac{4}{3}\la P_A P^*_A \psi, \psi \ra 1-\dfrac{s}{2}\mu(\psi) - \epsilon^{-2}|\psi|^2\mu(\psi) +\nonumber \\
        & - 2\epsilon^{-2}\mu(\psi)\circ\mu(\psi)+ \epsilon^{-2}tr(\mu(\psi)\circ\mu(\psi))1 + \nonumber \\
        &-4\epsilon^{-2}\mu\left(\iota\left(\sum_i \alpha_i\psi_i\right),\psi\right)+\nonumber \\
        & - \mu(\pi(1\otimes Ric)\psi,\psi) - 2 \la \mu(\nabla_A \psi, \nabla_A\psi)\ra.
    \end{align}
    Rearranging the RHS of \eqref{eq:frequency function.87} yields for us
    \begin{align} \label{eq:frequency function.88}
        \nabla^*\nabla \mu(\psi) +\left(\epsilon^{-2}|\psi|^2+\dfrac{s}{2}\right)\mu(\psi) & = \dfrac{8}{3}P_AP^*_A \psi \psi^* - \dfrac{4}{3}\la P_A P^*_A \psi, \psi\ra 1 + \nonumber \\
        & - 2\epsilon^{-2}\mu(\psi)\circ\mu(\psi) + \epsilon^{-2}tr(\mu(\psi)\circ\mu(\psi))1+\nonumber \\
        & - 4\epsilon^{-2}\mu\left(\iota\left(\sum_i \alpha_i\psi_i\right),\psi\right)+ \nonumber \\
        & - \mu(\pi(1\otimes Ric)\psi,\psi) - 2 \la \mu(\nabla_A \psi, \nabla_A\psi)\ra.
    \end{align}
    Consider $\chi$ to be the cut-off function whose support is is $B_{1-\delta/2}$ and equals to $1$ in $B_{1-\delta}$. We multiply both sides of \eqref{eq:frequency function.88} by $\chi$ and pair with $\mu(\psi)$ to integrate to obtain
    \begin{align}\label{eq:frequency function.89}
        \int \chi|\nabla \mu(\psi)|^2 &+\chi\left(\epsilon^{-2}|\psi|^2+\dfrac{s}{2}\right)|\mu(\psi)|^2 \lesssim_{\delta} \nonumber \\
        & \lesssim_{\delta} \int |P_A P^*_A \psi|\cdot |\psi| \cdot |\mu(\psi)| + \chi \epsilon^{-2}|\mu(\psi)|^2\cdot |\mu(\psi)| + \nonumber \\
        & + 4\chi \epsilon^{-2}|\psi|^2|\mu(\psi)|^2 + |Ric|\cdot |\psi|^2 |\mu(\psi)| + |\nabla_A \psi|^2 |\mu(\psi)| + \nonumber \\
        &+ |\nabla_A \psi|^2 |\mu(\psi)|.
    \end{align}
    Since $\epsilon = \tau$, using the estimate in Step 4, we can estimate the RHS of \eqref{eq:frequency function.89} further and rearrange the inequality to get
    \begin{align}\label{eq:frequency function.90}
        &\int \chi \left(\epsilon^{-2}|\psi|^2+\dfrac{s}{2}-c_{\delta}\epsilon^{-2+1/8}-4\epsilon^{-2}|\psi|^2\right)|\mu(\psi)|^2 \lesssim_{\delta} \nonumber \\
        &\lesssim_{\delta} \int (|P_AP^*_A \psi|\cdot |\psi| + |Ric|\cdot |\psi|^2 + |\nabla_A \psi|^2 + |\psi| \cdot |\nabla_A \psi|)|\mu(\psi)|
    \end{align}
    Using the estimates established in Step 1, Step 2, and Step 3 to further estimate \eqref{eq:frequency function.90}, we arrive at
    \begin{equation*}
        \int \chi  |F_A|^2 = \int \chi\epsilon^{-4}|\mu(\psi)|^2 \lesssim_{\delta} \dfrac{1}{\lambda^2 + \frac{s}{2}\epsilon^2-c_{\delta}\epsilon^{1/8}-4\Lambda^2(\delta)}(\mathfrak{c}+\mathfrak{c}^{1/8}).
    \end{equation*}
    The above estimate implies that if both $\epsilon = \tau$ and $\mathfrak{c}$ are small, then so is $\norm{F_A}_{L^2(B_{1-\delta})}$. This completes the proof of the claim made in the proposition.
\end{proof}

\textit{Part 2.} In this part, we prove the following corollary. Recall that for a given point $x \in Y$, we denote $\rho(x)$ by the critical radius of a connection around $x$. The corollary which we will prove roughly tells us that if the frequency function $N_x(\rho)$ is small, so is the density of the curvature $F_A$ on $B(x,\rho(x)/2)$.

\begin{Cor}\label{frequency function corollary 3.23}
    Suppose $(A,\psi,\epsilon)\in V_{k_0,6} \times \Gamma(W_{\mathfrak{s}_{3/2}}\otimes \mathscr{L}|_{B(x,\rho(x))}) \times (0,\infty)$ be a solution to \eqref{eq:first compactness.2}. For any $\kappa >0$, if $\rho <<_{\kappa} 1$ and $N(\rho) <<_{\kappa} 1$, then $\rho \norm{F_A}^2_{L^2(B(x,\rho(x)/2))} \leq \kappa$.
\end{Cor}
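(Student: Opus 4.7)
The plan is to reduce to an application of Proposition~\ref{frequency function proposition 3.22} at $r=\rho(x)$ with $\delta=1/2$: the hypothesis $r^{1/2}\|F_A\|_{L^2(B_r)}\leq 1$ is automatic from Definition~\ref{first compactness defintion 2.8}, and the conclusion $r^{1/2}\|F_A\|_{L^2(B_{r/2})}\leq \kappa^{1/2}$ squares to the desired bound. Everything therefore reduces to checking that the two dimensionless parameters
\[
\mathfrak{c} \;=\; \frac{\rho^4\|\nabla_A\psi\|^8_{L^2(B_\rho)}}{h(\rho)^4}+O(\rho^2), \qquad \tau \;=\; \frac{\epsilon}{h(\rho)^{1/2}}
\]
can be forced below the thresholds $\ll_{\delta,\kappa}1$ demanded by Proposition~\ref{frequency function proposition 3.22}, once $N(\rho)$ and $\rho$ are chosen small enough in terms of $\kappa$.

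For $\mathfrak{c}$, the idea is to convert smallness of $N(\rho)$ into smallness of $\|\nabla_A\psi\|^2_{L^2(B_\rho)}$ relative to $h(\rho)/\rho$. Using the Weitzenb\"ock identity of Lemma~\ref{weitzenbock formula} together with the definition of $H(r)$ gives
\[
\int_{B_\rho}|\nabla_A\psi|^2 \;=\; H(\rho) \;+\; \tfrac{4}{3}\!\int_{B_\rho}\!\la P_AP^*_A\psi,\psi\ra \;-\; \int_{B_\rho}\!\epsilon^{-2}|\mu(\psi)|^2,
\]
and the first term equals $N(\rho)h(\rho)/\rho$ by definition. The Penrose cross-term is controlled by H\"older combined with the $L^6$-bound of Lemma~\ref{frequency function lemma 3.21} and the $L^\infty$-bound of Lemma~\ref{curvature controls lemma 2.3}, yielding a contribution of order $\rho^{5/2}$; the last term is non-negative and is dropped. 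Inserting this into $\mathfrak{c}$ and using the lower bound on $h(\rho)$ coming from Corollary~\ref{frequency function corollary 3.13} produces a bound of the shape $\mathfrak{c}\lesssim N(\rho)^4 + O(\rho^\alpha)$ for some $\alpha>0$, which is arbitrarily small once $N(\rho)$ and $\rho$ are.

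For $\tau$, the defining equation gives $\epsilon^4\|F_A\|^2_{L^2(B_\rho)}=\|\mu(\psi)\|^2_{L^2(B_\rho)}\leq \|\psi\|_{L^\infty}^2\|\psi\|^2_{L^2(B_\rho)}\lesssim h(\rho)$ by Lemma~\ref{curvature controls lemma 2.3} and Lemma~\ref{frequency function lemma 3.8}. Combined with the saturation $\|F_A\|^2_{L^2(B_\rho)}\sim\rho^{-1}$ at the critical radius, this yields $\tau^4\lesssim\rho/h(\rho)$, and the requisite smallness follows by iterating Corollary~\ref{frequency function corollary 3.13} from $\rho$ up to a macroscopic scale at which $h$ is non-degenerate --- a property ultimately inherited from the global normalization $\|\psi\|_{L^4(Y)}=1$ via a covering argument on the compact manifold $Y$.

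The main obstacle --- and the reason the corollary must be stated on the refined Uhlenbeck class $V_{k_0,6}$ rather than $V_{k_0}$ --- is controlling the $P_AP^*_A\psi$ cross-term in the Weitzenb\"ock formula, which has no analog in the Dirac-type case of \cite{haydys2015compactness}. Only the stronger $L^6$-gauge yields the $L^6$-bound of Lemma~\ref{frequency function lemma 3.21}, which is what allows this Penrose contribution to be absorbed as genuinely lower order in $\rho$ rather than spoiling the dominant $N(\rho)^4$ bound on $\mathfrak{c}$.
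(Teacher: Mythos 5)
Your overall plan---reduce to Proposition \ref{frequency function proposition 3.22} at $r=\rho(x)$, $\delta=1/2$, then verify that $\mathfrak{c}$ and $\tau$ fall below the required thresholds---matches the paper exactly, and your treatment of $\mathfrak{c}$ (decomposing $\|\nabla_A\psi\|^2_{L^2(B_\rho)}$ via the definition of $H$, dropping the non-negative $\epsilon^{-2}|\mu(\psi)|^2$ term, bounding the Penrose cross-term by H\"older with the $L^6$-bound of Lemma \ref{frequency function lemma 3.21}) is sound and actually fills in details the paper leaves unstated.

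The $\tau$-bound, however, is where you take a genuinely different route, and there is a gap in it. The paper's key move is the inequality $\rho\|F_A\|^2_{L^2(B_\rho)}\epsilon^2 \leq \rho H(\rho) + \tfrac{4}{3}\rho\la P_AP^*_A\psi,\psi\ra_{L^2(B_\rho)} \lesssim \rho H(\rho)+O(\rho^2)$, obtained by writing $\epsilon^{-2}|\mu(\psi)|^2 = \epsilon^2|F_A|^2$ and reading off the definition of $H(\rho)$. Dividing by $\rho\|F_A\|^2 h(\rho)$ and using the case $\rho\|F_A\|^2_{L^2(B_\rho)}>\kappa$ then yields $\tau^2\lesssim_\kappa N(\rho)+O(\rho^2)/h(\rho)$, so that the hypothesis $N(\rho)\ll_\kappa 1$ directly feeds the bound and only a low-order residual remains. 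Your route instead goes through the pointwise bound $|\mu(\psi)|\lesssim|\psi|^2$ and saturation, giving $\tau^4\lesssim\rho/h(\rho)$ (or $\rho^2/h(\rho)$, if you keep the factor from $\int_{B_\rho}|\psi|^2\lesssim\rho h(\rho)$). This estimate makes no use of $N(\rho)$ and relies entirely on lower-bounding $h(\rho)$.

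The proposed lower bound on $h(\rho)$ ``by iterating Corollary \ref{frequency function corollary 3.13} from $\rho$ up to a macroscopic scale'' does not go through. Corollary \ref{frequency function corollary 3.13} gives $h(\rho)\gtrsim(\rho/R)^{\,e^{O(R^2)}(2+2N(R))}h(R)$, so to use it you must control $N(R)$ at the larger scale $R$. The hypothesis supplies only $N(\rho)\ll_\kappa 1$, and the almost-monotonicity of Corollary \ref{frequency function corollary 3.11} propagates smallness of $N$ downward in scale, not upward: $N(\rho)\leq e^{O(R^2-\rho^2)}N(R)+O(R^2-\rho^2)$ gives a lower bound on $N(R)$, which is useless here. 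Thus the covering step cannot be closed from the stated hypotheses. The paper avoids this by routing $\tau$ through $H(\rho)$, so that the dominant term is $N(\rho)$ itself; you should either adopt that route or produce an independent lower bound on $h(\rho)$ that does not pass through $N$ at a larger scale.
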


\begin{proof}
    Apply Proposition \ref{frequency function proposition 3.22} to $r:=\rho$, $\delta :=1/2$ and $\kappa > 0$. If $\rho \norm{F_A}^2_{L^2(B(x,\rho))} \leq \kappa$, then there is nothing to prove. Assume $\rho \norm{F_A}^2_{L^2(B(x,\rho))} > \kappa$. Note that
    \begin{align*}
        \rho \norm{F_A}^2_{L^2(B(x,\rho))} \epsilon^2 \leq \rho H(\rho) + \dfrac{4}{3}\rho \la P_A P^*_A \psi, \psi \ra_{L^2(B(x,\rho))} \lesssim \rho H(\rho) + O(\rho^2).
    \end{align*}
    As a result,
    \begin{align*}
        \tau^2 = \dfrac{\epsilon^2}{\norm{\psi}^2_{L^2(\partial B(x,\rho))}} &\lesssim \left(\rho\norm{F_A}^2_{L^2(B(x,\rho))}\right)^{-1}\left(N(\rho) + \dfrac{O(\rho^2)}{\norm{\psi}^2_{L^2(\partial B(x,\rho))}}\right) \\
        & \lesssim_{\kappa} N(\rho) + \dfrac{O(\rho^2)}{\norm{\psi}^2_{L^2(\partial B(x,\rho))}}.
    \end{align*}
    The above estimate shows that if $\rho$ and $N(\rho)$ are small, then so is $\tau$. Furthermore, $\mathfrak{c}$ is also small. Therefore, we deduce our assertion by Proposition \ref{frequency function proposition 3.22}.
\end{proof}

\textit{Part 3.} In this part, we give a proof of Proposition \ref{frequency function proposition 3.17}.\\
\noindent
\textit{Proof of Proposition \ref{frequency function proposition 3.17}} We argue by contradiction. Suppose that for any arbitrarily small $\kappa >0$, there is a solution $(A,\psi, \epsilon) \in V_{k_0,6}\times \Gamma(W_{\mathfrak{s}_{3/2}}\otimes \mathscr{L}|_{B(x,\rho(x))}) \times (0,\infty)$ such that $N_x(50\rho(x)) \leq \kappa$ but $\rho(x) \leq \kappa$.

\textit{Claim 1:} There is an $x' \in B(x,2\rho(x))$ such that 
\[ \rho(x') \leq \rho(x), \quad \quad \rho(x') \leq 2 \min_{y \in B(x',\rho(x'))} \{\rho(y)\}.\]
We show the existence of $x'$ by an inductive argument. The construction is a toss-pick procedure. To get a feel of the construction, we start with some base cases. Let $x_0 := x$. Obviously, $\rho(x_0) = \rho(x)$. If $\rho(x_0) \leq 2 \min\{ \rho(y) : y \in B(x_0,\rho(x_0))\}$, then we take $x':=x_0$. Otherwise, we pick another point $x_1 \in B(x_0, \rho(x_0))$ such that $\rho(x_1) < \rho(x)/2$. This implies that $\rho(x_1) < \rho(x)$. Now if $\rho(x_1) \leq 2 \min \{\rho(y) : y \in B(x_1,\rho(x_1))\}$, then we take $x':=x_1$. Otherwise, we pick an $x_2 \in B(x_1, \rho(x_1))$ such that $\rho(x_2)<\rho(x_1)/2<\rho(x)/2^2$. This implies that $\rho(x_2) < \rho(x)$. By repeating this construction, we obtain a sequence $\{x_n\}$ where $\rho(x_n) < \rho(x)/2^n$. If this sequence is infinite, then $\rho(x_n) \to 0$ as $n$ approaches infinity. But this is simply impossible because the function $\rho(\bullet)$ is bounded below away from zero for a particular solution $(A,\psi, \epsilon)$. As a result, the construction must stop at some $x_n$ where we have 
\[ \rho(x_n) \leq \rho(x), \quad \quad  \rho(x_n) \leq 2 \min_{y \in B(x_n, \rho(x_n))} \{\rho(y)\}.\]
Furthermore, we have $x_n \in B(x,2\rho(x))$ because
\[d(x_n,x) \leq \sum_{j=0}^{n-1} d(x_{j+1},x_j) \leq  \sum_{j=0}^{n-1} \rho(x_j) <  \sum_{j=0}^{n-1} \dfrac{\rho(x)}{2^j} < 2 \rho(x).\]

\textit{Claim 2:} Let $x'$ be as in Claim 1. For every $y \in B(x',\rho(x'))$, we have 
\[\rho(y) \lesssim \kappa,\quad \quad  N_y(\rho(y)) \lesssim \kappa.\]
Note that the function $r \mapsto r^{1/2}\norm{F_A}_{L^2(B(x,r))}$ is monotone. From Claim 1, we also have $\rho(x')$ small. If $\rho(x')\norm{F_A}^2_{L^2(B(x',\rho(x')))} < 1$, then we can add a small emough amount to $\rho(x')$ so that the newly obtained quantity $r'$ is still within $(0,r_0]$ and $r'\norm{F_A}^{2}_{L^2(B(x',r'))} = 1$. This would lead to a contradiction with the definition of the critical radius $\rho(x')$. Thus, $\rho(x')\norm{F_A}^2_{L^2(B(x',\rho(x'))}=1$. With this being understood, for any $y \in B(x', \rho(x'))$, we clearly have $B(x',\rho(x'))\subset B(y, 2\rho(x'))$. As a result,
\[\int_{B(y,2\rho(x'))}|F_A|^2 \geq \int_{B(x',\rho(x'))} |F_A|^2 = \dfrac{1}{\rho(x')} > \dfrac{1}{2\rho(x')}.\]
By the monotonicity, the above implies that $\rho(y) < 2\rho(x') \leq 2\rho(x) \lesssim \kappa$. At the same time, since $y \in B(x,5\rho(x))$, by Corollary \ref{frequency function corollary 3.11} and Proposition \ref{frequency function proposition 3.14} applied to $r:=5\rho(x)$, we have
\[N_y(\rho(y)) \leq e^{O(\kappa^2)}N_y(25\rho(y))+O(\kappa^2) \lesssim N_x(50\rho(x))+O(\kappa^2) \lesssim \kappa.\]

\textit{Claim 3:} There exists a finite number of points $\{y_1,\cdots, y_n\}\subset B(x',\rho(x'))$, where $n$ is large enough independent of $x'$ such that 
\[B(x',\rho(x'))\subset \bigcup_{i=1}^{n} B(y_i, \rho(y_i)/2).\]
From Claim 1, we know that for all $y \in B(x',\rho(x'))$, $\rho(y)\geq \rho(x')/2$. Consider a cover of $B(x',\rho(x'))$ given by
\[\bigcup_{y \in B(x',\rho(x'))} B(y, \rho(x')/4).\]
Since $B(x',\rho(x'))$ is compact, there is a finite sub-cover
\[\bigcup_{i=1}^{n} B(y_i, \rho(x')/4),\]
where $n$ is independent of $x'$. Obviously for each $i$, we have $B(y_i,\rho(x')/4)\subset B(y_i, \rho(y_i)/2)$. Hence, we obtained a finite cover of $B(x',\rho(x'))$ as desired.

Now, we combine the three claims to get to a contradiction. By Claim 2, Corollary \ref{frequency function corollary 3.23} and Claim 1, for each $y_i$ as in Claim 3, we have
\[\int_{B(y_i, \rho(y_i)/2)} |F_A|^2 \leq \dfrac{\kappa}{\rho(y_i)} \lesssim \dfrac{\kappa}{\rho(x')}.\]
Thus, from Claim 3, we deduce that
\[\int_{B(x',\rho(x'))} |F_A|^2 \leq \sum_{i=1}^{n} \int_{B(y_i,\rho(y_i)/2)}|F_A|^2 \lesssim \dfrac{n \kappa}{\rho(x')}.\]
This implies that $1=\rho(x')\norm{F_A}^2_{L^2(B(x',\rho(x'))} \lesssim \kappa$. But $\kappa$ is arbitrarily small; this is a contradiction.\qed

\section{The second compactness theorem}
In this section, we prove the second compactness theorem regarding the moduli space of solutions of \eqref{eq:first compactness.2}. The theorem concerns the behavior of a sequence of solutions of the original formulation of the three-dimensional RS-SW equations where the $3/2-$spinors become very large, yet the curvatures of the $U(1)-$connections are well-controlled. Roughly, in an appropriate topology, such a sequence converges away from a singular set $Z$ on $Y$. In some sense, one can view the limiting data as a boundary condition of the three-dimensional RS-SW equations. 

\begin{Th}\label{second compactness theorem}
    Let $\{(A_n, \psi_n, \epsilon_n)\} \subset \mathcal{A}(\mathscr{L}) \times \Gamma(W_{\mathfrak{s}_{3/2}}\otimes \mathscr{L}) \times (0,\infty)$ be a sequence of solutions of \eqref{eq:first compactness.2} where $\{F_{A_n}\}$ is uniformly bounded in $L^6-$norm. If $\limsup \epsilon_n = 0$, then
    \begin{enumerate}
        \item There is a closed nowhere-dense subset $Z \subset Y$, a connection $A \in \mathcal{A}(\mathscr{L}|_{Y\setminus Z})$ and a $3/2-$spinor $\psi \in \Gamma(Y\setminus Z, W_{\mathfrak{s}_{3/2}}\otimes \mathscr{L})$ such that $(A,\psi, 0)$ solves \eqref{eq:first compactness.2}. Furthermore, after to passing through a subsequence, $|\psi_n|$ converges to $|\psi|$ in $C^{0,\alpha}-$topology. Specifically, $Z = |\psi|^{-1}(0)$.
        \item On $Y\setminus Z$, up to gauge transformations and after passing through a subsequence, $A_n$ converges weakly to $A$ in $L^{2}_{1,loc}$ and $\psi_n$ converges weakly to $\psi$ in $L^2_{2,loc}$.
    \end{enumerate}
\end{Th}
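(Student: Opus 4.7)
The plan is to combine the frequency-function machinery of Section 5 with Uhlenbeck's local slice theorem: first extract a H\"older limit of $|\psi_n|$ whose zero set defines $Z$, then use Proposition \ref{frequency function proposition 3.15} to obtain a uniform lower bound on the critical radii on compact subsets of $Y\setminus Z$, so that the standard gauge-theoretic weak-compactness argument applies locally.

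\textbf{Step 1 (H\"older limit of $|\psi_n|$).} The uniform $L^6$-bound on $F_{A_n}$ activates Lemma \ref{curvature controls lemma 2.3} together with Lemmas \ref{frequency function lemma 3.19}--\ref{frequency function lemma 3.21}, giving global bounds $\norm{\psi_n}_{L^\infty(Y)} = O(1)$, $\norm{\nabla_{A_n}\psi_n}_{L^2(Y)} = O(1)$, and $\norm{P_{A_n} P^*_{A_n}\psi_n}_{L^6(Y)} = O(1)$. Combining Kato's inequality with the Weitzenb\"ock identity of Lemma \ref{weitzenbock formula} produces a scalar elliptic inequality for $|\psi_n|$ whose inhomogeneous term is uniformly bounded; a standard Moser iteration then yields a uniform $C^{0,\alpha}$-bound for some $\alpha \in (0,1)$. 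By Arzel\`a--Ascoli a subsequence satisfies $|\psi_n| \to f$ in $C^{0,\alpha'}(Y)$ for every $\alpha'<\alpha$, and since $\norm{\psi_n}_{L^4}=1$ the limit satisfies $\norm{f}_{L^4}=1$, in particular $f\not\equiv 0$. Put $Z := f^{-1}(0)$, which is closed.

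\textbf{Step 2 (Local weak convergence on $Y\setminus Z$).} On any compact $K \subset Y\setminus Z$ we have $f \geq \lambda_K > 0$, so $|\psi_n| \geq \lambda_K/2$ on $K$ for $n$ large; Proposition \ref{frequency function proposition 3.15} then gives a uniform lower bound $\rho_n(x)\geq \rho_*(K)>0$. Covering $K$ by finitely many balls $B(x_j, \rho_*/2)$ and applying the local slice theorem (Theorem \ref{local slice theorem}) to each, after local gauge transformations the $1$-forms $A_n-A_0$ are uniformly $L^2_1$-bounded on each ball, and Corollary \ref{curvature controls corollary 2.9} provides a uniform $L^2_2$-bound on $\psi_n$. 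Passing to a diagonal subsequence and patching local Coulomb gauges via $L^2_2$ transition functions produces $A_n \rightharpoonup A$ in $L^2_{1,\mathrm{loc}}(Y\setminus Z)$ and $\psi_n \rightharpoonup \psi$ in $L^2_{2,\mathrm{loc}}(Y\setminus Z)$. Rellich--Kondrachov supplies enough strong convergence to pass to the limit in the equations: $Q_A\psi=0$, and since $\epsilon_n\to 0$ while $F_{A_n}$ stays bounded in $L^6$, $\mu(\psi) = \lim \epsilon_n^2 F_{A_n} = 0$. Strong local convergence of $|\psi_n|$ then identifies $|\psi|=f|_{Y\setminus Z}$, so $Z = |\psi|^{-1}(0)$.

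\textbf{Step 3 (Nowhere density of $Z$ -- the main obstacle).} The analytic heart of the proof is ruling out that $Z$ contains an open ball $B(x_0,r_0)$. If it did, then $h_{x,n}(r) \to 0$ uniformly for $x\in B(x_0, r_0/2)$ and $r<r_0/2$. Rescale $\tilde\psi_n := \psi_n/M_n$ with $M_n := \sup_{B(x_0, r_0/2)} |\psi_n| \to 0$; by Proposition \ref{frequency function proposition 3.16}, Corollary \ref{frequency function corollary 3.11}, Corollary \ref{frequency function corollary 3.13}, and Proposition \ref{frequency function proposition 3.14}, the rescaled frequency functions of $\tilde\psi_n$ stay uniformly bounded on $B(x_0, r_0)$. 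Re-applying the compactness procedure of Step 2 at this scale extracts a nonzero harmonic $3/2$-spinor $\tilde\psi_\infty$ on $B(x_0, r_0)$ that vanishes on an open subset; unique continuation for the second-order elliptic operator obtained from Lemma \ref{weitzenbock formula} then forces $\tilde\psi_\infty \equiv 0$, a contradiction. This last step closely parallels the arguments in \cite{haydys2015compactness, Taubes:2016voz, taubes2012psl}, where most of the delicacy resides.
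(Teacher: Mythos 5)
Your Steps 2 and 3 are essentially the paper's route: Step 2 matches the paper's use of Proposition \ref{frequency function proposition 3.15}, the local slice theorem, Corollary \ref{curvature controls corollary 2.9}, and diagonal extraction; and Step 3 is precisely the blow-up/unique-continuation argument that the paper delegates to \cite{haydys2015compactness}. However, Step 1 has a genuine gap. You propose to get the uniform $C^{0,\alpha}$ bound by viewing the Weitzenb\"ock identity as a scalar elliptic inequality and running Moser iteration, claiming the inhomogeneous term is uniformly bounded. But from
\begin{equation*}
\tfrac12\Delta|\psi|^2 = \la\nabla_A^*\nabla_A\psi,\psi\ra - |\nabla_A\psi|^2 = \tfrac{4}{3}\la P_AP_A^*\psi,\psi\ra - |\nabla_A\psi|^2 + O(|F_A||\psi|^2 + |\psi|^2)
\end{equation*}
the only a priori control on $|\nabla_A\psi|^2$ is $L^1$, and even the term $\la P_AP_A^*\psi,\psi\ra - |\nabla_A\psi|^2$ is sign-indefinite. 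What you obtain is a one-sided subsolution inequality with an $L^6$ right-hand side, which together with $|\psi|\in L^\infty$ does \emph{not} give H\"older oscillation control; a two-sided equation or natural-growth structure with smallness is required, and neither is available near the zero set of $\psi$ where the critical radius degenerates. This is exactly why the paper does not run De Giorgi--Nash--Moser at all.

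The paper's actual Step 1 bypasses this by directly estimating the H\"older quotient $||\psi|(x)-|\psi|(y)|/d(x,y)^\alpha$ using the critical-radius lower bound $\rho(x)\gtrsim\min\{1,|\psi|^{1/\omega}(x)\}$ from Proposition \ref{frequency function proposition 3.15}. One splits into the regime $d(x,y)^{1/2}>\rho(x)/2$ (where the bound $|\psi|(y)\le|\psi|(x)\lesssim\rho(x)^\omega\lesssim d(x,y)^{\omega/2}$ controls the quotient with exponent $\omega/2$) and the regime $d(x,y)^{1/2}\le\rho(x)/2$ (where Kato, Morrey, Sobolev embedding, and Corollary \ref{curvature controls corollary 2.9} give $||\psi|(x)-|\psi|(y)|/d(x,y)^{1/2}\lesssim\rho(x)^{-1/2}\lesssim d(x,y)^{-1/4}$, yielding exponent $1/4$). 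This is precisely the quantitative vanishing-order control that the frequency function supplies and that Moser iteration cannot replace. If you want to salvage your Step 1 you should replace the Moser iteration by this dichotomy argument; the rest of your proposal is sound.
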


\begin{proof}
    Note that if $(A,\psi, \epsilon)$ is a solution and if the curvature $F_A$ is universally bounded, from Section $4$ (cf. Lemma \ref{curvature controls lemma 2.3}), the $L^{\infty}-$norm of $\psi$ is universally bounded. In this situation, we claim that there exists an $\alpha > 0$ such that $\norm{\psi}_{C^{0,\alpha}} = O(1)$. Equivalently, we have to show that for $x, y \in Y$ and $x\neq y$, there is an $\alpha > 0$ such that
    \[ \dfrac{||\psi|(x) - |\psi|(y)|}{d(x,y)^{\alpha}}\]
    is uniformly controlled. Indeed, we take $\omega > 0$ as in Proposition \ref{frequency function proposition 3.15}. From the proof of Proposition \ref{frequency function proposition 3.15}, we may assume $0 < \omega << 1$. Furthermore, without loss of generality, we also assume that $0\neq |\psi|(x) \geq |\psi|(y)$ and $d(x,y) \leq \omega$. By Proposition \ref{frequency function proposition 3.15}, we have that 
    \[\rho(x) \gtrsim \min\{1, |\psi|^{1/\omega}(x)\}.\] There are two cases to consider.

    \textit{Case 1:} $d(x,y)^{1/2}> \rho(x)/2$.

    In this scenario, there are two sub-cases. If $|\psi|(x) \geq 1$, then $\rho(x) \gtrsim 1$. As a result, $d(x,y) \gtrsim 1$ also. Then
    \[\dfrac{||\psi|(x) - |\psi|(y)|}{d(x,y)^{1/2}}  \lesssim |\psi|(x) \leq \norm{\psi}_{L^\infty} = O(1).\]
    If $|\psi|(x) < 1$, then we have $|\psi|(y) \leq |\psi|(x) \lesssim \rho(x)^{\omega} \lesssim d(x,y)^{\omega/2}$. As a result, a similar estimate above yields for us
    \[\dfrac{||\psi|(x) - |\psi|(y)|}{d(x,y)^{\omega/2}} = O(1).\]

    \textit{Case 2:} $d(x,y)^{1/2}\leq \rho(x)/2$.

    Since we assume $d(x,y) \leq \omega <<1$, $d(x,y) \leq d(x,y)^{1/2} \leq \rho(x)/2$. Consider a closed ball $B_{\rho(x)/2}$ on $Y$ of radius $\rho(x)/2$. By Kato's inequality, Morrey's inequality, one of the Sobolev embedding theorems, and Proposition \ref{curvature controls proposition 2.7} (or Corollary \ref{curvature controls corollary 2.9}), we have
    \begin{align*}
        \dfrac{||\psi|(x)-|\psi|(y)|}{d(x,y)^{1/2}} &\lesssim \norm{\nabla_A \psi}_{L^6(B_{\rho(x)/2})} \lesssim \norm{\nabla^2_A \psi}_{L^2(B_{\rho(x)/2})}\\
        & \lesssim \rho(x)^{-1/2} \lesssim d(x,y)^{-1/4}.
    \end{align*}
    Hence after re-arranging, we obtain $\norm{\psi}_{C^{0,1/4}} = O(1)$. This means that we do have a uniform control for $\norm{\psi}_{C^{0,\alpha}}$ for $\alpha = \min \{ 1/4, 1/2, \omega/2\}$. We apply this observation to our situation of a sequence of solutions of \eqref{eq:first compactness.2} where the curvatures are uniformly bounded. By the Arzela-Ascoli theorem, after passing through a subsequence and re-indexing, $|\psi_n|$ converges to $|\psi|$ in $C^{0,\alpha}-$topology. By continuity, the nodal set $Z:=|\psi|^{-1}(0)$ is closed. 

    Now we prove the second assertion in the theorem. We prove the weak convergence locally first. Let $x\in Y\setminus Z$. By Proposition \ref{frequency function proposition 3.15}, after passing through a subsequence, the critical radius $\rho(x_n)$ of $(A_n,\psi_n,\epsilon_n)$ is bounded below by a constant $2R$ that depends only on $|\psi|(x)$. From the observation above, if necessary, we can make $R$ smaller so that $|\psi_n|$ is bounded away from zero on $B(x,2R)$. There exists a cover of $B(x,R)$ such that on each member of the covering set, we have $L^2-$bounds for $F_{A_n}$ by Proposition \ref{frequency function proposition 3.22} and Proposition \ref{frequency function proposition 3.16}. Note that we can take this $L^2-$bounds to be the minimum between the constants provided by the aforementioned propositions and the universal bound of the curvatures in the hypothesis. As a result, we can use Proposition \ref{curvature controls proposition 2.7} (or Corollary \ref{curvature controls corollary 2.9}) to get $L^2_{2,A_n}-$bounds $\psi_n$. Putting $A_n$ inside Uhlenbeck gauge on $B(x,R)$, by the Banach-Alaoglu theorem, after passing through a subsequence and re-arranging the index, $(A_n, \psi_n)$ converges weakly to $(A, \psi)$ in $L^2_1$ and $L^2_2$ topology, respectively. A standard argument tells us that we can patch these local gauge transformations together to obtain a global one on $Y\setminus Z$. Note that the limiting solution $(A,\psi)$ must satisfy the following degenerate equations, 
    \[Q_A \psi = 0, \quad \quad \quad \mu(\psi) = 0.\]
    
    Finally, one can follow verbatim the argument in \cite{haydys2015compactness} to achieve the result that $Z$ is nowhere dense in $Y$.
\end{proof}

Recall that to write down the three-dimensional RS-SW equations, besides fixing a $\text{spin}^c$ structure on $Y$, we have to fix an auxiliary choice of a Riemannian metric $g$ on $Y$. Thus, theorem \ref{first compactness theorem} and Theorem \ref{second compactness theorem} are true for a specific a priori fixed $g$. A version of these theorems also holds as we vary the metrics on $Y$. The proof above can be adapted to this more general setting. In particular, we have the following.

\begin{Th}\label{varying compactness theorem}
    Denote $\mathfrak{M}$ by the space of all Riemannian metrics on $Y$. Let $\{g_n\}$ be a sequence of metrics on $Y$ converging to $g \in \mathfrak{M}$. Let $\{(A_n, \psi_n, \epsilon_n)\}$ be a sequence of solutions of the $g_n-$\eqref{eq:first compactness.2} equations (i.e, the blown-up $g_n-$RS-SW equations) such that the $L^6-$norms of $F_{A_n}$ are all uniformly bounded.
    \begin{enumerate}
        \item If $\limsup \epsilon_n > c >0$, then after passing through a subsequence and up to gauge transformations $\{(g_n, (A_n, \psi_n, \epsilon_n))\}$ converges to $(g, (A, \psi, \epsilon))$ in the $C^\infty$ topology. 
        \item If $\limsup \epsilon_n = 0$, then there exists a closed nowhere-dense subset $Z \subset Y$, a connection $A$ on $Y\setminus Z$, a $g-3/2-$spinor $\psi$ on $Y\setminus Z$ such that
            \begin{enumerate}
                \item $Q_A \psi = 0$ and $\mu(\psi) = 0$
                \item $\displaystyle \int_{Y\setminus Z} |\psi|^4 = 1$ and $\displaystyle \int_{Y\setminus Z} |\nabla_A \psi|^2 < \infty$
                \item $|\psi|$ extends to a $C^{0,\alpha}-$H\"older continuous function on $Y$ where $Z= |\psi|^{-1}(0)$;
            \end{enumerate}
            furthermore, $A_n$ converges weakly to $A$ in $L^2_{1,loc}$ and $\psi_n$ converges weakly to $\psi$ in $L^2_{2,loc}$ on $Y\setminus Z$.
    \end{enumerate}
\end{Th}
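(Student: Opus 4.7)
The plan is to adapt the proofs of Theorem \ref{first compactness theorem} and Theorem \ref{second compactness theorem} by treating the metric $g_n$ as a parameter and exploiting the continuous dependence of all relevant geometric quantities on $g$. First I would identify the $g_n$-spinor bundles $W_{\mathfrak{s}_{1/2},g_n}$ and $W_{\mathfrak{s}_{3/2},g_n}$ with the corresponding $g$-bundles via Bourguignon--Gauduchon-type bundle isomorphisms, which converge to the identity smoothly. Under these identifications the Clifford multiplication, the Levi-Civita connection, the operators $D_{A,g_n}$, $P_{A,g_n}$, $Q_{A,g_n}$, and the Weitzenb\"ock correction terms (scalar and Ricci curvatures) appearing in Lemma \ref{weitzenbock formula} and Lemma \ref{curvature controls lemma 2.4} depend $C^\infty$-continuously on $g$. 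The Uhlenbeck gauge-fixing constants, the Sobolev and Gagliardo--Nirenberg constants, and the Green's function of $\Delta_{g_n}+1$ likewise depend continuously on the metric. Hence every constant appearing in Sections 3 through 6 can be made uniform in $n$ for $n$ sufficiently large.

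For part (1), where $\limsup \epsilon_n > c$, the estimates leading to \eqref{eq:first compactness.17}--\eqref{eq:first compactness.19} now apply to the whole sequence with uniform constants. The $L^6$-bound on $F_{A_n}$ suffices to invoke Uhlenbeck's lemma in a uniform gauge slice, so elliptic bootstrapping yields a $C^\infty$-convergent subsequence after gauge transformations, jointly with $g_n \to g$. For part (2), with $\limsup \epsilon_n = 0$, I would invoke the local estimates of Section 4 and the frequency-function analysis of Section 5 with the same uniform constants. The $C^{0,\alpha}$-H\"older bound on $|\psi_n|_{g_n}$ established in the proof of Theorem \ref{second compactness theorem}, with $\alpha = \min\{1/4, \omega/2\}$ uniform in $n$, combined with Arzel\`a--Ascoli, yields a subsequence converging in $C^{0,\alpha}$ to a nonnegative function $|\psi|$ on $Y$; set $Z := |\psi|^{-1}(0)$. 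Away from $Z$ the critical radius of $A_n$ is bounded below uniformly by Proposition \ref{frequency function proposition 3.15}, Uhlenbeck's local slice theorem places each $A_n$ in a local Coulomb gauge with uniform $L^2_1$-bound, and Banach--Alaoglu provides the weak limits $A_n \to A$ in $L^2_{1,loc}$ and $\psi_n \to \psi$ in $L^2_{2,loc}$ on $Y\setminus Z$. Passing to the limit in the $g_n$-equations and using $\epsilon_n \to 0$ forces $Q_{A,g}\psi=0$ and $\mu(\psi)=0$; the normalization $\int_{Y\setminus Z}|\psi|^4=1$ follows from the $C^{0,\alpha}$-convergence of $|\psi_n|_{g_n}$ together with dominated convergence, while $\int_{Y\setminus Z}|\nabla_A\psi|^2 < \infty$ is inherited from the uniform Dirichlet-energy bound obtained in the proof of Theorem \ref{first compactness theorem}; the fact that $Z$ is nowhere dense follows verbatim from the argument in \cite{haydys2015compactness} cited at the end of Theorem \ref{second compactness theorem}.

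The hard part will be verifying that the frequency-function machinery, in particular the critical-radius lower bound of Proposition \ref{frequency function proposition 3.17} and the $|\psi|(x)$-to-$N$ control of Proposition \ref{frequency function proposition 3.16}, actually produces constants uniform in $g_n$. This reduces to a term-by-term check that the Weitzenb\"ock corrections, the constants in the Sobolev multiplication and Gagliardo--Nirenberg inequalities, the Green's function of $\Delta_{g_n}+1$, and the curvature-dependent terms in \eqref{eq:frequency function.86}--\eqref{eq:frequency function.88} all vary continuously with the metric. On a fixed compact $Y$ with $g_n \to g$ in $C^\infty$, this is automatic but requires careful bookkeeping; once it is in place, the arguments of Sections 3 through 6 transfer without structural modification and deliver both parts of the theorem.
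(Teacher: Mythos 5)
Your proposal is correct and follows the same strategy the paper adopts: the authors give no separate proof of Theorem \ref{varying compactness theorem} beyond the remark that ``the proof above can be adapted to this more general setting,'' and your argument is precisely that adaptation, with the additional and genuinely useful detail that the Bourguignon--Gauduchon identification of spinor bundles across metrics is what makes the convergence statement and the metric-uniformity of the Sobolev, Weitzenb\"ock, Green's-function, and frequency-function constants well posed.
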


Motivated by the above theorem, we make the following definition regarding the limiting objects that appear in the moduli space of charge $k$ of the three-dimensional RS-SW equations. 

\begin{Def}\label{charged moduli space}
    Let $g$ be a Riemannian metric on $Y$. We say a moduli space $\mathcal{M}_k(g)$ of \textit{charge $k$} of the three-dimensional RS-SW equations \eqref{eq:first compactness.1} to be the space of gauge equivalence classes of solutions $(A,\psi)$ where $L^6-$norm of $F_A$ is bounded by $k$. Specifically,
    \[\mathcal{M}_k(g) = \left\{ (A, \psi) :  \begin{array}{l}
    (A,\psi) \text{ solves \eqref{eq:first compactness.1}} \\
    \norm{F_A}_{L^6(Y)} \leq k
  \end{array}\right\}\bigg/ \mathcal{G}.\]
\end{Def}

\begin{Def}\label{3/2 fueter section}
    Suppose $g$ is a Riemannian metric on $Y$. Let $Z \subset Y$ be a closed proper subset of $Y$ and $(A, \psi) \in \mathcal{A}(\mathscr{L}|_{Y\setminus Z})\times \Gamma(Y\setminus Z, W_{\mathfrak{s}_{3/2}}\otimes \mathscr{L})$. We call a triple $(A,\psi, Z)$ a $g-3/2-$\textit{Fueter section along singular set $Z$} if it satisfies the conditions $2-(a), (b), (c)$ stated in Theorem \ref{varying compactness theorem}.
\end{Def}

It turns out that the existence of $g-3/2-$Fueter sections on $Y$ provides an obstruction for the compactness of $\mathcal{M}_k(g)$. 

\begin{Cor}\label{obstruction for compactness}
    Let $g \in \mathfrak{M}$. If there is no $g-3/2-$Fueter section, then $\mathcal{M}_k$ is always compact on some neighborhood of $g$ in $\mathfrak{M}$.
\end{Cor}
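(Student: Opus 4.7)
The plan is to argue by contradiction and reduce the compactness claim to the two parts of Theorem \ref{varying compactness theorem}. Suppose, for contradiction, that on every neighborhood of $g$ in $\mathfrak{M}$ the conclusion fails; then I can extract a sequence of metrics $g_n \to g$ together with solutions $(A_n, \phi_n) \in \mathcal{M}_k(g_n)$ whose gauge equivalence classes admit no subsequence converging (up to gauge, in $C^\infty$) to a point of $\mathcal{M}_k(g)$. The reducible stratum can be disposed of first: if $\phi_n \equiv 0$ for infinitely many $n$, then $F_{A_n} = 0$, so $[A_n]$ lies in the compact Jacobian torus of flat $U(1)$-connections on $\mathscr{L}$, which immediately yields a convergent subsequence. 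So I may assume $\phi_n \not\equiv 0$ for all large $n$.

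Next, perform the blow-up exactly as in Section 3: set $\epsilon_n := \|\phi_n\|_{L^4}^{-1}$ and $\psi_n := \epsilon_n \phi_n$, so that $(A_n, \psi_n, \epsilon_n)$ solves the blown-up $g_n$-RS-SW system \eqref{eq:first compactness.2}, and the $L^6$-bound $\|F_{A_n}\|_{L^6} \leq k$ built into membership in $\mathcal{M}_k(g_n)$ is precisely the hypothesis of Theorem \ref{varying compactness theorem}. Passing to a further subsequence, $\epsilon_n$ converges in $[0,\infty]$. If $\limsup \epsilon_n > c$ (with $c$ the constant of Theorem \ref{varying compactness theorem}, which can be taken uniform in a small neighborhood of $g$ since it depends continuously on the metric and the fixed reference connection $A_0$), Theorem \ref{varying compactness theorem}(1) yields $C^\infty$-convergence of $(g_n,(A_n, \psi_n, \epsilon_n))$ to some $(g,(A,\psi,\epsilon))$ with $\epsilon > 0$; unwinding the blow-up via $\phi := \psi/\epsilon$ then produces a genuine limit $(A, \phi) \in \mathcal{M}_k(g)$, contradicting the assumed non-convergence of $(A_n, \phi_n)$. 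If instead $\limsup \epsilon_n = 0$, Theorem \ref{varying compactness theorem}(2) produces exactly a $g$-$3/2$-Fueter section in the sense of Definition \ref{3/2 fueter section}, contradicting the hypothesis that none exists.

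The residual regime $\limsup \epsilon_n \in (0,c]$ corresponds to $\|\phi_n\|_{L^4}$ bounded above and away from zero, so the original pairs $(A_n, \phi_n)$ are already a priori controlled in $L^4 \times L^6$. Here I would work directly on the unblown-up system: Uhlenbeck gauge fixing (cf. Lemma \ref{first compactness lemma 1.4}) together with the $L^6$-bound on $F_{A_n}$ gives a uniform $L^6_1$-bound on $a_n := A_n - A_0$, and then the elliptic estimate for $Q_{A_0}$ applied to $Q_{A_0}\phi_n = -\pi(a_n \cdot \phi_n)$ delivers Sobolev control on $\phi_n$; standard elliptic bootstrapping, paralleling the proof of Theorem \ref{first compactness theorem}, yields a $C^\infty$-convergent subsequence, once again contradicting the assumption. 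This intermediate regime is the main technical point to verify, but it is a routine variant of the blown-up argument carried out in a range where the a priori bounds on both $\phi_n$ and $F_{A_n}$ are already at hand, so I do not anticipate a serious obstacle; the genuine content of the corollary is carried by the dichotomy of Theorem \ref{varying compactness theorem}.
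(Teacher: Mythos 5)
Your proof takes a genuinely different route from the paper, and there is one real gap that you should address.

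The paper's argument does not reason directly by contradiction on compactness. Instead it first shows $\nu := \sup\{\|\phi\|_{L^4} : [A,\phi] \in \mathcal{M}_k(g)\}$ is finite (otherwise a sequence with $\epsilon_n \to 0$ would yield a $g$-$3/2$-Fueter section via Theorem~\ref{second compactness theorem}); then it shows by contradiction, using the dichotomy in Theorem~\ref{varying compactness theorem}, that $\sup\{\|\phi\|_{L^4} : [A,\phi]\in\mathcal{M}_k(g')\} < \nu+1$ for all $g'$ in a neighborhood of $g$; only then does it invoke Theorem~\ref{first compactness theorem} on each $\mathcal{M}_k(g')$. The advantage of working with the scalar invariant $\sup\|\phi\|_{L^4}$ is that it converts a statement about compactness of many spaces into a statement about boundedness, which diagonalizes cleanly.

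The gap in your version is the very first reduction. From the negation of the conclusion you get a sequence $g_n \to g$ such that each $\mathcal{M}_k(g_n)$ \emph{individually} fails to be compact, but this does not produce a single diagonal sequence $(A_n,\phi_n)\in\mathcal{M}_k(g_n)$ ``admitting no subsequence converging to a point of $\mathcal{M}_k(g)$'': non-compactness of $\mathcal{M}_k(g_n)$ is a property of a one-parameter family inside that fixed moduli space, and you have not explained how to pick a distinguished element of each $\mathcal{M}_k(g_n)$ that certifies the failure. The paper's device of quantifying over $\sup\|\phi\|_{L^4}$ does exactly this selection for you: for each $n$ pick $(A_n,\phi_n)$ with $\|\phi_n\|_{L^4}\geq\nu+1$, which exists by the supremum hypothesis and is automatically irreducible (so your separate handling of the reducible stratum is not needed). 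You should restructure around this.

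That said, your treatment of the intermediate regime $\limsup\epsilon_n\in(0,c]$ is a legitimate observation. The paper's closing appeal to Theorem~\ref{first compactness theorem} from the bound $\|\phi\|_{L^4}<\nu+1$ gives only $\epsilon>1/(\nu+1)$, which need not exceed the specific constant $c$ in that theorem; so some argument in the band $1/(\nu+1)\leq\epsilon\leq c$ is needed to close the loop, exactly as you point out. Your sketch --- Coulomb gauge plus the $L^6$ curvature bound to control $a_n$, then bootstrap $\phi_n$ through the elliptic equation $Q_{A_0}\phi_n=-\pi(a_n\cdot\phi_n)$ coupled with $da_n=\mu(\phi_n)-F_{A_0}$ --- is the right idea (note Lemma~\ref{first compactness lemma 1.4} as cited gives $L^2_k$ control; the stated $L^6_1$ bound needs the $L^p$ elliptic estimate for the Coulomb-gauged system, which is standard but worth saying). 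If you fix the extraction step so that it runs through the $L^4$ supremum, and keep your intermediate-regime bootstrap, the argument becomes tighter than the paper's own.
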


\begin{proof}
    Since there is no $g-3/2-$Fueter section, by Theorem \ref{varying compactness theorem} (or Theorem \ref{first compactness theorem} and Theorem \ref{second compactness theorem}), the set $\{ \norm{\psi}_{L^4(Y)} : [A,\psi] \in \mathcal{M}_k(g)\}$ must have finite supremum, say, $\nu$. Suppose that there is a sequence of metric $\{g_n\}$ converging to $g$ such that for each $n$ we have
    \[\sup\{\norm{\psi}_{L^4(Y)} : [A,\psi] \in \mathcal{M}_k(g_n)\} \geq \nu + 1.\]
    This means that for each $n$ (also $g_n$), there is a corresponding $(A_n, \psi_n)$ such that $\norm{\psi_n}_{L^4} \geq \nu + 1$. By Theorem \ref{varying compactness theorem}, if the first scenario holds for this sequence $\{(g_n,(A_n, \psi_n))\}$, then the sequence converges to $(g,(A,\psi))$, where $(A,\psi) \in \mathcal{M}_k(g)$. This means that $\norm{\psi}_{L^4} \geq \nu + 1 > \nu$, which contradicts with the observation in the beginning. If the second scenario holds instead, this is also not possible because otherwise, we would obtain the existence of a $g-3/2-$Fueter section. Therefore, there must be a a neighborhood around $g$ in $\mathfrak{M}$ such that for all $g'$ in such neighborhood,
    \[\sup \{\norm{\psi}_{L^4(Y)} : [A,\psi] \in \mathcal{M}_k(g')\}<\nu + 1.\]
    By Theorem \ref{first compactness theorem}, we must have $\mathcal{M}_k(g')$ is compact in the $C^\infty$ topology.
\end{proof}

\section{Discussion}
The non-compactness of the moduli space is a hurdle to derive the monopole invariants. The results about the compactness of the moduli space of the RS-SW equations in this paper serve as the first step in the program of defining a Seiberg-Witten type invariant using the Rarita-Schwinger operator. Corollary \ref{obstruction for compactness} provides an obstruction for compactness, i.e., the non-existence of these so-called $3/2$-Fueter sections will ensure the compactness of the moduli space. Conjecturally, we expect that there should never be a $3/2$-Fueter section. A few indicators are pointing to this. 

Firstly, note that if $(A,\psi)$ is a solution where $\psi$ is a Rarita-Schwinger field, then $(A,\psi)$ is also a solution to a certain $3-$spinor Seiberg-Witten equations. Although the existence of limiting object in the moduli space of $2-$spinor Seiberg-Witten equations has been established by Doan and Walpuski \cite{doan2021existence}, such an analogous statement does not exist for $n-$spinor where $n>2$. Furthermore, the recent work of B\"ar and Mazzeo \cite{MR4252883} suggests that there is a sequence of spin manifolds that have an arbitrarily large number of Rarita-Schwinger fields; however, these manifolds have dimensions at least 4. In the case of multiple-spinor Seiberg-Witten equations, Haydys showed that there is a correspondence between the limiting objects in the moduli space with (non-trivial) solutions to a certain non-linear Dirac-type operator defined on a hyper-K\"ahler fiber bundle over $Y$ \cite{MR2980921}. Via Hitchin's \textit{hyper-K\"ahler quotient construction} \cite{MR0877637}, the hyper-K\"ahler fiber bundle over $Y$ has fibers given by $\mu^{-1}(0)/U(1)$. Roughly, the non-linear Dirac operator is a Dirac-type operator $\mathfrak{F}$ defined on the vertical component of the hyper-K\"ahler fiber bundle. Such an operator is called the \textit{Fueter operator}, and its solutions are called \textit{Fueter sections}. The study of the existence of limiting objects in the moduli space turns into the study of the existence of non-trivial (singular) Fueter sections. 

Morally, whenever there is a Dirac operator, there is a corresponding Rarita-Schwinger operator. Thus, it seems natural to ask whether there is an analogous correspondence in the setting of the Rarita-Schwinger operator and $3/2-$spinors. Inspired by the work of Haydys in \cite{MR2980921}, we propose that $g-$3/2-Fueter sections defined above correspond to non-trivial solutions of a certain non-linear Raritia-Schwinger operator associated with the Fueter operator. Note that, in the setting of $3/2-$spinors, the hyper-K\"ahler fiber bundle is cut down by a linear space $\gamma^{-1}(0)$. This more restrictive condition may lend insights into why one should not expect any 3/2-Fueter sections at all.

We expect the dimension of the moduli space to be zero (at least when $b_1(Y)>1$). Similar to the classical Seiberg-Witten equations, one hopes the invariants are obtained as a ``signed'' count of 
the solutions \cite{MR1781619}.  Conjecturally, this requires dealing with the ``transversality'' of the monopole equation and showing that generic perturbations of the RS-SW equation have only irreducible solutions. This is the next goal of the authors.

Since solutions of the RS-SW equations correspond to the critical points of the modified Chern-Simon-Dirac functional $\mathcal{L}^{RS}$ (cf. Proposition \ref{chern-simon-dirac}), a compactness and transversality result of the moduli space naturally leads to the following questions

\begin{?}
    Is there a homological invariant associated with $\mathcal{L}^{RS}$ via the Floer theory program? If so, does such a Floer homology associated with the RS-SW equations categorify the conjectural numerical invariants described above?
\end{?}

Various gauge-theoretic Floer homology theories turn out to have a formal $(3+1)-$TQFT picture, see \cite{donaldson_furuta_kotschick_2002}, \cite{kronheimer_mrowka_2007}. If the answer to the above question is positive, it is possible to develop a similar TQFT picture for a Floer homology associated with the RS-SW equations. Thus, one should expect that the homological invariant via functoriality should let us define (indirectly) numerical invariants associated with the RS-SW equations for closed $4-$manifolds. Such a supposed numerical invariant is difficult to achieve directly due to the non-compactness of the moduli space of the $4-$D RS-SW equations \cite{nguyen2023pin}.

To implement the Floer theory program in this setting, one has to understand the gradient flow line equations of $\mathcal{L}^{RS}$. It can be shown that the negative gradient flow lines of $\mathcal{L}^{RS}$ satisfy the four-dimensional version of the RS-SW equations defined on $Y\times \RN$
\begin{equation}\label{eq:4dRSSW}
    \begin{cases}
        Q^+_{B}\phi = 0,\\
        F^+_B = \gamma^{-1}(\mu(\phi)).
    \end{cases}
\end{equation}
A differential of a Floer complex should be defined by the count of solutions of \eqref{eq:4dRSSW} interpolating between critical points of $\mathcal{L}^{RS}$. Thus, it is natural for us to investigate the compactification problem of the moduli space of \eqref{eq:4dRSSW}.

\begin{Problem}
    Develop a gauge-theoretic frequency function that can be applied to the analysis of four-dimensional RS-SW equations defined on four-manifolds with cylindrical ends. 
\end{Problem}

We hope to explore these directions in our future works.

\bibliographystyle{amsplain}

\providecommand{\bysame}{\leavevmode\hbox to3em{\hrulefill}\thinspace}
\providecommand{\MR}{\relax\ifhmode\unskip\space\fi MR }

\providecommand{\MRhref}[2]{%
  \href{http://www.ams.org/mathscinet-getitem?mr=#1}{#2}
}

\bibliography{Reference}

\end{document}